%
%
%

\documentclass[graybox,12pt]{svmult}
\usepackage{
amsmath, amsfonts, amssymb, 
amscd
}
\pdfoutput=1 
\usepackage{mathptmx}       
\usepackage{helvet}         
\usepackage{courier}        
\usepackage{type1cm}        
 
\usepackage{enumerate} 
%
\usepackage{makeidx}         
\usepackage[pdftex]{graphicx}
\usepackage{multicol}        
\usepackage[bottom]{footmisc}


\makeindex             

                       
\newtheorem{assumption}{Assumption}[section]
\newtheorem{thm}{Theorem}[section]
\newtheorem{prop}{Proposition}[section]
\newtheorem{lem}{Lemma}[section]
\newtheorem{ex}{Exercise}[section]


\newcommand{\sumtwo}[2]{\sum_{\substack{#1 \\ #2}}} 
\newcommand{\abs}[1]{\left| #1\right|}
\newcommand{\norm}[2]{\left| #1\right|_{#2}}
\renewcommand{\emptyset}{\varnothing}

\newcommand{\setof}[2]{\left\{#1 \,:\, #2 \right\}}


\def\1{\ifmmode {1\hskip -3pt \rm{I}}
\else {\hbox {$1\hskip -3pt \rm{I}$}}\fi} 

\newcommand{\bigo}[1]{{\mathrm O}\lb #1\rb }
\newcommand{\smo}[1]{{\mathrm o}\lb #1\rb }

\newcommand{\df}{\stackrel{\Delta}{=}}

\newcommand{\eqvs}{\stackrel{\sim}{=}}
\newcommand{\leqs}{\stackrel{<}{\sim}}            
\newcommand{\geqs}{\stackrel{>}{\sim}}             


\newcommand{\lb}{\left(}
\newcommand{\rb}{\right)}
\newcommand{\lbr}{\left\{}
\newcommand{\rbr}{\right\}}

\newcommand{\dd}{{\rm d}}


\newcommand{\be}{\begin{equation}}
\newcommand{\ee}{\end{equation}}



\newcommand{\step}[1]{S{\small TEP}\,#1}
\providecommand{\opt}[1]{C{\small ASE}\,#1.}


\newcommand{\calA}{\mathcal{A}}

\newcommand{\calE}{\mathcal{E}}
\newcommand{\calF}{\mathcal{F}}

\newcommand{\calH}{\mathcal{H}}

\newcommand{\calM}{\mathcal{M}}
\newcommand{\calN}{\mathcal{N}}

\newcommand{\calP}{\mathcal{P}}
\newcommand{\calQ}{\mathcal{Q}}

\newcommand{\calT}{\mathcal{T}}

\newcommand{\calY}{\mathcal{Y}}


\newcommand{\frh}{\mathfrak{h}}

\newcommand{\frl}{\mathfrak{l}}
\newcommand{\frm}{\mathfrak{m}}
\newcommand{\frn}{\mathfrak{n}}

\newcommand{\frs}{\mathfrak{s}}
\newcommand{\frt}{\mathfrak{t}}

\newcommand{\frv}{\mathfrak{v}}


\newcommand{\bbB}{\mathbb{B}}
\newcommand{\bbC}{\mathbb{C}}
\newcommand{\bbD}{\mathbb{D}}
\newcommand{\bbE}{\mathbb{E}}

\newcommand{\bbL}{\mathbb{L}}

\newcommand{\bbN}{\mathbb{N}}

\newcommand{\bbP}{\mathbb{P}}
\newcommand{\bbQ}{\mathbb{Q}}
\newcommand{\bbR}{\mathbb{R}}
\newcommand{\bbS}{\mathbb{S}}
\newcommand{\bbT}{\mathbb{T}}

\newcommand{\bbV}{\mathbb{V}}

\newcommand{\bbZ}{\mathbb{Z}}

\newcommand{\sfb}{\mathsf b}
\newcommand{\sfc}{{\sf c}}
\newcommand{\sfe}{{\sf e}}
\newcommand{\sff}{{\sf f}}

\newcommand{\sfn}{{\sf n}}

\newcommand{\sfq}{{\sf q}}
\newcommand{\sfr}{{\sf r}}
\newcommand{\sfs}{{\sf s}}
\newcommand{\sft}{{\sf t}}
\newcommand{\sfu}{{\sf u}}
\newcommand{\sfv}{{\sf v}}
\newcommand{\sfw}{{\sf w}}
\newcommand{\sfx}{{\sf x}}
\newcommand{\sfy}{{\sf y}}
\newcommand{\sfz}{{\sf z}}

\newcommand{\sfA}{{\sf A}}

\newcommand{\sfC}{{\sf C}}

\newcommand{\sfE}{{\sf E}}

\newcommand{\sfP}{{\sf P}}

\newcommand{\sfS}{{\sf S}}
\newcommand{\sfT}{{\sf T}}
\newcommand{\sfU}{{\sf U}}
\newcommand{\sfV}{{\sf V}}
\newcommand{\sfW}{{\sf W}}

\newcommand{\sfX}{{\sf X}}
\newcommand{\sfY}{{\sf Y}}
\newcommand{\sfZ}{{\sf Z}}


\newcommand{\Zd}{\bbZ^d}
\newcommand{\Rd}{\bbR^d}

\newcommand{\cHm}[1]{\calH^-_{#1}}
\newcommand{\cHp}[1]{\calH^+_{#1}}


\newcommand{\sffo}{{\sf f}^\omega}
\newcommand{\sfso}{{\sf s}^\omega}
\newcommand{\sfto}{{\sf t}^\omega}

\newcommand{\sffof}[1]{{\sf f}^{\theta_{#1}\omega}}
\newcommand{\sfsof}[1]{{\sf s}^{\theta_{#1}\omega}}
\newcommand{\sftof}[1]{{\sf t}^{\theta_{#1}\omega}}

\newcommand{\tq}[1]{\sft^\omega ({#1})}
\newcommand{\fq}[1]{\sff^\omega ({#1})}
\newcommand{\ta}[1]{{\sft}({#1})}
\newcommand{\fa}[1]{{\sff}({#1})}

\newcommand{\rra}[1]{\sfr_{#1}}
\newcommand{\rrq}[1]{\sfr^\omega_{#1}}
\newcommand{\rrqof}[2]{{\sfr }_{#1}^{\theta_{#2}\omega}}



%


\newcommand{\Var}{\bbV{\rm ar}}
\newcommand{\Cov}{\bbC{\rm ov}}



\newcommand{\Zn}{Z_n}

\newcommand{\Plf}[1]{\bbP_\lambda^{#1}}
\newcommand{\Pnf}[1]{\bbP_n^{#1}}
\newcommand{\Enf}[1]{\bbE_n^{#1}}

\newcommand{\Pd}{\sfP_d}
\newcommand{\Ed}{\sfE_d}
\newcommand{\Wd}{\sfW_d}
\newcommand{\Wdn}[1]{\sfW_d^{#1}}


\newcommand{\tl}{\tau_\lambda}
\newcommand{\tlo}{\tau_\lambda^\sfq}

\newcommand{\Kl}{{\mathbf K}_\lambda}
\newcommand{\Knot}{{\mathbf K}_0}

\newcommand{\pKnot}{\partial{\mathbf K}_0}
\newcommand{\pKl}{\partial {\mathbf K}_\lambda}

\newcommand{\bfK}{{\mathbf K}}
\newcommand{\pbfK}{\partial {\mathbf K}}

\newcommand{\UlK}[1]{{\mathbf U}_\lambda^{#1}}


\newcommand{\Wdo}{\sfW_d^\omega}
\newcommand{\Wdon}[1]{\sfW_d^{#1 , \omega}}
\newcommand{\Zno}{Z_n^\omega}

\newcommand{\Pnfo}[1]{\bbP_n^{#1 , \omega}}
\newcommand{\Enfo}[1]{\bbE_n^{#1 , \omega}}

\newcommand{\Klo}{{\mathbf K}_\lambda^\sfq}
\newcommand{\Knoto}{{\mathbf K}_0^\sfq}

\newcommand{\sfVo}{{\sf V}^\omega}


\begin{document}

\title*{Multidimensional Random Polymers : A Renewal Approach
}
\author{Dmitry Ioffe}
\institute{Dmitry Ioffe  \at Faculty of IE\& M, Technion,  Haifa 32000, 
Israel.  \email{ieioffe@ie.technion.ac.il}\\ 
Supported by the ISF grant 817/09 and by  the Meitner Humboldt Research Award.
}
%
%
\maketitle

\abstract{In these lecture notes,  
which are based on the mini-course given at 
2013 Prague School on Mathematical Statistical Physics, 
we discuss ballistic phase of quenched and annealed 
stretched polymers in random environment on $\bbZ^d$  with an emphasis on the 
natural renormalized renewal structures which appear in such models. In the ballistic
 regime an irreducible decomposition of typical polymers leads to an effective 
 random walk  reinterpretation of the latter. In the annealed case 
 the Ornstein-Zernike theory based on this approach paves
 the way to 
 an essentially complete control on the level of local limit results 
 and invariance principles.  
 In the quenched case, the renewal structure maps the 
 model  of stretched polymers 
 into an effective model of directed polymers. As a result one is able to 
 use techniques and ideas developed in the context of directed polymers in order 
 to address issues like strong disorder in low dimensions and weak disorder in 
 higher dimensions. Among the topics addressed: Thermodynamics of quenched and
 annealed models, multi-dimensional renewal theory (under Cramer's condition), 
 renormalization and effective random walk structure of annealed polymers, 
 very weak disorder in dimensions $d\geq 4$ and strong disorder in dimensions
 $d=1,2$. }

\section{Introduction}
Mathematical and probabilistic developments presented here draw inspiration 
from statistical mechanics of stretched polymers, see for instance 
\cite{deGennes,RubinsteinColby}. 

Polymers chains to be discussed  in these 
lecture notes are modeled by paths of finite range random walks on $\bbZ^d$. 
We shall always assume that the underlying random walk distribution has 
zero mean. The word {\em stretched} alludes to the situation  when  the 
end-point of a polymer is pulled by an external force, or in the random walk 
terminology, by a drift. In the case of random walks this leads to a ballistic 
behaviour with limiting spatial extension described in terms of the usual law
of large numbers (LLN) for independent sums. Central limit theorem (CLT) and 
large deviations (LD) also hold. 

Polymer measures below are non-Markovian objects (see Remark~\ref{IN-NonM}), 
which gives rise to a  rich  morphology. We shall distinguish between 
ballistic and sub-ballistic phases and between quenched and
annealed polymers. Quenched polymers correspond to pulled random walks in random 
potentials. Their annealed counterparts correspond to pulled random walks in a 
deterministic attractive self-interaction potentials. 

Two main themes are the impact of the drift, that is when the model in question, annealed or 
quenched, becomes ballistic, and the impact of disorder, that is whether or not quenched and 
annealed models behave similarly. 

It is instructive to compare models of stretched polymers with those of directed polymers 
\cite{CSY2004}. In the latter case sub-ballistic to ballistic transition is not an issue. 
Furthermore, in the stretched case polymers can bend and return to the same vertices, which makes
even the annealed model to be highly non-trivial (in the directed case the annealed model is a
usual random walk). On the other hand it is unlikely that a study of stretched polymers will shed
light on  questions which are open  in the directed context. 
\subsection{Class of models.}
{\bf Underlying random walk.}
Consider random walk on $\bbZ^d$  with an  irreducible finite range step distribution. 
We  use the notation $\sfP_d$ both  for the random walk path measure 
and for the distribution of individual steps. 
For 
convenience we shall assume that nearest neighbour steps $\pm \sfe_k$ are permitted, 
\be 
\label{eq:L2-sfe}
\sfP_d \lb \pm \sfe_k \rb > 0
\ee
 The size of the range is denoted $R$: $\sfP_d \lb \sfX = \sfx \rb >0 
 \Rightarrow \abs{\sfx}\leq R$. Without loss of generality we shall assume that
 $\sfE_d \sfX = 0$.
\smallskip 

\noindent
{\bf Random environment.} The random environment is modeled by a collection 
$\lbr \sfVo_\sfx\rbr_{\sfx\in\bbZ^d}$ i.i.d {\em non-negative} random variables. The notation
$\calQ$ and $\calE$ are reserved for the corresponding product probability measure and
the corresponding expectation. We shall assume: 
\smallskip 

\noindent
{\bf (A1 )} $\sfVo$ is non-trivial and $0\in {\rm supp}\lb \sfVo\rb$ .
\smallskip 

\noindent
{\bf (A2 )} $\calQ\lb \sfVo < \infty \rb > p_c (\bbZ^d )$, where $p_c$ is the critical 
Bernoulli site percolation probability. 
\smallskip 

\noindent
{\bf Polymers and polymer weights.} Polymers 
$\gamma = (\gamma_0, \dots ,\gamma_n )$ are paths of the underlying random walk. 
For  each polymer $\gamma$ we define $\abs{\gamma} = n$ as the number of steps, and 
$\sfX (\gamma ) = \gamma_n -\gamma_0$ as the displacement along the polymer. 

The are two type of weights we associate with polymers: quenched random weights
\be 
\label{eq:IN-qweight}
\Wdo (\gamma ) = {\rm exp}\lbr -\beta \sum_{i=1}^{\abs{\gamma}}  \sfVo_{\gamma_i}\rbr \, 
\Pd (\gamma ) , 
\ee
and annealed weights 
\be 
\label{eq:IN-aweight}
\Wd (\gamma ) = \calE \lb \Wdo (\gamma ) \rb  
= {\rm e}^{- \Phi_\beta (\gamma )}\, 
\Pd (\gamma ) , 
\ee
where the self-interacting potential 
\be 
\label{eq:Phi}
 \Phi_\beta  (\gamma ) = \sum_\sfx \phi_\beta  (\ell_{\gamma }(\sfx )). 
\ee
Above  $\ell_\gamma (\sfx )$ is the local time of $\gamma$ at $x$;
\be 
\label{eq:loc-time}
 \ell_\gamma (x) = \sum_{i=1}^n \1_{\lbr \gamma_i = \sfx\rbr}, 
\ee
and $\phi_\beta$ is given by: 
\be 
\label{eq:IN-phi-beta}
\phi_\beta (\ell ) = - \log\calE\lb {\rm e}^{-\beta\ell\,  \sfVo }\rb .
\ee
The inverse temperature $\beta >0$ modulates the strength of disorder. 
\smallskip 

\noindent 
{\bf Pulling force, partition functions and probability distributions.} 
For $h\in\bbR^d$ we shall consider quenched and annealed partition functions 
\be 
\label{IN-pf}
\Zno (h ) = \sum_{\abs{\gamma} =n}  {\rm e}^{h\cdot \sfX (\gamma )} \Wdo (\gamma )\quad
{\rm and}\quad \Zn (h ) = \calE (\Zno (h ) ) = 
\sum_{\abs{\gamma} =n}  {\rm e}^{h\cdot \sfX (\gamma )} \Wd (\gamma ) , 
\ee
and the corresponding probability distributions, 
\be 
\label{IN-pd} 
\Pnfo{h}(\gamma ) = \frac{1}{\Zno (h )} {\rm e}^{h\cdot \sfX (\gamma )} \Wdo (\gamma )
\quad{\rm and}\quad 
\Pnf{h}(\gamma ) = \frac{1}{\Zn (h )} {\rm e}^{h\cdot \sfX (\gamma )} \Wd (\gamma ). 
\ee
\begin{remark}
\label{IN-NonM} 
Annealed measures $\Pnf{h}$ are non-Markovian. Quenched measures $\Pnfo{h}$ are also 
non-Markovian in the sense that in general 
$\Pnfo{h}$ is not a marginal of $\bbP_{m}^{h, \omega}$ for $m>n$.  
\end{remark}
\subsection{Morphology.} 
We shall distinguish between ballistic and sub-ballistic behaviour of quenched and
annealed polymers \eqref{IN-pd} and between strong an  weak impact of disorder on
the properties of quenched polymers (as compared to the annealed ones). 
\smallskip 

\noindent 
{\bf Ballistic phase.} 
For the purpose of these lecture notes, 
 let us say that a self-interacting random walk (or polymer) is ballistic if
there exists 
$\delta >0$ and 
a vector $\sfv\neq 0$
such that
\be 
\label{eq:L2-ballistic}
\lim_{n\to\infty} \Pnf{h} \lb \abs{\sfX}\leq \delta n\rb = 0\quad{\rm and}\quad 
\lim_{n\to\infty}\frac{1}{n}\bbE_n^h\,  \sfX (\gamma )  = \sfv .
\ee
The model is said to be sub-ballistic, if 
 \be 
\label{eq:L2-subballistic}
\lim_{n\to\infty}\frac{1}{n}\bbE_n^h \abs{\sfX (\gamma )}  = 0 . 
\ee
At this stage it is  unclear whether 
 there are  models which comply neither with \eqref{eq:L2-ballistic} nor 
with \eqref{eq:L2-subballistic}. It is the content of 
Theorem~\ref{thm:attractive-morp} below that for the annealed models the above
dichotomy  always holds. 

Similarly, the quenched model is said to be in the ballistic, respectively 
sub-ballistic, phase if $\calQ$-a.s 
\be 
\label{eq:L2-ballistic-q}
\lim_{n\to\infty} \Pnfo{h} \lb \abs{\sfX}\leq \delta n\rb = 0\quad{\rm and}\quad 
\lim_{n\to\infty}\frac{1}{n}\Enfo{h}\,  \sfX (\gamma )  = \sfv , 
\ee
and, respectively, 
\be 
\label{eq:L2-subballistic-q}
\lim_{n\to\infty}\frac{1}{n}\Enfo{h} \abs{\sfX (\gamma )}  = 0 . 
\ee
For quenched models it is in general open question whether the limiting spatial
extension (second limit in \eqref{eq:L2-ballistic-q}) always exists. 
See Theorem~\ref{thm:quenched-morp}  below for a precise statement.

Ballistic and sub-ballistic phases correspond to very different patterns of behaviour. 
We focus here on the ballistic phase. In a sense  sub-ballistic behaviour is more intricate than
the ballistic one.  
In the continuous context (Brownian motion) results about sub-ballistic phase are summarized 
in \cite{Sznitman}. The theory (so called {\em enlargement of obstacles}) was adjusted to
random walks on $\bbZ^d$ in \cite{Antal95}. 
\smallskip 

\noindent
{\bf Strength of disorder.}  For each value of the pulling 
force $h$ and the interaction $\beta$ 
 strength of disorder may be quantified on several 
levels: 
\smallskip 

{\bf L1.}\quad  $\calQ$-a.s $\limsup_{n\to\infty} \frac{1}{n}\log\frac{\Zno (h )}{\Zn (h)} < 0$.

\noindent
Since by Assumption {\bf (A1)} the annealed potential 
 $\phi_\beta$ in 
\eqref{eq:IN-phi-beta} satisfies $\lim_{\beta\to\infty} \frac{\phi_{\beta}}{{\beta}} = 0$, 
it is not difficult to see that, at least in the case when 
$\lbr \sfx~:~ \sfV^\omega_\sfx = 0\rbr$ does not percolate, 
{\bf L1.} holds in any dimension whenever the strength of 
interaction $\beta$ 
and the pulling force $h$ are large enough. 

Furthermore, as we shall see 
in Section~\ref{sec:Strong}  (and as it was originally proved in 
\cite {Zygouras-StrongDisorder}) the disorder is strong in the 
 sense of {\bf L1.} in lower dimensions 
 $d=2,3$  for any $\beta > 0$ 
 provided  that the annealed polymer is in (the interiour of) the ballistic phase.
\smallskip 

{\bf L2.}\quad  $\calQ$-a.s $\lim_{n\to\infty} \frac{\Zno (h )}{\Zn (h)} = 0$. 

This is presumably always the case when the quenched model is in 
sub-ballistic phase. The case
$h=0$ is worked out in great detail \cite{Sznitman, Antal95}. 
\begin{remark}
\label{IN-Rem-ballistic}
A characterization of annealed and quenched sets of sub-critical drifts; $\Knot$ and 
$\Knoto$ is given in \eqref{eq:L2-critical-a} and \eqref{eq:L2-critical-a-o} below. 
It always holds that $\Knot\subseteq \Knoto$. 
The inclusion 
is strict in any dimension for $\beta$ large enough, and it is presumably strict 
in dimensions $d=2,3$ for any $\beta >0$. 
On the other hand, it seems to be  an open question whether in higher dimensions
 $d>3$ the two sets of sub-critical drifts coincide at sufficiently small $\beta$. 
\end{remark}

{\bf L3.} Typical polymers under $\Pnf{h}$ and $\Pnfo{h}$ have very different 
properties. 
\smallskip 

\noindent 
 Ballistic phase of annealed polymers in dimensions 
$d\geq 2$ is by now completely 
understood, and we expose the core of the corresponding (Ornstein-Zernike) theory
 developed in \cite{IoffeVelenik-Annealed,IV-Critical} in Section~\ref{Asec:renewal}. 
In dimensions $d\geq 4$, the disorder happens to be weak in the sense of 
any of {\bf L1-L3} in the following regime (which we shall call 
{\em very weak disorder}): Fix $h\neq 0$ and then take 
 $\calQ\lb \sfVo = \infty \rb$ and 
$\beta$ in \eqref{eq:IN-qweight}
to be sufficiently small. These results~\cite{Flury,Zygouras,IV-Crossing,IV-CLT} 
are explained in Section~\ref{sec:Weak}. 
\smallskip 

\noindent
{\bf Zero drift case.} 
At $h=0$  properties of both annealed and quenched measures were described in depth
in \cite{Sznitman} and references therein, 
following an earlier analysis of Wiener sausage in 
\cite{DV1,DV2}. 
This is {\em not} the case we consider here. However, it is instructive 
to keep in mind what happens if there is no pulling force, and, accordingly,  we give a brief 
heuristic sketch. To fix ideas consider the case of pure traps 
$p =\calQ \lb \sfVo = 0\rb = 1 -\calQ \lb \sfVo = \infty\rb$. If $1-p$ is small, 
then $\lbr \sfx : \sfVo_\sfx = 0\rbr$ percolates, 
and the model is non-trivial. 
Let us start with a quenched case. Let $B_r$ be a lattice box
 $B_r =\lbr \sfx :\abs{\sfx}_1\leq r\rbr$ and $B_r (\sfx ) = \sfx + B_r$. We say that there is an
 $(R, r )$-clearing if 
 \[
  \exists\, \sfx \in B_R\ \text{such that $\sfVo_\sfy = 0$ for all $\sfy\in B_r (\sfx )$} .
 \]
The probability 
\[
 \calQ\lb \text{there is a $(R, r)$ clearing}\rb \approx  1 - \lb 1- p^{c_1 r^d}\rb^{c_2 R^d/r^d } 
 \approx 1 - {\rm e}^{-c_3p^{c_1 r^d}\frac{R^d}{r^d}}.  
\]
Up to leading terms this is non-negligible if $p^{c_1r^d}R^d\approx {\rm const}$, or if 
$r\approx \lb \log R\rb^{1/d}$. On the other hand, a probability that a random walk will 
go ballistically to a box (clearing) $B_r (\sfx )$ at distance of order $R$ from the origin 
is of order ${\rm e}^{-c_4 R}$, and the probability that afterwards it will
spend around $n$ units of time in $B_r (\sfx )$ is $\approx {\rm e}^{-c_5 n/r^2}$. We, therefore 
need to find an optimal balance between $R$ and $n/r^2\approx n/(\log R)^{2/d}$ terms, which gives, 
again up to leading terms, $R\approx n/(\log n)^{2/d}$. This suggests both a survival pattern for typical
quenched polymer (see Figure~\ref{fig:hzero}), and an asymptotic relation for the quenched partition function
\be 
\label{eq:IN-qpf}
\log\Zno \approx -\frac{n}{(\log n )^{2/d}} .
\ee
\begin{figure}[t]
\scalebox{.35}{\input{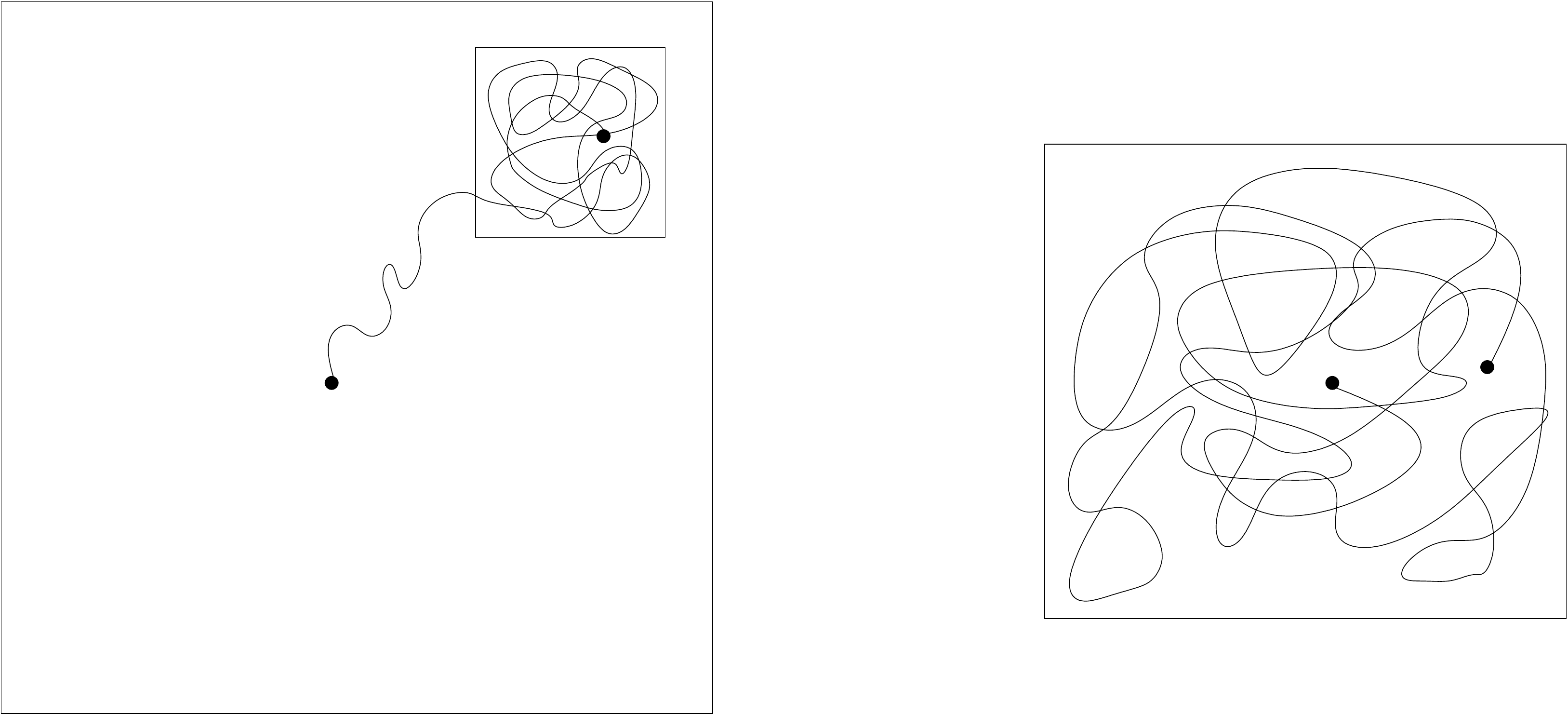_t}}
%
%
\caption{On the left: A survival pattern for an  $n$-step quenched polymer with 
$R\approx n/(\log n)^{2/d}$.\newline 
On the right: $n$-step annealed polymer in $B_R$ with $R\approx n^{1/(d+2)}$.}
\label{fig:hzero}       
\end{figure}
As far as the annealed model is considered for $\ell\geq 1$ define 
as before $\phi_\beta (\ell ) = -\log \calE \lb{\rm e}^{-\beta\ell \sfV_0 }\rb = -\log p\df\nu$. 
Consider random walk which stays all $n$ units of time inside $B_R$. The probabilistic price for the 
latter is $\approx {\rm e}^{-c_6 n/ R^2}$. On the other hand, the self-interaction price is 
$\approx {\rm e}^{-c_7 \nu R^d}$. Choosing optimal balance leads to $R\approx n^{1/(d+2)}$.  
This suggests a   behaviour 
pattern for typical annealed polymers (see Figure~\ref{fig:hzero}), which is very different from the 
survival  pattern for typical quenched polymer as discussed above. This also suggests the 
following asymptotics for the annealed partition function:
\be 
\label{eq:IN-apf}
\log\Zn \approx - n^{d/(d+2 )} .
\ee
The above discussion indicates that in the zero drift case the disorder is strong on levels {\bf L2, L3}, 
but not on {\bf L1.} 
\smallskip 

\noindent 
{\bf Outline of the notes.} We do not attempt to give a comprehensive survey of the existing results
on the subject. Neither the notes are self-contained, in many instances below we shall refer to 
the literature for more details on the corresponding proofs. The emphasis is on the exposition of
the renewal structure behind stretched polymers in the ballistic regime, and how this might help to
explore and understand various phenomena in question. 

Section~\ref{sec:TH} is devoted to the thermodynamics of annealed and quenched polymers, namely
to the facts which can be deduced from sub-additivity arguments and large deviation principles.

Multidimensional renewal theory (under assumption of exponential tails) 
is discussed in detail  in Section~\ref{Asec:renewal}. In Subsection~\ref{ssec:BalPhaseA} we explain 
renormalization procedures which lead to a reformulation of annealed models in this renewal context, 
which is the core of the Ornstein-Zernike theory of the latter. 
As a result we derive very sharp and essentially complete description of the ballistic phase 
of the annealed polymers on the level of  invariance principles and 
local limit asymptotics on all deviation scales. 

In Section~\ref{sec:Weak} we explain why the annealed renewal structure persists for quenched models
in the regime of very weak disorder in dimensions $d\geq 2$. More precisely, it happens that in the 
latter case the disorder is weak on all three levels ${\bf L1 - L3.}$. 

In Section~\ref{sec:Strong} we explain how to check  that the disorder is always strong 
already on  level {\bf L1} in 
dimensions $d=2, 3$. A more or less complete argument is given only in two dimensions. 

To facilitate references and the reading some of the back ground material on convex geometry and
large deviations is collected in the Appendix. 
\smallskip 

\noindent
{\bf Notation conventions.} Values of positive constants $c, \nu, c_1, \nu_1, c_2,\nu_2,  \dots $ 
may change between different Sections. 

In the sequel we shall use the following notation for asymptotic relations: 
Given a  set of indices
$\calA$ and two positive sequences $\lbr a_\alpha , b_\alpha \rbr_{\alpha\in\calA}$, 
we say that 
\begin{itemize}
 \item 
$a_\alpha \leqs
b_\alpha$ {\em uniformly in $\alpha\in \calA$} if there exists a constant $c>0$ such 
that $a_\alpha \leq  c b_\alpha$ for all $\alpha\in\calA$ . 
\item
We shall use
$a_\alpha \cong b_\alpha$ if both $a_\alpha\leqs b_\alpha$ and $a_\alpha\geqs b_\alpha$ hold.
\end{itemize}
For $1\leq p\leq \infty$, the $\ell_p$-norms on 
$\bbR^d$ are denoted
\[
\norm{x}{p} = \Bigl( \sum_{i=1}^d  \abs{x_i}^p \Bigr)^{\frac1{p}} .
\]
The default notation is for the Euclidean norm $\abs{\cdot} = 
\norm{\cdot}{2}$.

If not explicitly stated otherwise paths $\gamma = \lb \gamma_0,\dots , \gamma_n\rb$ are assumed to have 
their starting point at the origin; $\gamma_0 = 0$. A concatenation $\gamma\circ\eta$ of two paths 
$\gamma =\lb \gamma_0,\dots , \gamma_n\rb$ and $\eta = \lb \eta_0, \dots , \eta_m\rb$ is the path 
\[
  \gamma\circ\eta = \lb \gamma_0, \dots , \gamma_n, \gamma_n +\eta_1, \dots , \gamma_n +\eta_m\rb .                                                                           
 \]                                                                                                 
A union of two paths $\gamma\cup\eta$, with end-points at the origin or not, is a subset of $\bbZ^d$ with 
multiplicities counted. In particular, local times satisfy $\ell_{\gamma\cup\eta} (\sfx ) = 
\ell_\gamma (\sfx ) +\ell_\eta (\sfx )$.

\section{Thermodynamics of Annealed and Quenched Models.} 
\label{sec:TH} 
In the sequel we shall employ the following notation for families of polymers:
\be 
\label{eq:TH-FamiliesPolymers}
\calP_\sfx = \lbr\gamma ~:~\sfX (\gamma) = \sfx\rbr
\quad 
\calP_n = \lbr\gamma ~:~\abs{\gamma} = n\rbr
\quad 
{\rm and}\quad 
\calP_{\sfx , n} = \calP_\sfx\cap \calP_n
\ee
{\bf Conjugate ensembles.}  
Let $\lambda\geq 0$. 
Consider
\be 
\label{eq:Glx}
G_\lambda^\omega  (\sfx ) = 
\sum_{\sfX (\gamma ) = \sfx} {\rm e}^{-\lambda\abs{\gamma}} \Wdo (\gamma ) 
\quad{\rm and}\quad 
G_\lambda   (\sfx ) = \calE\lb G_\lambda^\omega  (\sfx )\rb = 
\sum_{\sfX (\gamma ) = \sfx} {\rm e}^{-\lambda\abs{\gamma}} \Wd (\gamma ).
\ee
{\bf Free energy and inverse correlation length.} 
One would like to define quenched and annealed free energies via:
\be 
\label{eq:lambda-h}
\lambda^\sfq (h ) = \lim_{n\to\infty}\frac1n\log \Zno (h )\quad{\rm and}\quad 
\lambda (h ) = \lim_{n\to\infty}\frac1n\log Z_n (h ). 
\ee
Similarly one would like to define 
and the inverse correlation lengths: For $\sfx\in \bbR^d$ set 
\be
\label{eq:L2-tl}
\tlo (\sfx) = - \lim_{r\to\infty} \frac{1}{r}\log G_\lambda^\omega 
\lb \lfloor r\sfx\rfloor\rb\quad {\rm and}\quad 
\tl (\sfx) = - \lim_{r\to\infty} \frac{1}{r}\log G_\lambda \lb \lfloor r\sfx\rfloor\rb .
\ee
Depending on the context other names for  $\lambda (h )$ are {\em connectivity constant} and 
{\em log-moment generating function}, and for  $\tl (\sfx )$  are {\em Lyapunov exponent} and, 
for some models in two dimensions, {\em surface tension}. 

By {\em Thermodynamics} we mean here 
statements about existence of limits in \eqref{eq:lambda-h} 
and \eqref{eq:L2-tl}, and 
their relation to {\em Large Deviation} asymptotics under quenched 
and annealed polymer measures \eqref{IN-pd}. 
Facts about Thermodynamics of annealed and quenched models are collected 
in Theorem~\ref{thm:attractive-morp} and Theorem~\ref{thm:quenched-morp} below, 
and, accordingly, 
 discussed in some  detail in Subsections~\ref{sub:TH-annealed} and 
\ref{sub:TH-quenched}.

\subsection{Annealed Models in dimensions $d\geq 2$.}
\label{sub:TH-annealed}
\begin{thm}
\label{thm:attractive-morp} 
{\bf A.}  The free energy $\lambda$ is well defined, non-negative  and convex on $\bbR^d$. 
Furthermore, 
\be 
\label{eq:L2-lambda0-a}
0  = \min_h \lambda (h) = \lambda (0). 
\ee
The set 
\be
\label{eq:L2-critical-a}
{\bf K}_{0}\df \lbr h~:~ \lambda (h )= 0\rbr 
\ee
is  a compact  convex set  with a non-empty interiour. \newline
{\bf B.} The inverse correlation length $\tl$ is well defined for any $\lambda\geq 0$, 
and it can be identified as the support function of the compact convex set 
\be
\label{eq:TH-Kl}
{\bf K}_\lambda \df \lbr h~:~ \lambda (h )\leq \lambda \rbr .
\ee
Define 
\be 
\label{eq:L3-LD-rate-f}
 I (\sfv ) = \sup_{h}\lbr h\cdot \sfv - \lambda (h )\rbr = 
 \sup_{\lambda}\lbr \tl (\sfv ) - \lambda\rbr  .
\ee
{\bf C.} 
For any $h\in\bbR^d$ the family of polymer measures $\Pnf{h}$
satisfies LD principle with the rate function
\be 
\label{eq:L3-LD-rate-fh}
 I_h (\sfv )\df \sup_f \lbr f\cdot \sfv - \lb  \lambda (f +h ) -\lambda (h )\rb 
 \rbr = I(\sfv)  - \lb 
h\cdot\sfv - 
\lambda (h )\rb. 
\ee
{\bf D.} For $h\in {\rm int}\lb  {\bf K}_{0} \rb $  the model is sub-ballistic,  
whereas for any    
$h\not\in {\bf K}_{0} $ the model is ballistic. 
\newline
{\bf E.} Furthermore, at critical drifts $h\in\partial {\bf K}_0$  
the model is still ballistic. 
In other words, the ballistic to sub-ballistic transition is always of the first order 
in dimensions $d\geq 2$. 
\end{thm}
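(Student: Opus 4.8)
The plan is to derive \textbf{E} as a corollary of the Ornstein--Zernike renewal theory of Section~\ref{Asec:renewal}, \emph{read at the critical value $\lambda=0$}: the assertion is precisely that the renewal picture, together with its mass gap, persists on the critical face $\pKnot$, and this alone forces ballisticity. Concretely, I would fix $h\in\pKnot$, pick an outward normal $\sfn$ to $\Knot$ at $h$ (so $\lambda(h)=0$ and $h\cdot\sfn=\tau_0(\sfn)$), and start from the conjugate identity
\be
\label{eq:E-conj}
\sum_{n\geq 0}{\rm e}^{-\lambda n}\Zn(h)=\sum_{\sfx}{\rm e}^{h\cdot\sfx}G_\lambda(\sfx),\qquad\lambda>0 ,
\ee
whose right--hand side I analyse through the irreducible decomposition of $G_\lambda$ as $\lambda\downarrow0$, and then transfer to the fixed--length ensemble $\Pnf{h}$ via the local--limit / equivalence--of--ensembles results of Section~\ref{Asec:renewal}.

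First, the renewal picture for $\lambda>0$. By Theorem~\ref{thm:attractive-morp}.\textbf{B}, $\tl$ is the support function of $\Kl\supseteq\Knot$; moreover $0\in{\rm int}\,\Knot$ (a consequence of Assumption~{\bf (A1)} via the elementary bound $\tau_0(\sfn)\geq c_\beta\norm{\sfn}{1}$ with $c_\beta>0$), so $\tau_0(\sfn)>0$ and hence Cram\'er's condition $\tl(\sfn)>0$ holds for \emph{every} $\lambda\geq0$. Under this condition Section~\ref{Asec:renewal} supplies an irreducible (cone--point) decomposition $G_\lambda=\sum_{k\geq 1}\Psi_\lambda^{*k}$ with $\Psi_\lambda$ supported in a cone about $\sfn$. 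Tilting $\Psi_\lambda$ by the support point $h_\lambda(\sfn)\in\pKl$ in direction $\sfn$ (with $h_0(\sfn)=h$) produces a renewal increment $\hat\Psi_\lambda$ of total mass one; being cone--supported, its mean $\bar{\sfv}(\lambda)$ satisfies $\bar{\sfv}(\lambda)\cdot\sfn>0$, so $\bar{\sfv}(\lambda)\neq 0$, and for $\lambda>0$ it has a finite mean length $\bar\ell(\lambda)$ and a finite exponential moment of its length.

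The crucial step --- and the one I expect to be the only genuine difficulty --- is the critical case: one must show that this structure survives at $\lambda=0$ \emph{with exponential tails}, i.e.\ that $\hat\Psi_0$ still has finite mean length $\bar\ell(0)<\infty$. The real danger is exactly the sub--ballistic mechanism: a priori $\bar\ell(\lambda)\uparrow\infty$ as $\lambda\downarrow0$, and then a single long excursion with a build--up of local time --- the scenario behind \eqref{eq:IN-apf} --- would dominate. Excluding this is the heart of \cite{IV-Critical}: a coarse--graining / surgery argument showing that in dimensions $d\geq2$ the entropy of ballistic cone--point configurations beats the cost of a spread--out one, the quantitative inputs being the finite range $R$ and the strict positivity $\phi_\beta(1)>0$ (Assumption~{\bf (A1)}). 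In $d=1$ no such argument is available, which is consistent with the restriction $d\geq2$ in the statement.

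Finally, granting that step, I would conclude as follows. At $\lambda=0$ and $h\in\pKnot$ the tilted path measure attached to \eqref{eq:E-conj} is a proper, non--degenerate renewal with i.i.d.\ increments of law $\hat\Psi_0$, finite mean length $\bar\ell(0)$ and drift $\bar{\sfv}(0)\neq 0$; in particular one recovers the Ornstein--Zernike asymptotics $G_0(r\sfn)=\Psi(r)\,{\rm e}^{-r\tau_0(\sfn)}$ with $\Psi(r)\cong r^{-(d-1)/2}$. Feeding this into the local--limit / equivalence--of--ensembles statements of Section~\ref{Asec:renewal} (extended to the critical case in \cite{IV-Critical}), the polymer decomposes, with $\Pnf{h}$--probability tending to one, into a renewal sequence of $\Theta(n)$ cone--confined increments of common law $\hat\Psi_0$, so that $\sfX(\gamma)\approx n\,\sfv$ with $\sfv:=\bar{\sfv}(0)/\bar\ell(0)\neq 0$. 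Both limits in \eqref{eq:L2-ballistic} then hold (any $\delta<\abs{\sfv}$ works), so the model is ballistic at every $h\in\pKnot$; combined with part~\textbf{D}, the ballistic/sub--ballistic transition is of first order. $\square$
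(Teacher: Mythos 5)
Your proposal addresses only Part~E, and even there it is a plan rather than a proof: the step you yourself single out as ``the only genuine difficulty'' --- that the irreducible (cone-point) structure survives at $\lambda=0$ with a finite mean length of the irreducible increment --- is exactly the entire content of the statement, and you resolve it by citing \cite{IV-Critical}. That is, in fairness, what the notes themselves do (the paper's ``proof'' of Part~E is a pointer to \cite{IV-Critical}, with the remark that reproducing the refined renormalization is beyond their scope), so your route for E coincides with the paper's; but as a self-contained argument it has a genuine gap at precisely the critical point, and Parts~A--D (sub-additivity and attractivity giving $\lambda$, the exponential decay of $G_0$ giving ${\rm int}(\Knot)\neq\emptyset$, the identification of $\tl$ as the support function of $\Kl$, and the LDP with $I_h$) are not touched at all, although they are part of the statement and are what the paper actually proves in this section.

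Two specific points in your sketch would not survive contact with the critical case. First, your expectation that the tilted increment $\hat\Psi_0$ retains \emph{exponential} tails at $\lambda=0$ contradicts Remark~\ref{rem:NotCritical}: at critical drifts the decay of $\sff(\sfx,n)$ in $n$ is only sub-exponential (only the decay in $\sfx$ remains exponential), so what one must extract is finiteness of the mean length (or suitable moments), not an exponential moment --- conflating the two hides where the work is. Second, the machinery of Subsection~\ref{ssec:BalPhaseA} that you propose to ``read at $\lambda=0$'' genuinely degenerates there: the hair control (Exercise~\ref{ex:L4-E2}) and the mass-gap argument in Theorem~\ref{thm:L3-RepAlphabets} both use the killing rate $\lambda>0$ (``the decay in $n$ comes for free as long as $\lambda>0$''), so a naive limit $\lambda\downarrow 0$ along $h_\lambda(\sfn)\to h\in\pKnot$ gives no uniform control of $\bar\ell(\lambda)$, which is exactly the sub-ballistic scenario you mention. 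Excluding it requires the modified coarse-graining of \cite{IV-Critical}; your plan names the obstacle correctly but supplies no mechanism for overcoming it beyond the citation.
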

 Proofs of Parts A-C  and of Part~D for sub-critical drifts ($h\in {\rm int}\lb  {\bf K}_{0} \rb $) 
 of 
Theorem~\ref{thm:attractive-morp} are based
on sub-additivity arguments.  Parts D (namely existence 
of limiting spatial extension $\sfv$ in \eqref{eq:L2-ballistic}  
 for super-critical drifts $h\not\in {\bf K}_{0} $) and E require a more refined 
multidimensional renewal analysis based on Ornstein-Zernike theory.
\smallskip 

\noindent 
{\bf Sub-additivity.} The following result is due to Hammersley~\cite{Ham62}:
\begin{prop}
\label{prop:L2-Hammersley-sabad}
Let $\lbr a_n\rbr$ and $\lbr b_n\rbr$ be two sequences such 
that:
\smallskip 

\noindent
(a) For all $m,  n$, $a_{n+m} \leq a_n + a_m + b_{n+m}$.

\noindent
(b)  The sequence $b_n$ is non-decreasing and 
\be 
\label{eq:L2-Hammersley-sabad}
\sum_{n}\frac{b_n}{n (n+1 )} <\infty .
\ee
Then, there exists the limit 
\be 
\label{eq:L2-subad}
\xi\df \lim_{n\to\infty}\frac{a_n}{n} \ \  \text{and}\ \ \frac{a_n}{n}\geq \xi 
+ \frac{b_n}{n} - 4 \sum_{k=2n}^\infty \frac{b_k}{k (k+1)}
\ee
\end{prop}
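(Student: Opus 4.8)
The plan is to mimic the classical Fekete subadditivity argument, but carrying along the error sequence $b_n$, and to extract the quantitative lower bound by iterating the subadditive inequality dyadically. First I would note that the relevant comparison is really with the modified sequence $\tilde a_n = a_n - C_n$ for an appropriate choice $C_n = \sum_{k \geq n} \frac{c\, b_k}{k(k+1)}$-type correction, chosen so that $\tilde a_{n+m} \leq \tilde a_n + \tilde a_m$ becomes genuinely subadditive; then Fekete gives $\xi = \lim \tilde a_n / n = \inf \tilde a_n/n$, and since $C_n \to 0$ (by the summability hypothesis (b)) we recover $\lim a_n/n = \xi$ as well. The monotonicity of $b_n$ in (b) is what lets the correction telescope correctly when we split $n+m$ into two pieces, because $b_{n+m}$ dominates both $b_n$ and $b_m$.

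More concretely, the key steps in order: (1) Fix $n$ and write an arbitrary large $N$ as $N = qn + s$ with $0 \leq s < n$; iterating (a) gives $a_N \leq q\, a_n + a_s + (\text{sum of } b\text{'s})$, where each applied $b$-term is of the form $b_{jn + s}$ for $j \leq q$, hence bounded using monotonicity. (2) Divide by $N$ and let $N \to \infty$ with $n$ fixed: the term $q a_n/N \to a_n/n$, the term $a_s/N \to 0$, and the $b$-contribution must be shown to vanish — this is where hypothesis \eqref{eq:L2-Hammersley-sabad} enters, since $\frac1N \sum_{j=1}^q b_{jn} \to 0$ follows from $b_k/k \to 0$ (itself a consequence of $\sum b_k/(k(k+1)) < \infty$ together with monotonicity of $b_k$, by a Cauchy-condensation/comparison estimate). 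This yields $\limsup_N a_N/N \leq a_n/n$ for every $n$, hence $\xi := \liminf_N a_N/N$ equals $\lim_N a_N/N$ and $\xi \leq a_n/n$ for all $n$. (3) For the sharp lower bound \eqref{eq:L2-subad}, I would instead split dyadically: apply (a) with $m = n$ to get $a_{2n} \leq 2 a_n + b_{2n}$, i.e. $\frac{a_{2n}}{2n} \leq \frac{a_n}{n} + \frac{b_{2n}}{2n}$; iterating, $\frac{a_{2^k n}}{2^k n} \leq \frac{a_n}{n} + \sum_{j=1}^{k} \frac{b_{2^j n}}{2^j n}$. Letting $k \to \infty$ gives $\xi \leq \frac{a_n}{n} + \sum_{j \geq 1} \frac{b_{2^j n}}{2^j n}$, and the remaining task is to bound the tail sum $\sum_{j\geq 1} \frac{b_{2^j n}}{2^j n}$ by $4\sum_{k = 2n}^\infty \frac{b_k}{k(k+1)}$, using that $b_k$ is non-decreasing so that $b_{2^j n} \leq b_k$ for all $k \in [2^{j-1} n, 2^j n)$ and comparing each dyadic term with the corresponding block of the series $\sum b_k/(k(k+1))$ (the block $[2^{j-1}n, 2^j n)$ contributes at least $\gtrsim \frac{b_{2^{j-1}n}}{2^{j}n}$ after estimating $\sum_{k} \frac1{k(k+1)}$ over that block as a telescoping sum $\approx \frac{1}{2^{j-1}n} - \frac{1}{2^j n} = \frac{1}{2^j n}$). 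Rearranging the constant yields the stated factor $4$. Combining (2), i.e. $\xi \leq a_n/n$, with a lower bound for $a_n/n$ requires reversing: one more application of (a) plus dyadic iteration gives $\frac{a_n}{n} \geq \xi + \frac{b_n}{n} - 4\sum_{k=2n}^\infty \frac{b_k}{k(k+1)}$ after carefully tracking which $b$-terms are added versus subtracted.

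The main obstacle I anticipate is the bookkeeping in step (3): getting the constant exactly $4$ and the index range exactly $k \geq 2n$ requires being careful about whether the first dyadic step contributes $b_{2n}$ or $b_n$, and about the direction of the monotonicity comparison in each dyadic block (one wants $b_{2^j n} \le b_k$, so $b_{2^j n}$ should be compared to the block \emph{below} it, $[2^{j-1}n, 2^j n)$, not above). A clean way to organize this is to prove first the crude statement $\xi = \lim a_n/n$ exists and $\xi \le a_n/n$, then prove the reverse inequality $a_n/n - \xi \le \sum_{j\ge 1} b_{2^j n}/(2^j n)$ by the telescoping dyadic estimate, and finally convert the dyadic sum into the series $\sum_{k \ge 2n} b_k/(k(k+1))$ as a separate self-contained lemma using only monotonicity of $b$. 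The sign $+ b_n/n$ in \eqref{eq:L2-subad} comes from the first step $a_n/n \ge (a_{n} )/n$ being replaced, via $a_{2n} \le 2a_n + b_{2n}$ read as a lower bound $a_n \ge \tfrac12 a_{2n} - \tfrac12 b_{2n}$ and recursed; I would double-check that particular rearrangement, as it is the one place where an off-by-a-factor error is easy to make.
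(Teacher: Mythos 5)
Your step (3) --- doubling $a_{2n}\le 2a_n+b_{2n}$, iterating to $a_{2^kn}\le 2^k a_n+\sum_{\ell=0}^{k-1}2^{\ell}b_{2^{k-\ell}n}$, and converting the dyadic sum $\sum_{j\ge1} b_{2^jn}/(2^jn)$ into the tail $\sum_{k\ge 2n}b_k/(k(k+1))$ by block comparison --- is indeed the engine of the paper's argument (one small slip: the block matched to $b_{2^jn}$ must lie \emph{above} $2^jn$, namely $[2^jn,2^{j+1}n)$, so that $b_k\ge b_{2^jn}$ there; this is exactly what produces the range $k\ge 2n$). The genuine gap is step (2). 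Iterating (a) along the arithmetic progression $N=qn+s$ leaves the error $\frac1N\sum_{j=1}^{q}b_{jn+s}$, and this does \emph{not} tend to $0$: the hypothesis \eqref{eq:L2-Hammersley-sabad} only gives $b_k/k\to0$ (indeed $b_m/m\le\sum_{k\ge m}b_k/(k(k+1))$), which is far too weak to kill a Ces\`{a}ro average of $b$ along multiples of $n$. For $b_k\equiv c$ the average converges to $c/n\neq0$; for $b_k=k/(\log k)^2$, which satisfies (b), it diverges. Worse, the conclusion you want from that step, $\xi\le a_n/n$ for all $n$, is simply false under (a)--(b): take $b_n\equiv c>0$ and $a_n\equiv -c$; then (a) holds with equality, $\xi=0$, yet $a_n/n=-c/n<\xi$ for every $n$. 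The whole point of \eqref{eq:L2-subad} is that $a_n/n$ may dip below $\xi$, by at most the stated correction, so no proof can begin by establishing $\xi\le a_n/n$. The same objection defeats your preliminary idea of a vanishing correction $C_n=\sum_{k\ge n}c\,b_k/(k(k+1))$ making $a_n-C_n$ subadditive: since $C_n\to0$, the defect $C_n+C_m-C_{n+m}$ is $o(1)$ and cannot absorb $b_{n+m}$, which may be unbounded; a workable correction has to be of size $n\sum_{k\gtrsim n}b_k/(k(k+1))$, not the bare tail.

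Because step (2) fails, both the existence of $\lim a_n/n$ and the lower bound must be extracted from the doubling estimate alone, and there your sketch is silent on the real difficulty that the paper's proof spends its effort on: the dyadic sequence $N_k=2^kn$ has unbounded gaps, so $\limsup_k a_{2^kn}/(2^kn)\le a_n/n+2\sum_{k\ge2n}b_k/(k(k+1))$ does not yet control $\limsup_m a_m/m$. The paper closes this in two moves you would need to supply: first, $b_m/m\le\sum_{k\ge m}b_k/(k(k+1))\to0$ is used to show $\limsup_{m\in\bbN}a_m/m=\limsup_{m\in n\bbN}a_m/m$ for each fixed $n$; second, the doubling scheme is rerun along arbitrary binary expansions $N=\sum_j r_j2^jn$ with $r_j\in\{0,1\}$, and it is precisely this step that degrades the constant from $2$ to $4$, giving $\limsup_m a_m/m\le a_n/n+4\sum_{k\ge2n}b_k/(k(k+1))$ for every $n$. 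Letting $n\to\infty$ along a subsequence realizing $\liminf_n a_n/n$ then yields existence of $\xi$ together with the quantitative bound (as for the $+b_n/n$ term you worry about, note it is comparable to the tail correction, since $b_n/n\le2\sum_{k\ge2n}b_k/(k(k+1))$). As written, your plan never reaches these steps, and the step it relies on instead is invalid.
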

Note that $\xi\df \liminf_{n\to\infty}\frac{a_n}{n}$ is always defined. 
The usual sub-additivity statement is for $b_n\equiv 0$. 
In this case 
$\xi = \inf_n\frac{a_n}{n}$. 
The proof (in the case $b_n\equiv 0$)  is 
straightforward: 
Indeed, 
 iterating on the sub-additive property, we infer that for any $n\geq m$ and any $k$ 
(and $N = kn +m$),  
\[
\frac{a_N}{N}  \leq \frac{k a_n}{kn +m} + \frac{a_m}{kn +m}
\]
Therefore, for any $n$ fixed
\[
 \limsup_{N\to\infty} \frac{a_N}{N} \leq \frac{a_n}{n } .
\]
\begin{remark}
\label{rem:L2-subad}
Note that \eqref{eq:L2-subad} implies that 
$\xi <\infty$, however the case $\xi = -\infty$ is not excluded by the argument. 
Note also that even if $\xi > -\infty$, no upper bounds (apart from $\lim\frac{a_n}{n } =\xi$) on 
$\frac{a_n}{n }$ 
are claimed.  In other words the sub-additivity argument above does not give information 
on the speed of convergence.
\end{remark}
\smallskip 

\noindent
{\bf Attractivity of the interaction.} 
The interaction $\phi_\beta$ in \eqref{eq:IN-phi-beta} is  attractive 
in the sense that 
\be 
\label{eq:attractive}
\phi_\beta (\ell + m ) \leq \phi_\beta (\ell ) + \phi_\beta (m ) .
\ee
Indeed, \eqref{eq:attractive}  follows from 
positive association of probability measures on $\bbR$. Namely, if $\sfX\in\bbR$ is a 
random variable, 
and $f,g$ two bounded  functions on $\bbR$ which are either both non-increasing or 
both non-decreasing, 
then 
\be 
\label{eq:L2-FKG}
 \bbE f(\sfX )g (\sfX ) \geq \bbE f(\sfX ) \bbE g(\sfX ) .
\ee
The universal validity of the latter inequality  is related 
to total ordering of $\bbR$. If $\sfY$ is an i.i.d. copy of $\sfX$, then 
\[
 \lb f (\sfX ) - f (\sfY )\rb \lb g (\sfX ) - g (\sfY )\rb  \geq 0.
\]
 Taking expectation we deduce \eqref{eq:L2-FKG}. 
\begin{ex}
\label{ex:L2.1}
 Show that if $\phi$ is attractive, then 
that for any $h\in\bbR^d$ and for any $\lambda$ and  $\sfx , \sfy\in\Zd$
\be 
\label{eq:L2-attractive-sup}
Z_{n+m} (h) \geq  Z_n (h )Z_m (h). 
\ee
Note that due to a possible over-counting such line of reasoning does not imply 
that $G_\lambda (\sfx +\sfy ) \geq G_\lambda (\sfx )G_\lambda (\sfy )$. 
\end{ex}
{\bf Part~A of Theorem~\ref{thm:attractive-morp}.} 
  Since the underlying random walk has finite range $R$, 
 $\Zn (h ) \leq {\rm e}^{R n\abs{h}}$. 
 On the other hand, since we assumed that $\Pd (\sfe_1 ) >1$, 
 \[
  \Zn (h ) \geq \lb {\rm e}^{h\cdot \sfe_1 - \phi_\beta (1)}\Pd (\sfe_1  >1 )\rb^n .
 \]
 Consequently, 
 $\lambda_n (h )\df \frac{1}{n}\log \Zn (\cdot )$
 is a sequence of convex (by H\"{o}lder's inequality) locally uniformly 
 bounded 
 functions.
 By Jensen's inequality $\lambda_n (0 ) = \min\lambda_n$. 
 For each $h$ fixed, $\log \Zn ( h )$ is, 
 by \eqref{eq:L2-attractive-sup} super-additive in $n$. By 
 Proposition~\ref{prop:L2-Hammersley-sabad}, $\lambda (h )= \lim_{n\to\infty}\lambda _n (h)$
 exists, and, by the above, convex and finite on $\bbR^d$. 
 
 Let us check that $\lambda (0 )=\min_h \lambda (h )= 0$. 
Assumption  {\bf (A1)} implies that 
\be
\label{eq:L4-phi-1}
\text{$\phi_\beta$ is monotone non-decreasing,\   
$\phi_\beta (1) = \min_\ell\phi_\beta  (\ell ) >0$}, 
\ee
and that 
\be 
\label{eq:TH-sublin}
\lim_{\ell\to\infty}\frac{\phi_\beta  (\ell )}{\ell} = 0. 
\ee
Next we rely on the following  well-known estimate 
for the underlying finite  range random walk: 
\smallskip 

\noindent
{\bf Estimate~1}. There exists $c = c (\Pd ) <\infty$ and $L_0 <\infty$, 
such that for any $L >L_0$, 
\be 
\label{eq:L4-EstimateRW-1}
\Pd\lb \max_{\ell\leq n}\abs{\sfX (\ell )} \leq L\rb \geqs {\rm e}^{-c\frac{n}{L^2}} .
\ee
uniformly in $n\in \bbN$.

\noindent
Let $\Lambda_L = \lbr \sfx ~:~ \abs{\sfx}_1\leq L\rbr$. Then, 
\[
 Z_n (h ) \geq {\rm e}^{- \abs{h}L} \sum_{\gamma\subset B_L} 
 \Wd (\gamma )\1_{\lbr \abs{\gamma}= n\rbr} .
\]
If $\abs{\gamma} = n$ and $\gamma\subset B_L$, then 
\[ 
 \Phi_\beta (\gamma ) \leq \lb 2L +1\rb^d \max_{\ell\leq n}\phi_\beta  (\ell ) .
\]
Consequently, in view of \eqref{eq:L4-EstimateRW-1}, 
\be 
\label{eq:L4-Zn-lbR}
\liminf_{n\to\infty} \frac{1}{n}\log Z_n (h ) \geq -\frac{c}{L^2} - 
\lb 2L +1\rb^d \liminf_{n\to\infty}
\frac{\max_{\ell\leq n}\phi (\ell )}{n} ,
\ee
for any $L >L_0$. 
 By \eqref{eq:TH-sublin}, 
\[
 \lim_{n\to\infty} \frac{\max_{\ell\leq n}\phi_\beta  (\ell )}{n} = 0. 
\]
It follows that $\lambda (h )\geq 0$ for any $h\in\bbR$. 
On the other hand, since the interaction potential
$\phi_\beta$ is non-negative $Z_n =Z_n (0) \leq 1$, and, 
consequently, $\lambda (0)\leq 0$.  Hence $\lambda (0) = 0 =\min_h \lambda (h )$
as claimed. 
\smallskip 

Since $\lambda\geq 0$, the set $\Knot$ in \eqref{eq:L2-critical-a} is convex. In 
order to check that it contains an open neighbourhood of the origin it would be enough
to show that there exists $\delta >0$, such that
\be 
\label{eq:TH-serriesKnot}
\sum_n \Zn (h ) = \sum_\sfx {\rm e}^{h\cdot \sfx} G_0 (\sfx ) <\infty , 
\ee
whenever, $\abs{h}<\delta$.  The convergence in \eqref{eq:TH-serriesKnot} 
 will follow as soon as we shall show that
the critical two-point function $G_0 (\sfx )$ 
in \eqref{eq:Glx} is exponentially decaying in $\sfx$. We continue 
to employ notation $\calP_\sfx$ for paths $\gamma$ with $\sfX (\gamma ) =\sfx$ (and, of course, 
with $\Pd (\gamma ) >0$). Consider the disjoint decomposition
\[
 \calP_\sfx = \bigcup_{k\geq 2}\calP_\sfx^{(k )} , 
\]
where (see Figure~\ref{fig:rhok}), 
\begin{figure}[t]
\sidecaption
\scalebox{.3}{\input{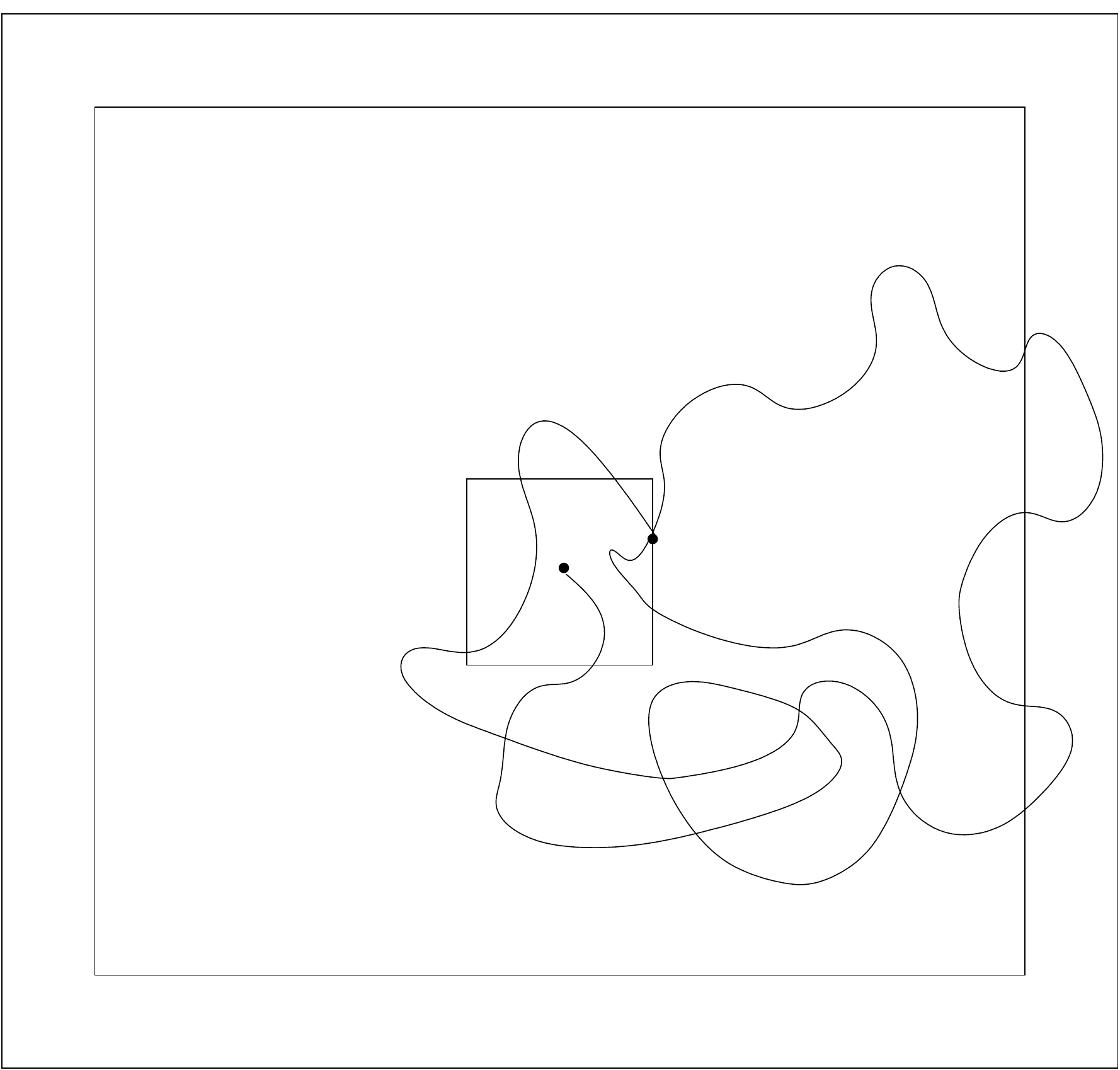_t}}
%
%
\caption{Paths $\gamma\in \calP_\sfx^{(k )}$.}
\label{fig:rhok}       
\end{figure}
\[
 \calP_\sfx^{(k )} = \lbr \gamma\in\calP_\sfx ~:~\gamma\subset\Lambda_{k\abs{\sfx}_1}
 \rbr\setminus 
\lbr \gamma\in\calP_\sfx ~:~\gamma\subset\Lambda_{(k-1)\abs{\sfx}_1}\rbr .
\]
If $\gamma\in \calP_\sfx^{(k )}$, then since the range $R$ of 
the underlying random walk is finite, 
\[
 \Phi_\beta  (\gamma )\geq \frac{(k-1)\abs{\sfx}_1}{R} \inf_{\ell}\phi_\beta (\ell ) = 
\frac{(k-1)\abs{\sfx}_1}{R}
 \phi_\beta (1)  .
\]
As a result, 
\[
 G_0^{(k )} (\sfx ) \df \Wd \lb \calP_\sfx^{(k )} \rb 
 \leq {\rm e}^{ -(k-1)\abs{x}_1 \phi_\beta (1)/R} \sum_{\gamma\in \calP_\sfx^{(k )}}\Pd (\gamma ).
\]
At this stage we shall rely on another well known estimate for short range zero-mean 
random walks: 

\noindent
{\bf Estimate~2}. Let $\sigma_0$ be the first hitting time of $0$. Then, 
\be 
\label{eq:L4-EstimateRW-2} 
\Ed\lb \sum_{\ell = 0}^{\sigma_0} \1_{\lbr \sfX (\ell )=\sfx\rbr}~\big|\sfX (0 )=\sfx\rb 
\leqs A_d (\abs{\sfx} )\df  
\begin{cases}
 \abs{\sfx},\quad & d=1\\
\log\abs{\sfx},& d=2\\
1,& d\geq 3 
\end{cases}
\ee
uniformly in $\sfx\in \bbZ^d$. 
\smallskip 
By a crude application of  \eqref{eq:L4-EstimateRW-2}, 
\be 
\label{eq:Glk-bound}
\sum_{\gamma\in \calP_\sfx^{(k )}}\Pd (\gamma ) \leq A_d ( k\abs{\sfx}_1 ) \ 
\Rightarrow \ G_0^{(k )} (\sfx ) \leq 
A_d (k \abs{\sfx}_1 ) 
 {\rm e}^{-\frac{ \phi_\beta (1) }{R} (k-1 ) 
 \abs{\sfx}_1 } . 
\ee
Therefore, 
\[
 G_0 (\sfx ) \leq \sum_{k\geq 1} A_d (( k+1) \abs{\sfx}_1 ) 
 {\rm e}^{-\frac{ \phi_\beta (1) }{R} k 
 \abs{\sfx}_1 } , 
\]
and \eqref{eq:TH-serriesKnot} follows. 
\smallskip 

\noindent 
{\bf Part~B of Theorem~\ref{thm:attractive-morp}.} As we have already noted, 
due to a possible over-counting it is not obvious  that $G_\lambda (\sfx +\sfy )\geq 
G_\lambda (\sfx )G_\lambda (\sfy )$. However, 
in view of the attractivity \eqref{eq:attractive} of $\phi_\beta$, 
the latter super-multiplicativity property
holds for the following first-hitting time version $H_\lambda$ of $G_\lambda$:
\be 
\label{eq:TH-Hlambda}
H_\lambda (\sfx ) = \sum_{\gamma\in\calP_\sfx} {\rm e}^{-\lambda \abs{\gamma}}
\Wd (\gamma )\1_{\ell_\sfx (\gamma )=1} .
\ee
In particular, the limit 
\be
\label{eq:TH-tl-H}
\tl (\sfx ) = -\lim_{r\to\infty}\frac{1}{r}\log H_\lambda\lb {\lfloor r\sfx\rfloor}\rb
\ee
exists, and, by Proposition~\ref{prop:L2-Hammersley-sabad}, 
is a non-negative, convex, homogeneous of order one function on $\bbZ^d$. Furthermore, 
$H_\lambda (\sfx )\leqs {\rm e}^{-\tl (\sfx )}$. 
\begin{ex}
 \label{ex:prop-H-tl}
 Prove the above statements. 
\end{ex}
We claim that the second of \eqref{eq:L2-tl} holds with the very same $\tl$. Clearly, 
$H_\lambda (\sfx )\leq G_\lambda (\sfx )$. The proof, therefore, boils down to a 
derivation of a complementary upper bound, which would render negligible correction 
on the logarithmic scale. We shall consider two cases: Fix any $\lambda_0 >0$.
\smallskip 

\noindent
\opt{1} $\lambda > \lambda_0$. Then for any $\sfx$,  
\be 
\label{eq:TH-Case1-Gl}
G_\lambda (\sfx ) \leq H_\lambda (\sfx ) G_\lambda (0) 
\leq H_\lambda (\sfx ) \sum_\sfy G_\lambda (\sfy )
\leq H_\lambda (\sfx )\frac{1}{1- {\rm e}^{-\lambda_0}} .
\ee
\smallskip 

\noindent
\opt{2} $\lambda\leq \lambda_0$. Evidently $\tl$ is non-decreasing in $\lambda$. 
Define: 
\be
\label{eq:TH-knot}
k_0 = 3\frac{R}{\phi_\beta (1 )} \max_{\sfy}\frac{\tau_{\lambda_0} (\sfy )}{\abs{\sfy}_1}. 
\ee
By \eqref{eq:Glk-bound} , 
\[
 \sum_{k\geq k_0} G_\lambda^{(k)} (\sfx ) \leq \sum_{k\geq k_0} G_0^{(k)} 
 \leq \sum_{k\geq k_0} A_d (k \abs{\sfx}_1 ) 
 {\rm e}^{-\frac{ \phi_\beta (1) }{R} (k-1 )\abs{\sfx}_1 } \leq {\rm e}^{-2\tl (\sfx )}
\]
is exponentially negligible with respect to ${\rm e}^{-\tl (\sfx )}$. Consequently, 
\be 
\label{eq:TH-GlHl-bound} 
G_\lambda (\sfx ) \leqs 
\sum_{k < k_0} G_\lambda^{(k)} (\sfx ) \leq H_\lambda (\sfx )  A_d (k_0 \abs{\sfx}_1 ) 
\leq {\rm e}^{-\tl (\sfx )}A_d (k_0 \abs{\sfx}_1 ) , 
\ee
and the second of \eqref{eq:L2-tl} indeed follows. 
\smallskip 

\noindent
{\bf $\tl$ is the support function of $\Kl$.} In order to see this notice that
 $\Kl = \lbr h~:~\lambda (h )\leq \lambda\rbr$ is the closure of the 
 domain of convergence
 \be 
 \label{eq:TH-serries}
  h \mapsto \sum_n {\rm e}^{-\lambda n}\Zn (h ) = \sum_{\sfx } {\rm e}^{h\cdot\sfx} 
  G_\lambda (\sfx ) . 
 \ee
 Consider 
 \be 
 \label{eq:TH-alpha-l}
 \alpha_\lambda (h ) = \max\lbr h\cdot \sfx ~:~ \tl (\sfx )\leq 1\rbr .
 \ee
The series in \eqref{eq:TH-serries} diverges if $\alpha_\lambda (h )> 1$, whereas, 
$h\in {\rm int}(\Kl )$ if $\alpha_\lambda (h ) <1$. Hence, 
\be 
\label{eq:TH-support} 
\partial\Kl = \lbr h~:~ \alpha_\lambda (h ) = 1\rbr\quad{\rm or}\quad 
\tl (\sfx ) = \max_{h\in\partial \Kl} h\cdot\sfx .
\ee

\noindent 
{\bf Part  C of Theorem~\ref{thm:attractive-morp}.} 
To be precise large deviations are claimed for the distribution of end-points, 
which we, with a slight abuse of notation,  proceed to call $\Pnf{h} (\sfx )$:
\be 
\label{eq:TH-Pnfx}
\Pnf{h} (\sfx ) = \sum_{\sfx (\gamma ) = \sfx }\Pnf{h} (\gamma ) \df  \frac{ {\rm e}^{h\cdot\sfx}
\Zn (\sfx  )}{\Zn (h )} . 
\ee
Let $h\in\bbR^d$. The limiting log-moment generation function under 
the sequence of measures $\lbr \Pnf{h}\rbr$
is 
\[
 \lim_{n\to\infty}\frac{1}{n}\log \Enf{h}\lb {\rm e}^{f\cdot \sfX}\rb = 
 \lambda (h+f  )-\lambda (h ). 
\]
The function $I_h$ in \eqref{eq:L3-LD-rate-fh} is just the Legendre-Fenchel transform of
the above. Since the underlying random walk has bounded range, exponential tightness 
is automatically ensured. By Theorem~\ref{thm:LD-convex} and Excercise~\ref{ex:LD.4} 
of the Appendix, upper large deviation bounds hold with $I_h$. 

We still need a matching lower bound. Let us sketch the proof which  relies on sub-additivity and 
Lemma~\ref{lem:LD-IJ} of Appendix. 
Due to a possible 
over-counting the function $n\to\log \Zn (\lfloor n\sfx\rfloor  )$ is not necessarily 
super-additive. However, its first hitting time version (see \eqref{eq:TH-FamiliesPolymers} for the 
definition of $\calP_{\sfx , n}$);  
\be 
\label{eq:TH-Zhat}
\hat\Zn (\lfloor n\sfx\rfloor  ) = \sum_{\gamma\in\calP_{\sfx ,n}} \Wd (\gamma )
\1_{\lbr \ell_\gamma (\sfx )= 1\rbr}, 
\ee
is super-additive. 
By Proposition~\ref{prop:L2-Hammersley-sabad} the limit
\be 
\label{eq:TH-J-funcZ}
J (\sfx ) = - \lim_{n\to\infty} \frac{1}{n} \log \hat\Zn (\lfloor n\sfx\rfloor  )\ 
{\rm and}\ \hat\Zn (\lfloor n\sfx\rfloor  )\leq c {\rm e}^{-n J (\sfx )}
\ee
exists and is a convex non-negative function on $\bbR^d$. Some care is needed to make this statement
 rigorous. Indeed, $\hat\Zn (\sfx )= 0$ whenever $\sfx$ does not belong to the 
 (bounded) range of the underlying $n$-step walk, and  what happens at boundary 
 points should be explored separately. We shall 
 ignore this issue  here. 
 
A slight modification of arguments leading to \eqref{eq:L2-lambda0-a} imply that $J (0)= 0$. 
By convexity this means that for any $\alpha\in (0,1)$ and any $\sfx$,  
\be 
\label{eq:TH-J-bound}
J(\sfx )\leq \alpha J\lb\frac{\sfx}{\alpha}\rb .
\ee
Since, $\phi_\beta$ is non-negative, 
\be 
\label{eq:TH-Z-Zhat-bound}
\hat\Zn (\lfloor n\sfx\rfloor  ) \leq \Zn (\lfloor n\sfx\rfloor  ) \leq \sum_{m=1}^{n} 
\hat Z_m (\lfloor n\sfx\rfloor  ) \leq 
\sum_{m=1}^{n}  c {\rm e}^{-m J (\frac{n}{m}\sfx )} \leq cn {\rm e}^{-n J (\sfx )} , 
\ee
where we used \eqref{eq:TH-J-funcZ} and \eqref{eq:TH-J-bound} on the last two steps. 
We conclude: 
\be 
\label{eq:TH-J-func}
J (\sfx ) = - \lim_{n\to\infty} \frac{1}{n} \log \Zn (\lfloor n\sfx\rfloor  )\ 
{\rm and}\ \Zn (\lfloor n\sfx\rfloor  )\leq c {\rm e}^{-n J (\sfx ) +\frac{\log n}{n}}
\ee
 Now, \eqref{eq:TH-J-func} means that \eqref{eq:LD-LB} and 
 \eqref{eq:LD-UB-x} are satisfied, the latter uniformly in $\sfx$. Since the range of the underlying 
 random walk is bounded, \eqref{eq:J-growth}, and in particular exponential tightness, 
 is trivially satisfied as well. Hence, by Lemma~\ref{lem:LD-IJ},  $I = J$. Since 
 \[
  -\frac{1}{n}\log \Pnf{h} (\lfloor n\sfx\rfloor ) = - \frac{1}{n} \log \Zn (\lfloor n\sfx\rfloor  ) 
  - \lb h\cdot\frac{\lfloor n\sfx\rfloor}{n} - \frac{\log\Zn (h )}{n}\rb , 
 \]
claim {\bf C} of Theorem~\ref{thm:attractive-morp} follows as well. 
\smallskip 

\noindent 
{\bf Part  D of Theorem~\ref{thm:attractive-morp} and 
limiting spatial extension.} 
For drifts   $h\in  {\rm int}\lb \Knot \rb$, 
\[
 I_h (\sfv ) = \sup_g \lbr g\cdot\sfv - \lb \lambda (h+g ) - \lambda (h )\rb\rbr \geq \abs{\sfv} 
{\rm dist}\lb h , \pKnot\rb > 0 , 
\]
for any $\sfv\neq 0$. Hence, if 
$h\in  {\rm int}\lb \Knot\rb $, then 
 the model is sub-ballistic in the sense of \eqref{eq:L2-subballistic}.

On the other hand, if $h\not\in\Knot$ or, equivalently, if $\lambda (h )>0$, then
\[
 I_h (0) = \sup_{g}\lbr -\lambda (g+h ) +\lambda (h )\rbr = \lambda (h ) >0. 
\]
This is a rough expression of ballisticity. 
It implies that the polymer is pulled away from the origin on the 
linear scale, but  it does not imply that the limit in \eqref{eq:L2-ballistic} exists.

More precisely, if  $\lambda$ is differentiable at $h\not\in \Knot$, 
then \eqref{eq:L2-ballistic} holds with $\sfv = \nabla\lambda (h )$. Indeed in the 
latter case
$I_h$ is strictly convex at $\sfv$ and, consequently $I_h (\sfv )= 0$ is the unique 
minimum. Furthermore, in such a case, the following law of large numbers holds: For 
any $\epsilon >0$, 
\be 
\label{eq:TH-LLN}
\sum_n \Pnf{h }\lb \Big| \frac{\sfX}{n} - \sfv\Big| \geq \epsilon\rb < \infty , 
\ee
and the series converge exponentially fast. 

However, the above sub-additivity based thermodynamics of annealed polymers does not 
imply that the sub-differential $\partial \lambda (h ) \df\calM_h = \lbr \sfv : I_h (\sfv )= 0 \rbr$
is always  a singleton. The general form of \eqref{eq:TH-LLN} is  
\be 
\label{eq:TH-LLN-Mh}
\sum_n \Pnf{h }\lb \min_{\sfv\in\calM_h}\Big| \frac{\sfX}{n} - \sfv\Big| \geq \epsilon\rb < \infty .  
\ee
Therefore, in general,  large deviations 
(Part  {\bf C} of Theorem~\ref{thm:attractive-morp})  imply neither existence of the 
limit in \eqref{eq:L2-ballistic}, nor a LLN. The set $\calM_h$ could be characterized as follows
\cite{Flury-LD,IV-Erwin}: 
\begin{lem}
 \label{lem:TH-Mhset}
 For any $h\not\in\Knot$ the set $\calM_h$ satisfies: Set $\mu = \lambda (h ) >0$. Then, 
 \be 
 \label{eq:TH-Mhset}
 \sfv\in\calM_h \Longleftrightarrow 
 \begin{cases}
  &\tau_\mu  (\sfv ) = h\cdot\sfv\\
  &\frac{\dd^-}{\dd\lambda}\Big|_{\lambda = \mu }\tl (\sfv )\leq 1\leq 
  \frac{\dd^+}{\dd\lambda}\Big|_{\lambda = \mu }\tl (\sfv ) 
 \end{cases}
\ee
\end{lem}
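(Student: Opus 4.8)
The plan is to extract both conditions from the single equality $I(\sfv)=h\cdot\sfv-\lambda(h)$ — which, by the definition \eqref{eq:L3-LD-rate-fh} of $I_h$, merely restates $\sfv\in\calM_h$ — by playing the two representations of $I$ in \eqref{eq:L3-LD-rate-f} against one another. Throughout I would write $\mu=\lambda(h)$, strictly positive precisely because $h\notin\Knot$. The three ingredients are: (i) the geometric formula $I(\sfv)=\sup_{\lambda\ge 0}\{\tl(\sfv)-\lambda\}$ from \eqref{eq:L3-LD-rate-f}; (ii) Part~B of Theorem~\ref{thm:attractive-morp}, by which $\tl$ is the support function of $\bfK_\lambda=\{h':\lambda(h')\le\lambda\}$, so that $h'\cdot\sfv\le\tau_\mu(\sfv)$ for every $h'\in\bfK_\mu$; and (iii) the concavity of $\lambda\mapsto\tl(\sfv)$ for each fixed $\sfv$. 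Ingredient (iii) is not recorded above and would need a short separate argument: it follows either from $\bfK_\lambda$ being the sublevel sets of the convex function $\lambda(\cdot)$ (Lagrangian duality expresses $\sup\{h'\cdot\sfv:\lambda(h')\le\lambda\}$ as an infimum of functions affine in the level $\lambda$), or directly from $-\log G_\lambda(\sfx)$ being concave in $\lambda$, since it is minus the logarithm of the Laplace transform, in the step-count variable, of the non-negative measure $n\mapsto\sum_{\gamma\in\calP_{\sfx,n}}\Wd(\gamma)$, and $\tl(\sfx)$ is a pointwise limit of such functions.

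For necessity I would argue: if $\sfv\in\calM_h$, i.e.\ $I(\sfv)=h\cdot\sfv-\mu$, then evaluating the supremum in (i) at $\lambda=\mu$ gives $I(\sfv)\ge\tau_\mu(\sfv)-\mu$, hence $h\cdot\sfv\ge\tau_\mu(\sfv)$; but $h\in\bfK_\mu$ since $\lambda(h)=\mu$, so (ii) furnishes the reverse inequality, forcing $\tau_\mu(\sfv)=h\cdot\sfv$ — the first condition (geometrically, $\sfv$ is an outer normal to $\bfK_\mu$ at the boundary point $h$). Feeding this back in, $I(\sfv)=\tau_\mu(\sfv)-\mu$, so the supremum in (i) is attained at $\lambda=\mu$; by the concavity (iii) of $\lambda\mapsto\tl(\sfv)-\lambda$ and the fact that $\mu$ lies in the interior of $[0,\infty)$, this attainment is equivalent to the first-order optimality condition $1\in\partial_\lambda\tl(\sfv)\big|_{\lambda=\mu}$, which is the condition on the second line of \eqref{eq:TH-Mhset}.

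For sufficiency I would run this in reverse: assuming both conditions, the second says $1\in\partial_\lambda\tl(\sfv)\big|_{\lambda=\mu}$, so by concavity (iii) the point $\mu$ maximises $\lambda\mapsto\tl(\sfv)-\lambda$, whence $I(\sfv)=\tau_\mu(\sfv)-\mu$ by (i); the first condition replaces $\tau_\mu(\sfv)$ by $h\cdot\sfv$, giving $I(\sfv)=h\cdot\sfv-\mu=h\cdot\sfv-\lambda(h)$, i.e.\ $I_h(\sfv)=0$, so $\sfv\in\calM_h$.

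The only substantial input is (iii); granting it, the rest is the elementary convex-duality bookkeeping above, and that concavity is the one real point I would expect to have to verify carefully. Two further items would need explicit care in a full write-up: the hypothesis $\mu=\lambda(h)>0$ is exactly what keeps the optimal $\lambda$ off the endpoint $0$, so that the optimality condition is a genuine two-sided derivative condition rather than a one-sided inequality at the boundary; and the argument uses the \emph{exact} support-function identity \eqref{eq:TH-support} for $\tl$, not merely its validity up to corrections on the logarithmic scale.
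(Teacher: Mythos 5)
Your argument is essentially the paper's own: you rewrite $\sfv\in\calM_h$ via the two representations of $I$ in \eqref{eq:L3-LD-rate-f}--\eqref{eq:L3-LD-rate-fh}, test the supremum at $\lambda=\mu$ and invoke the support-function identity \eqref{eq:TH-support} with $h\in\partial{\bf K}_\mu$ to force $\tau_\mu(\sfv)=h\cdot\sfv$, and then use concavity of $\lambda\mapsto\tl(\sfv)$ to translate attainment of the supremum at the interior point $\mu$ into the one-sided derivative condition. Your added justification of that concavity (log-convexity in $\lambda$ of the Laplace-transform structure of $G_\lambda$, or sublevel-set duality), which the paper merely asserts, and your explicit converse are welcome but do not change the route.
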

\begin{proof}
 By \eqref{eq:L3-LD-rate-fh}, 
 \[
  \sfv\in \calM_h\Longleftrightarrow 
  \sup_{\lambda}\lb \tl (\sfv ) - \lambda )\rb  + \lb \mu - h\cdot\sfv \rb =0 .
 \]
The choice $\lambda =\mu$ implies that $\tl (\sfv )\leq h\cdot\sfv$. Since $h\in\partial{\bf K}_\mu$, 
the first of \eqref{eq:TH-Mhset} follows by \eqref{eq:TH-support}. As a result, 
\be 
\label{eq:TH-concave-tl}
\tl (\sfv ) - \tau_\mu (\sfv ) \leq \lambda -\mu ,  
\ee
for any $\lambda$. Since the function $\lambda\to\tl (\sfv )$ is concave, left and right derivatives are
well defined, and the second of \eqref{eq:TH-Mhset} follows from \eqref{eq:TH-concave-tl}. 
\end{proof}

The differentiability (and even analyticity) of $\lambda$ at 
super-critical drifts $h\not\in \Knot$ and, in particular, 
the existence of the limit in  \eqref{eq:L2-ballistic} 
and the LLN \eqref{eq:TH-LLN}, 
is established in 
Subsection~\ref{ssec:BalPhaseA} 
as a consequence of much sharper asymptotic results based on analysis of renewal
structure of ballistic polymers. 
\smallskip 

\noindent 
{\bf Part  E of Theorem~\ref{thm:attractive-morp}.} 
Finally, $I_h (0) = 0$ whenever $h\in\pKnot$, which sheds little light on 
ballistic  properties
of the model at critical drifts. The critical case was worked out in
\cite{IV-Critical} via refinement of the renormalization construction of 
the Ornstein-Zernike theory (see Subsection~\ref{ssec:BalPhaseA}), and 
it is beyond the scope of these notes to 
reproduce the corresponding arguments here. 
\smallskip

\subsection{Thermodynamics of quenched polymers.}
\label{sub:TH-quenched}
The underlying random walk imposes a directed graph structure on $\bbZ^d$. Let us
say that $\sfy$ is a neighbour of $\sfx$; $\sfx \leadsto\sfy$ if $\Pd (\sfy -\sfx )>0$. 
Because of \eqref{eq:L2-sfe} and Assumption {\bf (A.2)} there is a unique 
infinite component ${\rm Cl}_\infty$ of $\lbr \sfx :\sfVo_\sfx <\infty\rbr$. Clearly, 
non-trivial thermodynamic limits may exist only if $0\in {\rm Cl}_\infty$. Furthermore, 
if $\calE \lb \sfVo \rb =\infty$, then 
$\sum_r \calQ\lb V^\omega_{\lfloor r\sfx\rfloor  }> cr \rb =\infty$ for any $c>0$, 
and consequently, 
\[
 \liminf_{r\to\infty} \frac{1}{r}\log G_\lambda^\omega (\lfloor r\sfx \rfloor ) = -\infty , 
\]
$\calQ$-a.s. for any $\sfx\neq 0$. Hence, in order to define inverse 
correlation length $\tlo$ one needs either to impose more stringent requirements
on disorder and use \eqref{eq:L2-tl}, or to find a more robust definition of $\tlo$. 
A more robust  definition is in terms of the so called point to hyperplane exponents:
\newline
Given $h\neq 0$ define $\cHp{h, t} = \lbr \sfx : h\cdot\sfx\geq t\rbr$. Let 
$\calP_{h, t}$ be the set of paths $\gamma = \lb \gamma (0), \dots , \gamma (n)\rb$ with 
$\gamma (n)\in \cHp{h, t}$. For $\lambda\geq 0$ consider, 
\[
 D_{h, \lambda}^\omega (t ) = \sum_{\gamma\in \calP_{h, t}} {\rm e}^{-\lambda\abs{\gamma}}
 \Wdo (\gamma ) .
\]
Assume that the limit 
\be 
\label{TH-qD-lim}
-\lim_{t\to\infty}\frac{1}{t}\log D_{h, \lambda}^\omega (t ) \df 
\frac{1}{\alpha^\sfq_\lambda  (h )}
\ee
exists. Should the inverse correlation length $\tlo$ be also defined (and positive), 
the following relation should hold:
\be 
\label{eq:TH-tau-alpha}
\frac{1}{\alpha^\sfq_\lambda (h )} = \min_{\sfx\in \cHp{h, 1}} \tlo (\sfx ). 
\ee
If $\tlo$ is the support function of a convex set $\Klo$, then, by 
\eqref{eq:A-dual-support} of the Appendix, $\alpha^\sfq_\lambda$ should be the support
function of the polar set \eqref{eq:A-polar} or, equivalently, the Minkowski function
of $\Klo$. 

Conversely, if the limit $\alpha_\lambda^\sfq$ in \eqref{TH-qD-lim} exists, 
then we may define $\tlo$ via 
\be 
\label{eq:TH-tlo-indirect}
\tlo (\sfx )  = \max\lbr h\cdot x ~ :~  \alpha_\lambda^\sfq (h)\leq 1\rbr , 
\ee
even if a direct application of \eqref{eq:L2-tl} does not make sense. 
\smallskip 

There is an extensive literature on thermodynamics of quenched models, 
\cite{Sznitman, Zerner, Flury-LD, Mourrat} to mention a few. The paper \cite{Mourrat}
 contains state of the art information on the matter, and several conditions on 
 the random environment were worked out there in an essentially optimal form. The 
 treatment of $\calQ (\sfVo = \infty  )>0$ case and, more generally, of 
 $\calE (\sfVo ) = \infty$ case is based on renormaliztion techniques for high 
 density site percolation and, eventually, on sub-additive ergodic theorems and 
 large deviation arguments. It is beyond the scope of these lectures to 
 reproduce the corresponding results here. Below we formulate some of the statements
 from \cite{Mourrat} and refer to the latter paper for proofs and detailed discussions.
 
 We assume {\bf (A1)} and {\bf (A2)}. 
 \begin{thm}
\label{thm:quenched-morp} 
The following happens $\calQ$-a.s on the event $0\in {\rm Cl}_\infty$:
\newline
{\bf A.}  The free energy $\lambda^\sfq $ is well defined, deterministic,  non-negative  
and convex on $\bbR^d$. 
Furthermore, 
\be 
\label{eq:L2-lambda0-o}
0  = \min_h \lambda^\sfq  (h) = \lambda^\sfq  (0). 
\ee
The set 
\be
\label{eq:L2-critical-a-o}
{\bf K}_0^\sfq \df \lbr h~:~ \lambda^\sfq   (h )= 0\rbr 
\ee
is  a compact  convex set  with a non-empty interiour. \newline
{\bf B.} The 
point to hyperplane exponent $\alpha_\lambda^\sfq$ in 
\eqref{TH-qD-lim}
is well defined for 
any $\lambda\geq 0$. Consequently, the inverse 
correlation length $\tlo$ is well defined
 via \eqref{eq:TH-tlo-indirect} 
also  for any $\lambda\geq 0$, 
and, furthermore, 
it can be identified as the support function of the compact convex set 
\be
\label{eq:TH-Kl-o}
{\bf K}_\lambda^\sfq  \df \lbr h~:~ \lambda^\sfq  (h )\leq \lambda \rbr .
\ee
Define 
\be 
\label{eq:L3-LD-rate-f-o}
 I^\sfq  (\sfv ) = \sup_{h}\lbr h\cdot \sfv - \lambda^\sfq  (h )\rbr = 
 \sup_{\lambda}\lbr \tl^\sfq (\sfv ) - \lambda\rbr  .
\ee
{\bf C.} 
For any $h\in\bbR^d$ the family of polymer measures $\Pnfo{h}$
satisfies LD principle with the rate function
\be 
\label{eq:L3-LD-rate-fh-o}
 I_h^\sfq (\sfv )\df \sup_f \lbr f\cdot \sfv - \lb  \lambda^\sfq  (f +h ) 
 -\lambda^\omega (h )\rb 
 \rbr = I^\sfq (\sfv)  - \lb 
h\cdot\sfv - 
\lambda^\sfq  (h )\rb. 
\ee
{\bf D.} For $h\in {\rm int}\lb  {\bf K}_{0}^\sfq\rb  $  the model is sub-ballistic,  
whereas for any    
$h\not\in {\bf K}_{0}^\sfq  $ the model is ballistic in the sense that $I_h^\sfq (0) >0$. 
\end{thm}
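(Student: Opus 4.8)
\smallskip
\noindent
\textbf{Proof strategy.}
The plan is to follow \cite{Mourrat}: reduce every assertion to a sub-additive ergodic argument over the i.i.d.\ environment, supplemented by a renormalization of high-density site percolation to handle the sites at which $\sfVo=\infty$. Throughout one conditions on $\lbr 0\in{\rm Cl}_\infty\rbr$. The structural fact I would lean on is that the quenched weight is \emph{exactly} multiplicative under concatenation,
\[
\Wdo(\gamma\circ\eta)=\Wdo(\gamma)\,\sfW_d^{\theta_{\gamma_n}\omega}(\eta),
\]
with no over-counting correction, because the potential sum in \eqref{eq:IN-qweight} omits the initial vertex. Consequently, in contrast with the annealed case, the various first-hitting (endpoint-pinned) partition functions below are genuine sub/super-additive cocycles over the environment, and Kingman's sub-additive ergodic theorem together with ergodicity of the i.i.d.\ field will supply \emph{deterministic} limits.

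\smallskip
\noindent
\textbf{Part A.}
Convexity of $h\mapsto\frac1n\log\Zno(h)$ is H\"older. Since $\Wdo(\gamma)\leq\Pd(\gamma)$ we have $\Zno(0)\leq1$, so $\limsup_n\frac1n\log\Zno(0)\leq0$; the matching bound $\lambda^\sfq(h)\geq0$ comes as in Part~A of Theorem~\ref{thm:attractive-morp}, confining the first $n$ steps to a box $\Lambda_L\cap{\rm Cl}_\infty$ on which the (finite) potentials are uniformly bounded and invoking Estimate~1, \eqref{eq:L4-EstimateRW-1}. To get existence and determinism of the limit I would pin the endpoint: apply Kingman's theorem to the point-to-point first-hitting partition functions $\hat Z_n^\omega(\lfloor n\sfx\rfloor)$ — here the environment shifts are deterministic, so the cocycle structure is genuine and ergodicity forces a deterministic convex non-negative rate $J^\sfq(\sfx)$; pass from $\hat Z_n^\omega$ to $\Zno(\lfloor n\sfx\rfloor)$ via the sandwich \eqref{eq:TH-Z-Zhat-bound}; and then recover $\lambda^\sfq(h)=\sup_\sfx\lbr h\cdot\sfx-J^\sfq(\sfx)\rbr$ from $\Zno(h)=\sum_\sfx{\rm e}^{h\cdot\sfx}\Zno(\sfx)$ (only polynomially many terms, each controlled uniformly by $J^\sfq$ after a countable dense set plus convexity), which also makes $\lambda^\sfq$ deterministic. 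Convexity of $\Knoto$ is then inherited; its non-empty interior is free, since Jensen gives $\lambda^\sfq\leq\lambda$, hence $\Knot\subseteq\Knoto$ and Part~A of Theorem~\ref{thm:attractive-morp} applies; compactness follows from a complementary bound $\lambda^\sfq(h)\geqs\abs{h}$ at large $\abs{h}$, obtained by running the walk ballistically along a directed path inside ${\rm Cl}_\infty$ in the direction of $h$.

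\smallskip
\noindent
\textbf{Parts B, C, D.}
For \textbf{B}, fix $h\neq0$, $\lambda\geq0$, and decompose a path ending in $\cHp{h,s+t}$ at its first entrance into $\cHp{h,s}$; exact multiplicativity turns $-\log$ of the first-hitting point-to-hyperplane partition function into a stationary sub-additive cocycle along shifts in the direction $h$, whence Kingman yields the deterministic limit $1/\alpha_\lambda^\sfq(h)$ of \eqref{TH-qD-lim}. Repeating the annealed computation \eqref{eq:TH-serries}--\eqref{eq:TH-support} I would identify $\alpha_\lambda^\sfq$ as the gauge of $\Klo$ (the series $\sum_n{\rm e}^{-\lambda n}\Zno(h)$ converges for $\alpha_\lambda^\sfq(h)<1$ and diverges for $\alpha_\lambda^\sfq(h)>1$), define $\tlo$ by \eqref{eq:TH-tlo-indirect}, and recognize it as the support function of $\Klo$ via the polar-set duality \eqref{eq:A-polar}--\eqref{eq:A-dual-support} of the Appendix. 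For \textbf{C}, Part~A already identifies the limiting logarithmic moment generating function of the end-point under $\Pnfo{h}$ as $f\mapsto\lambda^\sfq(h+f)-\lambda^\sfq(h)$ ($\calQ$-a.s.\ simultaneously in $f$, using a countable dense set plus convexity and continuity); exponential tightness is automatic from the bounded range, so Theorem~\ref{thm:LD-convex} and Exercise~\ref{ex:LD.4} give the upper bound with $I_h^\sfq$, while the sandwich \eqref{eq:TH-Z-Zhat-bound} already yields $-\frac1n\log\Zno(\lfloor n\sfv\rfloor)\to J^\sfq(\sfv)$, so Lemma~\ref{lem:LD-IJ} applies and $J^\sfq=I^\sfq$; translating via the identity for $-\frac1n\log\Pnfo{h}(\lfloor n\sfv\rfloor)$ exactly as in the annealed case gives the rate function $I_h^\sfq$. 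Finally \textbf{D} is immediate: for $h\in{\rm int}(\Knoto)$ one has $I_h^\sfq(\sfv)\geq\abs{\sfv}\,{\rm dist}(h,\partial\Knoto)>0$ for $\sfv\neq0$, so the model is sub-ballistic, whereas for $h\notin\Knoto$ one has $I_h^\sfq(0)=\sup_g\lbr-\lambda^\sfq(g+h)+\lambda^\sfq(h)\rbr=\lambda^\sfq(h)>0$; note that, unlike the annealed Theorem~\ref{thm:attractive-morp}, no limiting velocity or order of the transition is asserted, so no renewal input is needed.

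\smallskip
\noindent
\textbf{Main obstacle.}
None of the formal manipulations above is the real difficulty; the hard part will be the control of the environment when $\calQ(\sfVo=\infty)>0$ (equivalently $\calE\sfVo=\infty$). There the sub-additive cocycles can equal $+\infty$ on atypical configurations, so one must renormalize the high-density site percolation of $\lbr\sfx:\sfVo_\sfx<\infty\rbr$ and show that paths may detour around the forbidden sites inside ${\rm Cl}_\infty$ at a controlled sub-exponential cost, and one must also treat the boundary / lattice-support effects in the super-additive estimates with care. This is precisely the technical core of \cite{Mourrat}, which I would invoke rather than reproduce.
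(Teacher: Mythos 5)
Your proposal takes essentially the same route as the paper: the text does not actually prove Theorem~\ref{thm:quenched-morp} but quotes it from \cite{Mourrat}, noting that the proof rests on sub-additive ergodic theorems, large-deviation arguments and a renormalization of high-density site percolation to handle the sites with $\sfVo=\infty$ --- precisely the ingredients you outline (Kingman's theorem for pinned point-to-point and point-to-hyperplane cocycles, the annealed-style convex duality for Parts B--D, and deferral of the $\calQ(\sfVo=\infty)>0$ renormalization to \cite{Mourrat}). Your soft steps (exact multiplicativity of the quenched weights, recovery of $\lambda^\sfq$ from the point-to-point rate, and the annealed-style derivation of the LDP and of Part D) are consistent with that reference, so the proposal is in line with the paper's treatment.
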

The above theorem does not imply  strong limiting spatial extension form 
of the ballisticity condition \eqref{eq:L2-ballistic-q} for all $h\not\in {\bf K}_0^\sfq$, 
exactly for the same reasons 
as Theorem~\ref{thm:attractive-morp} does not imply the corresponding 
statement for annealed models. Existence of limiting spatial extension 
for quenched models in the very weak disorder regime is discussed, together
with other limit theorems, in Section~\ref{sec:Weak}.

In the case of critical drifts $h\in \partial{\bf K}_0^\sfq$, 
a form of ballistic behaviour was established in the continuous context in 
\cite{Sznitman95}.

\section{Multidimensional Renewal Theory and Annealed Polymers.}

\label{Asec:renewal}
\subsection{Multi-dimensional renewal theory}
\label{ssec:renewal}
{\bf One-dimensional renewals.} 
Let $\lbr \sff (\sfn )\rbr$ be a probability distribution on $\bbN$ (with strictly positive variance). 
We can think of $\sff$ as of a probability distribution for a step $\sfT  $ of the effective 
 one-dimensional 
random walk
\[
 \sfS_N = \sum_1^N \sfT_i .
\]
The distribution of $\lbr\sfS_n\rbr$ is governed by the product measure $\bbP$. 
The renewal array
$\lbr \sft (n )\rbr $ is given by
\be 
\label{eq:ARenewal}
\sft (0 ) = 1 \quad{\rm and}\quad 
\sft (n) = \sum_{m=1}^{n} \sff (m )\sft ( n-m ) .
\ee
In probabilistic terms \eqref{eq:ARenewal} reads as:
\be 
\label{eq:ARenewal-p}
 \sft (n) = \bbP\lb ~\exists ~N~:~ \sfS_N = n\rb = \sum_N \bbP\lb \sfS_N = n\rb .
\ee

Renewal theory implies that  
\be 
\label{eq:ARen-lim}
\lim_{n\to\infty}\sft (n ) = \frac{1}{\bbE \sfT }\df \frac{1}{\mu}.
\ee 
A proof of \eqref{eq:ARen-lim} is based on an analysis of  complex power series
\be 
\label{eq:A-hatt}
\hat\sft (z)  \df  \sum_n \sft (n) z^n \quad{\rm and}\quad 
\hat\sff (z)  \df  \sum_n \sff (n) z^n 
\ee
\begin{ex}
 \label{ex:R.1}
Show that $\hat\sft $ is absolutely convergent and hence analytic on the interiour of the unit 
disc $\bbD_1 =\lbr z : \abs{z} <1\rbr$. Check that $\hat\sft (1 ) = \infty$. 
\end{ex} 
It follows that on $\bbD_1$, 
\be 
\label{eq:ARenewal-hat}
\hat\sft (z) = \frac{1}{1-\hat\sff (z )} .
\ee
\begin{ex}
 \label{ex:R.2}
Check that $\abs{\sff (z)}\leq \sff (\abs{z}) <1 $ for any $z\in\bbD_1$. Prove \eqref{eq:ARenewal-hat}. 
\end{ex}
Consequently, by Cauchy formula, 
\be 
\label{eq:AR-Cauchy}
\sft (n) = \frac{1}{2\pi i}\oint_{\abs{z} = r}\frac{\hat\sft (z)}{z^{n+1}}\dd z = 
\frac{1}{2\pi i}\oint_{\abs{z} = r}\frac{\dd z}{z^{n+1}\lb 1-\hat\sff (z )\rb} , 
\ee
for any $r <1$.

\noindent 
{\bf Exponential tails.} Assume that there exists $\nu >0$, such that
\be 
\label{eq:AR-exp}
\sff (\sfn ) \leqs {\rm e}^{-\nu  n}, 
\ee
uniformly in $n \in\bbN$. 
\begin{lem}
\label{lem:AR-lim-exp} 
Under Assumption \eqref{eq:AR-exp} the convergence in 
\eqref{eq:ARen-lim} is exponentially fast in $n$. 
\end{lem}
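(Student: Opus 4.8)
The plan is to feed the exponential tail bound \eqref{eq:AR-exp} into the Cauchy representation \eqref{eq:AR-Cauchy} for $\sft(n)$ and to push the integration contour across the unit circle, collecting the residue at $z=1$. First I would record that \eqref{eq:AR-exp} makes $\hat\sff(z)=\sum_n\sff(n)z^n$ absolutely convergent, hence analytic, on the disc $\{\abs{z}<e^\nu\}$ with $e^\nu>1$; therefore $1-\hat\sff$ is analytic there, and by \eqref{eq:ARenewal-hat} the function $\hat\sft$ is meromorphic on that disc with poles exactly at the zeros of $1-\hat\sff$.

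The next step is to pin down the zeros of $1-\hat\sff$ in a disc of radius slightly larger than one. Inside the open unit disc, Exercise~\ref{ex:R.2} already gives $\abs{\hat\sff(z)}\le\hat\sff(\abs{z})<1$, so there is no zero there. On $\abs{z}=1$ one has $\abs{\hat\sff(z)}\le\sum_n\sff(n)=1$, with equality forcing all $\sff(n)z^n$ over $n\in\supp(\sff)$ to share a common argument; since positive variance gives at least two points in the support and (as is already needed for \eqref{eq:ARen-lim}) the step distribution is aperiodic, this forces $z=1$. At $z=1$ we have $\hat\sff(1)=1$ and $\hat\sff'(1)=\sum_n n\,\sff(n)=\mu\in[1,\infty)$, so the zero is simple: $1-\hat\sff(z)=\mu(1-z)+\bigo{(1-z)^2}$. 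Because the zero set of the nonconstant analytic function $1-\hat\sff$ is discrete and finite in compacts, I can then fix $\rho$ with $1<\rho<e^\nu$ so that $1-\hat\sff$ has no zero in $\{\abs{z}\le\rho\}$ other than $z=1$.

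With this in hand, I start from \eqref{eq:AR-Cauchy} with some $r<1$ and deform the contour out to $\abs{z}=\rho$, crossing only the simple pole at $z=1$; the residue theorem together with $\mathrm{Res}_{z=1}\bigl(z^{-(n+1)}(1-\hat\sff(z))^{-1}\bigr)=-1/\mu$ gives
\be
\sft(n)-\frac{1}{\mu}=\frac{1}{2\pi i}\oint_{\abs{z}=\rho}\frac{\dd z}{z^{n+1}\lb 1-\hat\sff(z)\rb}.
\ee
On $\abs{z}=\rho$ the function $1-\hat\sff$ is continuous and bounded away from $0$, say $\abs{1-\hat\sff(z)}\ge c>0$, so bounding the integrand by $c^{-1}\rho^{-(n+1)}$ and the contour length by $2\pi\rho$ yields $\abs{\sft(n)-1/\mu}\le c^{-1}\rho^{-n}$, which decays exponentially since $\rho>1$.

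The one place that needs care is the zero-counting step: I must verify that $1-\hat\sff$ does not vanish anywhere on the punctured unit circle — this is precisely where aperiodicity enters — and, using the analytic continuation past $\abs{z}=1$, that a deformation radius $\rho>1$ free of further zeros genuinely exists. Once that is settled, the remainder is the standard residue computation and a crude modulus bound on the shifted contour.
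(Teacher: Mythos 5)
Your proof is correct and follows essentially the same route as the paper: analyticity of $\hat\sff$ beyond the unit disc, the fact (Exercise~\ref{ex:R.3}) that $z=1$ is the only zero of $1-\hat\sff$ in a slightly larger disc and that this zero is simple because $\hat\sff^\prime(1)=\mu\neq 0$, followed by a contour shift to a radius $\rho>1$ yielding the $\rho^{-n}$ bound. The only difference is cosmetic — you cross the pole and collect the residue $1/\mu$ via the residue theorem, whereas the paper subtracts $1/\mu$ written as a contour integral and checks that the resulting integrand $\Delta$ has a removable singularity at $z=1$ before moving the contour — and your explicit zero-counting (including the role of aperiodicity) supplies exactly what the paper delegates to Exercise~\ref{ex:R.3}.
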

We start proving Lemma~\ref{lem:AR-lim-exp} by noting that under 
\eqref{eq:AR-exp} the function 
 $\hat\sff$ is defined and analytic on $\bbD_{1+\nu}$. 
 \begin{ex}
 \label{ex:R.3}
Check that there exists $\epsilon \in (0,\nu )$ such that $z=1$ is the only zero of $1-\sff (z)$ on 
$\bar\bbD_{1+\epsilon}$. Furthermore, $\frac{1-z }{\hat\sff (z) - 1 }$ is analytic on $\bbD_{1+\epsilon}$.
\end{ex}

Recall that we defined $\mu = \sum_n n\sff (n) = \hat\sff^\prime (1)$. Consider the representation, 
\[
 \frac{1}{\mu} = \frac{1}{2\pi i}\int_{\abs{z}=r}\frac{\dd z}{\mu (1-z) z^{n+1}} ,
\]
which holds for any $r <1$. By \eqref{eq:AR-Cauchy}, 
\be 
\label{eq:AR-difference}
\sft (n) - \frac{1}{\mu} = \frac{1}{2\pi i}\int_{\abs{z}=r}\frac{\hat\sff (z) - 1 -\mu (z-1 )}{(1-\hat\sff (z ))
(1-z) z^{n+1}}\dd z \df \frac{1}{2\pi i}\int_{\abs{z}=r}\frac{\Delta (z)}{z^{n+1}}\dd z .
\ee
On $\bar\bbD_{1+\epsilon}$ the denominator in the definition of $\Delta$ vanishes only at $z=1$. 
However, since $\mu = \hat\sff^\prime (1)$,  expansion of the numerator in a neighbourhood of $z=1$ gives:
\[
 \hat\sff (z) - 1 -\mu (z-1 )  = (z-1)^2 U (z) ,
\]
with some analytic $U$. It follows that 
\[
 \Delta (z) = \frac{U (z)}{ (1- \hat\sff (z) )/(z-1 )} .
\]
In view of Exercise~\ref{ex:R.3} $\Delta$ is analytic on $\bar\bbD_{1+\epsilon}$. As a result, 
\[
  \frac{1}{2\pi i}\int_{\abs{z}=r}\frac{\Delta (z)}{z^{n+1}}\dd z = 
 \frac{1}{2\pi i}\int_{\abs{z}=1+\epsilon}\frac{\Delta (z)}{z^{n+1}}\dd z .
\]
By \eqref{eq:AR-difference}, 
\be 
\label{eq:AR-difbound}
\left| \sft (n) - \frac{1}{\mu}\right| \leqs (1+\epsilon )^{-n} , 
\ee
uniformly in $n$. This is precisely the claim of Lemma~\ref{lem:AR-lim-exp}. \qed

\noindent
{\bf Complex renewals.} 
Suppose that \eqref{eq:ARenewal} holds with complex $\lbr \sff (n)\rbr$ and, accordingly, 
with complex  $\lbr \sft (n )\rbr$. As before, define  $\hat\sft (z)$ and 
$\hat\sff (z )$ as in \eqref{eq:A-hatt}. 
In the sequel we shall work with complex renewals 
which satisfy one of the following two assumptions, Assumption~\ref{as:ARenewal-c} 
or Assumption~\ref{as:ARenewal-c-small}, below. 


\begin{assumption}
\label{as:ARenewal-c} 
There exists $\epsilon >0$, such that 
the function $\hat\sff$ satisfies the following three properties:

\noindent
{\bf (a)} $\hat\sff (0) = 0$ and the $\hat\sff (z)$ in \eqref{eq:A-hatt} is 
 absolutely convergent in a neighbourhood of $\bar\bbD_{1+\epsilon}$. 

\noindent
{\bf (b)} $z=1$ is the only zero of $\lb \hat\sff (z ) - 1\rb $ in $\bar\bbD_{1+\epsilon}$.

\noindent
{\bf (c)} $\hat\sff^\prime (1 ) 
\neq 0$. 
\end{assumption}
Under Assumption~\ref{as:ARenewal-c} the exponential convergence bound \eqref{eq:AR-difbound} still
holds. Indeed, the only thing we have to justify is that $\hat\sft (z) = \sum_n \sft (n)  z^n$ is defined 
and  
analytic on some neighbourhood of the origin, and that 
\be 
\label{eq:AR-tvrsf}
 \hat\sft (z) = \frac{1}{1-\hat\sff (z)} ,
\ee
for all $\abs{z}$ sufficiently small. Indeed, if this is the case, then \eqref{eq:AR-Cauchy} holds for some 
$r >0$, and we may just proceed as before. However, by Assumption~\ref{as:ARenewal-c}{\rm (a)}, 
$\sum_n \abs{\sff (n)}\abs{z}^n <1$
for all $\abs{z}$ small enough. Hence \eqref{eq:AR-tvrsf}. 
 \begin{assumption}
 \label{as:ARenewal-c-small} 
 There exists $\epsilon >0$, such that 
the function $\hat\sff$ satisfies the following two  properties:

\noindent
{\bf (a)} The series $\hat\sff (z)$ in \eqref{eq:A-hatt} is 
 absolutely convergent in a neighbourhood of $\bar\bbD_{1+\epsilon}$.
 
 \noindent
 {\bf (b)} There exists  $\kappa >0$, such that 
 $\min_{\abs{z}\leq 1+\epsilon}\abs{ 1- \hat\sff (z )} \geq \kappa$.
\end{assumption}
Under Assumption~\ref{as:ARenewal-c-small}, the function 
$\lb 1- \hat\sff (z)\rb^{-1}$ is analytic in a neighbourhood of $\bar\bbD_{1+\epsilon}$, and 
the Cauchy formula \eqref{eq:AR-Cauchy}, which again by absolute convergence of $\hat\sff$ still holds for 
$r$ sufficiently small,  implies: 
\be 
\label{eq:AR-tn-c-small}
\abs{\sft (n )} \leq \frac{1}{\kappa (1+\epsilon )^n} .
\ee
\smallskip 

 \noindent
{\bf Multi-dimensional renewals.} 
Let $\lbr \sff (\sfx , \sfn )\rbr$ be a probability distribution on $\bbZ^d\times\bbN$. 
As in the one-dimensional case 
we can think of $\sff$ as of a probability distribution for a step $\sfU = (\sfX , \sfT ) $ of the effective 
$(d+1)$-dimensional random walk
\[
 \sfS_N = \sum_1^N \sfU_i .
\]
The distribution of $\lbr\sfS_n\rbr$ is governed by the product measure $\bbP$. 
We assume:
\begin{assumption}
\label{as:AR-distribution} 
Random  vector  $\sfU = \lb \sfX , \sfT\rb$ has a non-degenerate $(d+1)$-dimensional 
distribution. The random walk $\sfS_N$ is aperiodic (that is its support is not concentrated on 
a regular sub-lattice). 
\end{assumption}

The renewal array
$\lbr \sft (\sfx , n )\rbr $ is given by
\be 
\label{eq:ARenewal-d}
\sft (\sfx , 0 ) = \1_{\lbr \sfx =0\rbr}\quad{\rm and}\quad 
\sft (\sfx , n) = \sum_{m=1}^{n}\sum_{\sfy} \sff (\sfy , m )\sft (\sfx -\sfy , n-m ) .
\ee
Again, as in the one dimensional case \eqref{eq:ARenewal-p}, 
in probabilistic terms \eqref{eq:ARenewal-d} reads as:
\be 
\label{eq:ARenewal-p-d}
 \sft (\sfx , n) = \bbP\lb ~\exists ~N~:~ \sfS_N = (\sfx , n)\rb .
\ee
The renewal relation is inherited by  one-dimensional marginals: Set 
\[
\sff (n ) = \sum_\sfx\sff (\sfx , n)
\quad {\rm and}\quad 
 \sft (n ) = \sum_\sfx\sft (\sfx , n).
\]
Then, \eqref{eq:ARenewal} holds. 

We are going to explore the implications of the renewal relation \eqref{eq:ARenewal-d}
for a local limit analysis of conditional measures
\be 
\label{eq:ACond-m}
\bbQ_n \lb \sfx \rb = \frac{\sft ( \sfx , n )}{\sft (n )} .
\ee
{\bf Exponential tails.} 
Assume that there exists $\nu >0$, such that
\be 
\label{eq:AR-exp-d}
\sff (\sfx , \sfn ) \leqs {\rm e}^{-\nu (  \abs{\sfx} +n)  }, 
\ee
uniformly in $(\sfx , n )\in\bbZ^d\times\bbN$. In particular, \eqref{eq:AR-difbound} holds, and as a 
result we already have a sharp control over denominators in \eqref{eq:ACond-m}. 

Consider the following equation
\be 
\label{eq:ARen-Key-eq}
F (\xi , \lambda ) \df  \log \sum_{\sfx , n }{\rm e}^{\sfx\cdot \xi - \lambda n} \sff (\sfx , n )= 0.
\ee
Above $F :\bbC^d\times\bbC\mapsto \bbC$.
\begin{ex}
\label{ex:R.4}
Check that under \eqref{eq:AR-exp-d} there exists $\delta > 0$ such that 
$F$ is well defined and analytic on the disc $\bbD_\delta^{d+1}\subset \bbC^{d+1}$. 
\end{ex}
{\bf Shape theorem.} 
We shall assume that $\delta$ is sufficiently small. Then by the analytic implicit function 
theorem \cite{Kaup}, 
whose application is secured by Assumption~\ref{as:AR-distribution}, there is an analytic function
$\lambda  :\bbD_{\delta}^d\mapsto \bbC$ with such that for $(\xi , \lambda )\in \bbD_\delta^{d+1}$,
\be 
\label{eq:AR-lfunc}
 F (\xi , \lambda ) = 0\ \Leftrightarrow \  \lambda =\lambda (\xi ) . 
\ee
\smallskip

\noindent
For $\xi\in\bbD_\delta^d$ define:
\be 
\label{eq:AR-ftxi}
\sff_\xi (\sfx , n ) = \sff (\sfx , n ) {\rm e}^{\xi\cdot \sfx - \lambda (\xi )n }\quad{\rm and} \quad 
\sft_\xi (\sfx , n ) = \sft (\sfx , n ) {\rm e}^{\xi\cdot \sfx -\lambda (\xi )n } .
\ee
Evidently, the arrays $\lbr \sff_\xi (\sfx , n )\rbr$ and $\lbr \sft_\xi(\sfx , n )\rbr$ satisfy
\eqref{eq:ARenewal-d}. Also, under \eqref{eq:AR-exp-d},  $\sff_\xi (n)\df \sum_{\sfx } \sff_\xi (\sfx , n )$
 is well
defined for all $\abs{\xi} <\nu$.   
\begin{lem}
\label{lem:AR-fxi}  
There exists $\delta >0$ and $\epsilon >0$  such that 
\[
 \hat\sff_\xi (z)  \df \sum_n \sff_\xi (n) z^n ,
\]
satisfies Assumption~\ref{as:ARenewal-c} for all $\abs{\xi} <\delta$.  
\end{lem}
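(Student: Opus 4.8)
The plan is to verify directly the three requirements of Assumption~\ref{as:ARenewal-c} for $\hat\sff_\xi$, the central mechanism being that, by the very definition \eqref{eq:AR-lfunc} of $\lambda(\xi)$, the point $z=1$ is \emph{forced} to be a zero of $\hat\sff_\xi(\cdot)-1$, and that the case $\xi=0$ is already handled by Exercise~\ref{ex:R.3}; everything else is a perturbation argument off $\xi=0$.

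First I would record the normalizations. Since $F(0,0)=\log\sum_{\sfx,n}\sff(\sfx,n)=0$ and, by the analytic implicit function theorem, on $\bbD_\delta^{d+1}$ the equation $F(\xi,\lambda)=0$ has $\lambda(\xi)$ as its unique solution, we get $\lambda(0)=0$; in particular $\hat\sff_0=\hat\sff$, and $\abs{\lambda(\xi)}$ is as small as we wish once $\abs{\xi}$ is small. Next, for every $\xi\in\bbD_\delta^d$,
\[
\hat\sff_\xi(1)=\sum_n\sff_\xi(n)=\sum_{\sfx,n}\sff_\xi(\sfx,n)
=\sum_{\sfx,n}\sff(\sfx,n){\rm e}^{\xi\cdot\sfx-\lambda(\xi)n}={\rm e}^{F(\xi,\lambda(\xi))}=1 ,
\]
so $z=1$ is always a zero of $\hat\sff_\xi(\cdot)-1$, and moreover $\hat\sff_\xi(0)=\sff_\xi(0)=0$ since $\sff$, hence $\sff_\xi$, is supported on $\{n\geq 1\}$. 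For part {\bf (a)} it then remains to check absolute convergence near $\bar\bbD_{1+\epsilon}$: writing $\hat\sff_\xi(z)=\sum_{\sfx,n}\sff(\sfx,n){\rm e}^{\xi\cdot\sfx}\bigl(z{\rm e}^{-\lambda(\xi)}\bigr)^n$ and invoking the exponential tail \eqref{eq:AR-exp-d}, $\bigl|\sff(\sfx,n){\rm e}^{\xi\cdot\sfx}(z{\rm e}^{-\lambda(\xi)})^n\bigr|\leqs {\rm e}^{-(\nu-\abs{\xi})\abs{\sfx}}\bigl({\rm e}^{-\nu-\Re\lambda(\xi)}\abs{z}\bigr)^n$, so the double sum converges whenever $\abs{\xi}<\nu$ and $\abs{z}<{\rm e}^{\nu+\Re\lambda(\xi)}$; since $\lambda(\xi)\to 0$ as $\xi\to 0$, for $\delta$ and $\epsilon$ small enough (so that $1+\epsilon<{\rm e}^{\nu+\Re\lambda(\xi)}$ whenever $\abs{\xi}<\delta$) this holds on a neighbourhood of $\bar\bbD_{1+\epsilon}$, whence $\hat\sff_\xi$ is there analytic.

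The substance is part {\bf (b)} (and with it {\bf (c)}): stability of the zero set under the perturbation from $\xi=0$ to $\xi$. By Exercise~\ref{ex:R.3} pick $\epsilon_0\in(0,\nu)$ so that $z=1$ is the only zero of $\hat\sff(\cdot)-1$ on $\bar\bbD_{1+\epsilon_0}$, and fix $\epsilon\in(0,\epsilon_0)$ small enough for the convergence above to hold on $\bar\bbD_{1+\epsilon}$. On the circle $\{\abs{z}=1+\epsilon\}$ the function $\hat\sff(\cdot)-1$ is zero-free, so $c\df\min_{\abs{z}=1+\epsilon}\abs{\hat\sff(z)-1}>0$. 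Then I would bound the perturbation uniformly on this circle,
\[
\sup_{\abs{z}=1+\epsilon}\abs{\hat\sff_\xi(z)-\hat\sff(z)}
\leq\sum_{\sfx,n}\sff(\sfx,n)\,\bigl|{\rm e}^{\xi\cdot\sfx-\lambda(\xi)n}-1\bigr|\,(1+\epsilon)^n ,
\]
which tends to $0$ as $\xi\to 0$ by dominated convergence, the summand being dominated via \eqref{eq:AR-exp-d} by a summable bound independent of small $\xi$. Hence, shrinking $\delta$, this supremum is $<c$ for $\abs{\xi}<\delta$, and Rouch\'e's theorem yields that $\hat\sff_\xi(\cdot)-1$ and $\hat\sff(\cdot)-1$ have the same number of zeros, counted with multiplicity, inside $\bbD_{1+\epsilon}$, namely exactly one, while $\abs{\hat\sff_\xi(z)-1}\geq c-\sup_{\abs{z}=1+\epsilon}\abs{\hat\sff_\xi(z)-\hat\sff(z)}>0$ rules out a zero on the circle. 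Combined with the identity $\hat\sff_\xi(1)=1$ above, the point $z=1$ is therefore the unique zero of $\hat\sff_\xi(\cdot)-1$ in $\bar\bbD_{1+\epsilon}$, and it is simple; simplicity is exactly $\hat\sff_\xi^\prime(1)\neq 0$, which is {\bf (c)}. (Alternatively {\bf (c)} is immediate: $\hat\sff_\xi^\prime(1)=\sum_{\sfx,n}n\,\sff_\xi(\sfx,n)$ is analytic in $\xi$ and equals $\mu=\bbE\sfT\geq 1$ at $\xi=0$.)

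I expect the only genuine obstacle to be keeping the order of the quantifiers straight: $\epsilon$ must be fixed first, from the $\xi=0$ picture (Exercise~\ref{ex:R.3}) together with the convergence requirement, and only \emph{then} is $\delta$ chosen, depending on $\epsilon$, small enough for the Rouch\'e comparison on the fixed circle $\{\abs{z}=1+\epsilon\}$ to go through; the exponential tail \eqref{eq:AR-exp-d} is precisely what makes the perturbation estimate tend to $0$ uniformly on that circle. Apart from this, the argument is elementary once the identity $\hat\sff_\xi(1)=1$ is in hand.
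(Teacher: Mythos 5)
Your proof is correct, but it reaches condition {\bf (b)} by a genuinely different route than the paper. The paper argues in two regions: away from $z=1$ it uses continuity/compactness to fix $\epsilon$ so that $\hat\sff_\xi(z)=1$ has no solutions in $\bbD_{1+\epsilon}\setminus\bbD_\nu(1)$ for all small $\abs{\xi}$, and then near $z=1$ it uses uniform boundedness of $\lbr\hat\sff_\xi\rbr$ on $\bar\bbD_\nu(1)$ together with the fact that the derivatives $\hat\sff_\xi^\prime(1)=\mu(\xi)$ stay uniformly bounded away from zero to exclude extra solutions close to $1$; conditions {\bf (a)} and {\bf (c)} are dismissed as straightforward. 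You instead run a single Rouch\'e comparison on the fixed circle $\abs{z}=1+\epsilon$, using the exponential tails to make $\sup_{\abs{z}=1+\epsilon}\abs{\hat\sff_\xi(z)-\hat\sff(z)}$ small, and you pin the unique zero to the point $z=1$ exactly via the identity $\hat\sff_\xi(1)={\rm e}^{F(\xi,\lambda(\xi))}=1$, which is the same structural fact the paper uses implicitly (it is why $z=1$ solves the equation at all); as a bonus, simplicity of that zero hands you {\bf (c)} directly, and you also verify {\bf (a)} explicitly rather than waving it through. The one step you should make explicit before invoking the multiplicity count is that the zero of $\hat\sff(\cdot)-1$ at $z=1$ is itself simple (so that the count inside $\bbD_{1+\epsilon}$ is exactly one); this is immediate from $\hat\sff^\prime(1)=\mu=\bbE\,\sfT\geq 1>0$, which you in effect state at the end, so it is a matter of ordering rather than a gap. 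Your quantifier discipline ($\epsilon$ fixed first from the $\xi=0$ picture, then $\delta=\delta(\epsilon)$) matches the paper's $\nu$, $\delta=\delta(\nu)$ bookkeeping, and each approach buys something: the paper's argument avoids any appeal to Rouch\'e and generalizes to situations where one only controls the derivative near the zero, while yours is shorter, localizes all the work on one circle, and delivers {\bf (b)} and {\bf (c)} in one stroke.
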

\begin{proof}
Conditions {\bf (a)} and {\bf (c)} are straightforward. In order to check {\bf (b)} note that it is
trivially satisfied at $\xi =0$. Which, by continuity means that we can fix $\epsilon >0$ such that
for any $\nu >0$ fixed,the equation
\be 
\label{eq:ARf-point}
 \hat\sff_\xi (z) = 1
\ee
 has no solutions in $\bbD_{1+\epsilon}\setminus \bbD_\nu (1)$ for all $\abs{\xi}<\delta$. 
However, the family of analytic functions $\lbr \hat\sff_\xi\rbr_{\abs{\xi }<\delta}$ is uniformly 
bounded on $\bar\bbD_\nu (1)$. Furthermore, for $\delta >0$ small the collection  of derivatives.
\be 
\label{eq:AR-muxi}
\lbr \hat\sff_\xi^\prime (1) \df  \mu (\xi ) \df \sum_n n\sff_\xi (n) \rbr_{\abs{\xi} <\delta}
\ee
is uniformly bounded away from zero. 
Therefore, there exist $\nu>0$ 
 and $\delta =\delta (\nu )> 0$, such that $z=1$ is the only solution of $\hat\sff_\xi  (z) = 1$ on 
$\bar\bbD_\nu (1)$ for all $\abs{\xi} <\delta$. 
\end{proof}
\begin{remark}
\label{rem:ConvF}
 Note that the restriction of  $F$ to $\bbR^{d+1}\cap \bbD_\delta^{d+1}$ is convex, 
 and it is monotone non-increasing in $\lambda$. Hence, the 
 restriction of $\lambda$ to $\bbR^d\cap \bbD_\delta^{d}$ is convex as well. Indeed, let 
 $\lambda_i =\lambda (\xi_i )$; $i=1,2$, for two vectors $\xi_1, \xi_2\in  \bbR^d\cap \bbD_\delta^{d}$. 
 From  convexity of level set $\lbr\lb \xi , \lambda \rb ~: ~ F (\xi , \lambda )\leq 0\rbr$, we
 infer that for any convex  combination $\xi =\alpha \xi_1 + (1-\alpha )\xi_2$
 \[
  F (\xi , \alpha\lambda_1 + (1-\alpha )\lambda_2 )\leq 0\ \Rightarrow\ \lambda (\xi )\leq 
  \alpha\lambda_1 +  (1-\alpha )\lambda_2 .
 \]
The term {\em shape theorem} comes from the fact that in applications function $\lambda$  frequently describes
local parametrization of the boundary of the appropriate limiting shape.
\end{remark}

\noindent 
{\bf Limit theorems}
Consider the canonical measure $\bbQ_n$ defined in \eqref{eq:ACond-m}. The following 
Proposition describes ballistic behaviour under $\bbQ_n$. 
\begin{prop}
 \label{prop:AR-Ballistic}
Under assumption on exponential tails  \eqref{eq:AR-exp-d}, 
\be 
\label{eq:AR-Ballistic}
\lim_{n\to\infty} \frac{1}{n}\bbQ_n\lb \sfX\rb =\lim_{n\to\infty}\frac{1}{n} 
\frac{\sum_{\sfx }\sft (\sfx , n)\sfx}{\sft (n )}
= \frac{\bbE \sfX}{\bbE \sfT} \df\sfv .
\ee
\end{prop}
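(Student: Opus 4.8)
The plan is to route everything through the tilted renewal arrays $\sff_\xi,\sft_\xi$ of \eqref{eq:AR-ftxi} and to read off the velocity $\sfv$ by differentiating a renewal limit at $\xi=0$. First I would observe that $F(0,0)=\log\sum_{\sfx,n}\sff(\sfx,n)=0$, so $\lambda(0)=0$ by \eqref{eq:AR-lfunc}, and that $\sff_0=\sff$, $\sft_0=\sft$. By Lemma~\ref{lem:AR-fxi}, for $\abs{\xi}<\delta$ the function $\hat\sff_\xi$ satisfies Assumption~\ref{as:ARenewal-c}, so the one-dimensional marginal renewal array $\lbr\sft_\xi(n)\rbr$ obeys the exponential convergence bound \eqref{eq:AR-difbound}:
\be
\label{eq:plan-tilt}
\lim_{n\to\infty}\sft_\xi(n)=\lim_{n\to\infty}\sum_\sfx\sft(\sfx,n)\,{\rm e}^{\xi\cdot\sfx-\lambda(\xi)n}=\frac1{\mu(\xi)},\qquad \mu(\xi)=\hat\sff_\xi^\prime(1).
\ee
Since the constants in the proof of Lemma~\ref{lem:AR-fxi} are uniform in $\abs{\xi}<\delta$ and $\mu(\xi)$ is shown there to be bounded away from $0$, this convergence is in fact exponentially fast, uniformly for $\xi$ in a complex polydisc about the origin, and the maps $\xi\mapsto\sft_\xi(n)$ are analytic on that polydisc and uniformly bounded in $(n,\xi)$.

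Next I would differentiate \eqref{eq:plan-tilt} in $\xi$ at $\xi=0$. As $\lbr\xi\mapsto\sft_\xi(n)\rbr_n$ is a locally bounded family of holomorphic functions converging pointwise, Vitali's theorem gives convergence of all derivatives, locally uniformly, to the corresponding derivatives of $\xi\mapsto 1/\mu(\xi)$. Applying $\partial_{\xi_j}$ at $\xi=0$ and using $\lambda(0)=0$ yields
\be
\label{eq:plan-diff}
\lim_{n\to\infty}\sum_\sfx\sft(\sfx,n)\lb\sfx_j-n\,\partial_j\lambda(0)\rb=-\frac{\partial_j\mu(0)}{\mu(0)^2},
\ee
a finite constant. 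Dividing by $n\,\sft(n)$ and recalling $\sft(n)\to 1/\mu(0)>0$ from \eqref{eq:ARen-lim} gives $\lim_n\frac1n\sum_\sfx\sft(\sfx,n)\sfx_j/\sft(n)=\partial_j\lambda(0)$ for every coordinate $j$, that is, $\lim_n\frac1n\bbQ_n(\sfX)=\nabla\lambda(0)$.

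It then remains to identify $\nabla\lambda(0)$. Implicit differentiation of $F(\xi,\lambda(\xi))\equiv0$ at $\xi=0$ gives $\partial_{\xi_j}F(0,0)+\partial_\lambda F(0,0)\,\partial_j\lambda(0)=0$, and from \eqref{eq:ARen-Key-eq} one computes $\partial_{\xi_j}F(0,0)=\sum_{\sfx,n}\sfx_j\,\sff(\sfx,n)=\bbE\sfX_j$ and $\partial_\lambda F(0,0)=-\sum_{\sfx,n}n\,\sff(\sfx,n)=-\bbE\sfT$, both finite by \eqref{eq:AR-exp-d}. Hence $\partial_j\lambda(0)=\bbE\sfX_j/\bbE\sfT$, which is \eqref{eq:AR-Ballistic}.

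I expect the main obstacle to be the interchange of $\lim_{n\to\infty}$ with $\xi$-differentiation, and this is exactly what the complex-analytic setup is designed to handle: because $\hat\sff_\xi$ satisfies Assumption~\ref{as:ARenewal-c} uniformly on a genuine complex neighbourhood, each $\sft_\xi(n)$ is honestly holomorphic in $\xi$ and uniformly bounded, so Vitali (equivalently, a Cauchy-estimate argument on a small polycircle) upgrades the pointwise convergence in \eqref{eq:plan-tilt} to convergence of derivatives for free. A secondary point needing only a line of care — the same one flagged after \eqref{eq:TH-J-funcZ} — is to check that the rearrangements of the multiple series defining $\hat\sff_\xi$ and $\sft_\xi(n)$, and the summation of $\sft(\sfx,n)$ over its genuine support, are legitimate; all of this follows from the exponential tails \eqref{eq:AR-exp-d}.
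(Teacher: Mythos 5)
Your proposal is correct and follows essentially the same route as the paper: tilt the renewal array via \eqref{eq:AR-ftxi}, invoke Lemma~\ref{lem:AR-fxi} together with the exponential renewal convergence \eqref{eq:AR-difbound} to get \eqref{eq:plan-tilt} in the sense of analytic functions of $\xi$, differentiate at $\xi=0$, and identify $\nabla\lambda(0)=\bbE\sfX/\bbE\sfT$ by implicit differentiation of \eqref{eq:ARen-Key-eq}. Your explicit appeal to Vitali's theorem simply spells out what the paper compresses into the phrase ``convergence in the sense of analytic functions on $\bbD_\delta^d$'', so there is no substantive difference.
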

\begin{proof}
 Note that
\[
\bbQ_n\lb \sfX \rb = 
\frac{\sum_{\sfx }\sft (\sfx , n)\sfx}{\sft (n )} = \nabla_\xi \log\lb
\sum_{\sfx }{\rm e}^{\xi\cdot\sfx }\sft (\sfx ,n )\rb (0) .
\]
For $\abs{\xi}$ small we can rely on  Lemma~\ref{lem:AR-lim-exp} and Lemma~\ref{lem:AR-fxi}
to conclude that 
\be 
\label{eq:AR-conv-to-muxi}
 {\rm e}^{-\lambda (\xi )n} \sum_{\sfx} {\rm e}^{\xi\cdot\sfx }\sft (\sfx ,n ) = \frac{1}{\mu (\xi )}
\lb 1 +\smo{(1+\epsilon)^{-n}}\rb ,
\ee
where $\mu (\xi )$ was defined in \eqref{eq:AR-muxi}. The convergence in 
\eqref{eq:AR-conv-to-muxi} is in a sense of analytic functions on $\bbD_\delta^d$ for $\delta$ 
small enough. As a result, the convergence, 
\be 
\label{eq:AR-Qn-exp}
{\rm e}^{-\lambda (\xi )n} \bbQ_n \lb {\rm e}^{\xi \cdot \sfX }\rb = \frac{\mu (0)}{\mu (\xi )}
\lb 1 +\smo{(1+\epsilon)^{-n}}\rb, 
\ee
is also in a sense of analytic functions on $\bbD_\delta^d$. 
Since $\lambda (0) = 0$, 
\be 
\label{eq:AR-vn}
 \sfv_n \df \frac{1}{n} \bbQ_n\lb \sfX\rb = \nabla\lambda (0) - \frac{1}{n}\nabla\log \mu (0) +
\smo{(1+\epsilon)^{-n}}. 
\ee
\eqref{eq:AR-Ballistic} follows, since  by \eqref{eq:ARen-Key-eq}
\be
\label{eq:AR-QnOfX}
\nabla\lambda (0) = \frac{\bbE \sfX}{\bbE \sfT} =\sfv .
\ee
\end{proof}
{\bf Integral central limit theorem.} 
For $\xi\in\bbC^d$ consider
\be 
\label{eq:AR-Sn-Func}
 \phi_n (\xi ) =\bbQ_n\lb {\rm exp}\lbr 
 i (\sfX  - n \sfv_n )\cdot\xi\rbr\rb .
\ee
For any $R$ fixed,  $\phi_n \lb \frac{\cdot}{\sqrt{n} } \rb$ is well defined on $\bbD_R^d$.
 By \eqref{eq:AR-Qn-exp} and \eqref{eq:AR-vn}, 
 \be 
 \label{eq:AR-Sn-CLT}
 \phi_n \lb \frac{\xi }{\sqrt{n} } \rb = {\rm exp}\lbr n\lb \lambda (\frac{i\xi}{\sqrt{n}}) 
 - \nabla\lambda (0)\cdot 
 \frac{i\xi}{\sqrt{n}} \rb + \bigo{\frac{R}{\sqrt{n }} }\rbr = 
 {\rm e}^{-\frac{1}{2}\Xi\xi\cdot\xi + \bigo{\frac{R}{\sqrt{n }} }}, 
 \ee
in the sense of analytic functions on  $\bbD_R^d$.
Above $\Xi \df {\rm Hess}(\lambda )$. We claim:
\begin{lem}
\label{lem:AR-Xi} 
$\Xi$ is a positive definite $d\times d$ matrix. 
\end{lem}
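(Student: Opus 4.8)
The plan is to compute $\Xi = {\rm Hess}(\lambda)(0)$ explicitly and recognise it as a positive multiple of a genuine covariance matrix; positive \emph{semi}-definiteness is then automatic (it also follows for free from convexity of $\lambda$ on $\bbR^d\cap\bbD_\delta^d$, Remark~\ref{rem:ConvF}, or a posteriori from \eqref{eq:AR-Sn-CLT} together with Bochner's theorem), so the whole question is non-degeneracy, which I will reduce to the non-degeneracy hypothesis in Assumption~\ref{as:AR-distribution}.

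First I would differentiate the defining identity $F(\xi,\lambda(\xi))\equiv 0$ twice at $\xi=0$. Writing $F=\log M$ with $M(\xi,\mu)=\sum_{\sfx,n}{\rm e}^{\sfx\cdot\xi-\mu n}\sff(\sfx,n)$ and using $M(0,0)=1$, the partial derivatives of $F$ at the origin are the second–order cumulants of $(\sfX,\sfT)$ under $\sff$ (all finite by \eqref{eq:AR-exp-d}): $\nabla_\xi F=\bbE\sfX$, $\partial_\mu F=-\bbE\sfT$, ${\rm Hess}_\xi F={\rm Cov}(\sfX)$, $\nabla_\xi\partial_\mu F=-{\rm Cov}(\sfX,\sfT)$, $\partial_\mu^2 F={\rm Var}(\sfT)$, the moments being taken under $\sff$. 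First–order implicit differentiation reproduces $\nabla\lambda(0)=\bbE\sfX/\bbE\sfT=\sfv$ (cf.\ \eqref{eq:AR-QnOfX}); second–order implicit differentiation, after solving the resulting linear identity for ${\rm Hess}(\lambda)$, should give
\be
\label{eq:AR-Xi-plan}
\Xi=\frac{1}{\bbE\sfT}\bigl({\rm Cov}(\sfX)-\sfv\,{\rm Cov}(\sfT,\sfX)-{\rm Cov}(\sfX,\sfT)\,\sfv^{\mathrm T}+{\rm Var}(\sfT)\,\sfv\,\sfv^{\mathrm T}\bigr)=\frac{1}{\bbE\sfT}\,{\rm Cov}_\sff\!\bigl(\sfX-\sfv\,\sfT\bigr).
\ee
This is a routine computation, so I would simply carry it out and record \eqref{eq:AR-Xi-plan}.

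For the conclusion: $\bbE\sfT>0$ (indeed $\sfT\geq 1$ by the renewal set–up), and ${\rm Cov}_\sff(\sfX-\sfv\sfT)$ is positive semi-definite as any covariance matrix, so $\Xi$ is positive semi-definite. It fails to be positive definite only if there is a unit vector $a\in\bbR^d$ with $a\cdot(\sfX-\sfv\sfT)$ constant $\sff$-a.s.; equivalently $(a,-a\cdot\sfv)\cdot(\sfX,\sfT)$ is $\sff$-a.s.\ constant with $(a,-a\cdot\sfv)\neq 0$, i.e.\ the support of $\sff$ lies in an affine hyperplane of $\bbR^{d+1}$. This contradicts the non-degeneracy of the $(d+1)$-dimensional distribution of $\sfU=(\sfX,\sfT)$ postulated in Assumption~\ref{as:AR-distribution}. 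Hence $\Xi$ is positive definite. (Only the non-degeneracy part of that assumption is used; aperiodicity is irrelevant here.)

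The only genuinely substantive point — everything else is linear algebra and a two-line application of the analytic implicit function theorem already invoked in the shape theorem — is the passage from ``$(\sfX,\sfT)$ non-degenerate in $\bbR^{d+1}$'' to non-degeneracy of the single linear image $\sfX-\sfv\sfT\in\bbR^d$, i.e.\ the observation that the coefficient vector $(a,-a\cdot\sfv)$ is nonzero whenever $a\neq0$. I do not expect any real obstacle beyond bookkeeping the second–derivative identity carefully.
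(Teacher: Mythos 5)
Your proposal is correct and follows essentially the same route as the paper: a second-order expansion of the defining identity $F(\xi,\lambda(\xi))\equiv 0$ (the paper expands $\sum_{\sfx,n}\sff(\sfx,n){\rm e}^{\epsilon\xi\cdot\sfx-n\lambda(\epsilon\xi)}\equiv 1$ in $\epsilon$), yielding ${\rm Hess}\,\lambda(0)\,\xi\cdot\xi=\frac{1}{\bbE\sfT}\bbE\bigl(\lb\sfX-\sfv\,\sfT\rb\cdot\xi\bigr)^2$, followed by the observation that degeneracy would force $(\sfX,\sfT)$ onto a hyperplane, contradicting Assumption~\ref{as:AR-distribution}. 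Your explicit matrix form of $\Xi$ and the remark that only the non-degeneracy (not aperiodicity) part of the assumption is used are consistent with, and slightly more detailed than, the paper's two-line argument.
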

\begin{proof}
 Fix $\xi\in\bbR^d\setminus 0$ and consider \eqref{eq:AR-lfunc}:
\[
 \sum_{\sfx , n} f (\sfx , n) {\rm e}^{\epsilon\xi\cdot\sfx - n\lambda (\epsilon \xi )}\equiv 1 , 
\]
which holds for all $\abs{\epsilon} <\delta/\abs{\xi}$. The second order expansion gives:
\[
 {\rm Hess}\lambda (0)\, \xi\cdot\xi = 
 \frac{1}{\bbE \sfT}\bbE\lb \lb \sfX - \frac{\bbE \sfX}{\bbE \sfT} \sfT\rb\cdot\xi
\rb^2 .
\]
The claim of the lemma follows from the non-degeneracy Assumption~\ref{as:AR-distribution}. 
\end{proof}  
In view of Lemma~\ref{lem:AR-Xi}, 
 asymptotic formula \eqref{eq:AR-Sn-CLT} 
already implies the integral form of the CLT: The family of random vectors 
$\frac{1}{\sqrt{n}} \lb \sfX - n\sfv_n \rb$ weakly converges (under 
$\lbr\bbQ_n\rbr$) 
  to $\calN\lb 0 , \Xi \rb$. 
\smallskip 

\noindent
{\bf Local CLT.} 
$\phi_n$ is related to the characteristic function of $\sfX$ in the 
following way: For any $\theta\in\bbR^d$ , 
\[
 \phi_n (\theta ) = {\rm e}^{ - i  n\sfv_n\cdot\theta } \bbQ_n\lb {\rm e}^{i\theta\cdot \sfX}\rb 
 = \frac{{\rm e}^{ - in\sfv_n\cdot\theta } }{\sft ( n )} 
 \sum_{\sfx} \sft (\sfx , n) {\rm e}^{i\sfx\cdot\theta} .
\]
The complex array $\lbr \sft (\sfx , n) {\rm e}^{i\sfx\cdot\theta}\rbr$ is generated via multi-dimensional
renewal relation \eqref{eq:ARenewal-d} by $\lbr \sff (\sfx , n) {\rm e}^{i\sfx\cdot\theta}\rbr$. 
Since  $\lbr \sff (\sfx , n)\rbr$ is a non-degenerate probability distribution on $\bbZ^d$ with 
exponentially decaying tails, for any $\delta >0$ one can find 
$\kappa  =\kappa (\delta ) >0$ and $\epsilon = \epsilon (\delta )$, such that the 
array $\lbr \sff (\sfx , n) {\rm e}^{i\sfx\cdot\theta}\rbr$ 
satisfies Assumption~\ref{as:ARenewal-c-small} uniformly in $\abs{\theta} \geq \delta$. 
We conclude: 
\begin{lem}
\label{lem:AR-chi-control} 
For any $\delta >0$ there exists $c_\delta >0$ such that 
\be 
\label{eq:AR-chi-control}
\abs{\phi_n (\theta )}\leq {\rm e}^{-c_\delta n} ,
\ee
whenever $\theta \in\bbR^d$ satisfies $\abs{\theta }\geq \delta$. 
\end{lem}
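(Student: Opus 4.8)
The plan is to twist the multi-dimensional renewal array by a Fourier factor, pass to one-dimensional marginals, and invoke the complex renewal estimate \eqref{eq:AR-tn-c-small}; the real work is to verify Assumption~\ref{as:ARenewal-c-small} \emph{uniformly} in $\theta$.

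Since $\sfX$ is $\bbZ^d$-valued, the map $\theta\mapsto\abs{\phi_n(\theta)}$ is $2\pi\bbZ^d$-periodic, so it suffices to bound $\abs{\phi_n(\theta)}$ for $\theta$ in the compact set $K_\delta\df\lbr\theta\in[-\pi,\pi]^d:\abs{\theta}\geq\delta\rbr$. Fix $\theta$ and put $\sft_\theta(\sfx,n)\df\sft(\sfx,n){\rm e}^{i\sfx\cdot\theta}$ and $\sff_\theta(\sfx,n)\df\sff(\sfx,n){\rm e}^{i\sfx\cdot\theta}$. Multiplying \eqref{eq:ARenewal-d} by ${\rm e}^{i\sfx\cdot\theta}$ shows that $\sft_\theta$ again obeys \eqref{eq:ARenewal-d} with kernel $\sff_\theta$, so after summing over $\sfx$ the marginals $\sft_\theta(n)=\sum_\sfx\sft_\theta(\sfx,n)$ and $\sff_\theta(n)=\sum_\sfx\sff_\theta(\sfx,n)$ form a complex renewal pair in the sense of \eqref{eq:ARenewal}, with $\sum_\sfx\sft(\sfx,n){\rm e}^{i\sfx\cdot\theta}=\sft_\theta(n)$. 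Consequently $\phi_n(\theta)={\rm e}^{-in\sfv_n\cdot\theta}\sft_\theta(n)/\sft(n)$, and since $\sft(n)\to 1/\mu>0$ by Lemma~\ref{lem:AR-lim-exp}, the lemma will follow once we show $\abs{\sft_\theta(n)}\leqs(1+\epsilon)^{-n}$ uniformly in $\theta\in K_\delta$; the finitely many small $n$ not yet of that form are handled by the trivial bound $\abs{\phi_n(\theta)}\leq 1$, valid because $\bbQ_n$ in \eqref{eq:ACond-m} is a probability measure.

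The heart of the matter is to produce $\epsilon>0$ and $\kappa>0$, both independent of $\theta\in K_\delta$, such that $\hat\sff_\theta(z)=\sum_n\sff_\theta(n)z^n$ satisfies Assumption~\ref{as:ARenewal-c-small}; then \eqref{eq:AR-tn-c-small} gives $\abs{\sft_\theta(n)}\leq\kappa^{-1}(1+\epsilon)^{-n}$ and we are done. Property (a) is immediate from \eqref{eq:AR-exp-d}: $\abs{\sff_\theta(n)}\leq\sff(n)\leqs{\rm e}^{-\nu n}$, so $\hat\sff_\theta$ is analytic on $\bbD_{{\rm e}^{\nu}}$ and the family $\lbr\hat\sff_\theta\rbr_{\theta\in K_\delta}$ is uniformly bounded and equicontinuous on $\bar\bbD_{1+\epsilon_0}$ for any $\epsilon_0<{\rm e}^{\nu}-1$. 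For property (b) I would first work on $\bar\bbD_1$: if $\abs{z}=r<1$ then $\abs{\hat\sff_\theta(z)}\leq\hat\sff(r)<1$, while if $\abs{z}=1$, writing $z={\rm e}^{i\psi}$ one has $\hat\sff_\theta({\rm e}^{i\psi})=\bbE\,{\rm e}^{i(\sfX\cdot\theta+\sfT\psi)}$, the characteristic function of the $\bbZ^{d+1}$-valued vector $\sfU=(\sfX,\sfT)$ at $(\theta,\psi)$; by the aperiodicity assumed in Assumption~\ref{as:AR-distribution} its modulus equals $1$ only for $(\theta,\psi)\in 2\pi\bbZ^{d+1}$, which is impossible when $\theta\in K_\delta$ since then $0<\abs{\theta}$ and $\abs{\theta_j}\leq\pi$ for all $j$. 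Hence $1-\hat\sff_\theta(z)$ is nonvanishing on the compact set $\bar\bbD_1\times K_\delta$, so $\abs{1-\hat\sff_\theta(z)}\geq 2\kappa$ there for some $\kappa>0$ by continuity. Finally, equicontinuity of $\lbr\hat\sff_\theta\rbr$ lets one shrink $\epsilon\in(0,\epsilon_0]$ so that $\abs{\hat\sff_\theta(z)-\hat\sff_\theta(z/\abs{z})}\leq\kappa$ whenever $1\leq\abs{z}\leq 1+\epsilon$, uniformly in $\theta$, which propagates $\abs{1-\hat\sff_\theta(z)}\geq\kappa$ to all of $\bar\bbD_{1+\epsilon}$ and verifies Assumption~\ref{as:ARenewal-c-small} uniformly in $\theta\in K_\delta$.

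The main obstacle is property (b): one must combine the aperiodicity hypothesis, which excludes the value $1$ on the unit circle, with a compactness-and-equicontinuity argument in the pair $(z,\theta)$ so as to obtain a single constant $\kappa$ valid for all $\theta\in K_\delta$, and then push the lower bound a little past $\abs{z}=1$. The remaining ingredients --- the twisted renewal identity for $\sft_\theta$, the estimate \eqref{eq:AR-tn-c-small}, and the reduction through $\sft(n)\to 1/\mu$ --- are routine.
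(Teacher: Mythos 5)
Your proof is correct and follows essentially the same route as the paper: twist the renewal array by ${\rm e}^{i\sfx\cdot\theta}$, verify Assumption~\ref{as:ARenewal-c-small} uniformly in $\abs{\theta}\geq\delta$, and conclude via \eqref{eq:AR-tn-c-small} together with $\sft(n)\to 1/\mu$; the paper merely asserts the uniform verification in one sentence, which you carry out by compactness and equicontinuity. Two small remarks: plain aperiodicity (Assumption~\ref{as:AR-distribution}) only guarantees that $\hat\sff_\theta(z)\neq 1$ on $\abs{z}=1$ for $\theta\notin 2\pi\bbZ^d$ --- the modulus can equal one off $2\pi\bbZ^{d+1}$ when the support sits in a coset of a sublattice --- but nonvanishing of $1-\hat\sff_\theta$ is all you actually use, so the argument stands after replacing ``modulus equals $1$'' by ``equals $1$''; and your restriction to $\theta\in[-\pi,\pi]^d$ is the intended reading of the statement (taken literally for all $\theta\in\bbR^d$ it would fail at $\theta\in 2\pi\bbZ^d\setminus\lbr 0\rbr$ by periodicity), consistent with its use in the Fourier inversion over $\bbT^d$ in \eqref{eq:AR-Fourier}.
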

One applies Lemma~\ref{lem:AR-chi-control} as follows: By the Fourier inversion formula, 
\be 
\label{eq:AR-Fourier}
\bbQ_n (\sfx ) = \frac{1}{(2\pi  )^d}\int_{\bbT^d} {\rm e}^{-i\theta \cdot \lb \sfx -n \sfv_n\rb}
\phi_n (\theta )\dd \theta .
\ee
Choose $\delta, \epsilon  >0$ small. The above integral splits into the sum
of three terms:
\be 
\label{eq:AR-threeterms}
\begin{split}
& \int {\rm e}^{-i\theta \cdot \lb \sfx -n \sfv_n\rb}
\phi_n (\theta )\dd \theta 
= 
\int_{A_n } {\rm e}^{-i\theta \cdot\lb \sfx -n \sfv_n\rb}
\phi_n (\theta )\dd \theta \\
&\quad +
\int_{B_n  } {\rm e}^{-i\theta \cdot\lb \sfx -n \sfv_n\rb}
\phi_n (\theta )\dd \theta +
\int_{\abs{\theta} \geq \delta } {\rm e}^{-i\theta \cdot\lb \sfx -n \sfv_n\rb}
\phi_n (\theta )\dd \theta .
\end{split}
\ee 
Above $A_n = \lbr\theta ~:~\abs{\theta} < n^{-\frac{1}{2}+\epsilon}\rbr$ and 
$B_n = \lbr\theta ~:~n^{-\frac{1}{2}+\epsilon} \leq \abs{\theta} <\delta\rbr$. 
The third integral is negligible by Lemma~\ref{lem:AR-chi-control}. In order to control the second
integral (over $B_n$) note that for $\delta$ small enough 
\eqref{eq:AR-Qn-exp} applies, 
and hence, in view of positive definiteness of $\Xi$, 
\[
 \abs{\int_{ B_n} {\rm e}^{-i\theta \cdot\lb \sfx -n \sfv_n\rb}
\phi_n (\theta )\dd \theta } 
 \leq \int_{ B_n }
{\rm e}^{-\frac{n}{4}\Xi\, \theta\cdot\theta }\dd\theta  .
\]
The first integral in \eqref{eq:AR-threeterms} gives  local CLT asymptotics uniformly in 
$\abs{\sfx - n\sfv_n} = \smo{n^{\frac{1}{2} - (d+1 )\epsilon} }$.  Namely, for such $\sfx$-s
\[
 \int_{\abs{\theta} < n^{-\frac{1}{2}+\epsilon} } {\rm e}^{-i\theta \cdot \lb \sfx -n \sfv_n\rb }
 \phi_n (\theta )
\dd\theta  = 
\int_{\abs{\theta} < n^{-\frac{1}{2}+\epsilon} } 
\phi_n (\theta ) 
 \dd\theta + 
\smo{
\frac{1}{\sqrt{n^{d}}}
} .
\]
As in \eqref{eq:AR-Sn-CLT}, 
\[
\phi_n (\theta )  = {\rm e}^{-\frac{n}{2}\Xi\, \theta\cdot \theta + \bigo{n \abs{\theta}^3}} ,
\]
uniformly in $\abs{\theta} < n^{-\frac{1}{2}+\epsilon}$. 
We have proved: 
\begin{prop}
 \label{prop:loc-CLT}
For any fixed $\epsilon >0$ the asymptotic relation:
\be 
\label{eq:loc-CLT}
\bbQ_n (\sfx )  = \frac{1}{\sqrt{(2\pi n )^d {\rm det}\Xi}}\lb 1 +\smo{1}\rb, 
\ee
holds uniformly in $\abs{\sfx  -n\sfv_n } \leqs n^{\frac{1}{2} -  \epsilon}$. 
\end{prop}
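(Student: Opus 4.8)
The starting point is the Fourier inversion formula \eqref{eq:AR-Fourier} and the three-piece frequency decomposition \eqref{eq:AR-threeterms}. The only real freedom is the choice of the cut-off radii; I would fix once and for all an auxiliary exponent $\epsilon_0\in(0,\tfrac16)$, \emph{independent} of the $\epsilon$ in the statement, and set $A_n=\{\theta:\abs{\theta}<n^{-1/2+\epsilon_0}\}$, $B_n=\{\theta:n^{-1/2+\epsilon_0}\le\abs{\theta}<\delta\}$, with $\delta>0$ small enough that \eqref{eq:AR-Qn-exp} applies on $B_n$. Then $(2\pi)^d\,\bbQ_n(\sfx)$ splits into the corresponding three integrals, and \eqref{eq:loc-CLT} will follow once one shows that the $A_n$-integral produces the Gaussian constant while the other two are $\smo{n^{-d/2}}$.

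\noindent
\textbf{The two outer pieces.} On $\{\abs{\theta}\ge\delta\}$, Lemma~\ref{lem:AR-chi-control} gives $\abs{\phi_n(\theta)}\le{\rm e}^{-c_\delta n}$, so this contribution is $\smo{n^{-d/2}}$ trivially. On $B_n$, relation \eqref{eq:AR-Qn-exp} together with positive-definiteness of $\Xi$ (Lemma~\ref{lem:AR-Xi}) yields, for $\delta$ small, the Gaussian majorant $\abs{\phi_n(\theta)}\le{\rm e}^{-\frac n4\Xi\theta\cdot\theta}$; substituting $\theta=u/\sqrt n$ turns $\int_{B_n}$ into $n^{-d/2}\int_{\abs{u}\ge n^{\epsilon_0}}{\rm e}^{-\frac14\Xi u\cdot u}\,\dd u$, and the Gaussian tail beyond radius $n^{\epsilon_0}$ is smaller than any power of $n$, so this is again $\smo{n^{-d/2}}$.

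\noindent
\textbf{The core piece.} For the $A_n$-integral I would proceed in three steps. (i) Replace ${\rm e}^{-i\theta\cdot(\sfx-n\sfv_n)}$ by $1$: using $\abs{{\rm e}^{-i\theta\cdot w}-1}\le\abs{\theta}\abs{w}$ with $\abs{w}=\abs{\sfx-n\sfv_n}\leqs n^{1/2-\epsilon}$ and the Gaussian majorant, the incurred error is $\leqs n^{1/2-\epsilon}\int_{\bbR^d}\abs{\theta}{\rm e}^{-\frac n4\Xi\theta\cdot\theta}\,\dd\theta\leqs n^{1/2-\epsilon}\,n^{-(d+1)/2}=\smo{n^{-d/2}}$; this is precisely the step where the strict range $n^{1/2-\epsilon}$ (rather than $n^{1/2}$) is needed. (ii) Insert the refined quadratic expansion $\phi_n(\theta)=\exp\{-\frac n2\Xi\theta\cdot\theta+\bigo{n\abs{\theta}^3}\}$ from \eqref{eq:AR-Sn-CLT}, valid uniformly on $A_n$; since $\epsilon_0<\tfrac16$ one has $n\abs{\theta}^3<n^{-1/2+3\epsilon_0}\to0$ on $A_n$, hence $\phi_n(\theta)={\rm e}^{-\frac n2\Xi\theta\cdot\theta}(1+\smo{1})$ uniformly there. (iii) Conclude $\int_{A_n}\phi_n=(1+\smo{1})\int_{A_n}{\rm e}^{-\frac n2\Xi\theta\cdot\theta}\,\dd\theta=(1+\smo{1})\bigl((2\pi)^{d/2}(n^d\det\Xi)^{-1/2}+\smo{n^{-d/2}}\bigr)$, the last correction being the negligible Gaussian tail outside $A_n$. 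Dividing the sum of the three pieces by $(2\pi)^d$ gives $\bbQ_n(\sfx)=(2\pi n)^{-d/2}(\det\Xi)^{-1/2}(1+\smo{1})$, which is \eqref{eq:loc-CLT}.

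\noindent
\textbf{Main obstacle.} Everything above is Gaussian bookkeeping except for one genuinely delicate point: the quadratic expansion of $\phi_n$ must hold \emph{uniformly} over the entire shrinking ball $A_n$ (equivalently, over a ball of radius $\sim n^{\epsilon_0}$ in the diffusive scaling), not merely pointwise. This is where the earlier upgrade of \eqref{eq:AR-conv-to-muxi}--\eqref{eq:AR-Qn-exp} to convergence \emph{in the sense of analytic functions} on a fixed polydisc is essential: via Cauchy estimates it supplies uniform control both of the cubic remainder and of the subleading prefactor $\mu(0)/\mu(\xi)$, and it is the positive-definiteness of $\Xi=\mathrm{Hess}\,\lambda(0)$ from Lemma~\ref{lem:AR-Xi} that makes every tail error genuinely smaller than $n^{-d/2}$.
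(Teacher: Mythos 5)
Your proof is correct and follows essentially the same route as the paper: Fourier inversion, the three-region split with Lemma~\ref{lem:AR-chi-control} on $\abs{\theta}\geq\delta$, the Gaussian majorant from \eqref{eq:AR-Qn-exp} on $B_n$, and the expansion \eqref{eq:AR-Sn-CLT} on $A_n$. Your only deviation is a mild refinement of the bookkeeping — decoupling the cut-off exponent $\epsilon_0$ from the $\epsilon$ in the statement and using the Gaussian majorant (rather than a crude volume bound) when discarding the phase factor — which lets you treat the full range $\abs{\sfx-n\sfv_n}\leqs n^{1/2-\epsilon}$ directly, whereas the paper first gets uniformity on $\smo{n^{1/2-(d+1)\epsilon}}$ and absorbs the difference by the arbitrariness of $\epsilon$.
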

In fact, as it will become clear from local large deviations estimates below, 
it would be enough to state \eqref{eq:loc-CLT} only for $\abs{\sfx - n\sfv_n}\leq 1$. 
\smallskip 

\noindent
{\bf Local large deviations estimates.}  Assumption~\eqref{eq:AR-exp-d} implies that the family of measures
$\lbr \bbQ_n\rbr$ is exponentially tight. Furthermore, by the very definition of $\sft$ in 
\eqref{eq:ARenewal-d}
\[
 \sft (\sfx +\sfy , n+ m ) \geq \sft (\sfx ,n )\sft (\sfy , m ) . 
\]
Hence, by the sub-additivity argument the function 
\be 
\label{eq:AR-LD-func}
J (\sfu ) = -\lim_{n\to\infty}\frac{1}{n}\log\sft (n , \lfloor n\sfu\rfloor )
\ee
is well defined and convex on $\bbR^d$. By the renewal theorem \eqref{eq:ARen-lim}, 
\[
 J (\sfu ) = -\lim_{n\to\infty}\frac{1}{n}\log\bbQ_n  (\lfloor n\sfu\rfloor ) .
\]
Consequently, $\lbr \bbQ_n\rbr$ satisfies the large deviation principle with $J$. 
\smallskip 

A large deviation result states what it states.  Obviously, $J$ in \eqref{eq:AR-LD-func} is 
non-negative, and $\min J = J (\sfv ) =0$, where $\sfv$ was defined in 
 \eqref{eq:AR-Ballistic}. 
We shall show that $J$ has a quadratic minimum on $\bbB_\kappa^d (\sfv )$, and prove a 
local LD asymptotic relation for any $\sfu \in \bbB_\kappa^d (\sfv )$.  

Recall that $\lambda$ is analytic (and convex )  on a (real) ball
$\bbB_\delta^d$. Since, as we already know by Lemma\ref{lem:AR-Xi}, ${\rm Hess}\lb \lambda\rb (0 )$
is non-degenerate, and since $\nabla \lambda (0) = \sfv$, there exists $\kappa >0$ such that
\be 
\label{eq:AR-kappa}
\bbB_\kappa^d (\sfv )\subset \nabla\lambda\big|_{\bbB_\delta^d} .
\ee

Let $\sfu\in \bbB_\kappa^d (\sfv )$. Set $\sfu_n = \lfloor n\sfu \rfloor /n$ 
and choose $\xi_n\in \bbB_\delta^d$ such that 
$\sfu_n = \nabla\lambda (\xi_n )$. By \eqref{eq:AR-kappa} such $\xi_n$ exists (at least for all $n$ sufficiently large), 
and, for it is unique
by the implicit function theorem. Recall how we defined tilted function $\sft_{\xi_n}$ in 
\eqref{eq:AR-ftxi}. Then, 
\be 
\label{eq:AR-xi-tilt}
\sft (n, \lfloor n\sfu \rfloor ) = {\rm e}^{n\lambda  (\xi_n  ) - \xi_n\cdot  \lfloor n\sfu \rfloor  }
\sft_{\xi_n}  (n, \lfloor n\sfu \rfloor )
\ee
The term $\sft_{\xi_n}  (n, \lfloor n\sfu \rfloor )$ obeys  uniform 
sharp CLT asymptotics  \eqref{eq:loc-CLT} with $\Xi (\xi_n ) = {\rm Hess} (\lambda )(\xi_n )$.
The term 
\[
J (\sfu_n ) = \xi_n\cdot \sfu_n  - \lambda (\xi_n )
\]
is quadratic. Indeed, for any $\eta\in\bbB_\kappa^d (\sfv )$ and  $\sfw =\nabla\lambda (\eta  )$, one, 
using  $\lambda (0 )= 0$,  can rewrite:
\be 
\label{eq:AR-expansion}
\begin{split}
 \eta   \cdot\sfw -\lambda (\eta ) &= \lambda (0 ) - \lambda (\eta  ) - (-\eta )\cdot\nabla\lambda (\eta ) 
 \\
 &= \lbr \int_0^1\int_0^s {\rm Hess}(\lambda ) ((1-\tau )\eta )\dd\tau\dd s\rbr \eta\cdot \eta, 
 \end{split}
\ee
and rely on non-degeneracy of ${\rm Hess} (\lambda )$ on $\bbB_\kappa^d$. Incidentally, we have checked
that on $\bbB_\kappa^d (\sfv )$ the function   $J = \lambda^*$ is real analytic 
with ${\rm Hess}( J )(\sfv )$ being positive definite. 

The local limit estimates we have derived reads as: Recall notation $\sfu_n = \lfloor n\sfu \rfloor/n$ and
$\xi_n$ being defined via $\sfu_n = \nabla\lambda ( \xi_n )$. Then, 
\be 
\label{eq:AR-LocLim}
\bbQ_n \lb \lfloor n\sfu \rfloor \rb = 
\frac{\mu ( 0 )}{\mu (\xi_n ) \sqrt{(2\pi )^d{\rm det}\, \Xi (\xi_n )}} {\rm e}^{-n J (\sfu_n )}
\lb 1 +\smo{1} \rb , 
\ee
uniformly in  $\sfu \in \bbB_\kappa^d (\sfv )$

\subsection{Ballistic phase of annealed polymers} 
\label{ssec:BalPhaseA}
Recall that the reference polymer weights $\Wd$ are given by \eqref{eq:IN-aweight}. 
$\Phi$ is the self-interaction  potential \eqref{eq:Phi}  which satisfies the 
attractivity  condition 
\eqref{eq:attractive}.  
Let $h\not\in \Knot$ and, accordingly, $h\in\pKl$ with  $\lambda = \lambda (h ) >0$. 
We shall 
consider the normalized  weights
\be 
\label{eq:L4-Weight-hl}
\Wdn{ h , \lambda } (\gamma ) = {\rm e}^{h\cdot\sfX (\gamma ) - 
\lambda \abs{\gamma}}\Wd (\gamma ) .
\ee
These weight are normalized for the following reason: As before define 
\be 
\label{eq:L3-Famhl-d}
\calP_{\sfx}  =\lbr \gamma~:~ \sfX (\gamma ) = \sfx\rbr\quad 
\calP_{n}  =\lbr \gamma~:~ \abs{\gamma } = n\rbr\quad {\rm and}\quad 
\calP_{\sfx , n} =  \calP_{\sfx}\cap \calP_{n} .
\ee
Then, 
\be
\label{eq:L3-Weight-Famhl-d}
\sum_{\gamma\in\calP_{\sfx}} \Wdn{ h , \lambda } (\gamma )
= {\rm e}^{h\cdot \sfx }G_\lambda (\sfx ) 
 \asymp {\rm e}^{h\cdot\sfx - \tl (\sfx )}
 \ 
{\rm and }\ 
\sum_{\gamma\in\calP_{n}} \Wdn{ h , \lambda } (\gamma ) = {\rm e}^{-n\lambda} Z_n (h ) \asymp 1 .
\ee
If $h\cdot \sfx = \tl (\sfx )$, then the first term in \eqref{eq:L3-Weight-Famhl-d} 
is also of order $1$. More generally, let us define the following crucial notion:

\noindent 
{\bf Surcharge function. }
For any $\sfx\in\bbR^d$ 
define the surcharge function
\be 
\label{eq:Ans-function}
\sfs_h (\sfx ) = \tl (\sfx ) -
h\cdot\sfx \geq 0 .
\ee
In view of  \eqref{eq:TH-GlHl-bound}
 the first  of the estimates in \eqref{eq:L3-Weight-Famhl-d} could be upgraded 
as follows (see \eqref{eq:L4-EstimateRW-2} for the definition of $\sfA_d$ )
\be 
\label{eq:43-Weight-surcharge}
\Wdn{ h , \lambda } (\sfx )\df 
\sum_{\gamma\in\calP_{\sfx}} \Wdn{ h , \lambda } (\gamma ) 
\asymp {\rm e}^{-\frs_h (\sfx )}\  {\rm and, moreover,}
\ 
\Wdn{ h , \lambda } (\sfx  ) \leqs   \sfA_d (k_0\abs{\sfx}_1 )
{\rm e}^{-\frs_h (\sfx )} .
\ee
{\bf Surcharge cone.} 
Let us say that $\calY_1$ is a $\delta_1$-surcharge cone with respect to $h$ if:
\smallskip 

\noindent
(a) $\calY_1$ is a positive cone (meaning that its opening is strictly less than $\pi$) 
and it   contains a lattice direction $\pm\sfe_k$ in its interiour. 

\noindent
(b) For any $\sfx\not\in \calY_1$ the surcharge function $\frs$ satisfies 
\be 
\label{eq:L3-cone-Y}
\frs (\sfx ) =  \tl (\sfx ) - h\cdot\sfx > \delta_1\tl (\sfx ) .
\ee 
For the rest of this section we shall fix $\delta_1 \in (0,1)$ and a $\delta_1$-surcharge cone
$\calY_1$  with respect to $h$.
\smallskip 

\noindent
{\bf Factorization bound.} 
Assume that
the path $\gamma$ can be represented as a concatenation,
\be 
\label{gleta}
 \gamma\, =\, \gamma_0\circ\eta_1\circ\gamma_1\circ\dots\circ \gamma_m\circ\eta_{m+1} , 
\ee
such that paths $\gamma_\ell = \lb\sfu_\ell , \dots , \sfv_{\ell+1}\rb$ satisfy the 
following two properties:
\smallskip 

\noindent
{\bf (P1)} $\gamma_i$ is disjoint from $\gamma_j$ for all $i> j$. 

\noindent
{\bf (P2)} For any $i$ the local time 
$\ell_{\gamma_i} (\sfv_{i+1} )=1$.
\smallskip 

\noindent 
By \eqref{eq:L4-phi-1} and {\bf (P1)}, 
\[
 \Phi_\beta (\gamma ) \geq \Phi_\beta  (\gamma_1\cup\dots\cup\gamma_m ) = \sum_\ell \Phi_\beta 
 (\gamma_\ell ).
\]
Consequently,
\be 
\label{Wfbound}
\Wd
\lb\gamma \rb {\rm e}^{-\lambda\abs{\gamma}} \, \leq\, 
\prod_{l=1}^m \Wd
 (\gamma_\ell ) {\rm e}^{-\lambda\abs{\gamma_\ell} }
\cdot\prod_{k=1}^m {\rm e}^{-\lambda |\eta_k|} .
\ee
Fixing end points $\sfu_1, \sfv_2, \sfu_2, \dots$ and paths $\eta_\ell$ 
in \eqref{gleta}, 
and summing up with respect to all paths $\gamma_1, \dots, \gamma_m $ 
(with $\gamma_\ell = \lb\sfu_\ell , \dots , \sfv_{\ell+1}\rb$) satisfying 
properties {\bf (P1)} and {\bf (P2)} above we derive the following upper bound:
\be 
\label{eq:AN-FactBound}
\sum_{\gamma_1 , \dots , \gamma_m} 
\Wd
\lb\gamma \rb {\rm e}^{-\lambda\abs{\gamma}} \, \leq\, \prod_1^m 
H_\lambda (\sfv_{\ell +1} - \sfu_\ell ) {\rm e}^{-\lambda \sum\abs{\eta_\ell}} 
\leq {\rm e}^{-\sum\tl (\sfv_{\ell +1} - \sfu_\ell ) -\lambda \sum\abs{\eta_\ell}} .
\ee
Let us proceed with describing our algorithm to construct 
representation \eqref{gleta} with properties {\bf (P1)} and {\bf (B2)} for 
any path $\gamma\in \calP_\sfx$. 
\smallskip 

\noindent
{\bf Construction of skeletons.} Skeletons $\hat\gamma_K$ are constructed 
as a collection 
$\hat{\gamma}_K =  [ \frt_K , \frh_K]$, where $\frt_K$ is the trunk and 
$\frh_K$ is the set of hairs of $\hat{\gamma}_K $. 
Let $\gamma\in\calP_x$ and choose a scale $K$. In the sequel  we use 
$\UlK{K} = \lbr \sfu ~: ~\tl (\sfu )\leq K\rbr$ denote  the  ball of 
radius $K$ with respect to $\tl$ (note  that since, in general, 
$\tl (\sfy )\neq \tl (-\sfy )$, it does not have to be  a distance). 
Recall that $R$ denotes the range of the underlying random walk. Choose
$r= r_\lambda   = \min\lbr s ~:~ \bbB_R^d\subset \UlK{s}\rbr$, where as before 
$\bbB_R^d$ is the Euclidean
ball of radius $R$. Let us first explain decomposition \eqref{gleta} and construction 
of trunks (see Figure~\ref{fig:trunk}). 
\smallskip

\noindent
\step{0}. Set  $\sfu_0 =0$, $\tau_0 = 0$ and $\frt_0 =\lbr u_0\rbr$.  Go to
\step{1}. 
\smallskip

\noindent
\step{(l+1)}
If $\lb\gamma (\tau_l) ,\dots \gamma (n)\rb\subseteq \UlK{K} (u_l )$ then 
set $\sigma_{l+1} = n$ and stop.
Otherwise, 
 define 
\begin{equation*}
 \begin{split}
&\sigma_{l+1} = \min\lbr i >\tau_l~:~ 
\gamma (i)\not\in \UlK{K} (\sfu_l )\rbr\\
&\qquad\text{ and}\\
&\tau_{l+1} = 1+ \max 
\lbr i >\tau_l~:~ 
\gamma (i) \in \UlK{K+r } (\sfu_l )\rbr .
\end{split}
\end{equation*}
 Set $\sfv_{l+1} = \gamma (\sigma_{l+1})$ and $\sfu_{l+1} = \gamma (\tau_{l+1})$. 
Update $\frt_K = \frt_K\cup\lbr \sfu_{l+1}\rbr$ and go to \step{(l+2)} \qed

\begin{figure}[h]
\scalebox{.3}{\input{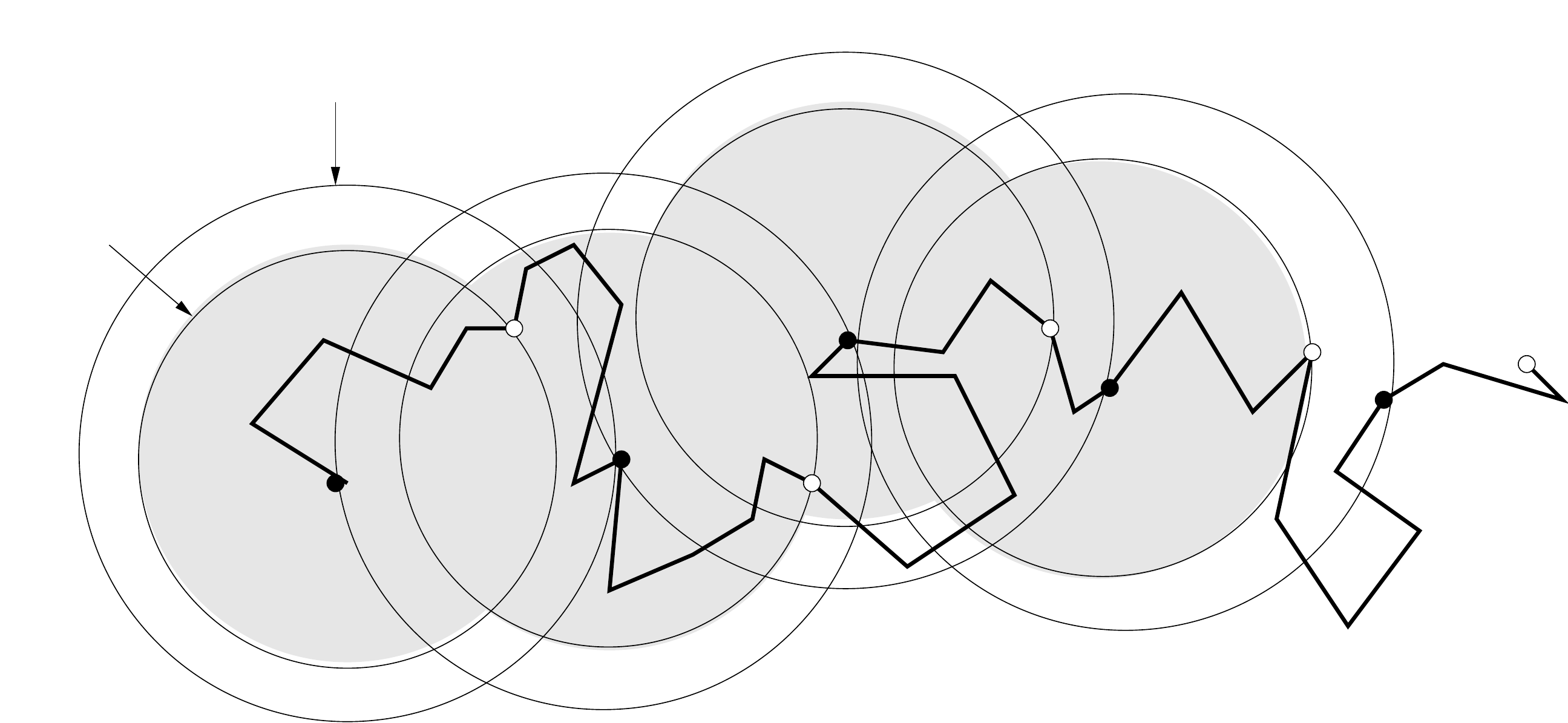_t}}
%
%
\caption{Decomposition \eqref{gleta} and construction of the trunk $\frt_K =\lbr \sfu_1, \dots , \sfu_4\rbr$ 
of the skeleton.}
\label{fig:trunk}       
\end{figure}

\noindent
Clearly the above algorithm leads to a decomposition
of $\gamma$ as in \eqref{gleta} with 
\[
\gamma_l\, =\, \lb \gamma(\tau_l ),\dots ,\gamma (\sigma_{l+1} )\rb\quad
\text{and}\quad \eta_l = \lb \gamma (\sigma_l ),\dots ,\gamma (\tau_l )\rb , 
\]
and with $\gamma_1, \gamma_2 , \dots$ satisfying conditions {\bf (P1)} and {\bf (P2)}.

The set $\frt_K$ is called the trunk of the skeleton $\hat{\gamma}_K$ 
of $\gamma$ on $K$-th scale. 
The hairs $\frh_K$ of $\hat{\gamma}_K$ take into account those $\eta_l$-s
which are long on $K$-th scale. Recall that $\eta_l : \sfv_l\mapsto \sfu_{l}$.
It is equivalent, but, 
since eventually we want to keep track of vertices from the trunk $\frt_K$, 
more convenient to think about $\eta_l$ as of 
a reversed path from $\sfu_l$ to $\sfv_l$. 
Then the $l$-th hair $\frh_K^l = \frh_K [\eta_l ]$ of 
$\gamma$ is constructed as follows: 

If $\eta_l\subseteq \UlK{K} (\sfu_l )$ then $\frh_K^l =
\emptyset
$. 
Otherwise, set $\sfu = \sfu_l$, $\sfv = \sfv_l$, $\eta = \eta_l$, $m =\abs{\eta}$, 
and 
proceed with the following algorithm (see Figure~\ref{fig:hair}): 
\begin{figure}[h]
\sidecaption
\scalebox{.3}{\input{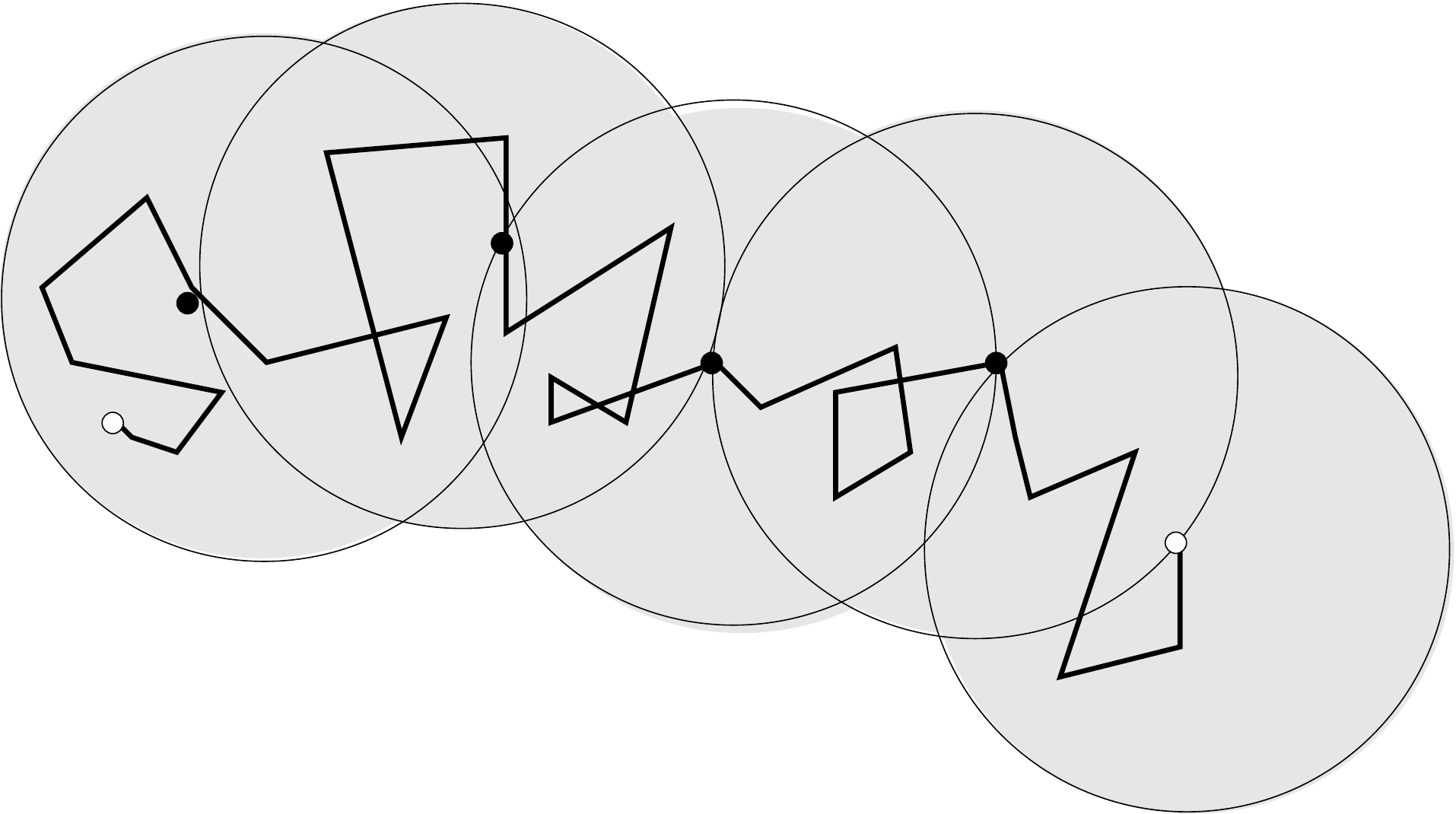_t}}
%
%
\caption{Construction of a hair $\frh_K = \lbr \sfw_1, \dots , \sfw_4\rbr$.  }
\label{fig:hair}       
\end{figure}
\smallskip

\noindent
\step{0}. Set $\sfw_0 =\sfu$, $\tau_0 = 0$ and $\frh_K [\eta ] = \emptyset$. Go 
to \step{1}.
\smallskip

\noindent
\step{(l+1)}. If $\lb \eta (\tau_l ),\dots ,\eta (m)\rb \subseteq \UlK{K} (\sfu_l )$
then stop. Otherwise set
\[
\tau_{l+1}\, =\, \min\lbr j> \tau_l~:~ \eta (j)\not\in \UlK{K} (\sfw_l )\rbr .
\]
Define $\sfw_{l+1} = \eta  (\tau_{l+1} )$, update 
$\frh_K  = \frh_K \cup \lbr \sfw_{l+1}\rbr$ and go to 
\step{(l+2)}. \qed
\smallskip 

\noindent
{\bf Control of $\Wdn{h , \lambda} \lb \hat\gamma_K\rb$.} 
 In the super-critical case $\lambda >0$, and hairs could be  controlled in a crude fashion
via comparison with an underlying walk killed at rate $\lambda$. 
\begin{ex}
\label{ex:L4-E2}
 There exists 
$\epsilon =\epsilon \lb \Pd , \lambda \rb >0$, such that
\be 
\label{eq:L4-E2}
\sum_{\gamma\in\calP_\sfu } {\rm e}^{-\epsilon\abs{\gamma}}\Pd (\gamma ) 
\leqs {\rm e}^{-\epsilon\tl (\sfu )}
\ee
uniformly in $\sfu$. 
\end{ex}
Let $\hat{\gamma}_K = \lb \frt_K , \frh_K\rb$ be a skeleton. We shall carefully control  the 
geometry of the trunk 
$\frt_K = \lb\sfu_0 ,\dots, \sfu_{N}\rb $. As far as the hairs are considered, 
we shall rely on \eqref{eq:L4-E2}, and the only 
thing we shall
control is $\#\lb \frh_K\rb$-the total number of $K$-increments (in $\tl$-metrics). 

Let us fix $\sfu_0, \sfv_1 , \sfu_2, \dots $. By \eqref{eq:AN-FactBound} the 
contribution coming from $\gamma_\ell :\sfu_\ell \to \sfv_{\ell+1}$ paths is 
bounded above by the product of ${\rm e}^{-\tl (\sfv_{\ell+1} -\sfu_\ell )}$. 
Now, by construction $\sfv_{\ell+1} \in \UlK{K+r_\lambda }(\sfu_\ell )
\setminus\UlK{K} (\sfu_\ell )$, which 
means that $\tl (\sfv_{\ell+1} -\sfu_\ell ) \in [K, K+r_\lambda ]$. 
On the other hand if  $\sfu_{\ell +1}$ is defined, then $\sfu_{\ell +1} \in 
\UlK{K+2r_\lambda }(\sfu_\ell )\setminus \UlK{K+r_\lambda }(\sfu_\ell )
$
Hence, 
$\tl (\sfu_{\ell+1} -\sfu_\ell ) \leq \tl (\sfv_{\ell+1} -\sfu_\ell ) +2r_\lambda$.
There are $\leqs RK^{d-1}$ possible exit points from $\UlK{K} (\sfv_\ell )$-balls which 
are possible candidates for $\sfv_{\ell+1}$ vertices. 

Consequently, \eqref{eq:AN-FactBound}
and \eqref{eq:L4-E2} imply that  
there exists $c = c (\lambda, \beta )>0$ such 
that the following happens: Let $\hat\gamma_K = \lb \frt_K , \frh_K\rb $ be 
a skeleton with trunk $\frt_K = \lb \sfu_0, \dots, \sfu_N\rb$, and 
$\frh_K = \lbr\frh_K^\ell\rbr$ collection of hairs. Notation $\gamma\sim\hat{\gamma}_K$
means that $\hat{\gamma}_K$ is the $K$-skeleton of $\gamma$ in the sense of 
the two algorithms above.  Then, 
\be 
\label{eq:L4-skel-bound-1}
\sum_{\gamma\sim\hat{\gamma}_K} \Wdn{h , \lambda} (\gamma )
\leqs {\rm exp} \lbr
h\cdot\sfx -\sum_{\ell=0}^{N}\tl (\sfu_{\ell +1} - \sfu_\ell ) - 
\epsilon K \# (\frh_K ) + cN\log K 
\rbr , 
\ee
uniformly in $\sfx$, large enough scales $K$ and 
skeletons $\hat{\gamma}_K$. 
\smallskip 

\noindent 
{\bf Kesten's bound on the number of forests.} A forest $\calF_N$ is a collection of $N$ rooted
trees $\calF_N = \lb\calT_1, \dots,\calT_N\rb$ of forward branching ratio at most $\sfb$. The tree $\calF_\ell$ 
is rooted at $\sfu_\ell$. Given $M\in \bbN$ we wish to derive an upper bound on 
$\# (M , N)$-number of all
forests $\calF_N$ satisfying $\abs{\calF_N}=M$. Above  $\abs{\calF_N}=M$ is the number of vertices of 
$\calF_N$ different from the roots $\sfu_1, \dots , \sfu_N$. Let $\bbP_p^N$ be 
the product percolation 
measure on $\times\sfT_\ell^\sfb$ at the percolation value $p$, 
where $\sfT^\sfb$ is the set of (edge) percolation configurations on the rooted 
tree of branching ration $\sfb$. 
In this way $\calT_\ell$ is viewed as a connected 
component of $\sfu_\ell$. Clearly, 
\be 
\label{eq:L4-probBound-F}
\bbP_p^N\lb \abs{\calF_N}=M\rb \leq 1 .
\ee
Each realization of $\calF_N$ with $\abs{\calF_N}=M$ has probability which is bounded below by 
$p^M (1-p )^{b (N+M )}$. Therefore, \eqref{eq:L4-probBound-F} implies: 
\be 
\label{eq:L4-Kesten}
\# (M , N) \leq \lb \max_{p\in [0,1]} p^M (1-p )^{b (N+M )}\rb^{-1} .
\ee
For $x \in [0,1]$, 
\[
 \log (1-x ) = -\int_0^x \frac{\dd t}{1-t } \geq - \frac{x}{1-x }
\]
Choosing  $p = \frac{1}{b}$ we, therefore,  infer from \eqref{eq:L4-Kesten} 
\be 
\label{eq:L4-Bound-F}
\# (M , N) \leq {\rm e}^{M \log b   + (N+M)\frac{b}{b-1}} .
\ee
{\bf Surcharge cost of a skeleton.} 
A substitution of \eqref{eq:L4-Bound-F} with $b\eqvs  RK^{d-1}$ into \eqref{eq:L4-skel-bound-1} implies:
 There exist $\epsilon^\prime , c^\prime >0$, such that
\be 
\label{eq:L4-skel-bound}
\sumtwo{\gamma\sim{\frt}_K}{\# (\frh_K ) \geq M}  \Wdn{h , \lambda} (\gamma )
\leqs {\rm exp} \lbr
h\cdot\sfx -\sum_{\ell=0}^{N}\tl (\sfu_{\ell +1} - \sfu_\ell ) - \epsilon^\prime K  M + c^\prime N\log K 
\rbr .
\ee
uniformly in $\sfx$, scales $K$, trunks $\frt_K$ and $M\in \bbN$.

With \eqref{eq:L4-skel-bound} in mind let us define the surcharge cost of 
a skeleton $\hat{\gamma}_K=[\frt_K  , \frh_K]$ as follows: Recall the notation  
$\frs_h (\sfu ) = \tl (\sfu ) - h\cdot\sfu$. Then,    
\be 
\label{eq:L4-scost-skeleton}
\frs_h \lb \hat{\gamma}_K \rb \df  \sum_{\ell = 1}^{N} \frs_h (\sfu_\ell - \sfu_{\ell -1} ) + \epsilon^\prime K
\#\lb \frh_K\rb .
\ee
Above the trunk $\frt_K = \lb\sfu_0, \dots \sfu_N\rb$.  We 
conclude: 
\begin{lem}
\label{lem:L4-skel-s}
For any $\epsilon >0$ there exists a scale $K_0$, such that
\be 
\label{eq:L4-skel-s}
 \sum_{\frs_h (\hat\gamma_K ) >2\epsilon \abs{\sfx }} \Wdn{h, \lambda} \lb \hat\gamma_K\rb \leqs 
{\rm e}^{-\epsilon\abs{\sfx}} , 
\ee
for all $K\geq K_0$ fixed and uniformly in  $h\in\pKl$, $\sfx\in \bbZ^d$. By convention  
the summation above is with respect to
skeletons $\hat\gamma_K$ of paths $\gamma\in\calP_\sfx$.
\end{lem}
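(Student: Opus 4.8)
\smallskip
\noindent
\textbf{Proof (sketch).} The plan is to run the usual Ornstein--Zernike coarse-graining/surcharge scheme. Fix $h\in\pKl$ and $\sfx\in\bbZ^d$, and let $\frt_K=\lb\sfu_0,\dots,\sfu_N\rb$ ($\sfu_0=0$) be the trunk of some $\gamma\in\calP_\sfx$, so $\frs_h(\hat\gamma_K)=\frs_h(\frt_K)+\epsilon^\prime K\,\#\lb\frh_K\rb$ with $\frs_h(\frt_K):=\sum_{\ell=1}^{N}\frs_h\lb\sfu_\ell-\sfu_{\ell-1}\rb$. With the convention $\sfu_{N+1}=\sfx$ the increments $\sfu_{\ell+1}-\sfu_\ell$ telescope to $\sfx$, so $h\cdot\sfx-\sum_{\ell=0}^{N}\tl\lb\sfu_{\ell+1}-\sfu_\ell\rb=-\sum_{\ell=0}^{N}\frs_h\lb\sfu_{\ell+1}-\sfu_\ell\rb\leq-\frs_h(\frt_K)$ (using $\frs_h\geq0$). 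Feeding this into \eqref{eq:L4-skel-bound}, applied with the smallest $M$ for which $\frs_h(\frt_K)+\epsilon^\prime KM\geq2\epsilon\abs{\sfx}$, one gets, for every trunk $\frt_K$,
\[
\sumtwo{\hat\gamma_K\supseteq\frt_K}{\frs_h(\hat\gamma_K)>2\epsilon\abs{\sfx}}\Wdn{h,\lambda}\lb\hat\gamma_K\rb\ \leqs\ {\rm e}^{-\max\lbr\frs_h(\frt_K),\,2\epsilon\abs{\sfx}\rbr+c^\prime N\log K}.
\]

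\smallskip
\noindent
Next I would control $N$ by means of the surcharge cone. Since $\tl\lb\sfu_\ell-\sfu_{\ell-1}\rb\in(K+r_\lambda,K+2r_\lambda]$ one has $\abs{\sfu_\ell-\sfu_{\ell-1}}\asymp K$ ($\tl$ being comparable to $\abs{\cdot}$, Part~B of Theorem~\ref{thm:attractive-morp}). An increment outside $\calY_1$ carries surcharge $>\delta_1 K$ by \eqref{eq:L3-cone-Y}, so at most $\delta_1^{-1}K^{-1}\frs_h(\frt_K)$ increments lie outside $\calY_1$; and as $\calY_1$ is a positive cone contained in some half-space $\lbr\sfy:\hat n\cdot\sfy\geq c_0\abs{\sfy}\rbr$, projecting $\sfu_N=\sum_\ell(\sfu_\ell-\sfu_{\ell-1})$ onto $\hat n$ and using $\abs{\sfu_N-\sfx}\leqs K$ bounds the number of increments inside $\calY_1$ by a constant multiple of $1+\abs{\sfx}/K$ plus a constant multiple of the number outside. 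Hence
\[
N\ \leq\ C_5\lb1+\frac{\abs{\sfx}}{K}\rb+\frac{C_5}{K}\,\frs_h(\frt_K),\qquad C_5=C_5(\lambda),
\]
where a compactness argument in $h$ lets $\calY_1$, $\delta_1$ and $C_5$ be taken uniform over $h\in\pKl$.

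\smallskip
\noindent
The last move is the summation over trunks. With $\eta_K:=c^\prime C_5\log K/K\to0$, the bound on $N$ gives $c^\prime N\log K\leq O(\log K)+\eta_K\lb\abs{\sfx}+\frs_h(\frt_K)\rb$, and since $\max\lbr\frs_h(\frt_K),2\epsilon\abs{\sfx}\rbr\geq\tfrac32\epsilon\abs{\sfx}+\tfrac14\frs_h(\frt_K)$, the left-hand side of the Lemma is $\leqs{\rm e}^{-(\tfrac32\epsilon-\eta_K)\abs{\sfx}}\sum_{\frt_K}{\rm e}^{-(\tfrac14-\eta_K)\frs_h(\frt_K)}$, the sum over trunks of paths in $\calP_\sfx$. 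As there are $\leqs\lb c_\star K^{d-1}\rb^N$ trunks of length $N$, split at $N\asymp\abs{\sfx}/K$: trunks of smaller length contribute $\leqs\lb c_\star K^{d-1}\rb^{O(1+\abs{\sfx}/K)}={\rm e}^{o_K(\abs{\sfx})}$ (bounding the exponential by $1$), while for larger lengths the bound on $N$ forces $\frs_h(\frt_K)\geq c_9\lb KN-\abs{\sfx}-K\rb$, turning the $N$-sum into a geometric series of ratio $c_\star K^{d-1}{\rm e}^{-(\tfrac14-\eta_K)c_9 K}<1$ (for $K$ large) whose leading term ${\rm e}^{-(\tfrac14-\eta_K)c_9\abs{\sfx}+O(\abs{\sfx}\log K/K)}$ exactly cancels the prefactor ${\rm e}^{(\tfrac14-\eta_K)c_9\abs{\sfx}}$ produced by the $-c_9\abs{\sfx}$ term, again leaving ${\rm e}^{o_K(\abs{\sfx})}$. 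Thus the whole sum is $\leqs{\rm e}^{(\eta_K+o_K(1)-\tfrac32\epsilon)\abs{\sfx}}$, whose exponent tends to $-\tfrac32\epsilon<-\epsilon$ as $K\to\infty$; picking $K_0$ so that it is $\leq-\epsilon$ for all $K\geq K_0$ finishes the argument.

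\smallskip
\noindent
The main obstacle is exactly this last summation: there are exponentially many (in $\abs{\sfx}/K$) almost-straight trunks, so one cannot afford to bound each skeleton separately and then multiply by a count --- the surcharge decay must be propagated through the sum, and the apparent exponential overhead ${\rm e}^{+(\tfrac14-\eta_K)c_9\abs{\sfx}}$ from the $-c_9\abs{\sfx}$ term in the surcharge lower bound for long trunks has to cancel against the trunk entropy $\lb c_\star K^{d-1}\rb^{N}$, which happens only once $K$ is large. What remains after this cancellation is an $O(\abs{\sfx}\log K/K)$ correction, and that is precisely why the statement carries $2\epsilon$ in the hypothesis but only $\epsilon$ in the conclusion: the factor-two slack absorbs that correction.
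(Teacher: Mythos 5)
Your proof is correct and follows essentially the route the paper intends: it combines the skeleton bound \eqref{eq:L4-skel-bound} with the definition \eqref{eq:L4-scost-skeleton} of the surcharge cost, controls the trunk length $N$ in terms of $\abs{\sfx}/K$ plus the trunk surcharge, and then beats the trunk entropy $\lb cK^{d-1}\rb^N$ by the ${\rm e}^{-cK}$ per-step decay once $K$ is large, using the $2\epsilon$-versus-$\epsilon$ slack to absorb the $O(\abs{\sfx}\log K/K)$ corrections. The paper itself states the lemma as an immediate consequence of \eqref{eq:L4-skel-bound} (deferring details to \cite{IoffeVelenik-Annealed}), and your filled-in details — apart from harmless sloppiness in the constants in the lower bound $\frs_h(\frt_K)\geqs KN-\abs{\sfx}-K$, which only requires choosing the splitting threshold $N\asymp\abs{\sfx}/K$ with a large enough constant — supply exactly that argument.
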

{\bf Cone points of skeletons.}
Recall the definition of the surcharge cone $\calY_1$ in \eqref{eq:L3-cone-Y}. 
Fix $\delta_2 \in (\delta_1 , 1)$ and $\delta_3 \in (\delta_2 , 1)$ , 
and define  enlargements $\calY_i$ of $\calY_1$ as follows: For $i=2,3$, 
\be 
\label{eq:L4-cone-Yi}
\calY_i = \lbr \sfx~:~ \frs (\sfx )\leq \delta_i \tl (\sfx )\rbr
\ee
Clearly, $\calY_i$-s are still  positive cones for $i=2,3$. 
Let $A^0 \df A\setminus 0$. Then, by construction, 
 $\calY^0_1\subset {\rm int}\lb \calY_2^0 \rb $ and 
 $\calY_2^0\subset{\rm int}\lb \calY_3^0 \rb $. 
\smallskip 

\noindent
Consider a skeleton $\hat\gamma_K = [\frt_K , \frh_K ]$. 
Let us say that 
a vertex of the trunk 
$\sfu_\ell\in \frt_K = \lb \sfu_0, \dots, \sfu_\ell, \dots , \sfu_{m+1}\rb$ is a
 $\calY_2$-cone 
point of  the skeleton $\hat\gamma_K$ if 
\be 
\label{eq:L4-cone-point-skeleton}
\hat\gamma_K \subset \lb \sfu_\ell - \calY_2\rb\cup \lb \sfu_\ell + \calY_2\rb .
\ee
Let $\#_{\rm bc}\lb \hat\gamma_K\rb$ be the total number of vertices of $\hat\gamma_K$ which are 
{\em not} cone points. 
\begin{prop}
\label{prop:L4-skel-cbad}
There exists $\nu_2 >0$ such that the following happens: For any $\epsilon >0$ there exists a 
scale $K_0$, such that
\be 
\label{eq:L4-skel-cbad}
 \sum_{\#_{\rm bc} (\hat\gamma_K ) >\epsilon \frac{\abs{\sfx }}{K}} \Wdn{h, \lambda} \lb \hat\gamma_K\rb \leqs 
{\rm e}^{-\nu_2 \abs{\sfx}} , 
\ee
for all $K\geq K_0$ fixed and uniformly in  $\sfx\in \bbZ^d$.
\end{prop}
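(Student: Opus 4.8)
The plan is to run the standard Ornstein--Zernike coarse-graining estimate: discard the skeletons whose total surcharge is macroscopic by invoking Lemma~\ref{lem:L4-skel-s}, and then establish a purely \emph{deterministic} bound on the number of non-cone vertices of the remaining skeletons. Concretely, fix $\epsilon>0$ and a small constant $\epsilon_1=\epsilon_1(\epsilon)>0$ to be specified at the end. By Lemma~\ref{lem:L4-skel-s} there is a scale $K_0$ such that for all $K\ge K_0$
\[
\sum_{\frs_h(\hat\gamma_K)>2\epsilon_1\abs{\sfx}}\Wdn{h,\lambda}(\hat\gamma_K)\ \leqs\ \mathrm{e}^{-\epsilon_1\abs{\sfx}},
\]
uniformly in $\sfx$, the sum running over skeletons $\hat\gamma_K$ of paths $\gamma\in\calP_\sfx$. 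It therefore suffices to show: if $K\ge K_0$ (possibly enlarged) and $\hat\gamma_K=[\frt_K,\frh_K]$ is the $K$-skeleton of some $\gamma\in\calP_\sfx$ with $\frs_h(\hat\gamma_K)\le 2\epsilon_1\abs{\sfx}$, then $\#_{\rm bc}(\hat\gamma_K)\le \epsilon\,\abs{\sfx}/K$; granting this, \eqref{eq:L4-skel-cbad} follows with $\nu_2$ of the order of $\epsilon_1$.

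For the deterministic step I would write the trunk as $\frt_K=(\sfu_0,\dots,\sfu_N)$ with increments $\sfx_\ell=\sfu_\ell-\sfu_{\ell-1}$, which by construction of the trunk satisfy $\tl(\sfx_\ell)\in[K,K+2r_\lambda]$, and call $\ell$ \emph{defective} if either $\sfx_\ell\notin\calY_1$ or a non-trivial hair hangs at $\sfu_\ell$. The point of the small-surcharge restriction is that defects are expensive: if $\sfx_\ell\notin\calY_1$ then the surcharge--cone inequality \eqref{eq:L3-cone-Y} gives $\frs_h(\sfx_\ell)>\delta_1\tl(\sfx_\ell)\ge\delta_1 K$, while by \eqref{eq:L4-scost-skeleton} every hair increment contributes at least $\epsilon' K$ to $\frs_h(\hat\gamma_K)$. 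Summing, on the event above the number of bad increments and the total hair length $\#(\frh_K)$ are each $\leqs\epsilon_1\abs{\sfx}/K$.

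The hard part is the geometry: showing that $\sfu_\ell$ can violate the cone condition \eqref{eq:L4-cone-point-skeleton} only if a defect sits close to it along the trunk. Here one uses that $\calY_1\subset\mathrm{int}(\calY_2)$, so there is a uniform angular margin between $\partial\calY_1$ and $\partial\calY_2$. For a skeleton vertex $\sfw$ attached near the trunk vertex $\sfu_p$ with, say, $p>\ell$, decompose $\sfw-\sfu_\ell=(\sfu_p-\sfu_\ell)+(\sfw-\sfu_p)$: the first summand is the sum of the increments $\sfx_{\ell+1},\dots,\sfx_p$, so it lies in the convex cone $\calY_1$ and has $\tl$-size $\ge(p-\ell)K$ whenever none of these increments is bad; the second has $\tl$-size at most (the length of the hair at $\sfu_p$) times $(K+R)$; and each intervening bad increment perturbs the first summand by a vector of $\tl$-size $\le K+2r_\lambda$. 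Thus $\sfw-\sfu_\ell$ is a perturbation of an element of $\calY_1$ of $\tl$-size $\ge(p-\ell)K$ by a vector whose $\tl$-size is at most a fixed multiple of $K$ times (the number of bad increments in $(\ell,p]$ plus the length of the hair at $\sfu_p$); once this perturbation is small relative to the angular margin the sum stays in $\calY_2$. Consequently there is a constant $C_\ast$ such that $\sfu_\ell$ fails the forward half of \eqref{eq:L4-cone-point-skeleton} only if some interval $(\ell,p]$ contains more than a $1/C_\ast$ fraction of bad increments, or a hair of length exceeding $(p-\ell)/C_\ast$ hangs at some $\sfu_p$ with $p>\ell$ --- and symmetrically on the backward side.

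To close, I would apply a Vitali-type (weak $(1,1)$ maximal) estimate on $\bbZ$ to the indicator of the defective indices: the number of trunk vertices $\sfu_\ell$ caught by the previous alternative is $\leqs$ the number of defects, hence $\leqs\epsilon_1\abs{\sfx}/K$ by the second paragraph; adding the at most $\#(\frh_K)\leqs\epsilon_1\abs{\sfx}/K$ hair vertices themselves gives $\#_{\rm bc}(\hat\gamma_K)\leqs\epsilon_1\abs{\sfx}/K$ with a constant depending only on $\delta_1,\delta_2,\epsilon',r_\lambda,R$. Choosing $\epsilon_1$ small enough in terms of $\epsilon$ completes the deterministic step, and together with the discard bound above yields \eqref{eq:L4-skel-cbad}. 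The main obstacle throughout is making this angular/covering bookkeeping uniform, in particular handling the hairs: their vertices must also lie in the double cone and can a priori point in arbitrary directions, so one genuinely needs the per-hair-increment surcharge cost in \eqref{eq:L4-scost-skeleton} combined with the ``a long increment forces angular stability'' mechanism above --- this is where the enlarged cone $\calY_2$ (and the freedom $\delta_2>\delta_1$) is used.
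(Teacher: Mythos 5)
Your proposal is correct and follows essentially the same route as the paper's (sketched) proof: reduce to skeletons of small surcharge via Lemma~\ref{lem:L4-skel-s}, and then use the angular gap between $\calY_1$ and $\calY_2$, the fact that each defect (a trunk increment outside $\calY_1$, or a hair vertex) carries surcharge of order $K$, and a covering/counting argument to bound $\#_{\rm bc}$ deterministically on the surviving skeletons. The only remark is that, exactly as in the paper's own sketch, the exponential rate you extract is of order $\epsilon_1(\epsilon)$ rather than a $\nu_2$ uniform in $\epsilon$; this is all that is needed downstream, since Proposition~\ref{prop:L4-cp-paths} invokes the statement for a single fixed $\epsilon$.
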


A proof of Proposition~\ref{prop:L4-skel-cbad} contains
several steps  and we refer to \cite{IoffeVelenik-Annealed} for more details. 
 First of all we show that, 
up to exponentially small corrections, 
 most of the vertices of the trunk
$\frt_K$ are $\calY_1$-cone points of the latter. 
We shall
end up with $N^\prime \eqvs \frac{\abs{\sfx}}{K}$ $\calY_1$-cone points of $\frt_K$. 

Any $\calY_1$-cone point of the trunk $\frt_K$ is evidently 
also a $\calY_2$-cone point of the latter.
On the other hand,  in view of 
\eqref{eq:L4-skel-s},  we can restrict attention to 
$\# \lb \frh_K\rb \leq \frac{2\epsilon}{\epsilon^\prime}
 \frac{\abs{\sfx}}{K}$. For $\epsilon\ll \epsilon^\prime$ the total 
 number of leaves $\# \lb \frh_K\rb$ is only
a small fraction of $N^\prime$. It is clear that an addition of a leave 
is capable of blocking at 
most $c= c(\delta_1 , \delta_2 )$ $\calY_1$-cone points of $\frt_K$ 
from being a $\calY_2$-cone point of the 
whole skeleton $\hat\gamma_K$. It is important that the above 
geometric constant $c = c(\delta_1 , \delta_2 )$
 does not depend on 
the running scale $K$. Consequently, under the reduction we are working with on large enough 
scales $K$, there are just not 
 enough leaves to block all (and actually a small fraction of) $\calY_1$-cone 
 points of $\frt_K$ from 
being a $\calY_2$-cone point of the 
whole skeleton $\hat\gamma_K$.
\smallskip 

\noindent 
{\bf Cone points of paths $\gamma\in\calP_\sfx$.} 
  Let us say that $\sfu_\ell\in \gamma  = \lb \sfu_0, \dots, \sfu_\ell, \dots , 
\sfu_{n}\rb\in\calP_\sfx$ is a cone 
point of  $\gamma $ if $0 <\ell < n$ and 
\begin{figure}[t]
\sidecaption
\scalebox{.25}{\input{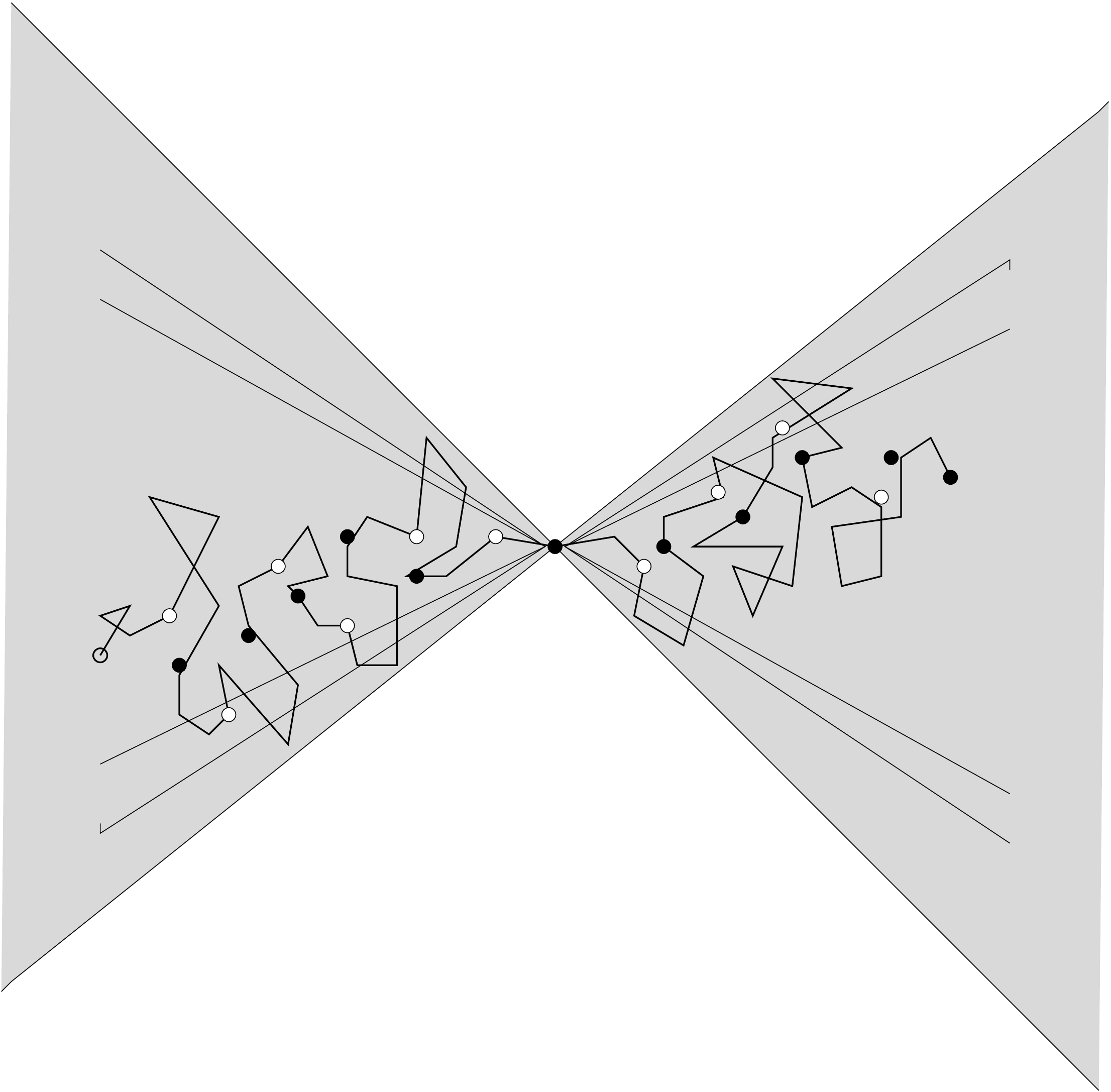_t}}
%
%
\caption{$\sfu$ is a cone point of the  path $\gamma = \lb \gamma_0, \dots , \gamma_n\rb$. 
Black vertices belong to the trunk. Paths leading from white vertices
to black vertices give rise to hairs..}
\label{fig:cone-points}       
\end{figure}
\be 
\label{eq:L4-cone-point-path}
\gamma \subset \lb \sfu_\ell - \calY_3\rb\cup \lb \sfu_\ell + \calY_3\rb .
\ee
Let $\#_{\rm cone} (\gamma )$ be the total number of the cone points of $\gamma$. 
\begin{prop}
\label{prop:L4-cp-paths}
There exist $\epsilon >0$ and  $\nu >0$ such that: 
\be 
\label{eq:L4-cp-paths}
 \sum_{\#_{\rm cone} (\gamma  ) <  \epsilon\abs{\sfx }}  \Wdn{h, \lambda} \lb \gamma \rb \leqs 
{\rm e}^{-\nu  \abs{\sfx}} , 
\ee
uniformly in  $\sfx\in \bbZ^d$. 
\end{prop}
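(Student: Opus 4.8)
The plan is to use the two reductions already in hand to pass from $\gamma$ to its $K$-skeleton on the event that the latter is ``clean'', and then to convert cone points of the skeleton into cone points of $\gamma$. Fix a large scale $K$, to be chosen last in terms of $\delta_1<\delta_2<\delta_3$ and the model, and to each $\gamma\in\calP_\sfx$ associate its $K$-skeleton $\hat\gamma_K=[\frt_K,\frh_K]$ as in the algorithms above. Decomposing the left-hand side of \eqref{eq:L4-cp-paths} over skeletons and applying Proposition~\ref{prop:L4-skel-cbad} with a small constant $\epsilon_1>0$, the contribution of all $\gamma$ whose skeleton satisfies $\#_{\rm bc}(\hat\gamma_K)>\epsilon_1|\sfx|/K$ is $\leqs {\rm e}^{-\nu_2|\sfx|}$. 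So it suffices to show: there are $K$ large and $\epsilon_1>0$ small such that every $\gamma$ with $\#_{\rm bc}(\hat\gamma_K)\le\epsilon_1|\sfx|/K$ has $\#_{\rm cone}(\gamma)\ge\epsilon_2|\sfx|$ for some fixed $\epsilon_2>0$; then any $\epsilon<\epsilon_2$ empties the remaining inner sum. Throughout we may assume $|\sfx|$ is large.

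\textbf{Counting clean cone points of the skeleton.} On this event write $\frt_K=(\sfu_0,\dots,\sfu_N)$. Since each trunk step has $\tl(\sfu_{j+1}-\sfu_j)\le K+2r_\lambda$ while $\tl(\sfx-\sfu_N)\le K$, the triangle inequality for $\tl$ together with its comparability to $|\cdot|$ gives $N\ge c_0|\sfx|/K$ with $c_0=c_0(h,\lambda)>0$. From $\#_{\rm bc}(\hat\gamma_K)\le\epsilon_1|\sfx|/K$ we get both $\#(\frh_K)\le\epsilon_1|\sfx|/K$ and at most $\epsilon_1|\sfx|/K$ trunk vertices that are not $\calY_2$-cone points of $\hat\gamma_K$. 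Fix a large integer $m_0$ (chosen in terms of the cones only) and call $\sfu_\ell$ \emph{clean} if every $\sfu_{\ell'}$ with $|\ell'-\ell|\le m_0$ is a $\calY_2$-cone point of $\hat\gamma_K$ carrying no hair. Each defective trunk vertex disqualifies at most $2m_0+1$ indices, so at least $(c_0-(4m_0+2)\epsilon_1)|\sfx|/K\ge\tfrac12 c_0|\sfx|/K$ indices are clean once $\epsilon_1$ is small. Discarding the $O(1)$ clean indices nearest the two ends of the trunk and keeping every $m_0$-th of the rest, we obtain clean indices $\ell_1<\dots<\ell_M$, consecutive ones $m_0$ apart, with $M\ge c_0|\sfx|/(2m_0K)$; using that $\sfu_{\ell_i},\dots,\sfu_{\ell_i+m_0}$ are all $\calY_2$-cone points together with the half-space description $\calY_2=\{\sfz:h\cdot\sfz\ge(1-\delta_2)\tl(\sfz)\}$, one sees that $\tl(\sfu_{\ell_{i+1}}-\sfu_{\ell_i})$ and $\tl(\sfu_{\ell_i}-\sfu_{\ell_{i+1}})$ both grow at least linearly in $m_0$, uniformly in $K$.

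\textbf{From skeleton cone points to cone points of $\gamma$.} The key geometric claim is that each clean $\sfu_{\ell_i}$ determines a vertex $\sfz_i$ of $\gamma$ with $\tl(\sfz_i-\sfu_{\ell_i})\le c_2K$ and $\tl(\sfu_{\ell_i}-\sfz_i)\le c_2K$, where $c_2=c_2(\delta_2,\delta_3)$ does not depend on $m_0$, such that $\sfz_i$ is a $\calY_3$-cone point of $\gamma$ in the sense of \eqref{eq:L4-cone-point-path}. Granting this and choosing $m_0$ so large that the separation of consecutive $\sfu_{\ell_i}$ from the previous paragraph exceeds $2c_2K$, the $\sfz_i$ are pairwise distinct, hence $\#_{\rm cone}(\gamma)\ge M\ge c_0|\sfx|/(2m_0K)=:\epsilon_2|\sfx|$, which closes the argument.

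\textbf{Proof of the geometric claim --- the main obstacle.} Two ingredients combine. First, every vertex of $\gamma$ lies within $\tl$-distance $\le c_1K$ of $\hat\gamma_K$: each piece $\gamma_j$ of \eqref{gleta} sits in $\UlK{K}(\sfu_j)$, a short hair likewise, and a long hair stays within $\tl$-distance $K$ of its hair vertices. Second, $\calY_2^0\subset{\rm int}(\calY_3^0)$, so there is $c_2=c_2(\delta_2,\delta_3)$ such that $\tl$-thickening $\sfu\pm\calY_2$ by $c_1K$ and then discarding the $\tl$-ball of radius $c_2K$ around $\sfu$ leaves one inside $\sfu\pm\calY_3$. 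Applying this at $\sfu_{\ell_i}$, and using that $\sfu_{\ell_i}$ is a $\calY_2$-cone point of $\hat\gamma_K$, the part of $\gamma$ at $\tl$-distance $\ge c_2K$ from $\sfu_{\ell_i}$ lies in $(\sfu_{\ell_i}-\calY_3)\cup(\sfu_{\ell_i}+\calY_3)$, with the endpoints $0$ and $\sfx$ in the two different cones (the exceptional clean indices near the ends of $\gamma$ have been discarded). Cleanliness with $m_0$ large then forces the remaining, local part of $\gamma$ --- the part meeting $\UlK{c_2K}(\sfu_{\ell_i})$ --- to consist only of pieces $\gamma_j$ with $|j-\ell_i|$ bounded in terms of $c_2$, hence to be confined to a $\tl$-ball of radius $O(c_2K)$ about $\sfu_{\ell_i}$ and to carry no hair. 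One then takes $\sfz_i$ inside this ball to be the vertex after which $\gamma$ never returns to $\sfu_{\ell_i}-\calY_3$ and which is not shielded from the backward part of $\gamma$ by an intervening forward excursion, and checks, using convexity of $\calY_3$ and the confinement, that $\gamma\subset(\sfz_i-\calY_3)\cup(\sfz_i+\calY_3)$. This last, local verification --- extracting a genuine vertex of $\gamma$ that cleanly separates the backward cone from the forward cone despite possible short-range backtracking --- is the delicate point, and I would carry it out following \cite{IoffeVelenik-Annealed}.
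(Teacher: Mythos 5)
Your first reduction, via Proposition~\ref{prop:L4-skel-cbad}, is indeed the intended first move, but the pivot of your argument --- the claim that a clean $\calY_2$-cone point $\sfu_{\ell_i}$ of the skeleton \emph{deterministically} produces a $\calY_3$-cone point of $\gamma$ within $\tl$-distance $O(K)$ --- is false, and with it the proposed reduction ``$\#_{\rm bc}(\hat\gamma_K)\le\epsilon_1\abs{\sfx}/K$ implies $\#_{\rm cone}(\gamma)\ge\epsilon_2\abs{\sfx}$'' collapses. The skeleton only records exit points from the balls $\UlK{K}(\sfu_\ell)$: inside each such ball the path may oscillate with lateral amplitude of order $K$ (say, repeatedly moving sideways by $\sim K$ and back every couple of steps while slowly progressing towards the exit point), and this neither creates a hair nor alters the trunk, since hairs come only from the $\eta$-pieces and new trunk vertices only from exits. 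For such a path every vertex $\sfz$ has some other visited vertex, both before and after it, whose displacement from $\sfz$ is essentially lateral of size comparable to $K$, hence lies outside $(\sfz-\calY_3)\cup(\sfz+\calY_3)$ as soon as $K$ exceeds a constant depending on $\delta_3$; consequently $\#_{\rm cone}(\gamma)=0$ while $\#_{\rm bc}(\hat\gamma_K)=0$ and the trunk is perfectly straight. So the inner sum you hope to ``empty'' is not empty for any choice of $K$, $m_0$, $\epsilon_1$; the obstruction is not the ``delicate local verification'' you defer to \cite{IoffeVelenik-Annealed}, but the fact that the path-by-path implication you want simply does not hold.

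What is true, and what the argument you are trying to reproduce actually exploits, is that such cone-destroying local behaviour is expensive in the weights $\Wdn{h, \lambda}$: oscillations of total size of order $K$ inside a clean block cost an extra factor ${\rm e}^{-cK}$ through the killing term ${\rm e}^{-\lambda\abs{\gamma}}$ (cf.\ \eqref{eq:L4-E2}) and the surcharge. Hence the passage from skeleton cone points to cone points of $\gamma$ must remain a weighted estimate: with the skeleton fixed, use the factorization bound \eqref{eq:AN-FactBound} together with attractivity to decouple the disjoint local portions of $\gamma$ lying over well-separated clean trunk vertices, show that the relative $\Wdn{h, \lambda}$-weight of the local configurations near one clean vertex which fail to produce a $\calY_3$-cone point of $\gamma$ is at most ${\rm e}^{-cK}$ uniformly in everything else, and then sum over the subsets of clean vertices at which failures occur, so that having fewer than a fixed fraction of successes costs ${\rm e}^{-\nu\abs{\sfx}}$. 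Your counting of clean trunk vertices and your fattened-cone computation controlling the part of $\gamma$ far from $\sfu_{\ell_i}$ are correct and reusable in that scheme, but as written the proof has a genuine gap precisely at the step you declared to be purely geometric.
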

As before, we refer to \cite{IoffeVelenik-Annealed} for details of the proof. Construction of 
cone points is depicted on Figure~\ref{fig:cone-points}
\smallskip 

\noindent
{\bf Irreducible decomposition of paths in $\calP_\sfx$, $\calP_n$ and $\calP_{\sfx , n}$.}
In the sequel we set $\calY \df\calY_3$, where $\calY_3$ is the positive cone 
in Proposition~\ref{prop:L4-cp-paths}. 
A path $\gamma = \lb \sfu_0, \dots , \sfu_n\rb$ is said to be irreducible if it does not contain 
$\calY$-cone points. We shall work with three sub-families 
$\calF^{[l]} $, $\calF^{[r]} $ and 
$\calF = \calF^{[l]} \cap \calF^{[r]} $ of irreducible paths.
Those
are defined as follows:
\[
 \calF^{[l]}  = \lbr\gamma
 \ {\rm irreducible}:\gamma\subset \sfu_n - \calY \rbr, \  
\calF^{[r]} = \lbr\gamma
\ {\rm irreducible}:\gamma\subset \sfu_0 + \calY \rbr .
\]
Note that any $\gamma = \lb\sfu_0 , \dots , \sfu_n\rb \in \calF$ is 
automatically confined to the 
diamond shape
\be 
\label{eq:L3-diamond}
\gamma\subset D(\sfu_0 , \sfu_n )\df \lb \sfu_0 +\calY \rb\cap \lb \sfu_n -\calY \rb .
\ee
Proposition~\ref{eq:L4-cp-paths} implies that up to corrections of order 
${\rm e}^{-\nu\abs{\sfx}}$
one can restrict attention to paths $\gamma\in\calP_\sfx$ which have the following 
decomposition into irreducible pieces (see Figure~\ref{fig:decomp}):
\be 
\label{eq:L3-irreducible-decomp-d}
\gamma = \gamma^{[l]}\circ \gamma^1\circ \dots\circ \gamma^N\circ\gamma^{[r]} .
\ee
\begin{figure}[h]
\sidecaption
\scalebox{.25}{\input{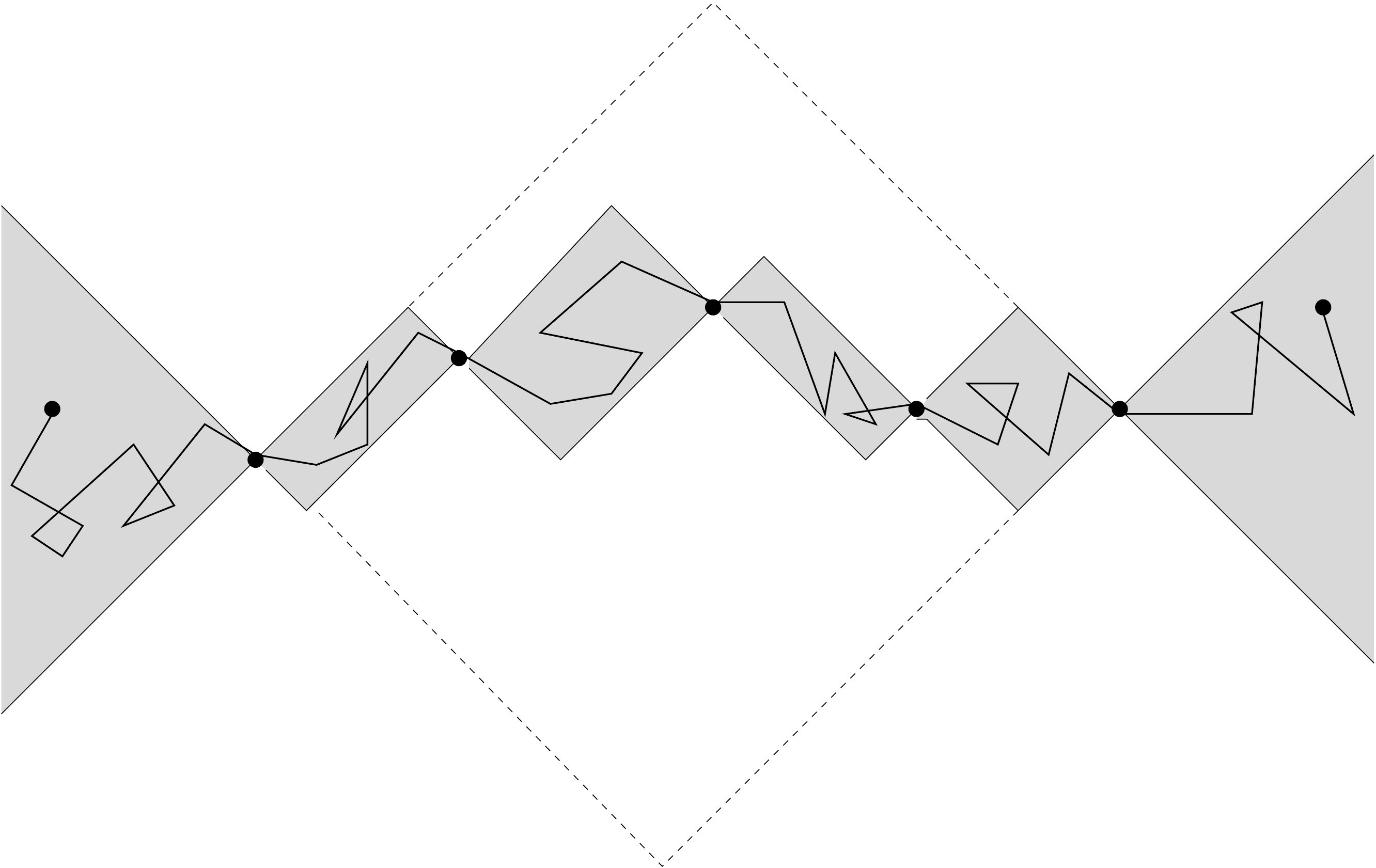_t}}
%
%
\caption{Irreducible decomposition $\gamma =\gamma^{[l]}\circ \gamma^1\circ \dots\circ \gamma^4\circ\gamma^{[r]}$.
The path $\gamma^{[l]}: \sfu_0\mapsto\sfu_1$ belongs to $\calF^{[l]}$, and the path $\gamma^{[r]} :
\sfu_5\mapsto\sfu_6$ belongs to $\calF^{[r]}$. Paths $\gamma_\ell :\sfu_\ell\mapsto\sfu_{\ell +1}$ belong 
to $\calF$. Each irreducible $\gamma_\ell$ stays inside the diamond shape 
$D (\sfu_\ell , \sfu_{\ell +1})$, and 
the concatenation $\gamma_1\circ\dots\circ\gamma_4\subset D (\sfu_1 , \sfu_5 )$.}
\label{fig:decomp}       
\end{figure}

Define
\be 
\label{eq:L3-fweights-d}
\sff^{[l]} (\sfx , n ) = \sumtwo{\sfX (\gamma ) = \sfx , \abs{\gamma }=n}{\gamma\in \calF^{[l]}} 
\Wdn{h ,\lambda } (\gamma )\ {\rm and}\ 
\sff^{[r]} (\sfx , n ) = \sumtwo{\sfX (\gamma ) = \sfx , \abs{\gamma }=n}{\gamma\in \calF^{[r]}} 
\Wdn{h ,\lambda } (\gamma ) .
\ee
\begin{thm}
\label{thm:L3-RepAlphabets} 
The weights $\sff^{[l]}$ and $\sff^{[r]}$ have exponentially decaying tails: 
There exists $\nu >0$ and, for every $\lambda >0$,  $\chi_\lambda >0$  such that
the following mass gap estimate holds uniformly in $\sfx$ and $n$:
\be 
\label{eq:L3-massgap-d}
\sff^{[l]} (\sfx , n ),\sff^{[r]} (\sfx , n ) \leqs {\rm e}^{-\nu \abs{\sfx} 
-\chi_\lambda n  } .
\ee
Furthermore, for each $\lambda = \lambda (h )>0$, 
$\Wdn{h , \lambda}$ is a probability distribution on $\calF$. In particular, 
the family of weights $\lbr \sff (\sfx ,\sfn )\rbr$, 
\be 
\label{eq:AN-f-weights}
 \sff  (\sfx , n ) = \sumtwo{\sfX (\gamma ) = \sfx , \abs{\gamma }=n}{\gamma\in \calF } 
\Wdn{h ,\lambda } (\gamma ) \df \sum_{\gamma\in\calF_{\sfx , n}}
\Wdn{h ,\lambda } (\gamma )
\ee
is a probability distribution on $\bbZ^{d}\times\bbN$ with exponentially decaying tails. 
\end{thm}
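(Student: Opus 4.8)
The plan is to read off the mass-gap bound \eqref{eq:L3-massgap-d} from the cone-point estimate Proposition~\ref{prop:L4-cp-paths}, and then to extract the normalization $\sum_{\sfx,n}\sff(\sfx,n)=1$ from the uniqueness of the irreducible decomposition \eqref{eq:L3-irreducible-decomp-d} together with a one-dimensional renewal / generating-function argument; throughout, $h\notin\Knot$ is fixed and $\lambda=\lambda(h)>0$. For the spatial decay, note that every $\gamma\in\calF^{[l]}\cup\calF^{[r]}\cup\calF$ is irreducible, hence has $\#_{\rm cone}(\gamma)=0$; for $\sfx\ne0$ this is $<\epsilon\abs{\sfx}$ with the $\epsilon$ of Proposition~\ref{prop:L4-cp-paths}, so summing $\Wdn{h,\lambda}$ over all such paths with $\sfX(\gamma)=\sfx$ gives
\be
\sff^{[l]}(\sfx),\ \sff^{[r]}(\sfx),\ \sff(\sfx)\ \leqs\ {\rm e}^{-\nu\abs{\sfx}},
\ee
where $\sff^{\bullet}(\sfx):=\sum_n\sff^{\bullet}(\sfx,n)$ (the case $\sfx=0$ involves only finitely many bounded paths); in particular these are finite and summable in $\sfx$.

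To upgrade this to exponential decay also in $n$ --- the mass gap $\chi_\lambda$ --- I would put a factor ${\rm e}^{-\chi\abs{\gamma}}$ into the weights, i.e.\ work at killing level $\lambda-\chi$. Since $\lambda>0$, one may fix $\chi=\chi_\lambda\in(0,\lambda)$ small enough that $\lambda-\chi>0$ and, by the Lipschitz continuity (in $\lambda'$, near $\lambda$) of $\tau_{\lambda'}$ and of $\partial\mathbf{K}_{\lambda'}$ guaranteed by Theorem~\ref{thm:attractive-morp}, the cones $\calY_1\subset\calY_2\subset\calY_3$ stay (slightly weakened) surcharge cones for the level $\lambda-\chi$. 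Re-running the skeleton construction together with Lemma~\ref{lem:L4-skel-s}, Proposition~\ref{prop:L4-skel-cbad} and Proposition~\ref{prop:L4-cp-paths} at this level then yields
\be
\sum_m{\rm e}^{\chi m}\sff^{[l]}(\sfx,m)=\sum_{\gamma\in\calF^{[l]}_\sfx}\Wdn{h,\lambda-\chi}(\gamma)\ \leqs\ {\rm e}^{-\nu'\abs{\sfx}},
\ee
and likewise for $\sff^{[r]}$ and $\sff$, which is exactly \eqref{eq:L3-massgap-d}.

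For the normalization, non-negativity of the $\sff(\sfx,n)$ is immediate from $\Wdn{h,\lambda}\ge0$ in \eqref{eq:AN-f-weights}. For the total mass, Proposition~\ref{prop:L4-cp-paths} shows that, up to total weight $\leqs{\rm e}^{-\nu\abs{\sfx}}$, every $\gamma\in\calP_\sfx$ has the \emph{unique} decomposition \eqref{eq:L3-irreducible-decomp-d} into $\gamma^{[l]}\in\calF^{[l]}$, $\gamma^1,\dots,\gamma^N\in\calF$, $\gamma^{[r]}\in\calF^{[r]}$; since successive irreducible pieces occupy essentially disjoint translates of $\calY_3$ and of $-\calY_3$ (cf.\ \eqref{eq:L3-diamond}), the self-interaction $\Phi_\beta$ of \eqref{eq:Phi}, hence $\Wdn{h,\lambda}$, is multiplicative over the decomposition while $\abs{\gamma}$ is additive. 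Summing over all $\gamma$, passing to generating functions in the number of steps $z$ (with the spatial variable set to $0$), and invoking the renewal identity $\hat\sft(z)=(1-\hat\sff(z))^{-1}$ of \eqref{eq:ARenewal-hat}, one obtains
\be
\sum_{n\ge0}z^n{\rm e}^{-\lambda n}Z_n(h)=\frac{\hat\sff^{[l]}(z)\,\hat\sff^{[r]}(z)}{1-\hat\sff(z)}+E(z),
\ee
where $\hat\sff^{\bullet}(z)=\sum_n z^n\sum_\sfx\sff^{\bullet}(\sfx,n)$ and, by the previous step, $E(z)$ is analytic in a disc of radius $>1$. By \eqref{eq:L3-Weight-Famhl-d} the coefficients ${\rm e}^{-\lambda n}Z_n(h)$ are bounded away from $0$ and from $\infty$, so the left side has radius of convergence exactly $1$ and a genuine singularity at $z=1$; since $\hat\sff^{[l]},\hat\sff^{[r]}$ are analytic across $\abs{z}=1$ and strictly positive at $z=1$ (sums of non-negative weights over the non-empty families $\calF^{[l]},\calF^{[r]}$ --- both contain the one-step path along the axis direction $\sfe_k$ that lies in $\calY_3$), the singularity can only come from a zero of $1-\hat\sff(z)$ at $z=1$. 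Hence $\sum_{\sfx,n}\sff(\sfx,n)=\hat\sff(1)=1$, i.e.\ $\Wdn{h,\lambda}$ is a probability distribution on $\calF$, which together with \eqref{eq:L3-massgap-d} completes the argument.

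The genuinely delicate points --- for which I would follow \cite{IoffeVelenik-Annealed} --- are (i) the robustness of the entire skeleton machinery under the passage from level $\lambda$ to level $\lambda-\chi$, which is where the dependence $\chi_\lambda\downarrow0$ as $\lambda\downarrow0$ enters and encodes the closing of the mass gap at the ballistic/sub-ballistic threshold, and (ii) checking that the accumulated weight of the paths omitted from the irreducible decomposition, once summed over $\sfx$ at fixed $n$, still decays exponentially in $n$, so that the error $E(z)$ above is really analytic beyond the unit circle and does not disturb the singularity analysis. The spatial decay and the generating-function bookkeeping, by contrast, are routine once Proposition~\ref{prop:L4-cp-paths} is in hand.
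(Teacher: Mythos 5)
Your proposal is correct in substance, but it takes a noticeably heavier route than the paper on the mass-gap half. The spatial decay is obtained exactly as in the paper: irreducible paths have no cone points, so Proposition~\ref{prop:L4-cp-paths} applies verbatim. For the decay in $n$, however, the paper does not re-run any renormalization: it simply splits according to whether $\abs{\sfx}\geq \epsilon n$ or not. In the first case the bound ${\rm e}^{-\nu\abs{\sfx}}$ already decays exponentially in $n$; in the second, the explicit factor ${\rm e}^{h\cdot\sfx-\lambda n}\leq {\rm e}^{-(\lambda-\epsilon\abs{h})n}$ sitting inside $\Wdn{h,\lambda}$ is itself exponentially small in $n$, while $\sum_{\gamma\in\calP_{\sfx ,n}}\Wd(\gamma)\leq G_0(\sfx)$ is bounded by \eqref{eq:TH-GlHl-bound}. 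Thus $\chi_\lambda$ comes for free from $\lambda>0$, with no appeal to stability of the surcharge cones. Your route --- tilting by ${\rm e}^{\chi\abs{\gamma}}$ and redoing the skeleton estimates at killing rate $\lambda-\chi$ --- can be made to work (it is essentially the robustness statement underlying Theorem~\ref{thm:AN-shape}), but it is substantially more delicate than you suggest: at level $\lambda-\chi$ the drift $h$ lies strictly outside $\bfK_{\lambda-\chi}$, so the surcharge function relative to $h$ is no longer non-negative and the skeleton bookkeeping must absorb an extra ${\rm e}^{O(\chi)\abs{\sfx}}$; this is precisely the point you defer to \cite{IoffeVelenik-Annealed}, whereas the paper's two-line observation avoids it altogether. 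For the normalization, your generating-function/Pringsheim argument in the temporal fugacity $z$ is a legitimate variant of the paper's (equally terse) argument, which instead invokes the characterization of $\Kl$ as the closure of the domain of convergence of the spatially tilted series \eqref{eq:TH-serries} together with ${\rm e}^{-\lambda n}Z_n(h)\asymp 1$; to close your version you should also note that a zero of $1-\hat\sff$ at some $z_0<1$ is excluded, since by the renewal identity it would force ${\rm e}^{-\lambda n}Z_n(h)$ to grow exponentially, contradicting \eqref{eq:L3-Weight-Famhl-d}, so the singularity at $z=1$ really is the first one and $\hat\sff(1)=1$ follows.
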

\begin{remark}
 \label{rem:NotCritical}
 Note that exponential decay in $n$ is claimed only if $\lambda >0$. $\lambda =0$ 
 corresponds  to the case of critical drifts $h\in\partial \Knot$. For critical
 drifts, 
 the decay in $n$  is  sub-exponential and,  furthermore, the whole coarse-graining 
 (skeleton construction) procedure should be modified. 
 It happens, nevertheless, that the decay in $\sfx$ is still exponential. 
 We do not discuss critical case 
 in these lecture notes, and refer to \cite{IV-Critical}. 
\end{remark}
\begin{proof}
 For fixed $\lambda >0$ 
 bounds $\sff^{[l]} (\sfx , n ),\sff^{[r]} (\sfx , n ) \leqs {\rm e}^{-\nu\abs{\sfx}  }$
directly follow from Proposition~\ref{prop:L4-cp-paths}. 
The case to work out is when $\abs{\sfx}$ is much smaller than $n$, say $\abs{\sfx} <
\frac{\epsilon}{\lambda} n$. But then ${\rm e}^{h\cdot\sfx - \lambda n} \leq 
{\rm e}^{-(\lambda - \epsilon ) n}$. 
Since by 
\eqref{eq:TH-GlHl-bound} the two-point function 
$G_0 (\cdot )$ is bounded, the decay in $n$ indeed 
comes for free as long as $\lambda >0$. 

In order to see that $\lbr \sff  (\sfx , n )\rbr$ is a probability distribution
recall that $\Kl$ was characterized as the closure of the domain of convergence 
of $h \mapsto \sum_{\gamma} {\rm e}^{h\sfx - \lambda n}\Wd (\gamma )$. Thinking 
in terms of 
\eqref{eq:L3-irreducible-decomp-d}, and in view of \eqref{eq:L3-massgap-d} this 
necessarily implies that $\sum \sff (\sfx , n )=1$.   
\end{proof}
{\bf Local geometry of $\partial\Kl$ and analyticity of $\lambda$.} 
Inspecting construction of the cone $\calY$ for 
$h\in\partial\Kl$ we readily infer that the very same $\calY$ would do for all drifts
$g\in \lb h + \bbB_\epsilon^d\rb\cap\partial\Kl$, for some $\epsilon >0$ sufficiently 
small. Similarly, it would do for all $\abs{\mu -\lambda }$ sufficiently small.
We are in the general renewal framework of \eqref{eq:ARen-Key-eq}. The following 
theorem is a consequence of \eqref{eq:AR-lfunc}, Lemma~\ref{lem:AR-Xi}  
 of Subsection~\ref{ssec:renewal} 
and 
\eqref{eq:A-Sigma-map} of the Appendix: 
\begin{thm}
 \label{thm:AN-shape}
 There exists $\epsilon =\epsilon_\lambda >0$, such that the following happens: 
 Let $h$ be a super-critical drift; $\lambda (h ) = \lambda >0$. Construct $\calY$ 
 and, 
 accordingly, $\lbr f (\sfx , n)\rbr$ as in Theorem~\ref{thm:L3-RepAlphabets}. 
 Then for $(g, \mu )\in\bbB_\epsilon^{d+1} (h , \mu )$, 
 \be 
 \label{eq:AH-lambda-analytic}
 \mu = \lambda (g )\ \Leftrightarrow\ \sum_{\gamma\in\calF} \Wdn{g,\mu } \lb \gamma \rb 
 =  \sum_{\sfx , n} {\rm e}^{(g-h)\cdot\sfx + (\lambda -\mu )n }\sff (\sfx ,n ) =1 .
 \ee
 As a result, $\lambda (\cdot )$ is real analytic on $\bbB_\epsilon^d (h )$ 
 and $\Xi (h ) = {\rm Hess} (\lambda)  (h )$ is non-degenerate. 
 
 In particular, 
 \be 
 \label{eq:AN-shape}
 g\in \lb h + \bbB_\epsilon^d\rb\cap\partial\Kl \ \Leftrightarrow \ 
 \sum_{\sfx , n} {\rm e}^{(g-h )\sfx } \sff (\sfx , n ) =1 .
 \ee
 As a result, $\pKl$ is locally analytic and has a uniformly positive 
 Gaussian curvature. 
\end{thm}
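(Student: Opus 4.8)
The plan is to recognise the irreducible decomposition of Theorem~\ref{thm:L3-RepAlphabets} as a bona fide multidimensional renewal and then invoke the analytic renewal machinery of Subsection~\ref{ssec:renewal}. As observed just above the statement, one first fixes $\epsilon>0$ so small that the cone $\calY$ constructed for $(h,\lambda)$ serves simultaneously every $(g,\mu)$ in the polydisc $\bbB_\epsilon^{d+1}(h,\lambda)$, and keeps the irreducible weights $\sff,\sff^{[l]},\sff^{[r]}$ of \eqref{eq:AN-f-weights} and \eqref{eq:L3-fweights-d} attached to this $\calY$. The first step is purely algebraic: by \eqref{eq:L4-Weight-hl} one has $\Wdn{g,\mu}(\gamma)={\rm e}^{(g-h)\cdot\sfX(\gamma)+(\lambda-\mu)\abs{\gamma}}\Wdn{h,\lambda}(\gamma)$, so summing over $\gamma\in\calF$ and grouping by $(\sfX(\gamma),\abs{\gamma})=(\sfx,n)$ gives $\sum_{\gamma\in\calF}\Wdn{g,\mu}(\gamma)=\sum_{\sfx,n}{\rm e}^{(g-h)\cdot\sfx+(\lambda-\mu)n}\sff(\sfx,n)=:\hat F(g,\mu)$, which is already the second equality in \eqref{eq:AN-shape}. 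By the mass gap \eqref{eq:L3-massgap-d} this series converges absolutely, hence is analytic, on a complex polydisc about $(h,\lambda)$, and $\hat F(h,\lambda)=1$ by Theorem~\ref{thm:L3-RepAlphabets}. Writing $\hat F(g,\mu)={\rm e}^{F(g-h,\mu-\lambda)}$ with $F$ the function in \eqref{eq:ARen-Key-eq} for the coarse--grained law $\{\sff(\sfx,n)\}$ places us exactly in the renewal framework of Subsection~\ref{ssec:renewal}; non--degeneracy and aperiodicity (Assumption~\ref{as:AR-distribution}) of that law are inherited from \eqref{eq:L2-sfe} and the mass gap.

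The second --- and main --- step is to identify the level set $\{\hat F=1\}$ with $\partial\Kl$ near $h$. I would feed the decomposition \eqref{eq:L3-irreducible-decomp-d} into the generating function whose domain of convergence is characterised by \eqref{eq:TH-serries}: up to the contribution of paths with few cone points, which is exponentially negligible and analytic by Proposition~\ref{prop:L4-cp-paths}, and up to the boundary factors coming from $\gamma^{[l]},\gamma^{[r]}$, one obtains $\sum_n{\rm e}^{-\mu n}Z_n(g)=\sum_\sfx{\rm e}^{g\cdot\sfx}G_\mu(\sfx)=\hat F^{[l]}(g,\mu)\,(1-\hat F(g,\mu))^{-1}\,\hat F^{[r]}(g,\mu)+\Psi(g,\mu)$, with $\hat F^{[l]},\hat F^{[r]},\Psi$ finite and analytic near $(h,\lambda)$ by \eqref{eq:L3-massgap-d} and Proposition~\ref{prop:L4-cp-paths}. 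Since the closure of the domain of convergence of the left--hand side is $\{g:\lambda(g)\le\mu\}$, and since $\mu\mapsto\hat F(g,\mu)$ is real, continuous and strictly decreasing, this forces, for real $(g,\mu)$ near $(h,\lambda)$, the equivalence $\lambda(g)\le\mu\Leftrightarrow\hat F(g,\mu)\le1$, and hence that $\lambda(g)$ is the unique $\mu$ with $\hat F(g,\mu)=1$. That is the displayed equivalence, and its $\mu=\lambda$ specialisation is precisely \eqref{eq:AN-shape}.

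With that in hand the rest is the renewal machinery. The previous step identifies the graph of $\lambda$ over $\bbB_\epsilon^d(h)$ with the zero set $\{F=0\}$, so the analytic implicit function theorem \eqref{eq:AR-lfunc} yields an analytic $\lambda_{\mathrm{ren}}$ with $\lambda(g)=\lambda+\lambda_{\mathrm{ren}}(g-h)$ on $\bbB_\epsilon^d(h)$; thus $\lambda$ is real analytic there, and ${\rm Hess}(\lambda)(h)={\rm Hess}(\lambda_{\mathrm{ren}})(0)=\Xi$ is positive definite by Lemma~\ref{lem:AR-Xi}, in particular non--degenerate. Finally, near $h$ the set $\partial\Kl=\{g:\lambda(g)=\lambda\}$ is a level set of a real--analytic, strictly convex function with positive--definite Hessian, hence a real--analytic hypersurface; by the standard Legendre correspondence \eqref{eq:A-Sigma-map} of the Appendix between ${\rm Hess}(\lambda)$ and the curvature of $\partial\Kl$ (equivalently, the second fundamental form computed with the outward normal $\nabla\lambda(h)\neq0$) its Gaussian curvature at $h$ is positive, and compactness of $\partial\Kl$ (Part~B of Theorem~\ref{thm:attractive-morp}) upgrades this to uniform positivity by a finite covering.

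The hard part will be the second step: turning the factorisation \emph{bound} of Subsection~\ref{ssec:BalPhaseA} into an exact renewal \emph{identity}. Concretely this requires checking that consecutive irreducible pieces are genuinely disjoint as subsets of $\bbZ^d$ (so that $\Phi_\beta$ splits additively across cone points, which also carry local time one), controlling the short boundary pieces $\gamma^{[l]},\gamma^{[r]}$, and verifying that the non--cone remainder $\Psi$ is finite and analytic uniformly on the polydisc --- precisely the points carried out in \cite{IoffeVelenik-Annealed}. Once the identity is secured, the implicit function theorem \eqref{eq:AR-lfunc} and Lemma~\ref{lem:AR-Xi} do the rest, which is why the theorem is quoted as a consequence of those two results together with \eqref{eq:A-Sigma-map}.
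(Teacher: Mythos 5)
Your proposal is correct and follows essentially the same route as the paper: the notes present Theorem~\ref{thm:AN-shape} exactly as a consequence of the renewal framework \eqref{eq:ARen-Key-eq}--\eqref{eq:AR-lfunc}, Lemma~\ref{lem:AR-Xi} and \eqref{eq:A-Sigma-map}, with the cone $\calY$ fixed once for all $(g,\mu)$ near $(h,\lambda)$ and the combinatorial verification of the exact renewal identity deferred to \cite{IoffeVelenik-Annealed}, just as you do. Your second step merely spells out, via the domain-of-convergence characterization \eqref{eq:TH-serries} and monotonicity in $\mu$, the identification of the implicitly defined function with the free energy, which is the same argument the paper uses (at the single point $(h,\lambda)$) in the proof of Theorem~\ref{thm:L3-RepAlphabets}.
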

\smallskip 

\noindent
{\bf Ornstein-Zernike Theory.} Theorem~\ref{thm:L3-RepAlphabets} paves the way 
for an application of the 
multidimensional renewal theory, as described in Subsection~\ref{ssec:renewal} 
to a study of various limit properties of annealed measures $\Pnf{h}$, 
whenever $h\not\in\Knot$ is a super-critical drift. For the rest of the section 
let us fix such $h$ and $\lambda = \lambda (h ) >0$. By the above, this generates
a cone $\calY$ and a probability distribution $\lbr \sff (\sfx , n ) \rbr$ with 
 exponentially  decaying tails. We declare that it is a probability distribution 
 of a random vector $\sfU = \lb \sfX , \sfT\rb\in \bbZ^d\times \bbN$. In view of
 our assumptions on the underlying random walk, $\sfU$ satisfies the non-degeneracy
 Assumption~\ref{as:AR-distribution}, which means that it is in the framework of
 the theory developed therein. 
 
 Let us construct the array $\lbr \sft (\sfx , n )\rbr$ via the renewal relation 
 \eqref{eq:ARenewal-d}. The number $\sft (\sfx , n )$ has the following meaning:
 Recall our definition $D (\sfx , \sfy ) = \lb \sfx +\calY\rb\cap \lb \sfy -\calY\rb$ 
 of diamond shapes, and 
 define the following three families  of diamond-confined paths: 
 \be 
 \label{eq:AN-Tpaths}
 \calT_\sfx = \lbr \gamma\in \calP_\sfx :\gamma\subset D(0, \sfx )\rbr, \ 
 \calT_n = \lbr \gamma\in \calP_n :\gamma\subset D(0, \gamma_n )\rbr\ 
 {\rm and}\ 
 \calT_{\sfx , n} = \calT_\sfx\cap \calT_n .
 \ee
 As before, $\sft (\sfx ) = \sum_n \sft (\sfx , n )$ and 
 $\sft (n )= \sum_\sfx \sft (\sfx , n )$. Then, 
 \be 
 \label{eq:AN-sft-quantities}
 \sft (\sfx , n ) = \sum_{\gamma\in\calT_{\sfx , n}}\Wdn{h, \lambda } (\gamma ),\ 
 \sft (\sfx ) = \sum_{\gamma\in\calT_{\sfx}}\Wdn{h, \lambda } (\gamma )\ 
 {\rm and}\ 
 \sft (n ) = \sum_{\gamma\in\calT_{  n}}\Wdn{h, \lambda } (\gamma ). 
 \ee
 {\bf Asymptotics of partition functions.} By Theorem~\ref{thm:L3-RepAlphabets}, 
 the partition function $Z_n (h )$ in \eqref{IN-pf} satisfies:
 \be 
 \label{eq:AN-pf-as}
 {\rm e}^{- n \lambda}Z_n (h ) =  \Wdn{h, \lambda}\lb \calP_n \rb = 
 \bigo{{\rm e}^{-\chi_\lambda n}} +\sum_{k+m+j =n}
 \sff^{[l]}(k )\sft (m )\sff^{[r]} (j ) .
 \ee
 Define $\kappa (h ) = \lb \sum_k \sff^{[l]}(k )\rb \lb \sum_j \sff^{[l]}(j )\rb$ and 
 $\mu (h ) = \sum n\sff (n ) = \bbE\sfT $. By Lemma~\ref{lem:AR-lim-exp}
 \be 
 \label{AN-conv-pf}
 \lim_{n\to\infty}{\rm e}^{- n \lambda (h )}Z_n (h ) = \frac{\kappa (h )}{\mu (h )} , 
 \ee
 exponentially fast. 
 \smallskip

 \noindent
 {\bf Limiting spatial extension and other  limit theorems.} 
 Since $\lambda$ is differentiable at any $h\not\in\Knot$, \eqref{eq:L2-ballistic} and LLN 
 \eqref{eq:TH-LLN} follow with $\sfv = \nabla (h )$. However, since for any $h\not\in \Knot$ 
  the probability distribution $\lbr \sff (\sfx , \sfn )\rbr$ in 
  \eqref{eq:AN-f-weights} has exponential tails much sharper local limit results follow 
 along the lines of Subsection~\ref{ssec:renewal}:  
 Let $g\not\in\Knot$ and $\sfu = \nabla\lambda (g )$. 
 Fix $\delta$ sufficiently small and 
 consider $\bbB_\delta^d (\sfu )$. 
 For $\sfw\in \bbB_\delta^d (\sfu )$ define 
 $\sfw_n = \lfloor n\sfw \rfloor/n$ and let 
$g_n$ being defined via $\sfw_n = \nabla\lambda ( g_n )$.
 \begin{thm}
  \label{eq:thm:AN-Local-lim-P}
There exists a positive real analytic function $\psi$ on $\bbB_\delta^d (\sfu )$ such that
\be 
\label{eq:AR-LocLim-P}
\Pnf{h} \lb \lfloor n\sfw \rfloor \rb = 
\frac{\psi (\sfw )}{ \sqrt{(2\pi )^d{\rm det}\, \Xi (g_n )}} {\rm e}^{-n I_h  (\sfw_n )}
\lb 1 +\smo{1} \rb , 
\ee
uniformly in  $\sfw \in \bbB_\delta^d (\sfu )$. 

In particular (considering $\sfu =\sfv$),  under $\Pnf{h}$ the distribution of the 
rescaled end-point $\frac{ \sfX - n\sfv}{\sqrt{n}}$ 
converges to the $d$-dimensional mean-zero normal distribution  with covariance 
matrix $\Xi (h ) = {\rm Hess} (\lambda )(h)$. 
\end{thm}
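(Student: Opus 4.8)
We only sketch the argument, referring to \cite{IoffeVelenik-Annealed,IV-CLT} for the details; the plan is to push the multidimensional renewal theory of Subsection~\ref{ssec:renewal} through the irreducible decomposition \eqref{eq:L3-irreducible-decomp-d}. Fix $h$ with $\lambda=\lambda(h)>0$ and let $\calY$, $\sff^{[l]}$, $\sff^{[r]}$ and $\lbr\sff(\sfx,n)\rbr$ be as in Theorem~\ref{thm:L3-RepAlphabets}, with $\lbr\sft(\sfx,n)\rbr$ the renewal array \eqref{eq:ARenewal-d} generated by $\sff$ (cf.\ \eqref{eq:AN-sft-quantities}). Since $\Pnf{h}(\lfloor n\sfw\rfloor)=\Wdn{h,\lambda}\lb\calP_{\lfloor n\sfw\rfloor,n}\rb\big/\lb{\rm e}^{-\lambda n}Z_n(h)\rb$ and, by \eqref{AN-conv-pf}, the denominator converges exponentially fast to $\kappa(h)/\mu(h)>0$, it suffices to analyse the numerator, for which the endpoint--resolved form of \eqref{eq:AN-pf-as} reads, up to an exponentially small correction (from paths with too few $\calY$--cone points, cf.\ Proposition~\ref{prop:L4-cp-paths}),
\[
\Wdn{h,\lambda}\lb\calP_{\lfloor n\sfw\rfloor,n}\rb=\bigo{{\rm e}^{-cn}}+\sumtwo{\sfy+\sfz+\sfw'=\lfloor n\sfw\rfloor}{k+m+j=n}\sff^{[l]}(\sfy,k)\,\sft(\sfz,m)\,\sff^{[r]}(\sfw',j) .
\]
By the mass gap \eqref{eq:L3-massgap-d} the two boundary pieces carry only $\bigo{1}$ total mass, so $(\sfz,m)$ differs from $(\lfloor n\sfw\rfloor,n)$ by $\bigo{1}$.

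Next, the shape theorem identifies the renewal data with $\lambda$: by \eqref{eq:AH-lambda-analytic} the renewal equation $F(\xi,\mu)=0$ is solved for small $\xi$ by $\mu=\tilde\lambda(\xi)\df\lambda(h+\xi)-\lambda(h)$, whence $\nabla\tilde\lambda(0)=\sfv$, ${\rm Hess}\,\tilde\lambda(\xi)=\Xi(h+\xi)$, and the renewal rate function \eqref{eq:AR-LD-func} equals $J=\tilde\lambda^{*}=I_h$ by \eqref{eq:L3-LD-rate-fh}. Take $g$ near $h$, $\sfu=\nabla\lambda(g)$ and $\sfw\in\bbB_\delta^d(\sfu)$ with $\delta$ small enough that $\bbB_\delta^d(\sfu)\subset\bbB_\kappa^d(\sfv)$; put $\sfw_n=\lfloor n\sfw\rfloor/n$ and let $g_n$ (hence $\xi_n\df g_n-h$) be the unique nearby solution of $\nabla\lambda(g_n)=\sfw_n$ (implicit function theorem, non-degeneracy of $\Xi$ from Theorem~\ref{thm:AN-shape}). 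Feeding the renewal local limit estimate \eqref{eq:AR-LocLim} (together with $\sft(m)\to\mu(h)^{-1}$, Lemma~\ref{lem:AR-lim-exp}) into the convolution, Legendre duality gives $\nabla J(\sfw_n)=\xi_n$, so a second--order Taylor expansion yields $mJ(\sfz/m)=\xi_n\cdot\sfz-m\tilde\lambda(\xi_n)+\bigo{|\sfz-m\sfw_n|^2/m}$ with $|\sfz-m\sfw_n|\leqs|\sfy|+|\sfw'|+k+j$ exponentially damped by \eqref{eq:L3-massgap-d}. Summing, the boundary contributions factor out and
\[
\Wdn{h,\lambda}\lb\calP_{\lfloor n\sfw\rfloor,n}\rb=\frac{{\rm e}^{-nI_h(\sfw_n)}}{\mu(\xi_n)\sqrt{(2\pi)^d\,{\rm det}\,\Xi(g_n)}}\,A^{[l]}(\xi_n)\,A^{[r]}(\xi_n)\lb1+\smo{1}\rb ,
\]
where $A^{[l]}(\eta)\df\sum_{\sfy,k}\sff^{[l]}(\sfy,k)\,{\rm e}^{\eta\cdot\sfy-k\tilde\lambda(\eta)}$ and $A^{[r]}$ is analogous; these are finite and real analytic in $\eta$ near $0$ by \eqref{eq:L3-massgap-d}. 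Dividing by ${\rm e}^{-\lambda n}Z_n(h)\to\kappa(h)/\mu(h)$ and writing $\xi_\infty=\xi_\infty(\sfw)\df\lim_n\xi_n$ (a real analytic function of $\sfw$, since $\lambda$ is analytic with non-degenerate Hessian), we obtain \eqref{eq:AR-LocLim-P} with $\psi(\sfw)=\mu(h)\,A^{[l]}(\xi_\infty)A^{[r]}(\xi_\infty)\big/\lb\kappa(h)\,\mu(\xi_\infty)\rb$, positive and real analytic; note $\psi(\sfv)=1$ because there $\xi_\infty=0$, $A^{[l]}(0)A^{[r]}(0)=\kappa(h)$ and $\mu(0)=\mu(h)$.

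Finally, for the CLT take $g=h$, so $\sfu=\sfv$. The decomposition \eqref{eq:L3-irreducible-decomp-d} represents $\sfX$ under $\Pnf{h}$ as the bulk renewal displacement --- which, conditioned on the total number of steps equalling $n$, is distributed as $\sfX$ under $\bbQ_n$ --- plus the two boundary displacements $\sfy,\sfw'$, which are $\bigo{1}$ by \eqref{eq:L3-massgap-d} and hence disappear upon division by $\sqrt n$; since $\sfv_n\to\sfv=\nabla\lambda(h)$ and $\tfrac1{\sqrt n}(\sfX-n\sfv_n)\Rightarrow\calN(0,\Xi(h))$ under $\bbQ_n$ by Lemma~\ref{lem:AR-Xi}, the same weak limit holds under $\Pnf{h}$ with $\sfv$ replacing $\sfv_n$. (Equivalently, integrate \eqref{eq:AR-LocLim-P} over mesoscopic boxes about $n\sfv$, using $nI_h(\sfv+\theta/\sqrt n)\to\tfrac12\Xi(h)^{-1}\theta\cdot\theta$ and exponential tightness to discard the tails.) I expect the real work to be the uniformity bookkeeping in the middle step --- showing $A^{[l]},A^{[r]}$ converge and the Taylor remainder is $\smo{1}$ \emph{uniformly} over $\sfw\in\bbB_\delta^d(\sfu)$ and over the (not yet negligible) range of $(\sfy,k),(\sfw',j)$, while keeping $\xi_n$ in the analyticity neighbourhood of $0$ --- which forces $\delta$ and the allowed range of $g$ to be small and is exactly where Theorem~\ref{thm:AN-shape} enters.
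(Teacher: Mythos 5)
Your route is the one the paper intends (it gives no written proof, only the pointer to Subsection~\ref{ssec:renewal} together with Theorems~\ref{thm:L3-RepAlphabets} and~\ref{thm:AN-shape}), and your bookkeeping is essentially right: the convolution of $\sff^{[l]},\sft,\sff^{[r]}$, the identification of the renewal shape function with $\lambda(h+\xi)-\lambda(h)$ and of $J$ with $I_h$, the mass-gap treatment of the boundary pieces yielding the analytic prefactors $A^{[l]},A^{[r]}$, and the CLT paragraph are all fine (you also correctly mirror the paper's own normalization in \eqref{eq:AR-LocLim}, where the factor $n^{d/2}$ visible in \eqref{eq:loc-CLT} has been absorbed/omitted). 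The genuine gap is the range of validity. The theorem fixes $h$ and allows an \emph{arbitrary} supercritical drift $g\not\in\Knot$, $\sfu=\nabla\lambda(g)$ --- this is precisely the ``all deviation scales'' content of the statement --- whereas you prove it only for $g$ close to $h$: you impose $\bbB_\delta^d(\sfu)\subset\bbB_\kappa^d(\sfv)$ and tilt the $h$-based renewal inputs by $\xi_n=g_n-h$. For $g$ far from $h$ this step breaks down twice over: the tilted sums $\hat\sff_{\xi}$, $A^{[l]}(\xi)$, $A^{[r]}(\xi)$ need not converge once $|\xi|$ exceeds the finite decay exponents of Theorem~\ref{thm:L3-RepAlphabets} (Theorem~\ref{thm:AN-shape} is only a local statement around $h$); and, more basically, the discarded terms of the $h$-based coarse-graining (the $\bigo{{\rm e}^{-cn}}$ in your first display, coming from Proposition~\ref{prop:L4-cp-paths}) decay at a fixed rate $\nu$, hence are not $\smo{{\rm e}^{-nI_h(\sfw_n)}}$ once $I_h(\sfu)$ is of order one; in that regime the $\calY_h$-cone-point decomposition no longer carries the dominant mass.

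The repair keeps your computation but relocates it: run the construction at the drift $g$ (cone $\calY_g$, irreducible weights which form a probability distribution with exponential tails, local analyticity of $\lambda$ and non-degeneracy of $\Xi(g_n)$ for $g_n$ near $g$ --- Theorems~\ref{thm:L3-RepAlphabets} and~\ref{thm:AN-shape} hold at every supercritical drift), so that the tilt you need is $g_n-g=\bigo{\delta}$, safely inside the analyticity region, and your convolution argument gives the local limit for $\Pnf{g}$ at its own typical displacement, i.e.\ exactly for $\sfw\in\bbB_\delta^d(\sfu)$. Passing to $\Pnf{h}$ is then exact algebra with no convergence issue: $\Pnf{h}(\sfx)={\rm e}^{(h-g)\cdot\sfx}\,\frac{Z_n(g)}{Z_n(h)}\,\Pnf{g}(\sfx)$, and combining \eqref{AN-conv-pf} at both drifts with the identity $I_h(\sfw)=I_g(\sfw)+(g-h)\cdot\sfw+\lambda(h)-\lambda(g)$ (immediate from \eqref{eq:L3-LD-rate-fh}) turns the local CLT at $g$ into \eqref{eq:AR-LocLim-P}, with a positive real analytic $\psi$ absorbing $\kappa(g),\mu(g),\kappa(h),\mu(h)$ and your $A^{[l]},A^{[r]}$. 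With that modification (your CLT part, which only uses $g=h$, is unaffected) the statement is covered in the generality in which it is made.
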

Let us turn to the (Ornstein-Zernike) 
asymptotics of the two point function $G_\lambda$. Let $\sfx\neq 0$ and 
$h = \nabla\tl (\sfx )$, that is $h\in \partial\Kl$ and $\tl (\sfx) = h\cdot \sfx$. Since, 
as we already know, $\partial \Kl$ is strictly convex, such $h$ is unambiguously defined. 
Under the weights $\Wdn{h, \lambda}$ 
the irreducible decomposition \eqref{eq:L3-irreducible-decomp-d} 
folds in the sense that the $\Wdn{h, \lambda}$-weight of all 
paths $\gamma\in\calP_\sfx$ which do not comply with it, is 
exponentially negligible as compared to $G_\lambda (\sfx )$. Hence:
 \be
  \label{eq:AH-OZ}
  {\rm e}^{\tl (\sfx )} G_\lambda (\sfx ) \lb 1 +\smo{1} \rb = \sum_n \sft (\sfx , n )
  = \frac{c (h )}{\sqrt{\abs{\sfx}^{d-1}}} \lb 1 +\smo{1} \rb, 
 \ee
asymptotically in $\sfx$ large. This follows from \eqref{eq:AR-LocLim-P} and Gaussian summation 
formula. 
\smallskip 

\noindent
{\bf Invariance principles.}
There are two possible setups for formulating invariance principles for annealed 
polymers. The first is when we consider $\Pnf{h}$, and accordingly polymers $\gamma$ with 
fixed number $n$ of steps. In this case one defines
\[
 x_n (t ) = \frac{1}{\sqrt {n }}\lb \gamma_{\lfloor nt\rfloor} - nt\sfv\rb, 
\]
and concludes from Theorem~\ref{eq:thm:AN-Local-lim-P} that $x_n (\cdot )$ converges to 
a $d$-dimensional Brownian motion with covariance matrix $\Xi (h )$. 

A somewhat different $(d-1)$-dimensional invariance principle holds in the conjugate ensemble
of {crossing} polymers. To define the latter fix $\sfx \neq 0$, $\lambda >0$ and 
consider the following probability distribution $\Plf{\sfx}$ on the family  $\calP_\sfx$ of 
all polymers $\gamma$ which have displacement $\sfX (\gamma )= \sfx$:
\be 
\label{eq:AN-Plf-x}
\Plf{\sfx} (\gamma )= \frac{1}{G_\lambda (\sfx )}{\rm e}^{-\lambda\abs{\gamma }} \Wd (\gamma ) .
\ee
Consider again 
the irreducible decomposition \eqref{eq:L3-irreducible-decomp-d} of paths $\gamma\in\calP_\sfx$. 
Let $0, \sfu_1, \dots, \sfu_{N+1}, \sfx$ be the end-points of the corresponding 
irreducible paths. We can approximate $\gamma $ by a linear interpolation through these vertices. 
We employ the language of Subsection~\ref{ssec:A-Surfaces} of the Appendix. Let 
$\frn (h) = \frac{\sfx}{\abs{\sfx }}$ and $\frv_1 ,\dots ,\frv_{d-1}$ are unit vectors in the 
direction of principal curvatures of $\pKl$ at $h$.  
In the orthogonal frame 
$\lb \frv_1 ,\dots ,\frv_{d-1} , \frn (h)\rb$, the linear interpolation through the vertices 
of the irreducible decomposition of $\gamma$ can be represented as a function $Y :[0, \abs{\sfx}]\mapsto 
T_h \pKl$. Consider the rescaling 
\[
 y_\sfx (t ) = \frac{1}{\sqrt{\sfx}} Y (t\abs{\sfx} ). 
\]
Then, \eqref{eq:AH-OZ} and the quadratic expansion formula \eqref{eq:QExpansion-tau} of the Appendix leads to 
the following conclusion: Let $\sfx_m $ be a sequence of points with $\abs{\sfx_m}\to\infty$ and 
$\lim_{m\to\infty}\frac{\sfx_m}{\abs{\sfx_m }} = \frn (h )$. Then the distribution of $y_{\sfx_m }(\cdot )$
under $\Plf{\sfx_m }$ converges to the distribution of the $(d-1)$-dimensional Brownian bridge with
the diagonal covariance matrix ${\rm diag}\lb \chi_1 (h ), \dots , \chi_{d-1} (h )\rb$, where 
$\chi_i (h )$ are the principal curvatures of $\pKl$ at $h$. 

\section{Very weak disorder in $d\geq 4$.}
\label{sec:Weak} 
The notion of very weak disorder depends on the dimension $d\geq 4$ and 
on the pulling force $h\neq 0$. It is quantified in terms of  
continuous positive non-decreasing functions 
$\zeta_d$ on $(0,\infty)$; $\lim_{h\downarrow 0}\zeta_d (h )= 0$.  Function 
$\zeta_d$ does not have an 
independent physical meaning: 
It is  needed to ensure a certain percolation property \eqref{eq:WD-perc-cone}, and 
 to ensure validity of a certain $\bbL_2$-type estimate
formulated in Lemma~\ref{lem:WDBound} below.
\begin{definition}
 \label{def:VW-disorder}
 Let us say that the polymer model \eqref{IN-pd} is in the regime of very weak disorder
 if $h\neq 0$ and 
 \be
 \label{eq:WD-VW}
 \phi_\beta (1) \leq \zeta_d \lb \abs{h}\rb .
 \ee
\end{definition}
\begin{remark}
\label{rem:WD-condition}
\eqref{eq:WD-VW} 
 is a technical condition, and it has three main implications as 
 it is explained below after formulation of Theorem~\ref{thm:B}. 
 \end{remark}
\smallskip 

\noindent
{\bf Quenched Polymers at very weak disorder.} 
In the regime of very weak disorder quenched polymers behave like their annealed
counter-parts. Precisely: For $h\neq 0$, continue to use $\sfv_h = \nabla \lambda ( h)$
and $\Xi_h = {\rm Hess}(\lambda )(h )$ for the limiting spatial extension and the 
diffusivity of the annealed model. Let ${\rm Cl}_\infty$ be the unique infinite 
connected cluster of $\lbr \sfx~:~\sfVo_\sfx <\infty\rbr$. 
\begin{thm}
 \label{thm:A}
Fix $h\neq 0$. Then, in the regime of very weak disorder,
the following holds $\calQ$-a.s.\ on the event $\lbr 0\in {\rm Cl}_\infty\rbr$:
\begin{itemize}
\item  The limit
\be
\label{eq:Aclaima}
 \lim_{n\to\infty} \frac{\Zno  (h)}{\Zn (h)}
\ee
exists and is a strictly positive, square-integrable random variable.
\item 
For every $\epsilon >0$, 
\be
\label{eq:Aclaimb}
\sum_n \Pnfo{h}\lb  \left|\frac{ \sfX(\gamma )}{n} - \sfv_h \right| >\epsilon  \rb < \infty .
\ee
\item 
For every $\alpha\in \bbR^{d}$,
\be
\label{eq:Aclaimc}
 \lim_{n\to\infty} \Pnfo{h} \lb  {\rm exp} 
\lbr \frac{i\alpha}{\sqrt{n}}\lb X (\gamma ) - n\sfv_h\rb\rbr\rb =
\exp\lbr -\frac12 \Xi_h \alpha\cdot\alpha \rbr .
\ee
\end{itemize}
\end{thm}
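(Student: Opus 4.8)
\emph{Proof proposal.} The plan is to use the Ornstein--Zernike renormalization of Subsection~\ref{ssec:BalPhaseA} to realize the quenched polymer as an effective directed polymer in an i.i.d.\ medium, and then to run the classical second-moment ($\bbL_2$) martingale argument for weak disorder; the restriction $d\ge4$ and the smallness \eqref{eq:WD-VW} will enter only through an intersection estimate for the effective walk. First I would fix $h\neq0$ in the very weak disorder regime. By \eqref{eq:WD-VW} the drift is super-critical, so $\lambda=\lambda(h)>0$ and Theorem~\ref{thm:L3-RepAlphabets} applies: up to $\Wdn{h,\lambda}$-weight $\bigo{\mathrm{e}^{-\chi_\lambda n}}$, a polymer $\gamma\in\calP_n$ admits the irreducible decomposition \eqref{eq:L3-irreducible-decomp-d}, $\gamma=\gamma^{[l]}\circ\gamma^1\circ\dots\circ\gamma^N\circ\gamma^{[r]}$, glued at cone points $0=\sfu_0,\sfu_1,\dots$ sitting in pairwise disjoint diamonds (overlapping only at the cone points, which have local time one). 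Because $\phi_\beta$ is attractive and local times add over disjoint vertex sets, the \emph{quenched} weight factorizes as $\Wdon{h,\lambda}(\gamma)=\Wdon{h,\lambda}(\gamma^{[l]})\prod_i\Wdon{h,\lambda}(\gamma^i)\,\Wdon{h,\lambda}(\gamma^{[r]})$; summing the irreducible pieces between prescribed cone points produces random weights $\sff^\omega(\sfx,m;\theta_\sfy\omega)$ that depend only on the environment in one diamond, have $\calE\,\sff^\omega=\sff$ (the annealed law of Theorem~\ref{thm:L3-RepAlphabets}), and are \emph{independent} across disjoint diamonds. Thus $\mathrm{e}^{-\lambda n}\Zno(h)$ is, up to the exponentially small correction, the partition function of a directed polymer on the lattice of cone points in the i.i.d.\ field $\{\sff^\omega\}$, the boundary pieces $\sff^{[l]},\sff^{[r]}$ being handled the same way via their mass gap \eqref{eq:L3-massgap-d}.

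Next I would set $W_n=\Zno(h)/\Zn(h)$. After this reduction the normalized quenched partition function becomes (in its point-to-hyperplane / fixed-number-of-renewal-steps form) a non-negative, mean-one martingale $\hat W$ with respect to a natural filtration of the environment, and $W_n$ inherits its large-$n$ limit through the renewal relation of Subsection~\ref{ssec:renewal}. Computing the second moment of $\hat W$ brings in two independent copies of the effective walk and an environment expectation that factorizes over diamonds, producing an extra factor exactly on the set of \emph{overlapping} diamond pairs; this reduces $\sup\calE[\hat W^{\,2}]<\infty$ to the summability of the expected number of such overlaps for two independent copies of the effective walk. Since the diamonds are of size $\bigo{1}$ and the effective increments are non-degenerate in $d+1$ dimensions (Assumption~\ref{as:AR-distribution}), that number is finite and small in $d\ge4$ under \eqref{eq:WD-VW} --- this is precisely Lemma~\ref{lem:WDBound}. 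Hence $\hat W\to W_\infty$ a.s.\ and in $\bbL_2$, with $\calE\,W_\infty=1$, $\calE\,W_\infty^2<\infty$, and $W_n\to W_\infty$. For positivity: on $\{0\notin{\rm Cl}_\infty\}$ the polymer is confined to a finite component while $\Zn(h)\asymp\mathrm{e}^{\lambda n}$, so $W_\infty=0$ there; on $\{0\in{\rm Cl}_\infty\}$ one has $W_\infty>0$ $\calQ$-a.s.\ from $\calE\,W_\infty=1$ together with a standard $0$--$1$ law, the percolation property \eqref{eq:WD-perc-cone} being what keeps the martingale non-degenerate. This gives \eqref{eq:Aclaima}.

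For the law of large numbers \eqref{eq:Aclaimb}, writing $A_{n,\epsilon}=\{\gamma\in\calP_n:\,|\sfX(\gamma)/n-\sfv_h|>\epsilon\}$ one has
\[
\Pnfo{h}(A_{n,\epsilon})=\frac{\Zn(h)}{\Zno(h)}\cdot\frac1{\Zn(h)}\sum_{\gamma\in A_{n,\epsilon}}\mathrm{e}^{h\cdot\sfX(\gamma)}\Wdo(\gamma),
\]
and the $\calQ$-expectation of the last sum equals $\Zn(h)\,\Pnf{h}(A_{n,\epsilon})\le\Zn(h)\,\mathrm{e}^{-cn}$ by the annealed large-deviation bound (Part~C of Theorem~\ref{thm:attractive-morp}, using $I_h>0$ away from $\sfv_h=\nabla\lambda(h)$); a $\calQ$-Markov inequality plus Borel--Cantelli force the sum to be $\le\mathrm{e}^{-cn/2}\Zn(h)$ for all large $n$, while $\Zn(h)/\Zno(h)\to W_\infty^{-1}<\infty$, whence $\Pnfo{h}(A_{n,\epsilon})\leqs\mathrm{e}^{-cn/2}$ eventually, which is summable. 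For the central limit theorem \eqref{eq:Aclaimc}, set $M_n(\alpha)=\Zn(h)^{-1}\sum_{\gamma\in\calP_n}\mathrm{e}^{\,i\alpha\cdot(\sfX(\gamma)-n\sfv_h)/\sqrt n}\,\mathrm{e}^{h\cdot\sfX(\gamma)}\Wdo(\gamma)$, so that $\Pnfo{h}(\mathrm{e}^{\,i\alpha\cdot(\sfX-n\sfv_h)/\sqrt n})=M_n(\alpha)/W_n$ and $\calE\,M_n(\alpha)=\Enf{h}[\mathrm{e}^{\,i\alpha\cdot(\sfX-n\sfv_h)/\sqrt n}]\to\exp\{-\tfrac12\Xi_h\alpha\cdot\alpha\}$ by the annealed CLT (Theorem~\ref{eq:thm:AN-Local-lim-P}, which holds uniformly for drifts near $h$). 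The same second-moment computation as above --- the cross terms again feeling only overlapping diamonds --- gives $M_n(\alpha)-W_n\,\calE\,M_n(\alpha)\to0$ in $\bbL_2(\calQ)$, hence $\calQ$-a.s.\ along a subsequence and then for the full sequence; since $W_n\to W_\infty>0$, dividing yields \eqref{eq:Aclaimc}.

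The hard part will be the $\bbL_2$ estimate of the second step, Lemma~\ref{lem:WDBound}: one has to bound, uniformly in the time horizon, the interaction between two independent copies of the effective Ornstein--Zernike walk through the shared environment, i.e.\ a quantitative intersection (Green's-function) estimate for that walk --- and it is exactly here that both $d\ge4$ and the smallness \eqref{eq:WD-VW} are needed. The remaining ingredients (the passage between the true and the renewal-reformulated partition functions, the $0$--$1$ law for $\{W_\infty>0\}$, and the deductions of \eqref{eq:Aclaimb}--\eqref{eq:Aclaimc}) are routine once that bound is in hand.
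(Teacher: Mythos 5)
Your overall strategy (irreducible/cone-point decomposition, reduction to an effective directed-polymer-like model, second-moment analysis in $d\geq 4$ under \eqref{eq:WD-VW}, and the routine deductions of \eqref{eq:Aclaimb} from \eqref{eq:Aclaima} plus annealed large deviations) matches the paper's. But the load-bearing step of your argument is exactly the one the paper points out is unavailable: you assert that after the Ornstein--Zernike reduction the normalized quenched partition function is "a non-negative, mean-one martingale $\hat W$ with respect to a natural filtration of the environment". It is not. The random weights $\sffof{\sfy}(\sfz,m)$ are indexed by a random starting point $\sfy$, and although weights attached to disjoint diamonds are independent, the diamond visited at the $\ell$-th renewal step depends on the path, so the same environment vertices can contribute to increments with different indices $\ell$; the index of the renewal step does not correspond to a fixed slice of fresh environment as it does for directed polymers. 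Consequently the increments $\sfY_\ell$ in \eqref{eq:Xl} are \emph{not} a martingale-difference sequence for any natural filtration, and an $\bbL_2$-bound alone does not give the a.s.\ convergence you need for \eqref{eq:Aclaima}. The paper's proof replaces your martingale step by Sinai's decomposition \eqref{eq:Sinai-tn}--\eqref{eq:EpsTerm} of $\tq{n}$ into a main term $\sfso(n)$ plus correction terms, and then proves a.s.\ convergence of both via McLeish's mixingale maximal inequality \eqref{eq:Maximal} with the half-space filtration $\calA_m$ of \eqref{eq:sigma-algebras}; the quantitative input making the mixingale conditions \eqref{eq:McLeish} checkable is precisely Lemma~\ref{lem:WDBound}, which controls conditional expectations given cylindrical $\sigma$-algebras, not just plain second moments of two replicas.

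A second, smaller gap is your positivity argument. A "standard $0$--$1$ law" cannot yield $W_\infty>0$ a.s.\ on $\lbr 0\in{\rm Cl}_\infty\rbr$: when traps are present the event $\lbr W_\infty>0\rbr$ has probability strictly between $0$ and $1$, and $\calE\,W_\infty=1$ (even granted) only gives positivity with positive probability, not a.s.\ on the infinite-cluster event; moreover the event is not a tail event of a product structure adapted to the polymer, again because the relevant environment regions are path-dependent. The paper instead proves \eqref{eq:WD-cone-x} by a quantitative covariance bound \eqref{eq:WD-intersection} for $\sfsof{\sfx}$, $\sfx$ ranging over a distant hyperplane layer $B_n$ in the cone, followed by Chebyshev and Borel--Cantelli, and then transports positivity back to the origin through a finite path in ${\rm Cl}_\infty^{h}$. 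So your proposal is correct in outline and in the annealed ingredients it invokes, but the two probabilistic devices it leans on (martingale convergence and a $0$--$1$ law) both fail in this model and must be replaced by the mixingale and covariance arguments, respectively.
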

\begin{remark}
 \label{rem:W-23cond}
Since in the regime of very weak disorder $h\not\in \Knot$, the series in \eqref{eq:TH-LLN} converge 
exponentially fast. Using $\Zno (A |h )$ for the restriction of $\Zno$ to paths from $A$ we conclude
 (from exponential Markov inequality) 
that there exists $c= c (\epsilon ) >0$ such that
\[
 \sum_n\calQ\lb \Zno\lb \left|\frac{ \sfX(\gamma )}{n} - \sfv_h \right| >\epsilon \big| h  \rb > 
 {\rm e}^{-c n} \Zn (h ) \rb
 <\infty .
\]
In other words, \eqref{eq:Aclaimb} routinely follows from \eqref{eq:Aclaima} and exponential 
bounds on annealed polymers. 
\end{remark}

\noindent
{\bf Reformulation in terms of basic partition functions.} 
For each $h\neq 0$ and $\beta >0$ 
basic annealed partition functions were defined in \eqref{eq:AN-sft-quantities}. 
Here is the corresponding definition of basic quenched partition functions:
\be 
 \label{eq:Q-sft-quantities}
 \sfto (\sfx , n ) = \sum_{\gamma\in\calT_{\sfx , n}}{\rm e}^{h\cdot 
 \sfX (\gamma )-\lambda\abs{\gamma}}
 \Wdo (\gamma ), 
 \ee
 and, accordingly $\sfto (\sfx )= \sum_n\sfto (\sfx , n )$ and $\sfto (n )= 
 \sum_\sfx\sfto (\sfx , n )$. 
 
Let $\calY = \calY_h$ be the cone used to define irreducible paths. Then  
${\rm Cl}_\infty^{h}$ is the infinite connected component (unique if exists )
of $\lbr \sfx\in \calY_h~:~ \sfVo_\sfx <\infty\rbr$. 
\begin{thm}
\label{thm:B}
Fix $h\neq 0$. Then, in the regime of very weak disorder, 
infinite connected cluster ${\rm Cl}_\infty^{h}$ exists $\calQ$-a.s.. 
Furthermore, 
the following holds $\calQ$-a.s.\ on the event $\lbr 0\in {\rm Cl}_\infty^{h}\rbr$:
\begin{itemize} 
\item The limit
\be
\label{eq:claima}
\sfs^\omega \df \lim_{n\to\infty} \frac{\tq{n}}{\ta{n}}
\ee
exists and is a strictly positive, square-integrable random variable.
\item 
For every
$\alpha\in\bbR^{d+1}$,
\be
\label{eq:claimc}
\lim_{n\to\infty}
\frac1{\tq{n}  }\sum_x\exp\lbr \frac{i\alpha}{\sqrt{n}}\cdot \lb x - nv\rb\rbr
\tq{x ,n}
=
\exp\lbr -\frac12 \Sigma\alpha\cdot\alpha \rbr .
\ee
\end{itemize}
\end{thm}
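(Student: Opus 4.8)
The plan is to exploit the effective directed-polymer structure provided by Theorem~\ref{thm:L3-RepAlphabets}. Because the weights $\Wdn{h,\lambda}$ concentrate (up to exponentially small corrections) on paths admitting the irreducible decomposition \eqref{eq:L3-irreducible-decomp-d}, the quenched quantity $\tq{x,n}$ in \eqref{eq:Q-sft-quantities} can be written as a sum over ordered sequences of irreducible blocks confined to diamond shapes $D(\sfu_\ell,\sfu_{\ell+1})$. The key point is that, since each irreducible block lives in a positive cone $\calY=\calY_h$ and successive blocks are concatenated head-to-tail, the blocks occupy \emph{disjoint} lattice regions: the diamonds $D(\sfu_\ell,\sfu_{\ell+1})$ overlap only at the cone points $\sfu_\ell$. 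Hence the random environment $\{\sfVo_\sfx\}$ restricted to distinct blocks is genuinely independent. This is exactly the feature that fails for the unconstrained model and is the whole reason for the renormalization in Subsection~\ref{ssec:BalPhaseA}. First I would record this decomposition carefully, including the negligibility (uniform in $\omega$, via Proposition~\ref{prop:L4-cp-paths}) of paths not of the form \eqref{eq:L3-irreducible-decomp-d}, so that $\tq{n}$ and $\ta{n}$ differ from their irreducible-block counterparts by $\bigo{{\rm e}^{-\nu n}}$ terms with $\nu$ deterministic.

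Next I would set up a martingale. Fix the cone $\calY_h$; the existence and a.s.\ uniqueness of ${\rm Cl}_\infty^{h}$ follows from Assumption~{\bf (A2)} together with a standard Peierls/renormalization argument for high-density site percolation inside a cone, using that the relevant percolation parameter is $\calQ(\sfVo<\infty)>p_c(\bbZ^d)$ and that a cone of opening $<\pi$ still supports a supercritical cluster; I would cite \cite{Mourrat} for this. On the event $0\in{\rm Cl}_\infty^{h}$, define $W_n^\omega=\tq{n}/\ta{n}$. Conditioning on the environment along the ``spine'' (the sequence of cone points $\sfu_0,\dots,\sfu_{N+1}$), one checks that $\calE\big(\tq{n}\big)=\ta{n}$ block by block, because $\sff(\sfx,n)=\calE\big(\sfto(\sfx,n)\big)$ for a single irreducible block by the definition \eqref{eq:IN-aweight} of the annealed weights. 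Therefore $\calE(W_n^\omega)=1$, and by ordering the blocks one obtains that $W_n^\omega$ is (up to the exponentially small correction above) a martingale with respect to the natural filtration generated by the environment read along increasing ``time'' $n$. The core estimate is then $L^2$-boundedness: I would show $\sup_n\calE\big((W_n^\omega)^2\big)<\infty$ in the very weak disorder regime. This is where condition \eqref{eq:WD-VW} enters --- it is precisely the input of the $\bbL_2$-type Lemma~\ref{lem:WDBound}. Concretely, $\calE\big((W_n^\omega)^2\big)$ expands as a sum over \emph{pairs} of block-sequences; the independence of environments on disjoint blocks means the second moment factorizes over the ``overlap'' structure of the two polymers, and the overlap is controlled by the expected number of common cone points, which in turn is governed by the Green's function of the effective $(d+1)$-dimensional renewal random walk $\sfS_N$. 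For $d\geq4$ this walk is transient on the relevant time-scale and the Green's function is summable; the smallness of $\phi_\beta(1)$ guarantees that the per-site disorder contribution $\exp\{2\phi_\beta(\ell)-\phi_\beta(2\ell)\}-1$ (which appears when two polymers share a vertex) is small enough to make the geometric-type series converge. From $L^2$-boundedness, the martingale convergence theorem gives the a.s.\ and $L^2$ limit $\sfso=\lim_n W_n^\omega$; strict positivity follows from a standard $0$--$1$ argument: $\{\sfso>0\}$ is a tail-type event of probability $0$ or $1$, and it cannot have probability $0$ because $\calE(\sfso)=1$ by uniform integrability.

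For the local/Gaussian statement \eqref{eq:claimc}, I would upgrade the convergence $W_n^\omega\to\sfso$ to a \emph{tilted} version: for $\alpha\in\bbR^{d+1}$ small, the same argument applied to the block weights multiplied by ${\rm e}^{i\alpha\cdot(\sfx,n)/\sqrt n}$ --- equivalently, comparing the quenched renewal array $\sfto(\sfx,n)$ with the annealed one $\sft(\sfx,n)$ --- shows that
\[
\frac{1}{\tq{n}}\sum_x {\rm e}^{\frac{i\alpha}{\sqrt n}\cdot(x-nv)}\,\tq{x,n}
\;-\;
\frac{1}{\ta{n}}\sum_x {\rm e}^{\frac{i\alpha}{\sqrt n}\cdot(x-nv)}\,\ta{x,n}
\;\longrightarrow\;0
\qquad\calQ\text{-a.s.}
\]
because the difference, after dividing by $\sfso$, is again a null-convergent martingale (the $L^2$ bound is uniform in $|\alpha|\leq R/\sqrt n$). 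The annealed term on the left converges to $\exp\{-\tfrac12\Sigma\alpha\cdot\alpha\}$ by the integral CLT of Subsection~\ref{ssec:renewal} (formula \eqref{eq:AR-Sn-CLT} and Lemma~\ref{lem:AR-Xi}), with $\Sigma$ the covariance of the effective renewal walk, so the quenched expression has the same limit; matching with the spatial marginals recovers \eqref{eq:claimc} (and $\Xi_h$ in Theorem~\ref{thm:A} is then read off from $\Sigma$ by the usual change of variables $n\mapsto$ number of blocks). The main obstacle is the uniform $L^2$ bound on the (tilted) martingale: one must control the joint-environment second moment of two coupled irreducible-block walks, which requires (i) a good estimate on the number of shared cone points in terms of a Green's function, and (ii) showing that the combinatorial entropy of the block decomposition does not overwhelm the smallness coming from \eqref{eq:WD-VW}. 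Both are precisely the content of Lemma~\ref{lem:WDBound}, so modulo invoking that lemma the remaining steps are the (standard) martingale bookkeeping sketched above.
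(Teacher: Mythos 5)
Your plan hinges on the claim that, because consecutive irreducible blocks occupy diamonds which meet only at cone points, the normalized ratio $W_n^\omega=\tq{n}/\ta{n}$ is (up to exponentially small corrections) a martingale ``with respect to the natural filtration generated by the environment read along increasing time $n$''. This is exactly the point where the stretched model differs from directed polymers, and it is where the argument breaks. In the directed case the $\ell$-th step reads the environment at deterministic time-level $\ell$, so revealing the environment level by level yields a martingale. Here the environment is indexed by space, the set of vertices ``used by time $n$'' depends on the path, and the same vertex $\sfx$ contributes to blocks with different indices $\ell$ for different paths entering the sums defining $\tq{n}$ and $\tq{n+1}$; consequently there is no filtration $\lbr\calF_n\rbr$ for which $\calE\lb W_{n+1}^\omega\,|\,\calF_n\rb=W_n^\omega$, and mean one ($\calE\, \sfto(\sfx,n)=\sft(\sfx,n)$) is not enough. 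The paper's proof is built precisely to circumvent this: one first passes through Sinai's decomposition \eqref{eq:Sinai-tn}, isolating $\sfso(n)=1+\sum_{\ell\le n}\sfY_\ell$ with $\sfY_\ell=\sum_\sfx\tq{\sfx,\ell}\lb\sffof{\sfx}-1\rb$ plus correction terms \eqref{eq:EpsTerm}, and then treats $\lbr\sfY_\ell\rbr$ not as martingale differences but as a \emph{mixingale} with respect to the spatial half-space filtration $\calA_m=\sigma\lbr\sfVo_\sfx:\sfx\in\cHm{m}\rbr$; the two-sided conditional bounds \eqref{eq:McLeish} are verified from the $\bbL_2$-estimate of Lemma~\ref{lem:WDBound} (this is where $d\ge 4$ and \eqref{eq:WD-VW} enter, through the $\ell^{-(d-1)/2+\epsilon}$ summability of $d_\ell^2$), and convergence follows from McLeish's maximal inequality \eqref{eq:Maximal} rather than from Doob. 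Your second-moment heuristics (overlap controlled by a transient effective walk, smallness from $\phi_\beta(1)$) correctly describe what Lemma~\ref{lem:WDBound} encodes, but without the mixingale (or some substitute for the missing martingale structure) the ``standard martingale bookkeeping'' you defer to does not exist.

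A second, smaller but genuine gap is strict positivity of $\sfso$. The event $\lbr\sfso>0\rbr$ is not a tail event for the i.i.d.\ field (it is tied to the non-tail event $\lbr 0\in{\rm Cl}_\infty^{h}\rbr$, and with traps $\sfso$ does vanish off that event), so a $0$--$1$ law plus $\calE(\sfso)=1$ does not give positivity on $\lbr 0\in{\rm Cl}_\infty^{h}\rbr$. The paper instead proves $\calQ\lb\exists\,\sfx\in\calY:\sfsof{\sfx}>0\rb=1$ via the covariance bound \eqref{eq:WD-intersection} on the slabs $B_n$, Chebyshev and Borel--Cantelli, and then propagates positivity from such an $\sfx$ back to the origin along a finite open path inside ${\rm Cl}_\infty^{h}$. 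Finally, for \eqref{eq:claimc} your tilted-comparison idea is in the right spirit, but it again presupposes the martingale structure; the paper handles it through the full decomposition \eqref{eq:Sinai0} with the same mixingale machinery, as detailed in \cite{IV-CLT}.
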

Below we shall explain the proof of \eqref{eq:claima}. The $\calQ$-a.s. CLT
follows in a rather similar fashion, albeit with some additional technicalities, 
and we refer to \cite{IV-CLT} for the complete proof. 
\smallskip 

\noindent
{\bf Three properties of very weak disorder.} 
The role of (technical) condition $\phi_\beta (1)\leq \zeta_d  (\abs{h} )$ is threefold: 

First of all,
setting $p_d = \calQ\lb \sfVo =\infty\rb$ and noting that
\be 
\label{eq:WD-perc}
 \zeta_d \lb \abs{h}\rb \geq 
 \phi_\beta (1) = -\log\calE\lb {\rm e}^{-\beta\sfVo }\rb \geq -\log (1-p_d ) \geq p_d, 
\ee
we conclude that \eqref{eq:WD-VW} implies that: 
\be 
\label{eq:WD-pd-small}
\calQ\lb \sfVo =\infty\rb\leq \zeta_d \lb \abs{h}\rb .
\ee
Thus, 
in view of \eqref{eq:WD-perc}, condition 
\eqref{eq:WD-VW} 
implies that 
the infinite 
cluster ${\rm Cl}_\infty^{h}$ exists: Namely if 
we choose $\zeta_d$ such that 
$\sup_h \zeta_d (\abs{h} )$ is sufficiently small, 
then 
\be 
\label{eq:WD-perc-cone}
\calQ\lb \text{there is an infinite cluster ${\rm Cl}_\infty^{h}$}\rb =1, 
\ee
for all situations in question. 

The second implication is that for any $h\neq 0$ fixed, $h\not\in\Knot (\beta )$ for all 
$\beta$ sufficiently small. In other words, in the regime of  very weak disorder 
the annealed model 
is always in the ballistic phase. Indeed, since $\phi_\beta (\ell )\leq 
\ell\phi_\beta (1 )$, 
\[
 Z_n (h ) \geq {\rm e}^{-n \phi_\beta (1)}\lb \sfE_d {\rm e}^{h\cdot\sfX}\rb^n .
\]
Consequently $\lambda (h ) >0$ whenever  $\log \lb \sfE_d {\rm e}^{h\cdot\sfX}\rb
 > \phi_\beta (1)$. 

The third  implication is an $\bbL_2$-estimate \eqref{eq:WDBound} below. Recall that 
for $h$ fixed, annealed measures $\Pnf{h}$ have limiting spatial extension 
$\sfv_h = \nabla\lambda (h )$, and they satisfy sharp classical local limit 
asymptotics around this value. 

For a subset $A\subseteq\bbZ^{d}$, let $\calA$ be the $\sigma$-algebra
generated by
$\lbr \sfVo_\sfx \rbr_{\sfx\in A}$. We shall call such $\sigma$-algebras cylindrical.
\begin{lem}
\label{lem:WDBound}
{For any dimension $d\geq 4$ there exists a positive non-decreasing function $\zeta_d$ on 
$(0,\infty )$ and a number $\rho < 1/12$ such that the following holds: If $\phi_\beta  (1) 
<\zeta_d (\abs{h} )$, 
then  }
there exist  constants $c_1 ,c_2 <\infty$ such that 
\be
\label{eq:WDBound}
\begin{split}
&\abs{
\calE 
 \left[ \tq{\sfx , \ell}\tq{\sfy ,\ell } \calE\lb \sffof{\sfx }(\sfz, m) -\sff (\sfz, m )\big|
\calA \rb \calE \lb \sffof{\sfy} (\sfw, k )  -\sff (\sfw, k )\big|\calA\rb
 \right]
} \\
&\quad
\leq \frac{c_1{\rm e}^{- c_2 (m+ k )}}{ \ell^{d -\rho}} \exp\lbr
-c_2\lb \abs{\sfx -\sfy}
+ \frac{\abs{\sfx -\ell \sfv_h}^2}{\ell} +\frac{\abs{\sfy -\ell \sfv_h}^2}{\ell}\rb\rbr , 
\end{split}
\ee
{for all} $\sfx , \sfy, \sfz, \sfw, m, k$ 
 and  all cylindrical  $\sigma$-algebras $\calA$ such that both
$\tq{\sfx,\ell}$ and $\tq{\sfy,\ell}$ are
$\calA$-measurable.
\end{lem}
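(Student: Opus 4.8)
The plan is to estimate the left-hand side of \eqref{eq:WDBound} by combining three ingredients: the exponential mass gap \eqref{eq:L3-massgap-d} for the annealed irreducible weights, which produces the factor $e^{-c_2(m+k)}$ and lets one truncate the two ``forward'' increments at scales comparable to $m$ and $k$; the classical local limit asymptotics \eqref{eq:AR-LocLim} of Subsection~\ref{ssec:renewal} (and Proposition~\ref{prop:loc-CLT}) for the annealed renewal array $\sft(\cdot,\cdot)$, which supply the Gaussian weights $e^{-c|\sfx-\ell\sfv_h|^2/\ell}$ and $e^{-c|\sfy-\ell\sfv_h|^2/\ell}$ together with a factor $\ell^{-d/2}$ for each bulk; and a $\bbL_2$-type decoupling, local in the disorder, bounding the two quenched bulks $\tq{\sfx,\ell}\,\tq{\sfy,\ell}$ against the product of their annealed means at the cost of a single power $\ell^{\rho}$. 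Recall that in the regime \eqref{eq:WD-VW} one has $h\notin\Knot$, hence $\lambda=\lambda(h)>0$ and the mass gap $\chi_\lambda>0$ of Theorem~\ref{thm:L3-RepAlphabets}, so this renewal picture is indeed available; the resulting inequality is the main input for the martingale estimate behind \eqref{eq:claima}.

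\textbf{Localization of the centered increments.} Since $\sff(\sfz,m)$ is deterministic and, by stationarity of $\calQ$, $\calE[\sffof{\sfx}(\sfz,m)]=\sff(\sfz,m)$, each factor $\calE(\sffof{\sfx}(\sfz,m)-\sff(\sfz,m)\mid\calA)$ is centered; moreover, paths contributing to $\sffof{\sfx}(\sfz,m)$ stay in the diamond $D(\sfx,\sfx+\sfz)$, of diameter $\leqs|\sfz|\le Rm$, so this factor depends on the environment only through $\calA$ restricted to that diamond and, by \eqref{eq:L3-massgap-d}, is bounded in absolute value by $e^{-\nu|\sfz|-\chi_\lambda m}$ (symmetrically for the $\sfw,k$ factor). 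Summing this crude bound over $\sfz$ with $|\sfz|\le Rm$ and over $\sfw$ absorbs those variables and reduces the claim to a bound on $\calE\bigl[\tq{\sfx,\ell}\,\tq{\sfy,\ell}\,\Delta_\sfx\,\Delta_\sfy\bigr]$ with $\Delta_\sfx,\Delta_\sfy$ bounded, centered, $\calA$-measurable and, up to a set of bounded diameter around $\sfx$, resp.\ $\sfy$, measurable with respect to the environment in $D(\sfx,\sfx+\sfz)$, resp.\ $D(\sfy,\sfy+\sfw)$. The decay in $|\sfx-\sfy|$ now comes for free: if $D(\sfx,\sfx+\sfz)$, $D(\sfy,\sfy+\sfw)$, $D(0,\sfx)$ and $D(0,\sfy)$ are pairwise disjoint then $\Delta_\sfx$ and $\Delta_\sfy$ are mutually independent and independent of $\tq{\sfx,\ell}\,\tq{\sfy,\ell}$, so the expectation vanishes; hence a non-zero contribution forces $|\sfx-\sfy|\leqs|\sfz|+|\sfw|\leqs m+k$, which, combined with the gap $e^{-\chi_\lambda(m+k)}$ already extracted, gives the claimed $e^{-c_2(|\sfx-\sfy|+m+k)}$.

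\textbf{The two-replica bulk estimate.} It remains to prove $\calE\bigl[\tq{\sfx,\ell}\,\tq{\sfy,\ell}\bigr]\leqs \ell^{\rho}\,\sft(\sfx,\ell)\,\sft(\sfy,\ell)$. Writing, for $\gamma\in\calT_{\sfx,\ell}$ and $\gamma'\in\calT_{\sfy,\ell}$,
\[
\calE\bigl[\Wdo(\gamma)\,\Wdo(\gamma')\bigr]=\Wd(\gamma)\,\Wd(\gamma')\prod_{\sfu}\rho_\sfu\bigl(\ell_\gamma(\sfu),\ell_{\gamma'}(\sfu)\bigr),
\]
positive association (cf.\ \eqref{eq:L2-FKG}) gives $\rho_\sfu\ge 1$, with $\rho_\sfu\equiv 1$ off the common range of $\gamma,\gamma'$ and $\log\rho_\sfu\leqs\phi_\beta(1)$ per shared vertex. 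Under \eqref{eq:WD-VW} this ``replica interaction'' is a small perturbation; one reorganizes the double sum along the common cone points of the two irreducible decompositions of Theorem~\ref{thm:L3-RepAlphabets}, obtains an effective two-replica renewal with exponentially decaying single-block weights, and sums the perturbative expansion to see that the accumulated cost of all overlap blocks between two bulks of length $\ell$ is at most $\ell^{\rho}$. This is precisely the step where $d\ge 4$ enters: it is the stretched-polymer counterpart of the transience/summability threshold making the normalized partition function an $\bbL_2$-bounded martingale (the analogous threshold for directed polymers being $d\ge 3$), and it also fixes the admissible $\zeta_d$ and the value $\rho<1/12$. Inserting the local limit asymptotics for $\sft(\sfx,\ell)$ and $\sft(\sfy,\ell)$, which produce $\ell^{-d/2}e^{-c|\sfx-\ell\sfv_h|^2/\ell}$ and $\ell^{-d/2}e^{-c|\sfy-\ell\sfv_h|^2/\ell}$ (with genuine large-deviation decay when $\sfx$ or $\sfy$ leaves a neighbourhood of $\ell\sfv_h$), converts this into $\ell^{-(d-\rho)}$ times the two Gaussians.

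\textbf{Assembly and main obstacle.} Combining the two steps — conditioning on the environment away from bounded neighbourhoods of $\sfx$ and $\sfy$ so as to reduce $\calE[\tq{\sfx,\ell}\,\tq{\sfy,\ell}\,\Delta_\sfx\,\Delta_\sfy]$ to $\calE[\tq{\sfx,\ell}\,\tq{\sfy,\ell}]$ times the support constraint $|\sfx-\sfy|\leqs m+k$ and the factor $e^{-\chi_\lambda(m+k)}$, then applying the two-replica bulk estimate — yields \eqref{eq:WDBound}, uniformly in all the parameters and in the admissible $\sigma$-algebra $\calA$. The hard part is the two-replica bulk estimate: making the reorganization along common cone points rigorous and showing that the overlap interaction is summable with only the polynomial loss $\ell^{\rho}$, uniformly in $\sfx,\sfy$ and in the disorder (kept small by \eqref{eq:WD-VW}), is exactly the delicate point that forces $d\ge 4$; we refer to \cite{IV-CLT} for the full argument.
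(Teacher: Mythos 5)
You should note at the outset that the paper does not actually prove this lemma: it refers to Lemma~2.1 of \cite{IV-CLT} and only explains heuristically where each factor in \eqref{eq:WDBound} comes from (local limit asymptotics of the annealed $\sft$'s, exponential decay of irreducible pieces, and the diamond-intersection origin of ${\rm e}^{-c_2\abs{\sfx-\sfy}}$). Your write-up reproduces that heuristic picture, which is the right picture, but as a proof it has a genuine gap, made explicit by your own closing sentence: the two-replica bulk estimate $\calE\bigl[\tq{\sfx,\ell}\,\tq{\sfy,\ell}\bigr]\leqs \ell^{\rho}\,\sft(\sfx,\ell)\,\sft(\sfy,\ell)$, uniformly in $\sfx,\sfy$ and in the admissible $\calA$, \emph{is} the content of the lemma, and you only gesture at it ("reorganize along common cone points, sum the perturbative expansion") before deferring to \cite{IV-CLT}. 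No argument is given for why $d\geq 4$ together with $\phi_\beta(1)\leq\zeta_d(\abs{h})$ keeps the accumulated overlap cost of two bulks of length $\ell$ below $\ell^{\rho}$ with $\rho<1/12$; that summability statement (the stretched analogue of the $\bbL_2$-region for directed polymers) is exactly where all the work lies.

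Two of the steps you do spell out are moreover flawed as written. First, you bound $\abs{\calE\lb \sffof{\sfx}(\sfz,m)-\sff(\sfz,m)\given\calA\rb}$ pointwise in $\omega$ by ${\rm e}^{-\nu\abs{\sfz}-\chi_\lambda m}$ "by \eqref{eq:L3-massgap-d}", and use this to factor the four-fold expectation into a sup-norm bound times $\calE\bigl[\tq{\sfx,\ell}\,\tq{\sfy,\ell}\bigr]$. But \eqref{eq:L3-massgap-d} is a statement about the annealed weight $\sff$; deterministically one only has $\Wdo(\gamma)\leq\Pd(\gamma)$, so the pointwise bound on $\sffof{\sfx}(\sfz,m)$ is the non-interacting quantity ${\rm e}^{h\cdot\sfz-\lambda m}\sum_{\gamma\in\calF_{\sfz,m}}\Pd(\gamma)$, which carries no ${\rm e}^{-\chi_\lambda m}$ decay (it can even grow exponentially in $m$ whenever $\log\Ed\,{\rm e}^{h\cdot\sfX}>\lambda(h)$). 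Hence you cannot pull the centered increments out of the expectation; the four factors must be handled jointly, which is what the proof in \cite{IV-CLT} does. Second, your vanishing criterion — pairwise disjointness of $D(\sfx,\sfx+\sfz)$, $D(\sfy,\sfy+\sfw)$, $D(0,\sfx)$, $D(0,\sfy)$ — is vacuous, since $D(0,\sfx)$ and $D(0,\sfy)$ always contain the origin and therefore always intersect; and negating it does not yield $\abs{\sfx-\sfy}\leqs\abs{\sfz}+\abs{\sfw}$, because an intersection of, say, $D(\sfx,\sfx+\sfz)$ with $D(0,\sfy)$ imposes no such bound. The correct geometric statement (the one in the paper's discussion) is that only an intersection of the two \emph{forward} diamonds forces $\abs{\sfx-\sfy}\leqs\abs{\sfz}+\abs{\sfw}$, and proving that the covariance vanishes otherwise requires exploiting the conditioning on $\calA$ and the centering of the increments, since $\tq{\sfx,\ell}\,\tq{\sfy,\ell}$ may depend on environment variables inside the other point's forward diamond.
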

\begin{remark}
Since the underlying random walk is of bounded range, 
\[
\#\lbr \sfz : \sffo (\sfz , m )
\neq 0\rbr \leqs m^{d} .
\]
Hence \eqref{eq:WDBound} also holds with 
$\sffof{\sfx }(m)$ and  $\sffof{\sfy} (k )$ instead of 
$\sffof{\sfx }(\sfz, m)$ and  $\sffof{\sfy} (\sfw, k )$. \newline 
There is nothing sacred about the condition $\rho <1/12$. We just need $\rho$ to
be sufficiently small. In fact, \eqref{eq:WDBound} holds with  $\rho=0$,
although a proof of such statement would be a bit  more involved. 
\end{remark}
We refer to \cite{IV-CLT} (Lemma~2.1 there) for a proof of Lemma~\ref{lem:WDBound}. 
The claim \eqref{eq:WDBound} has a transparent meaning: For $\rho=0$, the expression
\[
\frac{c_1}{\ell^d} 
{\rm exp}\lbr -c_2\lb 
\frac{\abs{\sfx -\ell \sfv_h}^2}{\ell} +\frac{\abs{\sfy -\ell \sfv_h}^2}{\ell}\rb
\rbr
\]
is just the local limit bound on the annealed quantity $\ta{\sfx , \ell}\ta{\sfy , \ell}$. 
 The term ${\rm e}^{-c_2 ( k+m )}$ reflects  exponential
decay of irreducible terms  $\lb \sffof{\sfx} (\sfz, m )  -\sff (\sfz , m )\rb$ and, 
accordingly, 
$\lb \sffof{\sfy} (\sfw, k )  -\sff (\sfw, k )\rb$. The term 
${\rm e}^{-c_2\abs{\sfx - \sfy }}$ appears for the following reason (see Figure~\ref{fig:kites}):  
By irreducibly, 
$\sffof{\sfx} (\sfz ,m )  -\sff (\sfz ,m )$ depends only on $\sfVo_\sfu$ with 
$\sfu$ belonging to the diamond shape $D(\sfx , \sfx+\sfz )$. Similarly, 
$\sffof{\sfy} (\sfw ,m )  -\sff (\sfw ,m )$ depends only on variables 
inside $D(\sfy , \sfy +\sfw )$. All these terms have zero mean. Consequently, 
\[
\calE 
 \left[ \tq{\sfx , \ell}\tq{\sfy ,\ell } \calE\lb \sffof{\sfx }(\sfz, m) -\sff (\sfz ,  m )\big|
\calA \rb \calE \lb \sffof{\sfy} (\sfw , k )  -\sff (\sfw, k )\big|\calA\rb \right] = 0, 
\]
whenever $D(\sfx , \sfx+\sfz )\cap D (\sfy , \sfy+\sfw )= \emptyset$. The remaining terms
 satisfy $\max\lbr\sfz, \sfw\rbr\geqs \abs{\sfx - \sfy }$. In other words, 
 the term ${\rm e}^{-c_2\abs{\sfx - \sfy }}$ also reflects exponential decay of irreducible
 connections. 
 \begin{figure}[h]
\scalebox{.25}{\input{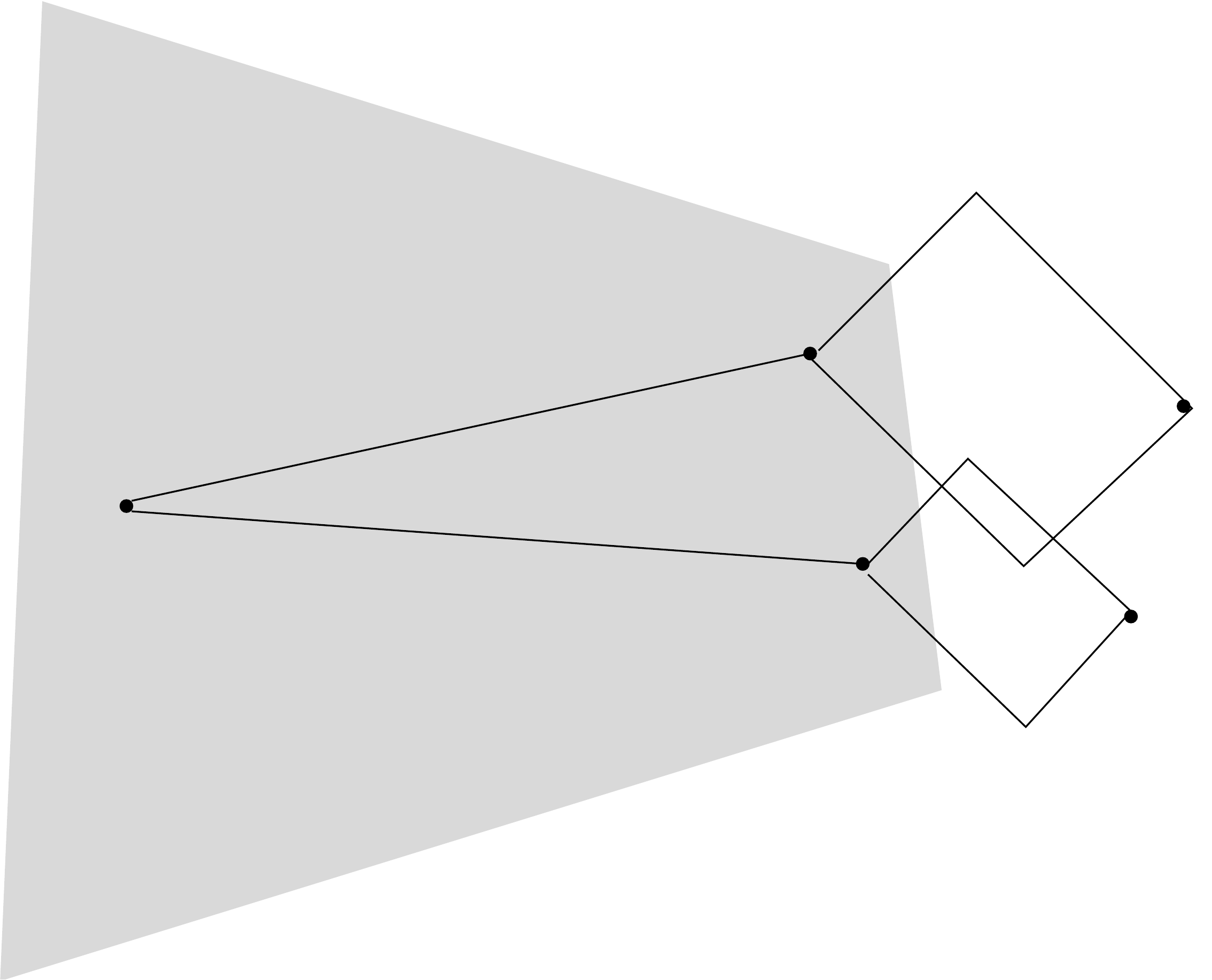_t}}
%
%
\caption{Kites $\tq{\sfx , \ell} \calE\lb \sffof{\sfx }(\sfz, m) -\sff (\sfz ,  m )\big| \calA \rb$ 
and $\tq{\sfy , \ell} \calE\lb \sffof{\sfy }(\sfw, k) -\sff (\sfz ,  k )\big| \calA \rb$ may 
have a non-zero 
covariance only if their diamond shapes 
$D (\sfx , \sfx +\sfz )$ and  $D (\sfy , \sfy +\sfw )  
$ intersect. 
}
\label{fig:kites}       
\end{figure}

The disorder imposes an
attractive interaction between two replicas. The impact of 
 small $\xi_d (\abs{h})$ condition in the very weak disorder regime,
 as formulated in \eqref{eq:WDBound}, 
 is that this interaction
is not strong enough to  destroy individual annealed  asymptotics. 
\smallskip 

\noindent
{\bf Sinai's decomposition of $\tq{\sfx, n }$.} 
We rely on an expansion  similar
to the one employed by Sinai \cite{Sinai} in the context of directed polymers. 
By construction quantities
$\tq{\sfx, n }$ satisfy the following (random) renewal relation:
\be 
\label{eq:QRenewal-d}
\tq{\sfx , 0 } = \1_{\lbr \sfx =0\rbr}\quad{\rm and}\quad 
\tq{\sfx , n} = \sum_{m=1}^{n}\sum_{\sfy} \fq{\sfy , m }\sftof{\sfy} (\sfx -\sfy , n-m ) .
\ee
Iterating in \eqref{eq:QRenewal-d} we obtain (for $n>0$): 
\[
 \tq{\sfx , n} = \fq{\sfx , n} + 
 \sum_{r=1}^\infty\sum_{\sfx_1,\dots \sfx_r}\sum_{n_1+\dots +n_r =n}
 \prod\sffof{\sfx_{i-1}}(\sfx_i - \sfx_{i-1} , n_i) , 
\]
where $\sfx_0\df 0$. Writing, 
\[
\sffo (\sfy  , m) = 
\fa{\sfy, m} + \lb \sffo (\sfy , m) - 
\fa{\sfy , m}\rb,
\]
we, after expansion and re-summation,  arrive 
to the following decomposition: 
\be
\label{eq:Sinai0}
\tq{\sfx ,n} = \ta{\sfx, n} +
\sum_{\ell =0}^{n-1}\sum_{m=1}^{n-\ell}\sum_{r=0}^{n-\ell-m}
\sum_{\sfy, \sfz}
\tq{\sfy ,\ell}\lb  \sffof{\sfy}( \sfz -\sfy , m) - \fa{ \sfz -\sfy,m}\rb \ta{\sfx - \sfz ,r} .
\ee
In particular, the decomposition of $\tq{n}$ is given by
\be
\label{eq:Sinai-tn}
\tq{n} = \ta{n} +
\sum_{\ell =0}^{n-1}\sum_{m=1}^{n-\ell}\sum_{r=0}^{n-\ell-m}
\sum_{\sfy}
\tq{\sfy ,\ell}\lb  \sffof{\sfy}(m) - \fa{m}\rb \ta{r} .
\ee
In order to prove \eqref{eq:claimc} one needs to consider the full decomposition 
\eqref{eq:Sinai0}. As it was already mentioned, we shall not do it here and, instead, refer 
to \cite{IV-CLT}. From now on, we shall concentrate on proving \eqref{eq:claima} and, accordingly
shall consider the reduced decomposition \eqref{eq:Sinai-tn}. Nevertheless, modulo additional 
technicalities, the proof of 
\eqref{eq:claima} captures all essential features of the argument. 

Recall that for the annealed quantities,  $\lim_{n\to\infty}\ta{n} 
= \frac{1}{\mu (h )} =\frac{1}{\mu}$ 
exponentially fast. Writing $\ta{m} = \frac{1}{\mu} + ( \ta{m}- \frac{1}{\mu} )$
 in all the corresponding terms in \eqref{eq:Sinai-tn}, we infer that 
 $\tq{n}$ 
can be represented
as
\be
\label{eq:Sinai}
\tq{n} = \frac{1}{\mu } \sfs^\omega (n)
+  \epsilon_n^\omega + \lb \ta{n} - \frac{1}{\mu}\rb
\\
\ee
where
\be
\label{eq:snterm}
\sfs^\omega (n ) = 1 + \sum_{\ell \leq n} \sum_\sfx
\tq{\sfx ,\ell}
 \lb\sffof{\sfx}  -1\rb,
\ee
and
the correction term $\epsilon_n^\omega$ 
 is given by
\be
\label{eq:EpsTerm}
\begin{split}
\epsilon_n^\omega = \epsilon_{n ,1}^\omega -  \epsilon_{n ,2}^\omega &\df
\sum_{\ell +m +r =n} \sum_{\sfx }
\tq{\sfx ,\ell}\lb \sffof{\sfx} (m ) - \fa{m}\rb 
\lb \ta{r} -\frac{1}{\mu}\rb\\ 
&+
- \frac{1}{\mu} \sumtwo{\ell\leq n}{m >  n -\ell}\sum_\sfx
\tq{x ,\ell}\lb \sffof{\sfx} (m ) - \fa{m}\rb .
\end{split}
\ee
The term $\ta{n} - \frac{1}{\mu}$ is negligible. Our target claim \eqref{eq:claima} is 
a direct consequence of the following proposition:
\begin{prop}
\label{prop:WD-limits}
In the very weak disorder regime the following happens $\calP$-a.s.:
\be
\label{eq:EpsConv}
\lim_{n\to\infty} \sfs^\omega (n ) = \sfs^\omega = 
 1 + \sum_{\sfx}\tq{\sfx}\lb \sffof{\sfx} - 1\rb  
\ \ {\rm and}\ \
\sum_n \calE ( \epsilon^ \omega_n )^2 <\infty .
\ee
Furthermore, $\sfso >0$ on the set $\lbr 0\in {\rm Cl}_\infty^{h}\rbr$. 
\end{prop}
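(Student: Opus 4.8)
The plan is to treat the two assertions about $\sfs^\omega(n)$ and $\epsilon_n^\omega$ separately, in both cases extracting everything from the $\bbL_2$-estimate of Lemma~\ref{lem:WDBound}, and then to deduce positivity of $\sfso$ from a percolation/0-1 argument on the event $\lbr 0\in {\rm Cl}_\infty^h\rbr$.

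First I would handle the martingale $\lbr \sfs^\omega(n)\rbr$. Write $\sfs^\omega(n) = 1 + \sum_{\ell\leq n}\sum_\sfx \tq{\sfx,\ell}(\sffof{\sfx}-1)$, and observe that the increment $\sfs^\omega(n)-\sfs^\omega(n-1) = \sum_\sfx \tq{\sfx,n-1}\,\bbE\bigl((\sffof{\sfx}-1)\,\big|\,\calA_{n-1}\bigr)^{\perp}$ has zero $\calE$-mean conditionally on the $\sigma$-algebra generated by the disorder in the region explored up to generation $n-1$ (this is exactly the Sinai-type telescoping that makes \eqref{eq:snterm} a martingale; here one needs the fact that $\tq{\sfx,\ell}$ is measurable with respect to disorder in the diamond $D(0,\sfx)$, so that $\sfx\mapsto\sffof{\sfx}-1$ is, after conditioning, an independent fresh increment). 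Hence it suffices to show $\sup_n \calE\,\sfs^\omega(n)^2<\infty$. Expanding the square produces a double sum $\sum_{\ell,\ell'}\sum_{\sfx,\sfy}\calE\bigl[\tq{\sfx,\ell}\tq{\sfy,\ell'}\bbE(\sffof{\sfx}-1|\calA)\bbE(\sffof{\sfy}-1|\calA)\bigr]$ with $\calA$ the cylindrical $\sigma$-algebra generated by the disorder inside $D(0,\sfx)\cup D(0,\sfy)$; by Lemma~\ref{lem:WDBound} (in the form noted in the remark, summed over $m,k$) each such term is bounded by $c_1 \ell^{-(d-\rho)} \exp\lbr -c_2(\abs{\sfx-\sfy}+\abs{\sfx-\ell\sfv_h}^2/\ell+\abs{\sfy-\ell\sfv_h}^2/\ell)\rbr$ when $\ell=\ell'$ (and a similar, even smaller bound when $\ell\neq\ell'$, where one conditions further and uses exponential decay in $\abs{\ell-\ell'}$). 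Summing over $\sfx$ with $\abs{\sfx-\ell\sfv_h}^2/\ell$ controlled gives a factor $\cong \ell^{d/2}$, summing over $\sfy$ with the $\abs{\sfx-\sfy}$-decay gives an $O(1)$ factor, and one is left with $\sum_\ell \ell^{d/2}\cdot \ell^{-(d-\rho)} = \sum_\ell \ell^{-(d/2-\rho)}$, which converges precisely because $d\geq 4$ and $\rho<1/12$. So the martingale is $\bbL_2$-bounded, hence converges $\calQ$-a.s.\ and in $\bbL_2$ to $\sfso = 1+\sum_\sfx \tq{\sfx}(\sffof{\sfx}-1)$, square-integrable.

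Next, $\sum_n \calE(\epsilon_n^\omega)^2<\infty$: write $\epsilon_n^\omega = \epsilon_{n,1}^\omega - \epsilon_{n,2}^\omega$ as in \eqref{eq:EpsTerm}, bound $\calE(\epsilon_n^\omega)^2\leq 2\calE(\epsilon_{n,1}^\omega)^2+2\calE(\epsilon_{n,2}^\omega)^2$, and for each term expand the square and apply Lemma~\ref{lem:WDBound}. For $\epsilon_{n,1}^\omega$ the extra ingredients are the exponential decay $\abs{\ta{r}-1/\mu}\leqs (1+\epsilon')^{-r}$ from Lemma~\ref{lem:AR-lim-exp} and the factor $\exp\lbr -c_2(m+k)\rbr$ from the lemma; after summing over $\sfx,\sfy,\sfz,\sfw$ one gets a bound $\cong \sum_{\ell+m+r\leq n}\ell^{-(d-\rho)+d/2}(1+\epsilon')^{-2r}\cdot\textrm{(geometric in }m)$, and summing the constraint $\ell+m+r=n$ over $n$ converts this into $\bigl(\sum_\ell \ell^{-(d/2-\rho)}\bigr)\cdot\bigl(\sum_r (1+\epsilon')^{-2r}\bigr)\cdot\bigl(\sum_m e^{-c_2 m}\bigr)<\infty$. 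For $\epsilon_{n,2}^\omega$ the truncation $m>n-\ell$ together with the $e^{-c_2 m}$ decay forces $m\gtrsim n-\ell$, which supplies an extra $e^{-c_2(n-\ell)/2}$, again summable in $n$. This is the step I expect to be the main bookkeeping burden, but it is conceptually routine once Lemma~\ref{lem:WDBound} is in hand — the dimensional condition $d\geq 4$ is used in exactly the same place as above.

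Finally, positivity of $\sfso$ on $\lbr 0\in {\rm Cl}_\infty^h\rbr$. Since $\sfso = \lim_n \tq{n}/\ta{n}$ and $\ta{n}\to 1/\mu>0$, it is equivalent to show $\lim_n \tq{n}>0$ a.s.\ on this event. The hard part is to rule out $\sfso=0$; here I would argue as follows. The event $\lbr\sfso>0\rbr$ is measurable, and one checks it is a tail-type event for the disorder along the cone $\calY_h$ in the sense that shifting the environment by one renewal step multiplies the relevant quantity by a strictly positive factor whenever the first step is realizable; combined with ergodicity of $\calQ$ under lattice shifts (restricted to $\calY_h$), $\calQ(\sfso>0\mid 0\in{\rm Cl}_\infty^h)\in\lbr 0,1\rbr$. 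To exclude the value $0$, note that $\calE\,\sfso = 1$ (dominated/monotone convergence plus $\calE(\sffof{\sfx}-1|\calA)$ having mean zero, so the expectation of every correction term vanishes), so $\calQ(\sfso>0)>0$, hence $\calQ(\sfso>0\mid 0\in{\rm Cl}_\infty^h)=1$. One should be a little careful that the conditioning event has positive probability, which is guaranteed by \eqref{eq:WD-perc-cone}. This completes the proof.
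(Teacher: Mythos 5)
The central gap is your claim that $\sfs^\omega(n)$ is a martingale. There is no filtration that makes the increments $\sfY_\ell=\sum_\sfx\tq{\sfx,\ell}\lb\sffof{\sfx}-1\rb$ a martingale difference sequence: the same site $\sfx$ can be the endpoint of $\ell$ irreducible steps for several different $\ell$, and the disorder region $\sfx+\calY$ on which $\sffof{\sfx}$ depends overlaps the diamonds carrying the factors $\tq{\sfy,\ell'}$ and $\sffof{\sfy}$ of other generations; hence conditioning on the disorder ``explored up to generation $n-1$'' destroys the centering of the $n$-th increment. This is exactly the point where stretched polymers differ from directed ones, and the paper flags it explicitly before introducing the mixingale machinery. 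Without the martingale property, $\sup_n\calE\,\sfs^\omega(n)^2<\infty$ does not give a.s.\ convergence, so your first step collapses. Moreover, your bound on the off-diagonal terms $\ell\neq\ell'$ (claimed ``even smaller, with exponential decay in $\abs{\ell-\ell'}$'') is not delivered by Lemma~\ref{lem:WDBound}, which is stated only for pairs in which both $\sft$-factors carry the \emph{same} time index $\ell$; controlling those cross correlations is precisely the difficulty. The paper's route is to introduce the half-space filtration $\calA_m$ of \eqref{eq:sigma-algebras}, verify the mixingale conditions \eqref{eq:McLeish} with $d_\ell^2\leqs\ell^{-(d-1)/2+\epsilon}$ (this is where $d\geq4$ genuinely enters, through $(d-1)/2-\epsilon>1$; note your diagonal computation $\sum_\ell\ell^{-(d/2-\rho)}$ already converges for $d=3$, so it cannot be the decisive estimate), and then apply McLeish's maximal inequality to obtain a.s.\ and $\bbL_2$ convergence of $\sfso(n)$; the correction terms are handled the same way, which is why they are rewritten in the mixingale form \eqref{eq:WD-eps-form} rather than estimated by a bare expansion of the square, where the same $\ell\neq\ell'$ problem reappears.

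The positivity argument also has a gap. The event $\lbr\sfso>0\rbr$ is not a tail event nor shift-invariant in any evident sense (it depends on the disorder at and around the origin), and you give no argument for the claimed zero-one law of $\calQ\lb\sfso>0\,|\,0\in{\rm Cl}_\infty^h\rb$; the phrase about ``shifting by one renewal step'' would at best transfer positivity from some $\sfsof{\sfx}$, $\sfx\in\calY$, to $\sfso$, which is a different (and correct) mechanism. The paper does exactly that reduction: on $\lbr 0\in{\rm Cl}_\infty^h\rbr$ a finite path $\gamma$ inside the infinite cluster connects $0$ to $\sfx$ and yields $\liminf_n\sfto(n)\geq\Wdon{h , \lambda}(\gamma)\,\sfsof{\sfx}/\mu$, so it suffices to prove $\calQ\lb\exists\,\sfx\in\calY:\sfsof{\sfx}>0\rb=1$; this is then established by a second-moment argument over the slabs $B_n=\partial\cHp{n}\cap\calY$, using the covariance bound \eqref{eq:WD-intersection} together with Chebyshev and Borel--Cantelli. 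Your observation that $\calE\,\sfso=1$, hence $\calQ\lb\sfso>0\rb>0$, is correct and useful, but by itself it does not give the almost sure statement on $\lbr 0\in{\rm Cl}_\infty^h\rbr$; either supply a genuine ergodicity argument for the auxiliary event $\lbr\exists\,\sfx\in\calY:\sfsof{\sfx}>0\rbr$ (using, e.g., its monotonicity under shifts along the cone) or follow the paper's second-moment route.
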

\begin{remark}
 \label{rem:WD-sero-s}
 Note that the formula for $\sfso$ is compatible with the common sense 
 if the random walk is trapped (case $\calQ (\sfVo = \infty )>0$.)
 Indeed, in such situation  $\lim_{n\to\infty}\tq{n}$
 should be clearly zero. On the other hand, if the random walk is trapped, 
 then the sum $1+\sum_{\sfx}\tq{\sfx}\lb \sffof{\sfx} - 1\rb$ contains 
 only finitely many 
 non-zero terms. 
 Using $\tq{0}=1$, let us rewrite it as 
 \[
  1+ \sum_{\sfx}\tq{\sfx} \sffof{\sfx} - \sum_{\sfx\neq 0}\tq{\sfx} - 1 = 
  \sum_{\sfx}\tq{\sfx} \sffof{\sfx} - \sum_{\sfx\neq 0}\tq{\sfx} .
 \]
 However, for $\sfx\neq 0$, 
 \[
  \tq{\sfx} = \sum_{\sfy}\tq{\sfy}\sffof{\sfy} (\sfx - \sfy )\ \Rightarrow\ 
  \sum_{\sfx\neq 0}\tq{\sfx} = \sum_{\sfy}\tq{\sfy} \sffof{\sfy } .
 \]
\end{remark}
{\bf Mixingale form of $\sfso (n )$ and $\epsilon^ \omega_n$.} 
Let us rewrite $\sfso (n )$ as
\be
\label{eq:Xl}
 \sfso (n ) = 1 + \sum_{\ell\leq n} \sfY_\ell\quad {\rm where}\quad 
 \sfY_\ell = 
 \sum_\sfx \tq{\sfx ,\ell}\lb\sffof{\sfx }  -1\rb.
\ee
The variables $\sfY_\ell$ are mean zero, and it is easy to deduce from 
the basic $\bbL_2$-estimate \eqref{eq:WDBound} that in the regime of weak disorder, 
$\sum\calE\lb \sfY_\ell^2 \rb <\infty$. Should $\lbr \sfY_\ell\rbr$ be a martingale
difference sequence (as in the case of directed polymers), we would be done. However, 
since, in principle, same vertices $\sfx$ may appear in different $\sfY_\ell$-s, there 
seems to be no natural martingale structure at our disposal. Instead one should make
a proper use of mixing properties of $\lbr \sfY_\ell\rbr$. Hence the name mixingale, 
which was introduced in \cite{McLeish}. In order to prove convergence 
of $\sfso (n )$ we shall rely on the mixingale approach developed in \cite{McLeish}.

Turning to the correction terms in \eqref{eq:EpsTerm}, note that both $\epsilon^\omega_{n, 1}$
and $\epsilon^\omega_{n, 2}$ could be written in the form 
\be 
\label{eq:WD-eps-form}
\sum_{\ell\leq n} \sum_{\sfx }\tq{\sfx ,\ell} \sum_m a^{(n)} (\ell , m )
\lb\sffof{\sfx } (m ) - \fa{m}\rb \df \sum_{\ell\leq n} \sfZ^{(n)}_\ell , 
\ee
where 
\be 
\label{eq:WD-coed-an}
a^{(n)} (\ell , m ) = \lb \ta{n - \ell - m}-\frac{1}{\mu}\rb\1_{\ell +m \leq n}\ \text{and}\ 
a^{(n)} (\ell , m ) = -\1_{\ell +m >n}
\ee
respectively in the cases of $\epsilon^\omega_{n, 1}$
and $\epsilon^\omega_{n, 2}$. Again, $\lbr \sfZ^{(n)}_\ell\rbr$ is not a martingale difference
sequence, and we shall rely on the mixingale approach of \cite{McLeish} for deducing their 
second convergence statement in  \eqref{eq:EpsConv}. 

Below we shall formulate a particular case of the maximal inequality for 
mixingales \cite{McLeish}. To keep relation with quenched polymers, and specifically with 
\eqref{eq:Xl} and  \eqref{eq:WD-eps-form},  in mind, let us introduce the following 
filtration 
$\lbr\calA_m\rbr$. Recall that the end-point of the $m$-step annealed polymer stays
close to $m\sfv_h = m\nabla\lambda (h )$. 
Define
half-spaces $\cHm{m}$ and the corresponding $\sigma$-algebras $\calA_m$ as
\be
\label{eq:sigma-algebras}
\cHm{m} = \setof{\sfx\in\bbZ^d }{\sfx\cdot\sfv_h \leq m |\sfv_h |^2}\quad\text{and}\quad
\calA_m =\sigma\setof{\sfVo_\sfx }{x\in\cHm{m}} .
\ee
\smallskip 

\noindent
{\bf Mixingale maximal inequality and convergence theorem of McLeish.}  
Let $\sfY_1 , \sfY_2, \ldots $ be a sequence of zero-mean, square-integrable random
variables. Let also
$\lbr\calA_k\rbr_{-\infty}^{\infty}$ be a filtration of $\sigma$-algebras.
Suppose that there exist $\epsilon >0$ and
numbers $d_1, d_2 , \ldots $ in such a way that
\be
\label{eq:McLeish}
\calE\lb \bbE\lb \sfY_\ell ~\big| \calA_{\ell -k}\rb^2\rb\leq
\frac{d_\ell^2}{(1+k)^{1+\epsilon}}
\quad\text{and}\quad
\calE\lb \sfY_\ell - \bbE\lb \sfY_\ell ~\big| \calA_{\ell + k}\rb\rb^2 \leq
\frac{d_\ell^2}{(1+k)^{1+\epsilon}}
\ee
for all $\ell = 1,2, \ldots $ and $k\geq 0$. Then~\cite{McLeish} there exists $K
= K (\epsilon) < \infty$ such that, for
all $n_1 \leq  n_2$,
\be
\label{eq:Maximal}
\calE\lbr \max_{n_1\leq r\leq n_2}\lb \sum_{n_1}^r \sfY_\ell\rb^2 
\rbr \leq K \sum_{n_1}^{n_2}
d_\ell^2 .
\ee
In particular, if $\sum_\ell d_\ell^2 <\infty$, then $\sum_\ell \sfY_\ell$
converges $\calQ$-a.s.\ and in $\bbL_2$.
\smallskip 

\noindent 
{\bf Convergence of 
 $\sfso (n )$. } Consider decomposition 
 \eqref{eq:Xl}. Clearly, 
 \be 
 \label{eq:W-condE1}
 \calE \lb \sfY_\ell ~\big| \calA_{\ell -k}\rb = \sum_{\sfx \in \cHm{\ell - k}}
 \tq{\sfx ,\ell}\lb\sffof{\sfx }  -1\rb
 \ee
 Applying \eqref{eq:WDBound} we conclude that for any $\sfx, \sfy\in \cHm{\ell - k}$, 
 \be
\label{eq:WDBound-s}
\begin{split}
&\abs{
\calE
 \left[ \tq{\sfx , \ell}\tq{\sfy ,\ell } \calE\lb \sffof{\sfx } - 1\big|
\calA_{\ell -k} \rb \calE\lb \sffof{\sfy}   -1\big|\calA_{\ell - k}\rb
 \right]
} \\
&\quad
\leq \frac{c_3}{ \ell^{d -\rho}} \exp\lbr
-c_2\lb \abs{\sfx -\sfy}
+ \frac{\abs{\sfx -\ell \sfv_h}^2}{\ell} +\frac{\abs{\sfy -\ell \sfv_h}^2}{\ell}\rb\rbr , 
\end{split}
\ee
Consequently, summing up with respect to $\sfx, \sfy\in \cHm{\ell - k}$ we infer 
that for any $\epsilon \geq 2\rho$:
\be 
\label{eq:WD-leqsbound1}
 \calE\lb \calE \lb \sfY_\ell\Big| \calA_{\ell-k}\rb^2\rb \leq 
 \frac{c_5{\rm e}^{-c_4\frac{k^2}{\ell}}}{\ell^{{d}/{2} -\rho}} \leq
 \frac{c_6}{\ell^{(d-1)/{2}- \epsilon } (1+ k)^{1+\epsilon}} \df 
 \frac{d_{\ell , -}^2}{(1+ k)^{1+\epsilon}}. 
\ee
On the last step we have relied on a trivial asymptotic inequality
\[
 \frac{{\rm e}^{-c_4\frac{k^2}{\ell}}}{\ell^{(1+\epsilon )/2}}
 \leqs \frac{1}{(1+k )^{1+\epsilon}}. 
\]
Note that if $(d-1)/{2}- \epsilon >1$, which is compatible with $d\geq 4$ and 
$\rho <1/12$, then $\sum d_{\ell , -}^2 <\infty$. 

Turning to the second condition in \eqref{eq:McLeish} 
note first of all 
that $\calE\lb \tq{\sfx ,\ell}\lb\sffof{\sfx }  -1 \rb\big|\calA_{\ell +k} \rb =0$ 
 whenever $\sfx\in\cHp{\ell + k}$, and 
 \[
 \calE\lb \tq{\sfx ,\ell}\lb\sffof{\sfx } (\sfy  )  -\sff (\sfy ) 
 \rb\big|\calA_{\ell +k} \rb = \tq{\sfx ,\ell}\lb\sffof{\sfx } (\sfy  )  -
 \sff (\sfy  )\rb
 \]
  whenever $\sfx +\sfy \in \cHm{\ell + k}$. Therefore,  
 \be 
 \label{eq:WD-twoterms-plus}
 \begin{split}
  \sfY_\ell - \calE\lb \sfY_\ell\Big|\calA_{\ell +k} \rb 
  &= \sum_{\sfx\in \cHp{\ell + k}} 
  \tq{\sfx ,\ell}\lb\sffof{\sfx }  -1 \rb \\
  &+ 
  \sumtwo{\sfx \in\cHm{\ell + k}}{ \sfy \in \cHp{\ell + k}} 
  \tq{\sfx ,\ell}\lb \sffof{\sfx } (\sfy -\sfx ) -\calE \lb 
  \sffof{\sfx } (\sfy -\sfx )\big| \calA_{\ell +k} \rb\rb .
  \end{split}
  \ee
The first term in \eqref{eq:WD-twoterms-plus} has exactly the same structure as
\eqref{eq:W-condE1}. The second term in \eqref{eq:WD-twoterms-plus} happens to be
even more localized (see discussion of (2.14) in \cite{IV-CLT}). The conclusion is:
\be 
\label{eq:WD-leqsbound2}
\calE\lb \sfY_\ell - \calE\lb \sfY_\ell\Big|\calA_{\ell +k} \rb^2\rb 
\leq 
 \frac{d_{\ell , +}^2}{(1+ k)^{1+\epsilon}} , 
\ee
where $d_{\ell , +}^2 \leqs \ell^{- (d-1 )/2 +\epsilon}$. 

Set $d_\ell^2 = \max\lbr d_{\ell , -}^2 ,d_{\ell , +}^2\rbr \leqs
 \ell^{- (d-1 )/2 +\epsilon}$, we, in view of the feasible choice 
 $(d-1 )/2 -\epsilon >1$, conclude from \eqref{eq:Maximal} that $\sfso (n )$
 is indeed a $\calQ$-a.s. converging sequence. 
 \smallskip 
 
 \noindent
 {\bf Correction terms.} Treatment of correction terms in their mixingale
 representation \eqref{eq:WD-eps-form} follows a similar pattern. We refer to 
 Section~2.2 in \cite{IV-CLT} for the proof of the second claim in 
 \eqref{eq:EpsConv}. 
 \smallskip 
 
 \noindent
 {\bf Positivity of $\sfso$.} As we have already checked the sum $\sfso = 1 +\sum_\sfx 
 \tq{\sfx }\lb\sffof{\sfx }  -1\rb$ converges $\calQ$-a.s. and in $\bbL_2$. In 
 particular, $\calE (\sfso ) = 1$. We claim that $\sfso >0$, $\calQ$-a.s. on the event
  $\lbr 0\in {\rm Cl}_\infty^{h}\rbr$. In order to prove  this it would be enough 
  to check that 
  \be 
  \label{eq:WD-cone-x}
  \calQ\lb \exists\sfx \in \calY : \sfsof{\sfx} >0\rb =1 .
  \ee
  Let us sketch the argument: 
  If $ \sfsof{\sfx} >0$, then $\sfx\in {\rm Cl}_\infty^{h}$. But 
  there is exactly  one infinite cluster in $\calY$. Hence $0$ is connected to
  $\sfx$ by a finite path $\gamma \subset {\rm Cl}_\infty^{h}\subset \calY$. 
  Now, by assumption on $\sfx$, $\lim_{n\to\infty} \sftof{\sfx }(n ) = 
  \lim_{n\to\infty} \sum_\sfz \sftof{\sfx}(\sfz , n ) >0$. By comparison with 
  annealed quantities (large deviations, for instance \eqref{eq:AR-LocLim})
  we, at least for large $n$,  may ignore terms 
  $\sftof{\sfx }(\sfz , n )$ with $\gamma\not\subset (\sfx +\sfz) - \calY$. 
  Which means that 
  $\liminf_{n\to\infty}\sfto (n) \geq \Wdon{h , \lambda}(\gamma )\frac{\sfsof{\sfx}}{\mu}$, 
  where 
  $\Wdon{h , \lambda}(\gamma ) = {\rm e}^{h\cdot\sfX (\gamma ) - \lambda\abs{\gamma}} 
  \Wdo (\gamma ) >0$. 
  
  It remains to check \eqref{eq:WD-cone-x}. Consider sets 
  \[
   B_n = \partial\cHp{n}\cap\calY , \quad \abs{B_n}\eqvs n^{d-1} .
 \]
We refer to the last Subsection of \cite{IV-Crossing} for the proof of the following 
statement: 
\be 
\label{eq:WD-intersection}
\abs{\Cov (\sfsof{\sfx } , \sfsof{\sfy })}\leqs \frac{1}{\abs{\sfy - \sfx}^{d/2-1}} , 
\ee
uniformly in $n$ and in $\sfx, \sfy \in B_n$. 
The quantity $\frac{1}{\abs{\sfy - \sfx}^{d/2 -1}}$  
in \eqref{eq:WD-intersection} 
represents an 
intersection probability for 
trajectories  of two  ballistic $d$-dimensional random walks 
(such as the effective random walks with step distribution $\sff (\sfw, m)$)
which start at $\sfx$ and $\sfy$. The statement  
\eqref{eq:WD-intersection} is very similar in spirit to that of Lemma~\ref{lem:WDBound}:
 it says that possible weak attraction due to disorder does not destroy such
 asymptotics. 
 
 With \eqref{eq:WD-intersection} at our disposal it is very easy to finish the 
 proof of \eqref{eq:WD-cone-x}. Indeed, it implies that 
 \[ 
  \Var\lb \frac{1}{n^{d-1}}\sum_{\sfx \in B_n }\sfsof{\sfx}\rb 
  \leqs \frac{1}{n^{d/2 - 1 }} ,  
 \]
and since $\calE\lb \sum_{\sfx \in B_n } \sfsof{\sfx} \rb = \abs{ B_n}\eqvs n^{d-1}$, 
the conclusion follows by 
Chebychev inequality  and Borel-Cantelli argument.

\section{Strong disorder} 
\label{sec:Strong}
In this section, we work only 
under Assumption~{\bf A1} of the Introduction, and 
we do not impose any further  assumptions on the 
environment $\lbr \sfV^\omega_\sfx  \rbr$.  The case of traps;  
 $\calQ \lb \sfVo =\infty \rb \in (0,1)$,   is not excluded 
and  we even do not need {\bf A2} or any other restriction   on the size
of the latter probability. 

The environment is always strong in 
two dimensions in the following sense (level {\bf L1} in the language of 
the Introduction):
\begin{thm}
\label{thm:Strong}
Let $d=2$ and $\beta , \lambda >0$. There exists $c = c(\beta ,\lambda ) >0$ such 
that the following holds: 
 Let $\lambda (h ) = \lambda$ (in particular $h\not\in \Knot$). Then, $\calQ$-a.s. 
\begin{equation}  
\label{eq:Strong}
\limsup_{n\to\infty} \frac{1}{n}\log\frac{\Zno (h  )}{\Zn (h )} < -c.
\end{equation}
In particular, $\lambda^\omega  (h ) <\lambda (h ) = \lambda$ 
whenever $\lambda^\omega$ is well defined.
\end{thm}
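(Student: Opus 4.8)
\medskip
\noindent\textbf{Proof proposal.} The plan is to pass, via the renewal (Ornstein--Zernike) decomposition of Section~\ref{Asec:renewal}, to an effective directed polymer, and then to use that directed polymers whose transversal space is one-dimensional are strongly disordered at every temperature. First I would record the quenched counterpart of the asymptotic identity \eqref{eq:AN-pf-as}: applying the irreducible decomposition \eqref{eq:L3-irreducible-decomp-d} to the quenched weights $\Wdon{h,\lambda}$ and using the mass gap \eqref{eq:L3-massgap-d} one gets, writing $\sff^{[l],\omega}(k)=\sum_\sfx\sff^{[l],\omega}(\sfx ,k)$ and similarly $\sff^{[r],\omega}(k)$,
\[
{\rm e}^{-n\lambda}\Zno (h)\ =\ \bigo{{\rm e}^{-c_1 n}}\ +\ \sum_{k+m+j=n}\sff^{[l],\omega}(k)\,\tq{m}\,\sff^{[r],\omega}(j).
\]
Since $\calE\,\sff^{[l],\omega}(k)=\sff^{[l]}(k)\leqs{\rm e}^{-\chi_\lambda k}$ by \eqref{eq:L3-massgap-d}, a Markov plus Borel--Cantelli argument (with the crude bound $\Wdo\leq\Pd$ taking care of the finitely many small $k$) gives $\sff^{[l],\omega}(k),\sff^{[r],\omega}(k)\leq C_\omega{\rm e}^{-\chi_\lambda k/2}$ for all $k$, $\calQ$-a.s. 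As $\Zn(h)\asymp{\rm e}^{n\lambda}$ by \eqref{AN-conv-pf}, the claim \eqref{eq:Strong} then reduces to the existence of a deterministic $\delta_0>0$ with
\[
\limsup_{n\to\infty}\tfrac1n\log\tq{n}\ \leq\ -\delta_0\qquad\calQ\text{-a.s.};
\]
any $c<\tfrac12\min\{\chi_\lambda,\delta_0\}$ then does the job. Here $\tq{\sfx ,n}$ is the quenched renewal array \eqref{eq:Q-sft-quantities}--\eqref{eq:QRenewal-d}; since $\calE\,\Wdon{h,\lambda}(\gamma)=\Wdn{h,\lambda}(\gamma)$ one has $\calE\,\tq{\sfx ,n}=\sft(\sfx ,n)$ and $\calE\,\tq{n}=\ta{n}\to 1/\mu(h)>0$, so $W_n=\mu(h)\tq{n}$ is the (asymptotically mean-one) normalised partition function of an effective directed polymer whose ``time'' is the number of underlying steps and whose transversal space is the hyperplane $\sfv^{\perp}$ of dimension $d-1$; for $d=2$ this is the $1+1$ case.

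Next I would establish the deterministic single-scale fractional-moment estimate: there exist $\theta\in(0,1)$, $\delta>0$ and a large scale $L$ with
\[
\calE\bigl[\tq{L}^{\,\theta}\bigr]\ \leq\ \ta{L}^{\,\theta}\,{\rm e}^{-\delta L}.
\]
What makes this available in $d=2$ is the divergence of the replica--collision series. Two independent copies of the effective renewal walk with common drift $\sfv$ differ in $\sfv^{\perp}$ by a centred $(d-1)$-dimensional walk, so the expected number of common vertices of two annealed polymers of length $\leq L$ is $\asymp\sum_{t\leq L}t^{-(d-1)/2}$, which for $d=2$ grows like $L^{1/2}$. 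Equivalently $\calE[\tq{L}^2]/\ta{L}^2$ equals an annealed two-replica partition function carrying the interaction $\prod_\sfx{\rm e}^{\phi_\beta(\ell_\gamma(\sfx))+\phi_\beta(\ell_{\gamma'}(\sfx))-\phi_\beta(\ell_\gamma(\sfx)+\ell_{\gamma'}(\sfx))}$, which is attractive by \eqref{eq:attractive} with a strictly positive per-collision gain (strict concavity of $\phi_\beta$), and it tends to $\infty$ with $L$. A soft consequence is already $W_\infty=0$ $\calQ$-a.s.\ in the spirit of Comets--Shiga--Yoshida, but that only yields $\tq{n}\to 0$, not an exponential rate. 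To get the displayed bound I would follow Zygouras' adaptation of Lacoin's $1+1$ argument: change the measure on $\calQ$ so as to slightly depress $\sfVo$ inside a tube of transversal width $\asymp L^{1/2}$ around the ballistic corridor, at relative-entropy cost $o(L)$ --- affordable precisely because in transversal dimension one the two-replica interaction is only marginally relevant --- while this tilt biases $\tq{L}$ substantially, and Hölder's inequality then delivers the estimate. This is where the restriction to $d=2$ (rather than $d=3$) is a real convenience: for $d=3$ the collision series diverges only logarithmically and a sharper, less expository, tilt is required, which is why a complete argument is presented only in two dimensions.

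Finally I would bootstrap to all scales by coarse-graining along the renewal structure: cutting a path into blocks of renewal-length $\approx L$ and using super-multiplicativity of $\tq{\cdot}$ under concatenation of diamonds, with the dependence between non-adjacent diamonds absent and that between adjacent overlapping diamonds controlled by the exponential tail \eqref{eq:L3-massgap-d} (an overlap of combinatorial size $\kappa$ costing ${\rm e}^{-c\kappa}$), exactly as in the $\bbL_2$ estimates behind Lemma~\ref{lem:WDBound} and \eqref{eq:WD-intersection}. This propagates the single-scale bound to $\calE[\tq{n}^{\theta}]\leqs\ta{n}^{\theta}{\rm e}^{-\delta' n}$ for all $n$ with a fixed $\delta'>0$; since $\calE\log\tq{n}\leq\tfrac1\theta\log\calE[\tq{n}^{\theta}]$ by Jensen, Markov plus Borel--Cantelli gives $\limsup_n\tfrac1n\log\tq{n}\leq-\delta'/\theta$ $\calQ$-a.s., the required bound with $\delta_0=\delta'/\theta$. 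The main obstacle throughout is not the directed-polymer heuristics but the bookkeeping of non-independence: unlike genuine directed polymers, neither the martingale identity for $W_n$ nor the sub/super-multiplicativity of fractional moments is exact for renewal weights carried by overlapping diamonds, so one must repeatedly absorb these overlap errors against the collision divergence --- comfortable in $d=2$, borderline in $d=3$, and failing in $d\geq 4$, consistently with the weak-disorder picture of Section~\ref{sec:Weak}.
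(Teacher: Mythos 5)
Your strategy is the paper's own: use the irreducible (Ornstein--Zernike) decomposition to turn the stretched polymer into an effective directed polymer, run a Lacoin--Zygouras fractional-moment/change-of-measure argument, and finish with Markov and Borel--Cantelli; your initial reduction (quenched irreducible decomposition with the mass gap \eqref{eq:L3-massgap-d} handling the boundary pieces, and $\Zn (h)\asymp {\rm e}^{n\lambda}$) is fine. But three steps, as written, would not go through. First, your tilt points the wrong way: ``depressing'' $\sfVo$ in the corridor makes the quenched weights \emph{larger} under the tilted law, and the H\"{o}lder step \eqref{eq:ST-Holder} then gives nothing. The paper tilts by ${\rm e}^{\delta\psi (\sfVo )-g(\delta )}$ with $\psi$ non-decreasing and $\delta >0$, so that by positive association the tilted annealed potential strictly \emph{increases}, \eqref{eq:ST-phi-delta}, whence $\calE_\delta (\rrq{N})\leq {\rm e}^{-c_\psi \delta N}$. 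Second, the single-scale bound you claim, $\calE [\tq{L}^{\theta}]\leq \ta{L}^{\theta}{\rm e}^{-\delta L}$ with a rate linear in $L$, is not what this change of measure delivers, and you never do the bookkeeping where $d=2$ actually enters. The gain is $c_\psi\alpha\delta N$, the entropy cost is of order $\delta^2\abs{A_N}$, and one must additionally beat the endpoint-entropy factor $\abs{A_N}$ from the crude bound in \eqref{eq:ST-target1}; since $\abs{A_N}\asymp N^{3/2+\epsilon}$ in $d=2$, the window $C\log N/N\ll\delta_N\ll N^{-1/2-\epsilon}$ is nonempty and the single-scale quantity in \eqref{eq:target} is $<1$, but the gain is only of order $N^{1/2-\epsilon}$, not order $N$. (This tradeoff, which fails already for $d=3$ where $\abs{A_N}\asymp N^{2}$, is the concrete reason the paper stops at two dimensions; the replica-collision divergence you invoke is heuristic motivation, not the mechanism used, and no $\bbL_2$-type input in the spirit of Lemma~\ref{lem:WDBound} or \eqref{eq:WD-intersection} appears anywhere in the proof.)

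Third, your bootstrap cuts at fixed microscopic time $n$ and then proposes to ``absorb overlap errors against the collision divergence''; that is vague and unnecessary. The paper parametrizes by the number $N$ of irreducible steps: for $\rrq{N}$ the factorization $\rrq{N+M}=\sum_{\sfx}\rrq{\sfx , N}\,\rrqof{M}{\sfx}$ is exact, the two factors are independent, and $(\sum a_i)^{\alpha}\leq\sum a_i^{\alpha}$ converts the single-scale estimate \eqref{eq:target} into exponential decay of $\calE [(\rrq{M})^{\alpha}]$, i.e.\ \eqref{eq:rterm}, hence \eqref{eq:rterm-lim} and \eqref{eq:Strong}. So the exponential rate comes from iterating an exactly multiplicative structure over blocks of irreducible steps, not from a linear-rate single-scale bound; if you repair the tilt direction, carry out the $\abs{A_N}$-versus-$\delta_N$ bookkeeping, and switch the iteration variable from $n$ to the number of irreducible steps, your outline becomes the paper's proof.
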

\begin{remark}
As in~\cite{Lacoin} and, subsequently, \cite{Zygouras-StrongDisorder} 
proving strong disorder in dimension 
$d=3$ is a substantially more delicate task.
\end{remark}
Let us explain Theorem~\ref{thm:Strong}:  By the exponential Markov inequality 
(and Borel-Cantelli) it is sufficient to prove that there 
exist $c^\prime >0$ and  $\alpha >0$ such that 
\begin{equation} 
\label{eq:FracMom}
\calE \lbr 
\lb \frac{\Zno (h )}{\Zn (h ) }\rb^\alpha \rbr  \leq e^{-c^\prime  n} .
\end{equation}
We shall try to establish \eqref{eq:FracMom} with $\alpha\in (0,1 )$. This is 
the fractional moment method of \cite{Lacoin}. It has a transparent logic: 
Since $\calE \lb \Zno (h )\rb = \Zn (h )$, expecting  \eqref{eq:Strong} means that 
$\Zno (h )$ takes excessive exponentially  high values with exponentially 
small probabilities. Taking 
fractional moments  in \eqref{eq:FracMom} amounts to truncating these high values.  
\smallskip

\noindent
\textbf{Reduction to Basic Partition Functions.}
Recall the definition of diamond-confined (basic) partition 
functions $\sff(\sfx, n) = \calE\lb  \sffo (\sfx, n)\rb$, 
$\sft (\sfx, n) = \calE \lb\sfto (\sfx, n)\rb$ and, accordingly, 
$\sff (\sfx )$, $\sft (\sfx ), \dots$ in \eqref{eq:AN-sft-quantities}. 
Since $\lim_{n\to\infty} \sft (n ) = \mu(h )^{-1}$, theorem
target statement \eqref{eq:Strong} would follow from 
\be 
\label{eq:Strong-tf} 
\limsup_{n\to\infty}\frac{1}{n}\log \frac{\sfto (n )}{\sft (n )} = 
\limsup_{n\to\infty}\frac{1}{n}\log \sfto (n ) < 0. 
\ee
In its turn, in view of Teorem~\ref{thm:L3-RepAlphabets}, 
 \eqref{eq:Strong-tf} is routinely implied by the following  statement 
(\eqref{eq:rterm-lim} below): Let $\rrq{N}$ be the partition function of $N$ irreducible steps:
\begin{equation}  
\label{eq:pfSteps}
\rrq{N} \df \sum_{\sfu_1,\cdots, \sfu_N} \sffo (\sfu_1) \sffof{u_1} (\sfu_2 -\sfu_1)
\cdots \sffof{\sfu_{N-1}} (\sfu_N -\sfu_{N-1}) = \sum_{\sfx} \rrq{\sfx, N}.
\end{equation}
Then, $\calQ$-a.s. 
\be
\label{eq:rterm-lim}
\limsup_{N\to\infty}\frac{1}{N}\log \rrq{N} < 0.
\ee
Again by Borel-Cantelli and 
the exponential Markov inequality, \eqref{eq:rterm-lim} would follow as soon 
as we check that for some $\alpha >0$, 
\begin{equation} 
\label{eq:rterm}
\limsup_{N\to\infty} 
\frac{1}{N}
\log\bbE\lb \rrq{N}\rb^\alpha < 0 .
\end{equation}
\smallskip

\noindent
\textbf{Fractional Moments.}
The proof of the fractional moment bound \eqref{eq:rterm} comprises 
several steps.
\smallskip 

\noindent 
\step{1} 
Following \cite{Lacoin}: \eqref{eq:rterm} is verified  once we show that 
there exist $N\in\bbN$ and $\alpha\in (0,1)$ such that
\begin{equation} 
\label{eq:target}
\calE\lbr \sum_x \lb \rrq{x ,N}\rb^\alpha \rbr <1 .
\end{equation}
Indeed, first of all if $a_i\geq 0$ and $\alpha \in (0,1)$, then 
\be
\label{eq:ST-trivial} 
 \lb \sum a_i \rb^\alpha \leq \sum a_i^\alpha .
\ee
Equivalently (setting $p =1/\alpha >1$ and $b_i = a_i^\alpha$), 
$
\sum b_i^p \leq \lb \sum b_i\rb^p .
$
Since 
\[
 \frac{\dd}{\dd b_n} \lb \sum b_i\rb^p \geq p b_n^{p-1} = \frac{\dd}{\dd b_n} \sum b_i^p , 
\]
the latter form of \eqref{eq:ST-trivial} follows by induction.  

We proceed with proving  that \eqref{eq:target} implies \eqref{eq:rterm}. 
Evidently, by \eqref{eq:ST-trivial}, 
\[
  \rrq{N+M} = \sum_\sfx \rrq{N, \sfx }\rrqof{M}{\sfx} \ 
  \Rightarrow \ 
  \lb \rrq{N+M}\rb^\alpha \leq \sum_\sfx \lb \rrq{N, \sfx }\rb^{\alpha} 
  \lb\rrqof{M}{\sfx}\rb^\alpha , 
\]
for any $\alpha \in (0,1)$. Since  $\rrq{N, \sfx }$ and $\rrqof{M}{\sfx}$ are 
independent, and $\rrq{M}$ is translation invariant, it follows that
\[
 \calE \lbr (\rrq{N+ M})^\alpha\rbr \leq 
 \calE\lbr \sum_\sfx \lb \rrq{N, \sfx }\rb^\alpha \rbr
 \calE\lbr  (\rrq{M})^\alpha\rbr  .
\]
Hence \eqref{eq:target}, implies exponential decay of 
$M\mapsto \calE\lbr  (\rrq{M})^\alpha\rbr $. 
\smallskip

\noindent
\step{2} Let $\sfv_h = \sum_\sfx \sfx\sff (\sfx )$; mean displacement under probability
measure $\lbr\sff (\sfx )\rbr$. By Theorem~\ref{thm:L3-RepAlphabets}, the latter 
distribution has exponential tails, and classical moderate deviation results apply. 
For $\sfy\in\bbZ^d$ define the distance from $\sfy$ to the line in the direction of $\sfv_h$; 
 $\dd_h (\sfy ) = \min_a \abs{\sfy - a\sfv_h}$. 
Pick $K$ sufficiently large and $\epsilon$ small, and consider
\[
A_N = \lbr \sfy\in\bbZ^d~:~ 0\leq \sfy\cdot \sfv_h\leq KN\ {\rm and}\ \dd_h (\sfy )
\leq N^{\frac{1}{2} +\epsilon}\rbr.
\]
Recall that $\rra{\sfx , N}$ is the distribution of the end point of the $N$-step 
random walk with $\lbr\sff (\sfx )\rbr$ being the one step distribution. 
With a slight abuse of notation, we can consider $\rra{N}$ as a distribution on
the set of all $N$-step trajectories of this random walk:
\be
\label{eq:ST:trajectories}
\rra{N}(\sfx_1, \dots , \sfx_N ) = \sff (\sfx_1 )\sff (\sfx_2 - \sfx_1 )\dots 
\sff (\sfx_N -\sfx_{N-1}) .
\ee
By classical (Gaussian)  moderate deviation estimates, there exists $c>0$ such that
\be 
\label{eq:ST-MD}
\sum_{\sfx\not\in A_N}\rra{\sfx, N}^\alpha \leq {\rm e}^{- c\alpha N^{2\epsilon}}
\ {\rm and}\ 
\rra{N}\lb \lbr\sfx_1, \dots , \sfx_N\rbr\not\subset A_N\rb \leq {\rm e}^{- cN^{2\epsilon}} .
\ee
Furthermore, with another slight abuse
of notation we can consider $\sfr_N (\cdot )$ as the distribution on the family 
of all $N$-concatenations $\gamma = \gamma_1\circ\gamma_2\circ\dots\circ\gamma_N$ 
of irreducible paths $\gamma_i\in\calF$. In this way, 
\[
 \rra{N} (\gamma) = \prod_1^N\Wdn{h , \lambda}(\gamma_i ) .
\]
Recall from \eqref{eq:L3-diamond} that irreducible paths $\gamma_i$ 
satisfy the following diamond confinement condition:
 If $\sfx_{i-1}, \sfx_i$ are the end points of $\gamma_i$, then 
 $\gamma_i\subset D (\sfx_{i-1} , \sfx_i )$. 
\begin{ex}
\label{ex:ST-diamonds}
Prove the following generalization of the second of 
\eqref{eq:ST-MD} (see Figure~\ref{fig:Aset}): There exists $c>0$ such that
\be 
\label{eq:ST-MD-diamond}
\rra{N}\lb \cup_i D(\sfx_{i-1} , \sfx_i )\not\subset A_N\rb 
\leq {\rm e}^{- cN^{2\epsilon}}.
\ee
\end{ex}
\begin{figure}[h]
\sidecaption
\scalebox{.25}{\input{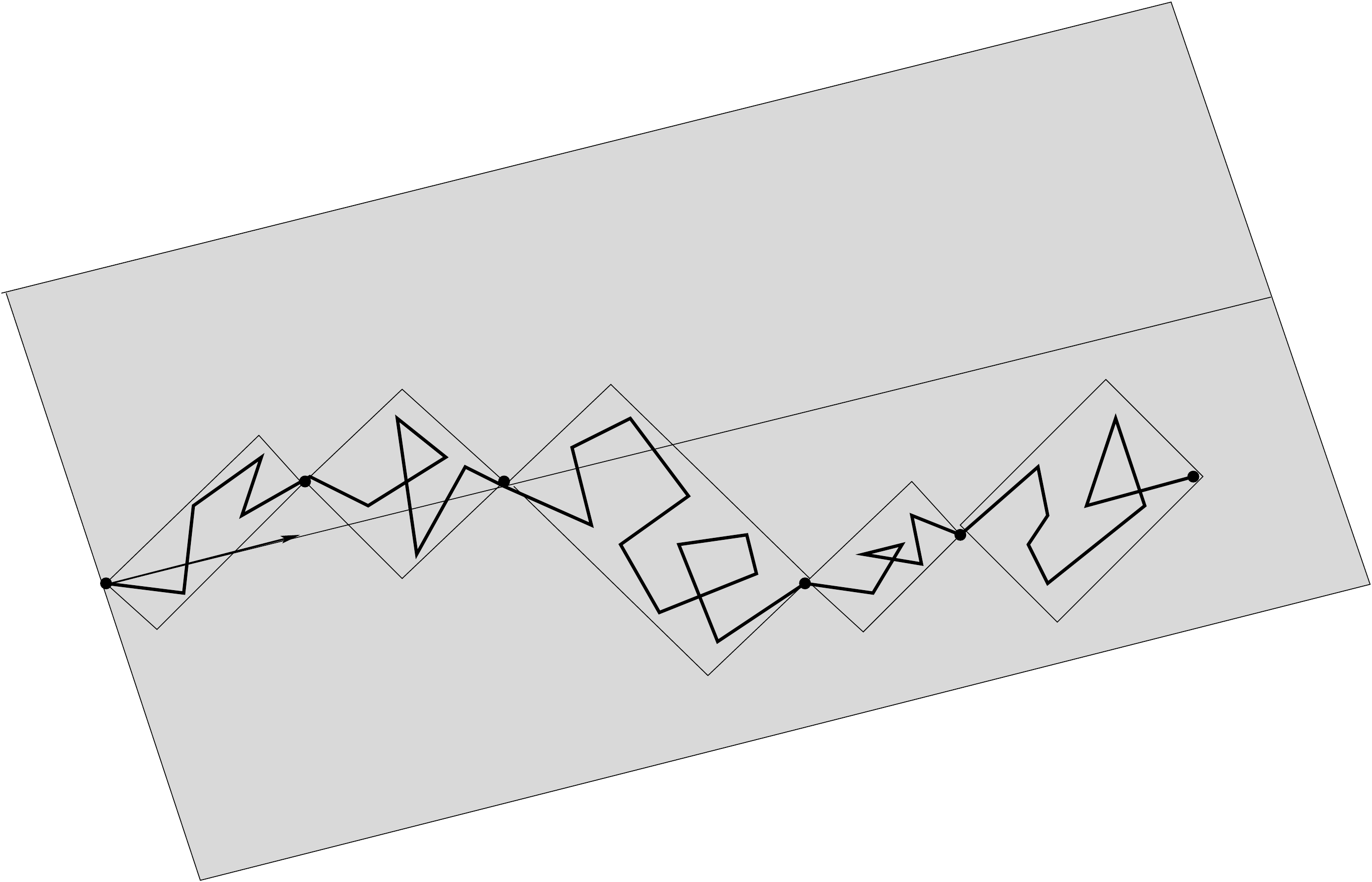_t}}
%
%
\caption{Example: $N=5$. The path $\lb 0, \sfx_1, \dots , \sfx_5\rb$ of the effective random
walk, and the union of diamond shapes $\cup_i D(\sfx_{i-1} , \sfx_i ) \subset A_N$.}
\label{fig:Aset}       
\end{figure}

Since any concatenation $\gamma = \gamma_1\circ\gamma_2\circ\dots\circ\gamma_N$ 
of irreducible paths $\gamma_i\in\calF$ satisfies 
$\gamma\subseteq \cup_i D(\sfx_{i-1} , \sfx_i )$, we readily infer that under $\rra{N}$
typical annealed paths stay inside $A_N$, 
\be 
\label{eq:ST-MD-paths}
\rra{N}\lb \gamma\not\subset  A_N\rb \leq {\rm e}^{- cN^{2\epsilon}} \ 
\Rightarrow \ 
\calE\lbr \lb \rrq{N}\lb \gamma\not\subset  A_N\rb\rb^\alpha\rbr  
\leq {\rm e}^{- \alpha cN^{2\epsilon}}, 
\ee
 for any $\alpha\in (0,1)$ (by Jensen's inequality). 

Since $\calE \lbr \bigl(\rrq{x ,N}\bigr)^\alpha\rbr \leq r_{N,x}^\alpha$, we, 
in view of the first of \eqref{eq:ST-MD}, may restrict summation in \eqref{eq:target}
to $\sfx\in A_N$. In view of \eqref{eq:ST-MD-paths}, it would be enough to check 
that 
\be 
\label{eq:ST-target1}
\sum_{x\in A_N} \calE\lbr \lb \rrq{x, N}(\gamma\subset A_N )\rb^\alpha\rbr 
\leq 
\abs{A_N}\calE \lbr \lb \rrq{N}\lb \gamma\subset A_N\rb\rb^\alpha\rbr  < 1. 
\ee
The first inequality above  is a crude over-counting, but for $d=2$ it will do. 
\smallskip 

\noindent
\step{3} We, therefore, concentrate on proving the second inequality in \eqref{eq:ST-target1}.  
At this stage, we shall modify the distribution of the environment 
inside  $A_N$ in the 
following way: 
The modified law of the 
environment, which we shall denote  $\calQ_\delta$ is still product 
and, for every $\sfx\in A_N$,
\[
 \frac{\dd \calQ_\delta}{\dd \calQ}
\lb \sfVo_\sfx  \rb 
\df e^{\delta \psi \lb \sfVo_\sfx \rb - g (\delta )},  \quad 
\text{where\quad  $e^{g (\delta )} =\log  \calE\lb  e^{\delta \psi \lb \sfVo \rb }\rb $, }
\]
and $\psi$ is a bounded non-decreasing function on $\bbR_+$, 
for instance $\psi (v ) =v\wedge 1$. 

The  annealed potential in the modified environment is  $\psi_{ \beta}( \ell , \delta ) = 
-\log\calE_\delta\lb {\rm e}^{-\beta\ell \sfVo}\rb$. Note that for any $\ell\geq 1$, 
\be 
\label{eq:ST-phi-delta}
 \frac{\dd \phi_{ \beta} (\ell , \delta )}{\dd\delta}\big|_{\delta=0} 
 =\calE\lb \psi (\sfVo )\rb \calE ({\rm e}^{-\ell\beta\sfVo} ) 
 - \calE\lb \psi (\sfVo ){\rm e}^{-\ell\beta\sfVo}\rb > 0 .
\ee
Indeed, since $\psi$ is non-decreasing and ${\rm e}^{-\ell v}$ 
is decreasing, the last inequality follows from positive association of 
one-dimensional probability measures, as described in the beginning of 
Subsection~\ref{sub:TH-annealed}. 

By \eqref{eq:ST-MD-paths} we can ignore paths which do not stay 
inside $A_N$. Thus, \eqref{eq:ST-phi-delta} implies: 
There exists $c_\psi >0$ such that for all $\delta$
sufficiently small, 
\be 
\label{eq:ST-rN-delta}
\calE_\delta\lb \sfr^\omega \rb = \sum_{\sfx ,n}\calE_\delta\lb \sffo (\sfx, n )
\rb \leq 1-c_\psi \delta \ \Rightarrow\ 
\calE_\delta( \rrq{N }) \leq {\rm e}^{-Nc_\psi\delta }.
\ee
From H\"{o}lder's inequality,
\be 
\label{eq:ST-Holder}
\begin{split}
 \calE \lbr \lb \rrq{N}\lb \gamma\subset A_N\rb \rb^\alpha \rbr 
 &\leq \Bigl(  \calE_\delta\lbr \bigl(  
\frac{\dd \calQ}{\dd \calQ_\delta}\bigr)^{1/(1-\alpha ) }\rbr \Bigr)^{1-\alpha}
\bigl( \calE_\delta \lbr  \rrq{N}(\gamma\subset A_N ) \rbr \bigr)^\alpha \\
&\stackrel{\eqref{eq:ST-rN-delta}}{\leq} 
\Bigl(  \calE_\delta\lbr \bigl(  
\frac{\dd \calQ}{\dd \calQ_\delta}\bigr)^{1/(1-\alpha ) }\rbr \Bigr)^{1-\alpha} 
{\rm e}^{-Nc_\psi\alpha \delta }.
\end{split}
\ee
Now, the first term on the right hand side of \eqref{eq:ST-Holder} is 
\be 
\label{eq:ST-change-back}
\begin{split}
\calE_\delta\lbr \bigl(  
\frac{\dd \calQ}{\dd \calQ_\delta}\bigr)^{1/(1-\alpha ) }\rbr 
&= 
\calE\lbr 
\frac{\dd \calQ_\delta}{\dd \calQ}
\bigl(  
\frac{\dd \calQ}{\dd \calQ_\delta}\bigr)^{1/(1-\alpha ) }\rbr 
\\
&= 
\lb 
\calE \lbr {\rm e}^{\frac{\alpha}{1-\alpha} (g (\delta ) - \delta\psi (\sfVo ))}\rbr
\rb^{\abs{A_N}} .
\end{split}
\ee
However, the first order terms in $\delta$ cancel:
\begin{equation} 
\label{eq:quadratic}
\frac{\dd}{\dd \delta}\Big|_{\delta =0}
\log \calE \lbr {\rm e}^{\frac{\alpha}{1-\alpha} (g (\delta ) - \delta\psi (\sfVo ))}
\rbr
 = \frac{\alpha}{1-\alpha}\lb g^\prime (0) - \calE\lbr \psi (\sfVo )\rbr \rb = 0.
\end{equation}
Consequently, by the second order expansion, there exists $\nu_\psi <\infty$, 
such that 
\be 
\label{eq:ST-q-bound}
\Bigl(  \calE_\delta\lbr \bigl(  
\frac{\dd \calQ}{\dd \calQ_\delta}\bigr)^{1/(1-\alpha ) }\rbr \Bigr)^{1-\alpha} 
\leq {\rm e}^{\frac{\nu_\psi}{1-\alpha}\delta^2 \abs{A_N}} .
\ee
A substitution to \eqref{eq:ST-Holder} yields: 
\be 
\label{eq:ST-final}
\calE \lbr \lb \rrq{N}\lb \gamma\subset A_N\rb \rb^\alpha \rbr \leq 
{\rm e}^{-Nc_\psi\alpha \delta + \frac{\nu_\psi}{1-\alpha}\delta^2 \abs{A_N}} . 
\ee
We are now ready to specify the choice of $\delta = \delta_N$: 
In two dimensions; $d=2$, the cardinality $\abs{A_N}\leqs N^{\frac{3}{2}+\epsilon}$. 
Hence, \eqref{eq:ST-target1} follows whenever we choose 
\[
\frac{C\log N }{N} \ll  \delta_N\ll N^{-\frac{1}{2}-\epsilon} 
\]
with $C = C (\alpha )$ being sufficiently large.  

\section{Appendix: Geometry of convex bodies and Large deviations.}
\label{Asec:geometry}
In  these notes we shall restrict attention to finite dimensional spaces $\bbR^d$. The principal
references  are \cite{BoF,Rock,Schneider} for convex geometry and \cite{Varadhan,DS,DZ,FrankLD} 
for large deviations.
\subsection{Convexity and duality.}
{\bf Convex functions.} 
A function $\phi :\bbR^d\to \bbR\cup\infty $ is said to be convex if 
\[
 \phi \lb t \sfx +(1-t )\sfy\rb \leq t\phi (\sfx ) + (1-t)\phi (\sfy ) , 
\]
for all $\sfx , \sfy\in \bbR^d$ and all $t\in [0,1]$. 
\begin{remark}
 \label{rem:A-rem-conv}
Note that by definition we permit $\infty$ values, but not $-\infty$ values. 
\end{remark}
Alternatively, $\phi :\bbR^d\to \bbR\cup\infty $ is convex if the set
\be 
\label{eq:A-epi}
{\rm epi}(\phi ) = \lbr (\sfx , \alpha )~:~ \phi (\sfx )\leq \alpha\rbr \subset \bbR^{d+1}
\ee
is convex. We shall work with convex lower-semicontinuous functions: $\phi$ is 
lower-semicontinuous if for any $\sfx$ and any sequence $\sfx_n$ 
converging to $\sfx$, 
\[
 \phi (\sfx )\leq \lim_{n\to\infty}\phi (\sfx_n ) .
\]
Alternatively, $\phi$ is lower-semicontinuous if the set ${\rm epi}(\phi )$ 
is closed. 

A basic example of  a convex and lower-semicontinuous (actually continuous ) function is
an affine function
\[
 \frl_{a,h} (\sfx )  =a + h\cdot \sfx .
\]
 \begin{thm}
\label{thm:A-conv-lsc} 
The following are equivalent:

\noindent
(a) $\phi:\bbR^d\to \bbR\cup\infty $ is convex and lower-semicontinuous.

\noindent
(b) ${\rm epi} (\phi )$ is convex and closed. 

\noindent
(c) $\phi$ can be recovered from its affine minorants: 
\be
\label{eq:A-phi-l}
\phi (x) = \sup_{a,h}\lbr \frl_{a, h} (\sfx )~:~ \frl_{a,h }\leq \phi\rbr .
\ee
(d) ${\rm epi} (\phi )$ is the intersection of closed half-spaces
\be
\label{eq:A-epiphi-l}
{\rm epi} (\phi ) = \bigcap_{\frl_{a,h}\leq \phi } {\rm epi}\lb \frl_{a,h}\rb .
\ee
\end{thm}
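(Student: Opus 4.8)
The equivalence of (a) and (b) has already been recorded above: $\phi$ is convex precisely when $\mathrm{epi}(\phi)$ is convex, and lower-semicontinuous precisely when $\mathrm{epi}(\phi)$ is closed. The implication (d)$\Rightarrow$(b) is immediate, since each half-space $\mathrm{epi}(\frl_{a,h}) = \lbr (\sfx,\alpha) : \alpha \ge a + h\cdot\sfx\rbr$ is closed and convex, hence so is any intersection of such sets. The equivalence (c)$\Leftrightarrow$(d) is the elementary identity $\mathrm{epi}\bigl(\sup_i f_i\bigr) = \bigcap_i \mathrm{epi}(f_i)$ applied to the family of all affine minorants of $\phi$: the pointwise supremum representation \eqref{eq:A-phi-l} holds if and only if $\mathrm{epi}(\phi)$ is cut out by the corresponding affine epigraphs as in \eqref{eq:A-epiphi-l}. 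Thus everything reduces to proving (b)$\Rightarrow$(c), i.e.\ that closedness and convexity of $\mathrm{epi}(\phi)$ force $\phi$ to coincide with the upper envelope of its affine minorants.

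\textbf{Core argument.} Fix a point $(\sfx_0,\alpha_0)\notin\mathrm{epi}(\phi)$; it suffices to produce an affine $\frl \le \phi$ with $\frl(\sfx_0) > \alpha_0$. Since $\mathrm{epi}(\phi)$ is closed and convex and $(\sfx_0,\alpha_0)$ lies outside it, the strict separating hyperplane theorem in $\bbR^{d+1}$ yields $(h,\beta)\neq 0$ and $c\in\bbR$ with $h\cdot\sfx + \beta\alpha \le c < h\cdot\sfx_0 + \beta\alpha_0$ for all $(\sfx,\alpha)\in\mathrm{epi}(\phi)$. Because $\mathrm{epi}(\phi)$ is stable under adding non-negative amounts to the last coordinate, the left-hand side stays bounded above over $\mathrm{epi}(\phi)$ only if $\beta\le 0$. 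If $\beta<0$, rescale so that $\beta=-1$; then $h\cdot\sfx - c \le \phi(\sfx)$ for every $\sfx$ (trivially where $\phi=\infty$, and by taking $\alpha=\phi(\sfx)$ in the separation inequality otherwise), so $\frl := h\cdot(\,\cdot\,) - c$ is an affine minorant, while the strict inequality at $(\sfx_0,\alpha_0)$ reads $\frl(\sfx_0) > \alpha_0$, as required.

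\textbf{The obstacle: vertical hyperplanes.} The remaining case $\beta=0$ produces only a ``vertical'' hyperplane: $h\neq 0$ and $h\cdot\sfx \le c < h\cdot\sfx_0$ for all $\sfx$ with $\phi(\sfx)<\infty$, carrying no information in the $\alpha$-direction. I would handle this by a tilting step. First dispatch the degenerate case $\phi\equiv+\infty$: then $\mathrm{epi}(\phi)=\varnothing$, every affine function is a minorant, and \eqref{eq:A-phi-l} holds with supremum $+\infty$. So assume $\phi$ is proper and pick $\bar\sfx$ with $\phi(\bar\sfx)<\infty$ and some $\bar\alpha<\phi(\bar\sfx)$; the point $(\bar\sfx,\bar\alpha)$ lies outside $\mathrm{epi}(\phi)$, and since $\bar\sfx$ is in the (nonempty) effective domain of $\phi$, its separating hyperplane cannot be vertical, so the Core argument supplies at least one affine minorant $\frl_0\le\phi$. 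Now return to $(\sfx_0,\alpha_0)$ in the vertical case and set $\frl_k := \frl_0 + k\,(h\cdot(\,\cdot\,) - c)$ for $k\ge 0$: on the effective domain of $\phi$ the added term is $\le 0$, hence $\frl_k \le \frl_0 \le \phi$, while $\frl_k(\sfx_0) = \frl_0(\sfx_0) + k(h\cdot\sfx_0 - c) \to \infty$, so $\frl_k(\sfx_0) > \alpha_0$ for $k$ large. This yields (b)$\Rightarrow$(c) and closes the cycle of implications. The only genuinely delicate point is this tilting step — equivalently, the classical fact that a proper closed convex function admits an affine minorant — while all the other implications are bookkeeping translations between a function and its epigraph.
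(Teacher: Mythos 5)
Your proof is correct: the reduction of everything to (b)$\Rightarrow$(c), the separation argument with the observation that $\beta\le 0$, and the tilting trick $\frl_k=\frl_0+k(h\cdot(\,\cdot\,)-c)$ to dispose of vertical separating hyperplanes (after securing one affine minorant $\frl_0$ from a non-vertical separation at a point of the effective domain) is exactly the classical argument, and the degenerate case $\phi\equiv+\infty$ is handled properly. The paper itself states Theorem~\ref{thm:A-conv-lsc} as background material without proof, deferring to the standard references on convex analysis, and your argument coincides with the canonical proof given there, so there is nothing further to compare.
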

\begin{definition}
 \label{def:A-subdif} $\phi$ is sub-differentiable at $\sfx$ if there exists $\frl_{a,h} \leq \phi$
such that $\phi (\sfx ) = \frl_{a,h} (\sfx )$. In the latter case we write $h\in\partial\phi (\sfx )$.
\end{definition}
\begin{ex}
\label{ex:A-conv1}
Check that $h\in\partial \phi (\sfx )$  iff $\phi (\sfx ) <\infty$ and 
\be 
\label{eq:A-subdif}
\phi (\sfy ) - \phi (\sfx ) \geq h\cdot (\sfy - \sfx )
\ee
for any $\sfy\in\bbR^d$. 
 \end{ex}
Convex functions on $\bbR^d$  are always sub-differentiable at interiour points of their effective
domains.  
In general, sets  $\partial\phi (\sfx )$ may be empty, may be singletons 
 or they may contain continuum of different slopes $h$.  
\begin{ex}
\label{ex:A-conv2}
Find an example with $\partial \phi (\sfx ) = \emptyset$. 
Prove that in general $\partial\phi (\sfx )$ is closed and convex. 
Check that a convex $\phi$ is differentiable at $\sfx$ with $\nabla\phi (\sfx ) = h$ iff
$\partial\phi (\sfx ) = \lbr h\rbr$.
\end{ex}
\begin{definition}
\label{def:A-LF-transform}
Let $\phi :\bbR^d\mapsto \bbR\cup\infty$. The Legendre-Fenchel transform, or the convex 
conjugate, of $\phi$ is 
\be 
\label{eq:A-LF-transform}
\phi^* (\sfx ) = \sup_{h}\lbr h\cdot \sfx -\phi (h )\rbr .
\ee 
\end{definition}
By construction, $\phi^*$ is always convex and lower-semicontinuous: Indeed, 
\[
 {\rm epi} (\phi^* ) = \bigcap_{h}{\rm epi}\lb \frl_{-\phi (h ) , h}\rb ,
\]
which is obviously closed and convex.
\smallskip 

\noindent
{\bf Duality.}
Let $\phi $ be a convex and lower-semicontinuous function. let us say
that $h$ and $\sfx$ are a pair of conjugate points if $\sfx\in\partial\phi (h )$. 
\begin{thm}
 \label{thm:A-duality}
If $\phi$ is convex and lower-semicontinuous, then 
 $\lb\lb\phi\rb^*\rb^* = \phi$. In the latter situation, the notion of conjugate points is symmetric, 
namely the following are equivalent:
\be
\label{eq:A-duality}
\sfx \in\partial\phi (h )\Leftrightarrow h\in\partial\phi^* (\sfx )
\Leftrightarrow  \phi (h ) + \phi^* (\sfx ) = h\cdot\sfx .
\ee
Let $h, \sfx $ be a pair of conjugate points. Then strict convexity of $\phi$ at $h$ is 
equivalent to differentiability of $\phi^*$ at $\sfx$. Namely, 
\be 
\label{eq:A-duality-diff}
\forall\, g\neq h,\ \phi (g )-\phi (h ) > \sfx\cdot (g-h )\ 
\Leftrightarrow\ \nabla\phi^* (\sfx ) = h .
\ee
\end{thm}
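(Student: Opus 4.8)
The statement to prove is Theorem~\ref{thm:A-duality}, the Fenchel--Moreau biconjugation theorem together with the symmetry of conjugate points and the duality between strict convexity and differentiability.

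\medskip

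\textbf{Plan.} The whole argument rests on Theorem~\ref{thm:A-conv-lsc}, especially the representation \eqref{eq:A-phi-l} of a convex lower-semicontinuous function as the supremum of its affine minorants. First I would establish $(\phi^*)^* = \phi$. The inequality $(\phi^*)^* \leq \phi$ is immediate (``Young's inequality''): for every $h$, $\phi^*(h) \geq h\cdot\sfx - \phi(\sfx)$ by definition of $\phi^*$, hence $h\cdot\sfx - \phi^*(h) \leq \phi(\sfx)$, and taking the supremum over $h$ gives $(\phi^*)^*(\sfx) \leq \phi(\sfx)$. For the reverse inequality I would use the fact that $\phi$ is convex and lower-semicontinuous, so by \eqref{eq:A-phi-l} it equals the supremum of the affine functions $\frl_{a,h}$ lying below it. For each such minorant, $a + h\cdot\sfy \leq \phi(\sfy)$ for all $\sfy$ means $h\cdot\sfy - \phi(\sfy) \leq -a$ for all $\sfy$, i.e. $\phi^*(h) \leq -a$, i.e. $a \leq -\phi^*(h)$; therefore $\frl_{a,h}(\sfx) = a + h\cdot\sfx \leq h\cdot\sfx - \phi^*(h) \leq (\phi^*)^*(\sfx)$. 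Taking the supremum over all affine minorants and invoking \eqref{eq:A-phi-l} yields $\phi(\sfx) \leq (\phi^*)^*(\sfx)$, completing the first claim. (One should note the harmless degenerate case where $\phi \equiv +\infty$, or has no affine minorant at all; the standing convention in Remark~\ref{rem:A-rem-conv} that $-\infty$ is disallowed keeps this under control.)

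\medskip

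\textbf{Symmetry of conjugate points.} This is now a short chain of equivalences built from Exercise~\ref{ex:A-conv1}. By that exercise applied to $\phi^*$, $h \in \partial\phi^*(\sfx)$ iff $\phi^*(\sfx') - \phi^*(\sfx) \geq h\cdot(\sfx'-\sfx)$ for all $\sfx'$, which rearranges to $\phi^*(\sfx) + h\cdot\sfx - h\cdot\sfx' + \phi^*(\sfx') \geq \ldots$; more cleanly, $h\cdot\sfx - \phi^*(\sfx) \geq h\cdot\sfx' - \phi^*(\sfx')$ for all $\sfx'$, i.e. $h\cdot\sfx - \phi^*(\sfx) = \sup_{\sfx'}\{h\cdot\sfx' - \phi^*(\sfx')\} = (\phi^*)^*(h) = \phi(h)$, using biconjugation. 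So $h \in \partial\phi^*(\sfx)$ iff $\phi(h) + \phi^*(\sfx) = h\cdot\sfx$, which is visibly symmetric in the roles of $(\phi,h)$ and $(\phi^*,\sfx)$; running the same computation with $\phi$ and $\phi^*$ interchanged gives $\sfx\in\partial\phi(h)$ iff the same equality holds. This proves the three-way equivalence \eqref{eq:A-duality}. The Young inequality $\phi(h)+\phi^*(\sfx)\geq h\cdot\sfx$ always holds, and equality is exactly the subgradient condition.

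\medskip

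\textbf{Strict convexity versus differentiability.} Finally, for a conjugate pair $(h,\sfx)$ I want: $\phi$ is strictly convex at $h$ (meaning $\phi(g) - \phi(h) > \sfx\cdot(g-h)$ for all $g\neq h$) iff $\nabla\phi^*(\sfx) = h$. By Exercise~\ref{ex:A-conv2}, $\phi^*$ is differentiable at $\sfx$ with gradient $h$ iff $\partial\phi^*(\sfx) = \{h\}$. Using the symmetric characterization just proved, $g \in \partial\phi^*(\sfx)$ iff $\phi(g) + \phi^*(\sfx) = g\cdot\sfx$, i.e. iff $\phi(g) - g\cdot\sfx = -\phi^*(\sfx) = \phi(h) - h\cdot\sfx$ (the last equality because $(h,\sfx)$ is itself a conjugate pair); that is, iff $\phi(g) - \phi(h) = \sfx\cdot(g-h)$. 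Combined with Young's inequality $\phi(g)-\phi(h)\geq\sfx\cdot(g-h)$ (valid for any $g$ since $\sfx\in\partial\phi(h)$), we see $\partial\phi^*(\sfx) = \{h\}$ iff the strict inequality $\phi(g)-\phi(h) > \sfx\cdot(g-h)$ holds for every $g\neq h$. This is precisely \eqref{eq:A-duality-diff}.

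\medskip

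I expect the main obstacle to be purely bookkeeping rather than conceptual: one must be careful about the convention excluding $-\infty$ and about the edge cases (points where $\phi=+\infty$, functions with empty subdifferential, the possibility that $\phi$ has no affine minorant at all), since \eqref{eq:A-phi-l} is the load-bearing input and it must be applied only where legitimate. No deep new idea is needed beyond Theorem~\ref{thm:A-conv-lsc} and the two exercises; the proof is essentially a careful unwinding of definitions.
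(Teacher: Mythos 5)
Your proposal is correct. The paper itself offers no proof of Theorem~\ref{thm:A-duality} --- it is stated as background material with the convex-analysis references (Rockafellar, Bonnesen--Fenchel, Schneider) standing in for the argument --- and what you give is exactly the standard Fenchel--Moreau proof that those references use: Young's inequality for $(\phi^*)^*\leq\phi$, the affine-minorant representation \eqref{eq:A-phi-l} for the reverse inequality, and then the equivalences of \eqref{eq:A-duality} and \eqref{eq:A-duality-diff} obtained by unwinding Exercises~\ref{ex:A-conv1} and~\ref{ex:A-conv2} together with biconjugation, with the degenerate cases ($\phi\equiv+\infty$, empty subdifferential) correctly flagged as the only bookkeeping issues.
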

{\bf Support and  Minkowski functions.}
Let $\bfK\subset \bbR^d$ be a compact convex set with non-empty interiour around the 
origin  $0\in {\rm int}\lb \bfK\rb$. 
\begin{definition}
 \label{def:A-convex-functionals} The function
\be 
\label{eq:A-K-char}
\chi_{\bfK} ( h ) = 
\begin{cases}
 0, &\text{if $h\in\bfK$}\\
\infty , &\text{otherwise}
\end{cases}
\ee
is called the characteristic function of $\bfK$. 

The function
\be 
\label{eq:A-K-sup}
\tau_{\bfK} (\sfx ) = \sup_{h\in \bfK} h\cdot \sfx = \max_{h\in\pbfK} h\cdot \sfx 
\ee
is called the support function of $\bfK$. 

The function 
\be 
\label{eq:A-K-Mink}
\alpha_{\bfK} ( h ) = \inf\lbr r >0 ~:~ h \in r \bfK\rbr
\ee
is called the Minkowski function of $\bfK$. 
\end{definition}
As it will become apparent below  functions $\chi_{\bfK}, \tau_{\bfK}$ and $\phi_{\bfK}$ are 
convex and lower-semicontinuous. 
\smallskip 

\noindent
{\bf Duality relation between $\chi_{\bfK}$ and $\tau_{\bfK}$.}  
The characteristic function $\chi_{\bfK}$ is convex and 
lower-semicontinuous since  ${\rm epi}\lb \chi_{\bfK}\rb = \bfK\times [0, \infty)$. 
The support function 
 $\tau_{\bfK}$ is the supremum 
of linear functions. As such it is homogeneous of order one. 
Also, \eqref{eq:A-K-sup} could be recorded in the form which makes $\tau_{\bfK}$ 
to be  the convex conjugate:
\be 
\label{eq:A-tau-chi}
 \tau_{\bfK} = \chi_{\bfK}^* \ \text{and, by Theorem~\ref{thm:A-duality},}\ \chi_{\bfK} = \tau_{\bfK}^*.
\ee
Since $\tau_{\bfK}$ is homogeneous, the latter reads as
\be
\label{eq:A-K-tau}
\bfK = \bigcap_{\frn\in\bbS^{d-1}}\lbr h\, :\, h\cdot\frn \leq \tau_{\bfK} (\frn )\rbr .
\ee
By \eqref{eq:A-K-char} if  $h\in\pbfK$ then  $\sfx\in\partial\chi_{\bfK} (h)$ 
if and only if $\sfx$ is in the 
direction of the outward normal to a hyperplane which touches $\pbfK$ at $h$. Thus, 
$\partial \chi_{\bfK} (h)$ is always a closed convex cone (which can be just a semi-line).

 Other way around, 
$\partial\tau_{\bfK} (\sfx )$ contains all boundary points 
$h\in\pbfK$, such that $\sfx$ is the direction of the outward normal to a supporting hyperplane 
at $h$. In particular, 
\be 
\label{eq:A-homog-sub}
 \partial\tau_{\bfK} (r \sfx ) = \partial\tau_{\bfK} (\sfx ) , 
\ee
and $\partial\tau_{\bfK} ( \sfx ) \subset\pbfK$ is a closed convex facet (which can be just one
point). 

As a consequence: $\tau_{\bfK}$ is differentiable at $\sfx\neq 0$ iff the supporting hyperplane
with the outward normal direction of $\sfx$ touches $\pbfK$ at exactly one point $h$. In 
particular $\tau_{\bfK}$ is differentiable at any $\sfy\neq 0$ iff $\pbfK$ is strictly convex. 
\smallskip 

\noindent 
{\bf Polarity relation between $\tau_{\bfK}$ and $\alpha_{\bfK}$.} 
As for $\alpha_{\bfK} ( h )$ 
the assumptions $\bfK$ is bounded and  $0\in {\rm int}\lb \bfK\rb$ imply 
 that for any $h\neq 0$, the value $\alpha_{\bfK} (h)$ is positive and finite. 
 Consequently,
\[
 \frac{h}{\alpha_{\bfK} ( h )}\in\pbfK \ {\rm and}\  \exists\, \sfx\neq 0\ \text{such that}\ 
\tau_{\bfK} (\sfx ) = \frac{\sfx\cdot h}{ \alpha_{\bfK} ( h )} .
\]
On the other hand, again since $\frac{h}{\alpha_{\bfK} ( h )}\in\pbfK$, 
\[
 \tau_{\bfK} \lb \sfy \rb \geq \frac{\sfy\cdot h}{ \alpha_{\bfK} ( h )} , 
\]
for any $\sfy\in\bbR^d$.  Since $\tau_{\bfK}$ is homogeneous of order one, we, therefore,  conclude:
\be 
\label{eq:A-dual-support}
\alpha_{\bfK} ( h ) = \max\lbr h\cdot \sfy ~:~ \tau_{\bfK} (\sfy )\leq 1\rbr .
\ee
In other words, $\alpha_{\bfK}$ is the support function of the closed convex set:
\be
\label{eq:A-polar} 
\bfK^* = \lbr \sfy ~:~ \tau_{\bfK} (\sfy )\leq 1\rbr .
\ee
In particular, $\alpha_{\bfK}$ is convex and lower-semicontinuous. 
\begin{ex}
 \label{ex:AK-polar}
For any $\sfx, h\neq 0$, $\sfx\cdot h \leq  \tau_{\bfK} (\sfx ) \alpha_{\bfK} ( h )$. Furthermore, 
\be 
\label{eq:AK-polar}
\frac{\sfx\cdot h}{\tau_{\bfK} (\sfx ) \alpha_{\bfK} ( h )} =1\, \Leftrightarrow\, 
\frac{\sfx}{\tau_{\bfK} (\sfx )}\in\partial \alpha_{\bfK} ( h )\, \Leftrightarrow\, 
\frac{h}{\alpha_{\bfK} ( h )} \in \partial\tau_{\bfK} (\sfx ) .
\ee
\end{ex}
Actually, by homogeneity it would be enough to establish \eqref{eq:AK-polar} for 
$\sfx\in\partial \bfK^*$ (equivalently  $\tau_{\bfK} (\sfx )=1$) and 
$h\in\pbfK$ (equivalently $\alpha_K (h )=1$). In the latter case, let us say that $\sfx\in \partial \bfK^*$
and $h\in\pbfK$ are in polar relation  if $\sfx\cdot h =1$. 
\begin{ex}
 \label{ex:AK-polar-points}
Let $\sfx , h$ be in polar relation. Then 
\be 
\label{eq:AK-polar-points}
\text{$\pbfK$ is strictly convex (smooth) at $h$ iff  $\partial \bfK^*$ is smooth (strictly convex) at $\sfx$} .
\ee

\end{ex}
\begin{remark}
\label{rem:A-generalities} Most of the above notions can be defined and 
effectively studied in much 
more generality than we do. In particular, one can go beyond assumptions of finite dimensions 
and non-empty interiour. 
\end{remark}

\subsection{Curves and surfaces.} 
\label{ssec:A-Surfaces}
Let $M$ be a smooth $(d-1)$-dimensional surface (without boundary) 
embedded in $\Rd$. For $u\in M$ let
$\frn (u)\in\bbS^{d-1}$ be the normal direction at $u$. $T_uM$ is the tangent space to 
$M$ at $u$. Thus $\frn $ is a map $\frn :\, M\mapsto \bbS^{d-1}$.
It is called the Gauss map, and its differential $\dd \frn_u$ is called 
 the Weingarten map. 
 Since $T_u M =  T_{\sfn (u ) }\bbS^{d-1}$, we may consider 
$\dd \frn_u $ as  a linear map on the tangent space 
$T_uM$. 
\begin{ex}
\label{ex:L1.10} Check that $\dd \frn_u $ is self-adjoint (with respect to the usual Euclidean
scalar product on $\Rd$).  Hence, the eigenvalues $\chi_1, \dots , \chi_{d-1}$ of $\dd\frn_u$
are real, and the corresponding normalized eigenvectors $\frv_1 ,\dots , \frv_{d-1}$ form an 
orthonormal basis of $T_u M$. 
\end{ex}
\begin{definition}
\label{def:L1-curvatures} 
Eigenvalues $\chi_1 , \dots , \chi_{d-1} \geq 0$
 of $\dd\frn$ are called principal curvatures of $M$ at $u$. The normalized 
eigenvectors $\frv_1 ,\dots , \frv_{d-1}$ are called directions of principal curvature. 
The product $\prod_\ell  \chi_\ell$ is called the Gaussian curvature. 
\end{definition}
Assume that $M$ is locally given by a level set of a smooth function $\lambda (\cdot )$, such that 
$\nabla\lambda (u) \neq 0$. That is, in a neighbourhood of $u$; 
$v\in M\, \leftrightarrow\, \lambda (v )= 0$.
Then, 
\[
 \frn_v = \frac{\nabla\lambda (v )}{\abs{\nabla \lambda (v )}} . 
\]
Define $\Sigma_u = {\rm Hess}[\lambda ](u )$. 
\begin{ex}
 \label{ex:A-Sigma-map}
Check that for $g\in T_uM$, 
\be 
\label{eq:A-Sigma-map} 
\dd\frn_u g\cdot g = \frac{1}{\abs{\nabla \lambda (v )}}\Sigma_u g\cdot g .
\ee
\end{ex}
{\bf Convex surfaces.} Let now $M = \pbfK$, and $\bfK$ is a 
bounded 
convex body with non-empty interiour. 
In the sequel we shall assume that the boundary $\partial {\mathbf K}$ is smooth 
(at least $\sfC_2$). Let $\tau = \tau_{\bfK}$ be the support function of ${\mathbf K}$. 
Whenever defined the Hessian $\Xi_\sfx = {\rm Hess}[\tau ] (\sfx )$ has a natural interpretation 
in terms of the curvatures of ${\mathbf K}$ at $h = \nabla\tau (\sfx )$. 
We are following Chapter~2.5 in \cite{Schneider}. 
\smallskip 

\noindent 
{\bf General case.}
 We assume that 
$M$ is smooth and that 
the Gaussian curvature of $M$ is uniformly non-zero. 
In particular, $M=\pbfK$ is strictly convex, and, by duality relations, 
its support function $\tau$ is differentiable. 
In the sequel  $\frn (h)$ is understood as the exteriour
normal to $M$ at $h$. 

Recall 
that 
for every $\sfx\neq 0$, the gradient
$\nabla\tau (\sfx)  = h_\sfx \in \pbfK$, and could be characterized by $h_\sfx\cdot \sfx = \tau (\sfx)$. 
Consequently, $\nabla \tau (r\sfx ) = \nabla \tau (\sfx)$. This is a homogeneity relation. It readily
implies the following: Let  $\Xi_\sfx$ be the Hessian of $\tau$ at $\sfx$. Then, 
\be  
\label{eq:homog-tau}
 \Xi_\sfx \sfx =0 .
\ee
Let $h\in \pbfK$ and let $\frn = \frn ( h)$ be the normal direction to $\pbfK$ at $h$. 
Then 
\[
 \nabla \tau (\frn (h )) = h.
\]
 In other words, the  restriction of $\nabla\tau$ to $\bbS^{d-1}$ is precisely the 
inverse of the Gauss map $\frn$. Hence the restriction $\hat\Xi_{\frn (h)}$ of $\Xi_\frn$ to 
$T_h \pbfK$ is 
the inverse of the Weingarten map $\dd\frn_h $. 
\begin{definition} 
\label{def:L1-radii-curvature}
Let $\frn\in\bbS^{d-1}$ and $h =\nabla\tau (\frn )$. 
Eigenvalues $r_\ell = 1/\chi_\ell$  
 of $\hat \Xi_\frn$ are called principal radii of curvature of $M=\pbfK$ at $h$.
\end{definition}
{\bf Example: Smooth convex curves.} 
Let $\frn_\theta   = (\cos\theta ,\sin\theta )$. 
Radius of curvature $r (\theta ) = 1/\chi (\theta )$ 
 of the boundary $\partial{\mathbf K}$ at a point 
$h_\theta  = \nabla  \tau (\frn_\theta )$ is given by 
\[
 r (\theta )  = \frac{\dd^2}{\dd\theta^2} \tau (\theta ) + \tau (\theta ) , 
\]
where we put $\tau (\theta ) = \tau (\frn_\theta )$. 
Indeed, $\sfv_\theta  = \frn^\prime_\theta  = (-\sin\theta , \cos\theta )$ is the unit 
spanning vector of $T_{h_\theta}\pbfK$. Note that $\frac{\dd}{\dd \theta}\sfv_\theta = - \frn_\theta$. 
 Hence, 
\[
 \frac{\dd^2}{\dd\theta^2}\tau (\theta ) = 
\frac{\dd}{\dd \theta}\lb \nabla\tau (\frn_\theta )\cdot \sfv_\theta\rb = 
\Xi_{\frn_\theta}\sfv_\theta\cdot \sfv_\theta - \nabla\tau (\frn_\theta )\cdot\frn_\theta  = 
\Xi_{\frn_\theta}\sfv_\theta\cdot \sfv_\theta - \tau (\frn_\theta )
\]
{\bf Second order expansion.} Let $\lb \frv_1 ,\dots ,\frv_{d-1} , \frn (h)\rb$ be orthonormal 
coordinate frame, where $\lb \frv_1 ,\dots ,\frv_{d-1}\rb$ is a basis of 
$T_h M$. Consider matrix elements  $\Xi_\frn(i,j )$ in this coordinates. Then the homogeneity 
relation \eqref{eq:homog-tau} applied at $\sfx= \frn (h)$ yields:
\be 
\label{eq:Xi_x}
 \Xi_\frn (\ell , d ) = 0\ \text{for all $\ell= 1, \dots, d$} .
\ee
Which means that as a quadratic form $\Xi_\frn$ satisfies:
\be
\label{eq:Xi_x-projection}
\Xi_\frn \sfu\cdot \sfw = \Xi_\frn \pi_h \sfu\cdot \pi_h \sfv ,
\ee
 where $\pi_h$ is the orthogonal projection on $T_h\pbfK$. Furthermore, 
\begin{ex}
 \label{ex:A-L1.8}
Check that the Hessian $\Xi_\sfx\df {\rm Hess}_\sfx \tau  = \frac{1}{\abs{\sfx}}\Xi_\frn$, where 
$\frn = \frn_\sfx \in \bbS^{d-1}$ is the unit vector in the  direction
of $\sfx$. 
\end{ex}
Consequently, second order expansion takes the form: For any  $\sfx\neq 0$ and 
$t\in (0,1)$
\be 
\label{eq:A-2dOrder-tau}
 \tl\lb t\sfx +\sfv\rb +\tl \lb (1-t )\sfx -\sfv \rb - \tl (\sfx ) = 
\frac{\Xi_\frn \sfv  \cdot \sfv}{2t (1-t) \abs{\sfx }} + 
\smo{\frac{\abs{\sfv}^2}{\abs{\sfx }}} .
\ee
Recording 
this in the 
(orthonormal) basis of principal curvatures, we deduce 
the following Corollary: 
\begin{corollary}
\label{cor:QExpansion-tau}
 Let $\sfx\in\Rd$; $\frn = \frn_\sfx = \frac{\sfx}{\abs{\sfx}}\in\bbS^{d-1}$, 
and let $h = \nabla\tau (\sfx )$. Consider the 
orthogonal frame $(\frv_1, \dots ,\frv_{d-1} , \frn )$, where $\frv_\ell$-s are the directions
of principal curvature of $\pbfK$ at $h$. Then, for any $t\in (0,1)$ and for 
any $y_1, \dots y_{d-1}$, 
\be 
\label{eq:QExpansion-tau}
\begin{split}
&\tau\lb t\sfx + \sum_{\ell = 1}^{d-1} y_\ell\frv_\ell \rb 
+ \tau\lb (1-t)\sfx - \sum_{\ell =1}^{d-1} y_\ell\frv_\ell \rb- \tau (\sfx) \\
&\qquad = \sum_{\ell = 1}^{d-1} \frac{y_\ell^2}{2t (1-t )\abs{\sfx} \chi_\ell  }
\quad + 
\smo{\frac{\sum y_\ell^2}{\abs{\sfx}} } .
\end{split}
\ee
\end{corollary}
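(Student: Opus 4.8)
The plan is to deduce the stated quadratic expansion directly from the general second order formula \eqref{eq:A-2dOrder-tau}, whose proof (a Taylor expansion of $\tau$ around $\sfx$ together with the homogeneity relation \eqref{eq:homog-tau}) has already been established. The only thing to do is to re-express the Hessian term $\Xi_\frn \sfv \cdot \sfv$ appearing there in the orthonormal basis of principal curvature directions, and to verify that the perturbation $\sfv$ may be taken tangential without loss of generality.

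First I would fix $\sfx \neq 0$, set $\frn = \frn_\sfx = \sfx/\abs{\sfx}$, and let $h = \nabla\tau(\sfx) = \nabla\tau(\frn) \in \pbfK$. By \eqref{eq:Xi_x-projection} the quadratic form $\Xi_\frn$ annihilates the normal component of any vector, so $\Xi_\frn\sfv\cdot\sfv = \Xi_\frn(\pi_h\sfv)\cdot(\pi_h\sfv)$; hence in \eqref{eq:A-2dOrder-tau} it suffices to take $\sfv = \sum_{\ell=1}^{d-1} y_\ell \frv_\ell \in T_h\pbfK$, and the neglected term $\smo{\abs{\sfv}^2/\abs{\sfx}}$ is exactly $\smo{(\sum y_\ell^2)/\abs{\sfx}}$. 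Next, recall from the discussion preceding Definition~\ref{def:L1-radii-curvature} that the restriction $\hat\Xi_\frn$ of $\Xi_\frn$ to $T_h\pbfK$ is the inverse of the Weingarten map $\dd\frn_h$, whose eigenvalues are the principal curvatures $\chi_1,\dots,\chi_{d-1}$ with eigenvectors $\frv_1,\dots,\frv_{d-1}$ (Definition~\ref{def:L1-curvatures}). Therefore $\Xi_\frn \frv_\ell\cdot\frv_m = \delta_{\ell m}/\chi_\ell$, and for $\sfv = \sum y_\ell\frv_\ell$ one gets $\Xi_\frn\sfv\cdot\sfv = \sum_{\ell=1}^{d-1} y_\ell^2/\chi_\ell$. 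Substituting this into \eqref{eq:A-2dOrder-tau} and replacing $2t(1-t)\abs{\sfx}$ in the denominator gives precisely \eqref{eq:QExpansion-tau}.

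There is essentially no obstacle here: the corollary is a bookkeeping reformulation of \eqref{eq:A-2dOrder-tau}, and the only point requiring a word of care is the uniformity of the error term — one should note that the $\smo{\cdot}$ is uniform over $t$ in compact subsets of $(0,1)$ and over unit directions, which follows from the smoothness (at least $\sfC_2$) and uniform positivity of the Gaussian curvature of $\pbfK$ assumed throughout Subsection~\ref{ssec:A-Surfaces}, since these guarantee a uniform modulus of continuity for ${\rm Hess}\,\tau$ away from the origin. I would close by remarking that the homogeneity identity of Exercise~\ref{ex:A-L1.8}, $\Xi_\sfx = \tfrac{1}{\abs{\sfx}}\Xi_\frn$, is what accounts for the factor $\abs{\sfx}$ in the denominator of the leading term, so that the formula scales correctly under $\sfx \mapsto r\sfx$, $y_\ell \mapsto r y_\ell$.
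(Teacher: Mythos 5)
Your proposal is correct and is essentially the paper's own argument: the Corollary is obtained from the second order formula \eqref{eq:A-2dOrder-tau} by diagonalizing $\Xi_\frn$ on $T_h\pbfK$, where its restriction is the inverse Weingarten map with eigenvalues $1/\chi_\ell$ and eigenvectors $\frv_\ell$, so that $\Xi_\frn\sfv\cdot\sfv=\sum_\ell y_\ell^2/\chi_\ell$ for $\sfv=\sum_\ell y_\ell\frv_\ell$. Your additional remarks on \eqref{eq:Xi_x-projection}, on uniformity of the error, and on the scaling $\Xi_\sfx=\tfrac{1}{\abs{\sfx}}\Xi_\frn$ are consistent with the text and add nothing that conflicts with it.
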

{\bf Strict triangle inequality.} 
If principal curvatures of $\pbfK$ are  uniformly bounded or, equivalently, 
if quadratic forms $\Xi_\frn $ are uniformly (in $\frn\in \bbS^{d-1}$) positive definite,  
then there exists a constant
$c>0$ such that 
\be 
\label{eq:A-strict-t-tau}
\tau (\sfx ) +\tau (\sfy ) - \tau (\sfx +\sfy ) 
\geq c\lb \abs{\sfx } +\abs{\sfy } - \abs{\sfx +\sfy }\rb .
\ee
In order to prove \eqref{eq:A-strict-t-tau} note, first of all, 
that since for any $\sfz\neq 0$, $\nabla\tau (\sfz ) = \nabla\tau (\frn_\sfz )$, 
one can rewrite
the left han side of \eqref{eq:A-strict-t-tau} as 
\[
 \tau (\sfx ) +\tau (\sfy ) - \tau (\sfx +\sfy ) = \sfx \cdot\lb \nabla\tau (\frn_{\sfx} ) 
 - \nabla\tau (\frn_{\sfx +\sfy} )\rb
  + \sfy\cdot \lb \nabla\tau (\frn_{\sfy} ) - \nabla\tau (\frn_{\sfx +\sfy} )\rb .
\]
Simiraly, 
\[
 \abs{\sfx } +\abs{\sfy } - \abs{\sfx +\sfy } = \sfx\cdot \lb \frn_{\sfx} - \frn_{\sfx +\sfy}\rb 
 + \sfy\cdot \lb \frn_{\sfy} - \frn_{\sfx +\sfy}\rb . 
\]
Therefore, \eqref{eq:A-strict-t-tau} will follow if we show that for any two unit vectors
$\frn, \frm\in\bbS^{d-1}$, 
\be 
\label{eq:A-strict-t-tau-1} 
\frn\cdot \lb \nabla\tau (\frn ) - \nabla\tau (\frm )\rb \geq 
c\, \frn\cdot\lb \frn -\frm\rb .
\ee
Set $\Delta = \frn -\frm$. 
Since $\frn\cdot\lb \frn -\frm\rb\leqs \abs{\Delta}^2$, and since we are not pushing for the 
optimal value of $c$ in \eqref{eq:A-strict-t-tau}, it would be enough to consider second order
expansion in $\abs{\Delta}$.

To this end define $\gamma_t = \frm +t\Delta$  
and $h_t = \nabla\tau (\gamma_t )$.  
Then, 
\be 
\label{eq:A-2d-order} 
\begin{split}
 \frn\cdot \lb \nabla\tau (\frn ) - \nabla\tau (\frm )\rb
 &= 
 \frn\cdot\int_0^1 \frac{\dd}{\dd t}\nabla\tau (\gamma_t )\dd t = 
 \int_0^1 \Xi_{\gamma_t } \frn\cdot \Delta\dd t \\
 & = 
 \int_0^1 \Xi_{\gamma_t } \pi_{h_t}\frn\cdot \pi_{h_t} \Delta\dd t .
 \end{split}
\ee
The last equality above is \eqref{eq:Xi_x-projection}. By construction 
$\gamma_t$ is orthogonal to $T_{h_t}\partial \bfK$.  Hence the projection 
\[
 \pi_{h_t }\frn = \frn -\frac{\frn\cdot\gamma_t}{\abs{\gamma_t}^2}\gamma_t 
  = (1-t)\Delta + \frac{(1-t)\Delta \cdot\gamma_t}{\abs{\gamma_t}^2}\gamma_t
=  (1-t)\Delta + 
 \smo{\abs{\Delta}} .
\]
On the other hand $\pi_{h_t }\Delta = \Delta + \smo{\abs{\Delta}}$. Hence, up to higher 
order terms in $\abs{\Delta}$, 
\be 
\label{A-qf-bound}
\int_0^1 \Xi_{\gamma_t } 
\pi_{h_t}\frn\cdot \pi_{h_t} \Delta\dd t \geq \frac{1}{2} 
\min_{ h\in\partial \bfK}\min_{\ell}
r_\ell (h )
\abs{\frn -\frm}^2 , 
\ee 
and \eqref{eq:A-strict-t-tau} follows.

\subsection{Large deviations.}
\label{Asec:LD}
{\bf The setup.}
Although the framework of the theory is much more general we shall restrict attention
to probabilities on finite-dimensional spaces. 
Let $\lbr \bbP_n\rbr$ be a family of probability measures on $\bbR^d$.
\begin{definition}
 \label{def:LD-RateFunction} A function $J : \bbR^d \mapsto [0,\infty]$ is said to be a 
rate function if it is proper (${\rm Dom}(J)\df\lbr\sfx: J (\sfx )<\infty\rbr\neq \emptyset$) 
and if  it has compact level sets. In particular  rate functions are
always lower-semicontinuous. 
\end{definition}
\begin{definition}
 \label{def:LD-LDP}
A family $\lbr \bbP_n\rbr$ satisfies large deviation principle with rate function $J$ 
(and speed $n$) if:
\begin{description}
 \item[Upper Bound] For every closed $F\subseteq\bbR^d$
\be 
\label{eq:LD-UB}
\limsup_{n\to\infty}\frac{1}{n}\log \bbP_n \lb F \rb \leq - \inf_{\sfx\in F} J(\sfx ) .
\ee
 \item[Lower  Bound] For every open  $O\subseteq\bbR^d$
\be 
\label{eq:LD-LB}
\limsup_{n\to\infty}\frac{1}{n}\log \bbP_n \lb O \rb \geq  - \inf_{\sfx\in O} J(\sfx ) .
\ee
\end{description}
\end{definition}
There is an alternative formulation of the lower bound:
\begin{ex}
 \label{ex:LD-LB-x}
Check that \eqref{eq:LD-LB} is equivalent to: For every $\sfx\in\bbR^d$ the family 
$\lbr \bbP_n\rbr$ satisfies the LD lower bound at $\sfx$, that is for any open 
neighbourhood $O$ of $\sfx$, 
\be 
\label{eq:LD-LB-x}
\limsup_{n\to\infty}\frac{1}{n}\log \bbP_n \lb O \rb \geq  - J(\sfx ) .
\ee
\end{ex}
All the measures we shall work with are exponentially tight:
\begin{definition}
 \label{def:LD-ET}
A family $\lbr \bbP_n\rbr$ is exponentially tight if for any $R$ one can find a compact 
subset $K_R$ of $\bbR^d$ such that
\be 
\label{eq:LD-ET}
\limsup_{n\to\infty}\frac{1}{n}\log \bbP_n \lb K_R^{\sfc} \rb \leq -R .
\ee
\end{definition}
If exponential tightness is checked then one needs derive upper bounds only for all compact sets:
\begin{ex}
 \label{ex:LD.3}
Check that if $\lbr \bbP_n\rbr$ is exponentially tight and it satisfies 
\eqref{eq:LD-LB} for all open sets and  \eqref{eq:LD-UB} for all 
{\em compact} sets, then it satisfies LDP. 
\end{ex}
In particular, $\lbr \bbP_n\rbr$ satisfies an upper large deviation bound with $J$ if 

\noindent
(a) It is exponentially tight. 

\noindent
(b) For every $\sfx\in \bbR^d$, the family $\lbr \bbP_n\rbr$ satisfies the following upper 
large deviation bound at $\sfx$: 
\be 
\label{eq:LD-UB-x-general}
\lim_{\delta\downarrow 0} \limsup_{n\to\infty}\frac{1}{n}\log \bbP_n \lb \left| \frac{\sfX}{n}
-\sfx\right| \leq \delta 
\rb 
\leq -  J(\sfx ) .
\ee
We shall mostly work with measures on $\frac{1}{n}\bbZ^d$ 
which are generated by scaled random variables $\frac{1}{n}\sfX $, for instance when 
 $\sfX = \sfX (\gamma )$ is the spatial extension of a polymer or the 
end point of a self-interacting random walk. In the latter case we shall modify the 
notion \eqref{eq:LD-UB-x-general} of point-wise LD upper bound as follows: 
\begin{definition}
 \label{def:LD-x}
A family $\lbr \bbP_n\rbr$ of probability measures on $\bbZ^d$ satisfies an 
upper LD bound at $\sfx\in\bbR^d$ if for any $R >0$
\be 
\label{eq:LD-UB-x}
\limsup_{n\to\infty}\frac{1}{n}\log \bbP_n \lb \sfX = \lfloor n\sfx\rfloor \rb 
\leq -  J(\sfx )\wedge R .
\ee
\end{definition}
At a first glance constant $R$ in \eqref{eq:LD-UB-x} does not seem to contribute
to the statement. However, checking and formulating things this way may be convenient. 
\begin{ex}
 \label{ex:LD.4}
Let  $J$ be a rate function. Check that if $\lbr \bbP_n\rbr$ is exponentially tight, 
if the lower bound \eqref{eq:LD-LB-x} is satisfied, 
 and if \eqref{eq:LD-UB-x} is satisfied, for any $R \in [0,\infty )$,   
 {\em uniformly} on 
compact subsets of $\bbR^d$, then $\lbr \bbP_n\rbr$ satisfies the LD principle  in the sense of 
Definition~\ref{def:LD-LDP}.  
\end{ex}
{\bf Log-moment generating functions and convex conjugates.}
Frequently quests after LD rate functions stick to the following pattern: Assume 
that the (limiting) log-moment generating function
\be
\label{eq:LD-lmgf}
\lambda (h ) = \lim_{n\to\infty}\frac{1}{n}\log\bbE_n {\rm e}^{h\cdot \sfX }
\ee 
is well defined (and not identically $\infty$) for all $h\in\bbR^d$. 
\begin{ex}
 \label{ex:LD.5}
Check that if $\lambda (\cdot )$ in \eqref{eq:LD-lmgf} is indeed defined, then
it is convex and lower-semicontinuous. 
\end{ex}
Consider the Legendre-Fenchel transform $I$ of $\lambda$
\be
\label{eq:LD-I}
I (\sfx ) = \sup_h\lbr h\cdot \sfx - \lambda (h)\rbr .
\ee
Here is one of the basic general LD results:
\begin{thm}
 \label{thm:LD-convex}
Assume that $\lambda$ in \eqref{eq:LD-lmgf} is well defined and proper. 
\begin{description}
 \item[Upper Bound.] For any $\sfx\in\bbR^d$ the family $\lbr \bbP_n\rbr$
satisfies upper LD bound \eqref{eq:LD-UB-x} with $I$  at $\sfx$. 
\item[Lower Bound.] If, in addition, $I$ is sub-differential and 
strictly convex at $\sfx$, then 
$\lbr \bbP_n\rbr$
satisfies a lower  LD bound at $\sfx$ with $I$   in \eqref{eq:LD-LB-x}.
\end{description}
\end{thm}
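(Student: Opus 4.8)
The plan is to prove the two bounds separately: the upper bound by an exponential Chebyshev estimate, and the lower bound by an exponential change of measure combined with a weak law of large numbers under the tilted ensemble.

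\emph{Upper bound.} Fix $\sfx\in\bbR^d$ and $R>0$. I would start from the trivial inequality ${\rm e}^{h\cdot\lfloor n\sfx\rfloor}\bbP_n\lb\sfX=\lfloor n\sfx\rfloor\rb\le\bbE_n {\rm e}^{h\cdot\sfX}$, valid for every $h$ with $\lambda(h)<\infty$. Dividing by $n$, taking logarithms, letting $n\to\infty$ and using \eqref{eq:LD-lmgf} together with $\lfloor n\sfx\rfloor/n\to\sfx$ gives $\limsup_n\tfrac1n\log\bbP_n\lb\sfX=\lfloor n\sfx\rfloor\rb\le-\lb h\cdot\sfx-\lambda(h)\rb$. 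Optimising over $h$ and recalling \eqref{eq:LD-I} yields the bound $-I(\sfx)$ when $I(\sfx)<\infty$, and $-R$ for each fixed $R$ when $I(\sfx)=\infty$; in both cases it is $\le-(I(\sfx)\wedge R)$, which is exactly \eqref{eq:LD-UB-x}. Since the estimate before the optimisation is affine in $\sfx$, the bound is automatically uniform on compact sets.

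\emph{Lower bound.} Now assume $I$ is subdifferentiable and strictly convex at $\sfx$ (in particular $I(\sfx)<\infty$), and pick $h\in\partial I(\sfx)$. Since $\lambda$ is convex and lower-semicontinuous (Exercise~\ref{ex:LD.5}), we have $I=\lambda^*$ and $\lambda=\lambda^{**}$, so the duality Theorem~\ref{thm:A-duality} and \eqref{eq:A-duality} give $\sfx\in\partial\lambda(h)$ and $\lambda(h)+I(\sfx)=h\cdot\sfx$; moreover, by \eqref{eq:A-duality-diff}, strict convexity of $I$ at $\sfx$ is equivalent to differentiability of $\lambda$ at $h$, so $\lambda$ is finite in a neighbourhood of $h$ and $\nabla\lambda(h)=\sfx$. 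Because $\tfrac1n\log\bbE_n {\rm e}^{h\cdot\sfX}\to\lambda(h)<\infty$, for all large $n$ I can introduce the tilted probability measures $\tfrac{\dd\widetilde\bbP_n}{\dd\bbP_n}(\sfy)=\tfrac{{\rm e}^{h\cdot\sfy}}{\bbE_n {\rm e}^{h\cdot\sfX}}$; their log-moment generating function converges to $g\mapsto\lambda(h+g)-\lambda(h)$, which is finite near $g=0$ and differentiable there with gradient $\sfx$. Feeding this into the same coordinatewise Chebyshev estimate used for the upper bound, I would deduce that for every $\delta>0$ one has $\widetilde\bbP_n\lb\abs{\sfX/n-\sfx}\ge\delta\rb\to0$ exponentially fast, i.e.\ a weak law of large numbers at $\sfx$ under $\widetilde\bbP_n$.

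Finally, given an open neighbourhood $O$ of $\sfx$, choose $\delta>0$ with $\lbr\abs{\sfy-\sfx}<\delta\rbr\subset O$ and change measure back:
\[
\bbP_n\lb\sfX/n\in O\rb\ \geq\ \bbE_n {\rm e}^{h\cdot\sfX}\,\widetilde\bbE_n\!\lbr{\rm e}^{-h\cdot\sfX}\IF{\abs{\sfX/n-\sfx}<\delta}\rbr\ \geq\ \bbE_n {\rm e}^{h\cdot\sfX}\,{\rm e}^{-n(h\cdot\sfx+\abs{h}\delta)}\,\widetilde\bbP_n\lb\abs{\sfX/n-\sfx}<\delta\rb .
\]
Taking $\tfrac1n\log$, letting $n\to\infty$ and using the tilted LLN together with $\lambda(h)-h\cdot\sfx=-I(\sfx)$ gives $\liminf_n\tfrac1n\log\bbP_n\lb\sfX/n\in O\rb\ge-I(\sfx)-\abs{h}\delta$; as the left-hand side is independent of $\delta$, letting $\delta\downarrow0$ proves \eqref{eq:LD-LB-x}. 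I expect the main obstacle to be this lower bound, and within it the weak LLN for $\widetilde\bbP_n$ obtained \emph{without} assuming exponential tightness: this is precisely where differentiability of $\lambda$ at $h$ (equivalently, strict convexity of $I$ at $\sfx$) is indispensable, and it must be extracted purely from the convergence of the moment generating functions in a neighbourhood of $h$.
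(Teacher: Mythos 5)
The paper itself does not prove Theorem~\ref{thm:LD-convex}: it is quoted in the Appendix as a standard fact, with the proof left to the cited large-deviation references. Your argument is exactly the classical Cram\'er/G\"artner--Ellis proof found there (exponential Chebyshev for the upper bound; exponential tilting plus a law of large numbers under the tilted laws for the lower bound), and it is correct. Three small points to tighten. First, when $\partial I(\sfx)$ is not a singleton you should take $h$ to be the subgradient that witnesses strict convexity at $\sfx$ (the exposing slope): \eqref{eq:A-duality-diff} is a statement about that particular conjugate pair, and for another element of $\partial I(\sfx)$ the equivalence with differentiability of $\lambda$ may fail. Second, the assertion that $\lambda$ is finite in a neighbourhood of $h$ deserves its one-line justification: strict convexity at $\sfx$ forces $\partial\lambda (h)=\lbr \sfx\rbr$ (any other $\sfy\in\partial\lambda (h)$ would satisfy $I(\sfy)-I(\sfx)=h\cdot(\sfy-\sfx)$), and a convex function on $\bbR^d$ can have a bounded non-empty subdifferential only at interior points of its domain, since at a boundary point the subdifferential, if non-empty, contains a translate of the normal cone and is unbounded. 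This finiteness is precisely what closes your coordinatewise Chebyshev bound for $\widetilde{\bbP}_n\lb \abs{\sfX /n-\sfx}\geq\delta\rb$: for small $t>0$ one has $\lambda (h+t\sfe_i)-\lambda (h)=t\, \sfx\cdot\sfe_i+\smo{t}$, and a union bound over the $2d$ coordinate half-spaces finishes the tilted LLN, so you located correctly where differentiability of $\lambda$ at $h$ enters. Third, the parenthetical claim that the upper bound is ``automatically uniform on compact sets'' is neither needed for the theorem nor justified as stated (uniformity would require a finite selection of tilts, e.g.\ via lower semicontinuity of $I\wedge R$); also note that you prove the $\liminf$ version of \eqref{eq:LD-LB-x}, which is what is actually intended there, so this is fine.
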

Sub-differentiability and strict convexity  over finite-dimensional spaces
are studied in great generality (e.g. low-dimensional effective domains, behaviour 
at the boundary of relative interiours etc )  and detail 
\cite{Rock}. 

Lower LD bounds with 
$I$ generically {\em do not} hold. In particular true LD rate functions $J$ are generically 
{\em non-convex}. However, $I = J$ in many important examples such as sums of i.i.d.-s 
and   Markov chains. Moreover, $I=J$ for most of polymer models with purely 
attractive or repulsive interactions. A notable 
exception is provided by one-dimensional polymers with repulsion \cite{GdH95,Ko94}(which we 
do not discuss here). 
Under minor additional integrability conditions 
the relation between $I$ and $J$ could be described as follows:

Let $\phi$ be a function on $\bbR_+$ with a super-linear growth at $\infty$: 
\[
 \lim_{t\to\infty}\frac{\phi (t )}{t} = \infty .
\]
\begin{lem}
\label{lem:LD-IJ}
Assume that $\lbr \bbP_n\rbr$ satisfies LDP with rate function $J$, 
and assume that 
\be 
\label{eq:J-growth}
\limsup\frac{1}{n}\bbE_n {\rm e}^{n \phi \lb\frac{\abs{\sfX}}{n}\rb  } <\infty .
\ee
Then~\cite{DS}  $\lambda (\cdot )$ in \eqref{eq:LD-lmgf} is defined and equals to
\[
 \lambda (h) = \sup_{\sfx}\lbr \sfx\cdot h - J (\sfx ) \rbr .
\]
Consequently, $I$ is the convex lower-semicontinuous  envelop of $J$, that is 
\[
 I (\sfx ) = \sup\lbr \frl_{a, h}(\sfx)~:~ \frl_{a, h}\leq J\rbr.
\]
In particular, if $J$ convex, then $I=J$. 
\end{lem}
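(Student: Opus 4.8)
\emph{Proof proposal.} The plan is to isolate the single identity
\be
\label{eq:planLJstar}
\lambda (h ) \;=\; \sup_{\sfx}\lbr h\cdot\sfx - J (\sfx )\rbr \;=\; J^* (h ),
\ee
where the existence of the limit defining $\lambda$ in \eqref{eq:LD-lmgf} is understood as part of the claim; once \eqref{eq:planLJstar} is in hand, the remaining two assertions are pure convex analysis. Indeed, by \eqref{eq:LD-I} we then have $I = \lambda^* = J^{**}$, and for an \emph{arbitrary} function $J$ the elementary equivalence $\frl_{a,h}\leq J \Leftrightarrow a\leq -J^*(h)$ gives
\[
\sup\lbr \frl_{a,h}(\sfx )~:~\frl_{a,h}\leq J\rbr \;=\; \sup_{h}\lbr h\cdot\sfx - J^*(h )\rbr \;=\; J^{**}(\sfx ) \;=\; I(\sfx ),
\]
which by Theorem~\ref{thm:A-conv-lsc} (characterization via affine minorants) is exactly the convex lower-semicontinuous envelope of $J$. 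Since a rate function has compact level sets and is therefore lower-semicontinuous (Definition~\ref{def:LD-RateFunction}), in the convex case this envelope is $J$ itself, so $I=J$ by Theorem~\ref{thm:A-duality}.

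\emph{The lower bound $\lambda(h)\geq J^*(h)$.} Fix $h$ and an arbitrary $\sfx$. For $\delta>0$,
\[
\bbE_n\,{\rm e}^{h\cdot\sfX}\;\geq\;{\rm e}^{n(h\cdot\sfx-\abs{h}\delta)}\,\bbP_n\lb \abs{\tfrac{\sfX}{n}-\sfx}\leq\delta\rb .
\]
Taking $\tfrac1n\log$, passing to $\liminf_{n\to\infty}$, invoking the large deviation lower bound \eqref{eq:LD-LB} in its pointwise form \eqref{eq:LD-LB-x}, and letting $\delta\downarrow 0$ yields $\liminf_{n\to\infty} \tfrac1n\log\bbE_n {\rm e}^{h\cdot\sfX}\geq h\cdot\sfx-J(\sfx)$; the supremum over $\sfx$ gives the lower bound. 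Finiteness of $\lambda$ should also be noted here: $J^*(h)>-\infty$ because $\mathrm{Dom}(J)\neq\emptyset$, while super-linearity of $\phi$ gives $\abs{h}t\leq\phi(t)+C_h$ with $C_h:=\sup_{t\geq0}(\abs{h}t-\phi(t))_+<\infty$, whence $\bbE_n{\rm e}^{h\cdot\sfX}\leq{\rm e}^{C_h n}\,\bbE_n{\rm e}^{n\phi(\abs{\sfX}/n)}$ and \eqref{eq:J-growth} bounds $\limsup_{n\to\infty}\tfrac1n\log\bbE_n{\rm e}^{h\cdot\sfX}$ from above.

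\emph{The upper bound $\lambda(h)\leq J^*(h)$ --- the main point.} This is the version of Varadhan's lemma for the \emph{unbounded} continuous functional $\sfx\mapsto h\cdot\sfx$, and it is the step where hypothesis \eqref{eq:J-growth} is indispensable; I expect it to be the main obstacle. Split, for $M>0$,
\[
\bbE_n\,{\rm e}^{h\cdot\sfX}\;=\;\bbE_n\lb {\rm e}^{h\cdot\sfX};\,\abs{\sfX}\leq Mn\rb+\bbE_n\lb {\rm e}^{h\cdot\sfX};\,\abs{\sfX}> Mn\rb .
\]
On the first term, cover the compact ball $\lbr\abs{\sfx}\leq M\rbr$ by finitely many small balls, apply the large deviation upper bound \eqref{eq:LD-UB} on each, and use lower semicontinuity of $J$ (equivalently, upper semicontinuity of $\sfx\mapsto h\cdot\sfx-J(\sfx)$) to obtain $\limsup_{n\to\infty}\tfrac1n\log\bbE_n({\rm e}^{h\cdot\sfX};\abs{\sfX}\leq Mn)\leq\sup_{\abs{\sfx}\leq M}\lbr h\cdot\sfx-J(\sfx)\rbr\leq J^*(h)$. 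For the tail, given $R>0$ choose $M=M_R$ so large that $\abs{h}t\leq\phi(t)-R$ for all $t\geq M_R$ (possible since $\phi(t)/t\to\infty$); then on $\lbr\abs{\sfX}>M_R n\rbr$ one has ${\rm e}^{h\cdot\sfX}\leq{\rm e}^{-Rn}{\rm e}^{n\phi(\abs{\sfX}/n)}$, so $\bbE_n({\rm e}^{h\cdot\sfX};\abs{\sfX}>M_R n)\leq{\rm e}^{-Rn}\bbE_n{\rm e}^{n\phi(\abs{\sfX}/n)}\leq C n\,{\rm e}^{-Rn}$ by \eqref{eq:J-growth}, and its $\limsup_{n\to\infty}\tfrac1n\log$ is $\leq -R$. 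Combining the two pieces through $\limsup_{n\to\infty}\tfrac1n\log(a_n+b_n)=\max\lb\limsup_{n\to\infty}\tfrac1n\log a_n,\ \limsup_{n\to\infty}\tfrac1n\log b_n\rb$ gives $\limsup_{n\to\infty}\tfrac1n\log\bbE_n{\rm e}^{h\cdot\sfX}\leq\max\lbr J^*(h),-R\rbr$, and since $R$ is arbitrary and $J^*(h)>-\infty$ this is $\leq J^*(h)$. Together with the lower bound, the limit exists and equals $J^*(h)$; this establishes \eqref{eq:planLJstar}, and the conclusions of the lemma follow as in the first paragraph. (The result is classical; see \cite{DS}.)
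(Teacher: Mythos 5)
Your proposal is correct and follows precisely the classical route that the paper itself does not reproduce but delegates to \cite{DS}: Varadhan's lemma for the linear functional $\sfx\mapsto h\cdot\sfx$ (lower bound from the pointwise LD lower bound, upper bound by splitting at $\abs{\sfX}\leq Mn$ and killing the tail with the super-linear growth hypothesis \eqref{eq:J-growth}), followed by the biconjugate identity $I=\lambda^*=J^{**}$ and the affine-minorant characterization of the convex lower-semicontinuous envelope. The only point worth flagging is that the existence of the limit defining $\lambda$ requires the $\liminf$ form of the LD lower bound, which is what \eqref{eq:LD-LB} is intended to state (its $\limsup$ is evidently a typo), and your argument correctly uses that version.
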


\end{document}